\documentclass[12pt]{amsart}
\usepackage{amsmath}
\usepackage{amsfonts}
\usepackage{amssymb}
\usepackage{amscd}
\usepackage{graphicx}
\usepackage[abbrev,alphabetic]{amsrefs}
\RequirePackage[dvipsnames,usenames]{color}
\usepackage{soul,xcolor}
\setstcolor{red}
\usepackage{stmaryrd}
\usepackage{mathtools}
\usepackage{booktabs}
\usepackage{multirow}
\newtagform{tiny}{\tiny(}{)}
\usepackage{enumitem}

\usepackage{mathtools}
\usepackage{hyperref}
\usepackage[margin=1.25in]{geometry}
\usepackage{mathrsfs}

\usepackage{amsthm}
\usepackage{comment}
\usepackage[all,cmtip]{xy}
\usepackage{tikz-cd}
\usetikzlibrary{cd}

\tikzcdset{
  cells={font=\everymath\expandafter{\the\everymath\displaystyle}},
}

\usepackage[all]{xy}

\usepackage{cleveref}

\makeatletter
\def\@tocline#1#2#3#4#5#6#7{\relax
  \ifnum #1>\c@tocdepth 
  \else
    \par \addpenalty\@secpenalty\addvspace{#2}%
    \begingroup \hyphenpenalty\@M
    \@ifempty{#4}{%
      \@tempdima\csname r@tocindent\number#1\endcsname\relax
    }{%
      \@tempdima#4\relax
    }%
    \parindent\z@ \leftskip#3\relax \advance\leftskip\@tempdima\relax
    \rightskip\@pnumwidth plus4em \parfillskip-\@pnumwidth
    #5\leavevmode\hskip-\@tempdima
      \ifcase #1
       \or\or \hskip 1em \or \hskip 2em \else \hskip 3em \fi%
      #6\nobreak\relax
    \hfill\hbox to\@pnumwidth{\@tocpagenum{#7}}\par
    \nobreak
    \endgroup
  \fi}
\makeatother

\newcommand{\stacksproj}[1]{\cite{stacks-project}*{\href{https://stacks.math.columbia.edu/tag/#1}{Tag~{#1}}}}

\newcommand{\rup}[1]{\lceil #1 \rceil}
\newcommand{\rdown}[1]{\lfloor #1 \rfloor}

\newcommand{\Z}{\mathbb{Z}}
\newcommand{\Q}{\mathbb{Q}}
\newcommand{\R}{\mathbb{R}}
\newcommand{\F}{\mathbb{F}}

\newcommand{\cHom}{\mathcal{H}om}

\newcommand{\bN}{\mathbb{N}}

\newcommand{\bQ}{\mathbb{Q}}

\newcommand{\cA}{\mathcal{A}}

\newcommand{\cC}{\mathcal{C}}
\newcommand{\cD}{\mathcal{D}}

\newcommand{\cF}{\mathcal{F}}
\newcommand{\cG}{\mathcal{G}}
\newcommand{\cH}{\mathcal{H}}
\newcommand{\cJ}{\mathcal{J}}
\newcommand{\cI}{\mathcal{I}}
\newcommand{\cK}{\mathcal{K}}
\newcommand{\cL}{\mathcal{L}}

\newcommand{\cO}{\mathcal{O}}

\newcommand{\sO}{\mathcal{O}}

\newcommand{\m}{\mathfrak{m}}
\newcommand{\n}{\mathfrak{n}}

\newcommand{\fp}{\mathfrak{p}}

\newcommand{\sK}{\mathscr{K}}

\newcommand{\wt}{\widetilde}

\DeclareMathOperator{\Mor}{Mor}

\DeclareMathOperator{\Supp}{Supp}
\DeclareMathOperator{\Spec}{Spec}

\DeclareMathOperator{\Coker}{Coker}
\DeclareMathOperator{\Hom}{Hom}

\DeclareMathOperator{\Ext}{Ext}

\DeclareMathOperator{\Exc}{Exc}

\DeclareMathOperator{\Ker}{Ker}

\DeclareMathOperator{\lct}{lct}

\DeclareMathOperator{\qfpt}{qfpt}
\DeclareMathOperator{\num}{num}

\renewcommand{\div}{{\rm div}}

\newcommand{\WDiv}{\mathrm{WDiv}}
\newcommand{\MDiv}{\mathrm{MDiv}}

\newcommand{\mydot}{{{\,\begin{picture}(1,1)(-1,-2)\circle*{2}\end{picture}\ }}}
\newcommand{\Mod}{\mathfrak{Mod}}
\newcommand{\Serre}[1]{S_{#1}}
\newcommand{\Gorenstein}{G_1}

\def\ge{\geqslant}
\def\geq{\geqslant}
\def\le{\leqslant}
\def\leq{\leqslant}
\def\phi{\varphi}
\def\epsilon{\varepsilon}
\def\tilde{\widetilde}

\def\mapsto{\longmapsto}
\def\into{\lhook\joinrel\longrightarrow}
\def\onto{\relbar\joinrel\twoheadrightarrow}
\renewcommand{\cong}{\simeq}

\theoremstyle{plain}
\newtheorem{theorem}{Theorem}[section]
\newtheorem{thm}[theorem]{Theorem}

\newtheorem{proposition}[theorem]{Proposition}
\newtheorem{prop}[theorem]{Proposition}
\newtheorem{lemma}[theorem]{Lemma}
\newtheorem{lem}[theorem]{Lemma}
\newtheorem{corollary}[theorem]{Corollary}
\newtheorem{cor}[theorem]{Corollary}

\newtheorem{claim}[theorem]{Claim}
\newtheorem*{claim*}{Claim}

\newtheorem{theoremA}{Theorem}

\theoremstyle{definition}
\newtheorem{definition}[theorem]{Definition}

\newtheorem{setting}[theorem]{Setting}
\newtheorem{example}[theorem]{Example}
\newtheorem{notation}[theorem]{Notation}

\newtheorem*{setup*}{Setup}

\theoremstyle{remark}
\newtheorem{remark}[theorem]{Remark}
\newtheorem*{ackn}{Acknowledgements}

\theoremstyle{plain}

\newenvironment{claimproof}[0]
  {%
   \paragraph{\it Proof.}%
  }
  {%
    \hfill$\blacksquare$%
  }

\numberwithin{equation}{section}

\definecolor{Mahogany}{rgb}{0.753, 0.251, 0.0}



\crefname{theorem}{Theorem}{Theorems}
\crefname{prop}{Proposition}{Propositions}
\crefname{proposition}{Proposition}{Propositions}
\crefname{lemma}{Lemma}{Lemmas}
\crefname{lem}{Lemma}{Lemmas}
\crefname{corollary}{Corollary}{Corollaries}
\crefname{cor}{Corollary}{Corollaries}
\crefname{definition}{Definition}{Definitions}
\crefname{remark}{Remark}{Remarks}

\crefname{conjecture}{Conjecture}{Conjectures}
\crefname{claim}{Claim}{Claims}
\crefname{notation}{Notation}{Notations}
\crefname{setting}{Setting}{Settings}
\crefname{example}{Example}{Examples}

\title{Quasi-$F$-splitting versus log canonicity}
\author{Kenta Sato}
\address{Department of Mathematics and Informatics, Chiba university, Chiba, 263-8522,
Japan}
\email{sato@math.s.chiba-u.ac.jp}
\author{Shunsuke Takagi}
\address{Graduate School of Mathematical Sciences, University of Tokyo, 3-8-1 Komaba,
Meguro-ku, Tokyo 153-8914, Japan}
\email{stakagi@ms.u-tokyo.ac.jp}
\author{Shou Yoshikawa}
\address{Institute of Science, Tokyo 152-8551, Japan}
\email{yoshikawa.s.9fe9@m.isct.ac.jp}
\dedicatory{Dedicated to Professor J\'anos Koll\'ar on the occasion of his seventieth birthday.}

\begin{document}
\begin{abstract}
In this paper, we investigate the relationship between quasi-$F$-splitting and log canonicity. 
We show that if a numerically $\Q$-Gorenstein normal singularity is quasi-$F^e$-split for every $e\geq 1$, then it is numerically log canonical. 
In dimension two, we prove the converse under the condition that the Gorenstein index is not divisible by the characteristic $p$.  
We also classify two-dimensional quasi-$F$-split normal singularities.  
\end{abstract}

\maketitle
\tableofcontents

\section{Introduction}
The theory of $F$-singularities plays a central role in the study of singularities in positive characteristic and has deep connections with birational geometry. 
The notion of $F$-purity, which is equivalent to local $F$-splitting in our setting, 
was introduced by Hochster and Roberts \cite{HR76} in their study of local cohomology in positive characteristic. 
The global theory of $F$-splitting was initiated by Mehta and Ramanathan \cite{MR85} in their study of the cohomology of Schubert varieties. 
Hochster and Huneke \cite{HH89} also introduced a stronger variant of $F$-purity, strong $F$-regularity, by requiring splittings of suitable iterations of Frobenius after perturbation by arbitrary nonzero divisors.
Hara and Watanabe \cite{hw02} proved that, under the $\Q$-Gorenstein assumption, $F$-pure singularities are log canonical and strongly $F$-regular singularities are klt. 
This result provides one of the basic links between $F$-singularities and singularities appearing in the minimal model program. 

Yobuko \cite{yobuko19} introduced the notion of quasi-$F$-splitting, a natural generalization of $F$-splitting formulated in terms of Witt vector sheaves. 
For the purposes of this introduction, we recall its local form. 
Let $R$ be an $F$-finite reduced local ring of characteristic $p>0$. The ring $W_n R$ of Witt vectors of length $n$ is  a local ring of characteristic $p^n$ and its reduced quotient $W_nR/\sqrt{0}$ is naturally isomorphic to $R$. 
The ring $W_nR$ is endowed with the Frobenius and restriction maps $F\colon W_nR \to F_*W_nR$ and $R^{n-1}\colon W_nR \to R$ (see \S\ref{Witt ring} for basic properties of the ring $W_nR$). 
\begin{definition}[\cite{yobuko19}]
We say that the local ring $R$ is \emph{quasi-$F$-split} if there exist an integer $n \ge 1$ and a $W_nR$-module homomorphism $\varphi \colon F_*W_nR \to R$ such that $\varphi \circ F=R^{n-1}$. 
\[
\xymatrix{
W_n R \ar[r]^F \ar[d]_{R^{n-1}} & F_*W_n R \ar@{.>}[dl]^{\varphi} \\
R & 
}
\]
\end{definition}
When $n=1$, this condition is the same as $F$-purity. 
Yobuko \cite{yobuko19} used this notion to study the liftability to characteristic zero of Calabi--Yau varieties of finite Artin--Mazur height. 
Quasi-$F$-splitting is considerably less sensitive to the arithmetic of the Frobenius morphism than $F$-purity, and consequently captures a much broader class of singularities.
For example, the affine cone over an elliptic curve $E$ is always quasi-$F$-split, whereas it is $F$-pure if and only if $E$ is ordinary. 

More recently, using Tanaka's Witt divisorial sheaves \cite{tanaka22}, Tanaka--Witaszek--Yobuko \cite{TWY} introduced quasi-$F^e$-splittings and quasi-$F$-regularity for log pairs, providing a framework for systematically studying singularities in birational geometry via Witt vector methods. 
Quasi-$F^e$-splitting is an iterated version of quasi-$F$-splitting. 
Quasi-$F$-regularity is a generalization of strong $F$-regularity and strengthens quasi-$F$-splitting in a way similar to the relationship between strong $F$-regularity and $F$-purity.
It was shown in \cite{KTTWYY3} that quasi-$F$-regularity implies klt singularities and that the converse holds in dimension two.

In this paper, we study the relationship between quasi-$F$-splitting and log canonicity in the setting of log pairs. 
Our first main result extends the result of Hara--Watanabe to quasi-$F$-splitting. 

\begin{theoremA}[cf.~Theorem~\ref{thm:qFs-to-lc-norm}]
Let $R$ be an $F$-finite Noetherian normal domain of characteristic $p>0$.  Let $\Delta$ be an effective $\Q$-Weil divisor on $X:=\Spec R$ such that $\lfloor \Delta \rfloor$ is reduced and $K_X+\Delta$ is numerically $\Q$-Cartier. If the pair 
$\left(R,\frac{p^e-1}{p^e} \Delta \right)$
is quasi-$F^e$-split for every integer $e\geq 1$, then $(X,\Delta)$ is numerically log canonical.  In particular, if $K_X+\Delta$ is $\Q$-Cartier and $(R,\Delta)$ is quasi-$F^e$-split for every $e \ge 1$, then $(X,\Delta)$ is log canonical.
\end{theoremA}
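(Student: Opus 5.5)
We argue by contradiction: suppose $(X,\Delta)$ is not numerically log canonical. Then there is a proper birational morphism $\pi\colon Y\to X$ from a normal integral scheme and a prime divisor $E\subset Y$ with discrepancy $a(E,X,\Delta)<-1$. Here the discrepancy is computed via the numerical pullback of $K_X+\Delta$; in the two-dimensional or general numerically $\Q$-Cartier setting this is Mumford's pullback. Write $K_Y+\Delta_Y=\pi^*(K_X+\Delta)$, so that $\operatorname{ord}_E(\Delta_Y)=1+\epsilon$ with $\epsilon>0$.

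We may shrink $X$ to the local ring at the generic point of $\pi(E)$. The property of being quasi-$F^e$-split for all $e$ is preserved under localization, so this is harmless. We then aim to show that the pair $(R,\frac{p^e-1}{p^e}\Delta)$ fails to be quasi-$F^e$-split for $e\gg0$.

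The key tool is a criterion for quasi-$F^e$-splitting in terms of the Witt-divisorial sheaves of \cite{TWY}. Quasi-$F^e$-splitting of level $n$ is equivalent to surjectivity of an evaluation map
\[
\Hom_{W_n\cO_X}\bigl(F^e_*W_n\cO_X(p^e(K_X+\Delta_e)),W_n\omega_X\bigr)\to \omega_X\text{-type module},
\]
where $\Delta_e:=\frac{p^e-1}{p^e}\Delta$. After Matlis duality, this becomes injectivity of a map on local cohomology
\[
H^d_{\m}(\cO_X(K_X))\to H^d_{\m}\bigl(Q^e_{X,n}(K_X+p^e\Delta_e)\bigr)
\]
for a suitable quotient sheaf $Q$.

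The plan is to push this map to $Y$ and compare it with $\Delta_Y$. Via the trace of $\pi$, any splitting on $X$ should induce a compatible splitting on $Y$ away from codimension two, twisted by $p^e(K_Y+\Delta_{Y,e})$. Restricting to the generic point of $E$ gives a DVR with uniformizer $t$. There the splitting would be a map $F^e_*W_n(\cO)\to\cO$ that is compatible with $R^{n-1}$, but twisted by a divisor in which $E$ appears with coefficient
\[
p^e\operatorname{ord}_E(\Delta_{Y,e})\approx (p^e-1)(1+\epsilon)>p^e \quad\text{for } e\gg0.
\]

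The contradiction should then come from an explicit computation over a DVR (or over $k[[t]]$ with a Witt-vector analogue). Write $D_e$ for this twisting divisor. The Witt-divisorial sheaf $W_n\cO(-D_e)$ for a coefficient exceeding $p^e$ sends $1$ into $\m$ after applying any $W_n\cO$-linear map to $\cO$. The reason is that Teichmüller components scale by $t^{\lceil p^{i}c\rceil}$-type factors, so in every ghost component the image of $F(1)$ acquires positive $t$-adic valuation. In particular, $R^{n-1}(1)=1$ cannot lie in the image. The requirement that $e$ be large appears here: for fixed $n$ the excess $\epsilon p^e-1$ must eventually dominate the level-dependent loss, which is roughly bounded by $\sum_i p^{-i}$-type contributions. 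Since the hypothesis allows $n$ to depend on $e$, we need a bound that is uniform in $n$.

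The main obstacle is therefore twofold. First, we must make the "compatible splitting extends to $Y$" step rigorous for Witt-divisorial sheaves. Second, we must obtain the estimate at the generic point of $E$ uniformly in $n$.

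For the first step, we would try to work on $X$ directly with the valuation $\operatorname{ord}_E$. We would show that any $W_n\cO_X$-linear map $\varphi\colon F^e_*W_n\cO_X(p^e(K_X+\Delta_e))\to\omega_X$ satisfies a valuative inequality measured by $\operatorname{ord}_E$. This is a quasi-$F$ analogue of the Hara–Watanabe argument, using that $\lfloor\Delta\rfloor$ is reduced to control the boundary terms.

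For the uniform estimate, we would induct on $n$ using the exact sequence
\[
0\to F_*W_{n-1}\to W_n\to\cO\to0.
\]
The goal is to prove that the Frobenius-twisted valuation of every component stays at least $\epsilon p^e-C$, with $C$ independent of $n$.
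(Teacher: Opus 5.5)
There is a genuine gap, and it sits where you place the ``main obstacle'': nothing in your proposal, or in the available theory, lets a quasi-$F^e$-splitting on $X$ be transported to the divisorial valuation $\cO_{Y,E}$.

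In the classical $F$-split case this transport works because $\Hom_R(F^e_*R(\cdot),R)$ has rank one over $F^e_*R$. So a splitting $\phi$ \emph{is} an effective divisor $\Delta_\phi$ with $(p^e-1)(K_X+\Delta_\phi)\sim 0$, and its pole order along $E$ is read off from $\pi^*(K_X+\Delta_\phi)-K_Y$. A quasi-$F^e$-splitting of $(R,\Delta_e)$, with $\Delta_e:=\frac{p^e-1}{p^e}\Delta$, is instead an element of $\Hom_{W_n\cO_X}(F^e_*W_n\cO_X(p^e\Delta_e),W_n\omega_X(-K_X))$. This module has no divisorial description, since $W_n\omega_X$ is only an extension built from $\omega_X$ and $F_*W_{n-1}\omega_X$. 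Moreover, the identity $\varphi\circ F^e=T_n\circ\alpha^*\circ R^{n-1}$ ties the first Witt component of the source to the bottom piece $\omega_X(-K_X)$ of the target. So there is no ``divisor of $\varphi$'' to pull back.

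The trace $\pi_*W_n\omega_Y\to W_n\omega_X$ also goes the wrong way: it pushes splittings on $Y$ down to $X$. Going up requires vanishing of higher direct images, as in \cref{prop:birational transformation rule}.

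Your twist $p^e(K_Y+\Delta_{Y,e})$ is not defined in general. Only $K_X+\Delta$ is numerically $\Q$-Cartier, and $K_X+\Delta_e$ differs from $\frac{p^e-1}{p^e}(K_X+\Delta)$ by $\frac{1}{p^e}K_X$, which need not be.

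Finally, the DVR estimate is only a heuristic. Ghost components of Witt vectors over an $\F_p$-algebra reduce to $a_0^{p^i}$ and give no valuative control. The bound uniform in $n$, which you rightly say is needed since $n$ depends on $e$, is left open.

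The paper never localizes at $E$. It assumes $K_R$ effective, takes $g$ with $\mathrm{div}(g)\ge D:=K_R+\Delta$, and sets $f=cg$, where $c$ is built from some $t\in\tau(R,\{p^lD\})$ for all $l$. The $n$-quasi-$F^e$-splitting makes $H^d_\m(\Phi^e_{R,D_e,n})$ injective. Combined with multiplication by $f$ and Matlis duality, this gives $f\in\tau^q(\omega_R,D+(1-\varepsilon)\mathrm{div}(f))$ for all $\varepsilon>0$ (\cref{thm:qFs-test-norm}).

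Uniformity in $n$ comes from condition $(\star)$: a single $s\in R^\circ$ kills $\Ker V_{p^lD,n}$ for all $n,l$. This is obtained from Keeler's vanishing on a model where $D$ becomes numerically trivial (\cref{lemma:univ-van-H^d-1}). It yields $[t^2]\cdot\wt{0^*_{R,p^lD,n}}=0$ for every $n$ (\cref{prop:van-tilde-0*}) and the lifting theorem \cref{thm: lifting thm pair}.

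Divisors over $X$ enter only through the comparison $\tau^q(\omega_R,D)\subseteq\cJ(R,D-K_R)$ (\cref{thm:comp-test-ideal-mult-ideal}). That comparison is proved with local cohomology on all of $Y$ and the vanishing of $R^if_*W_n\cO_Y(p^lD_Y+A)$. Then $f\in\cJ(R,\Delta+(1-\varepsilon)\mathrm{div}(f))$ gives $\mathrm{ord}_E(\Delta_Y^{\num})<1+\varepsilon\,\mathrm{ord}_E(f)$ for every $E$ and every $\varepsilon$, i.e.\ $a_E(X,\Delta)\ge -1$.

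To make your route work you would need two things: a substitute for the divisor of a Witt-vector splitting, and a bound uniform in $n$. The quasi-test ideal machinery supplies exactly these.
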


When $R$ is Gorenstein, \cite{KTTWYY3}*{Theorem~H} shows that $R$ is quasi-$F^e$-split for all $e \ge 1$ if and only if $R$ is quasi-$F$-split. 
Therefore, if an $F$-finite Gorenstein normal domain $R$ is quasi-$F$-split, then $\Spec R$ has log canonical singularities.  
We emphasize that, even in the Gorenstein case, quasi-$F$-splitting does not imply $F$-purity, as illustrated by the example above.

The proof of Theorem~A follows a strategy similar to that of Sato--Takagi \cite{ST25}, but replaces classical test ideals with quasi-test ideals. 
The quasi-$F^e$-splitting assumption forces the corresponding quasi-test ideals to be sufficiently large after arbitrarily small perturbations (Theorem~\ref{thm:qFs-test-norm}).
By comparing quasi-test ideals with multiplier ideals, proved in the $\Q$-Cartier case in \cite{KTTWYY3} and extended in Theorem~\ref{thm:comp-test-ideal-mult-ideal} to the numerically $\Q$-Cartier case, we deduce that the corresponding multiplier ideals are also sufficiently large after such perturbations.
This implies that the pair is numerically log canonical.

We next study the converse direction in dimension two.  
In this case, the obstruction to quasi-$F$-splitting is governed by the Cartier index of $K_X+\Delta$.
Our second main result is stated as follows.

\begin{theoremA}[\cref{thm:qFs-lc-Z_p-index}]\label{intro:thm:qFs-lc-Z_p-index}
Let $(R,\m)$ be a two-dimensional $F$-finite normal local domain of characteristic $p>0$ with perfect residue field.  
Let $\Delta$ be an effective $\Q$-Weil divisor on $X:=\Spec R$ such that $(X,\Delta)$ is log canonical.  
If the Cartier index of $K_X+\Delta$ is not divisible by $p$, then $(X,\Delta)$ is purely quasi-$F^e$-split for every integer $e\geq 1$.
\end{theoremA}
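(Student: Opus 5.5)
The plan is to reduce to the case where $K_X+\Delta$ is Cartier by taking the index-one cover, and then to use the known results on quasi-$F$-splitting of two-dimensional log canonical singularities in the Gorenstein (or Cartier-index-one) setting.

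\textbf{Step 1: index-one cover.} Let $r$ be the Cartier index of $K_X+\Delta$. By hypothesis $p\nmid r$. I form the cyclic cover
\[
f\colon Y=\Spec S\to X, \qquad S=\bigoplus_{i=0}^{r-1}\cO_X\bigl(\lfloor -i(K_X+\Delta)\rfloor\bigr),
\]
and put $\Delta_Y=f^*\Delta$ appropriately (its boundary part). Because $p\nmid r$, the cover is étale in codimension one over the locus where $\Delta$ is absent, and tamely ramified along components of $\Delta$. Hence $K_Y+\Delta_Y=f^*(K_X+\Delta)$ is Cartier, and $(Y,\Delta_Y)$ is again log canonical. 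Moreover $R\to S$ splits as an $R$-module map via the degree-zero projection, again using $p\nmid r$. This splitting should be compatible with Witt vectors and the Frobenius, so it descends pure quasi-$F^e$-splittings. Concretely, I would show that a $W_nS$-linear splitting $\varphi$ of the quasi-Frobenius for $(S,\Delta_Y)$, composed with the trace $W_nS\to W_nR$ normalized by $1/r\in\Z_p^\times$, gives one for $(R,\Delta)$. I expect the main obstacle to be the bookkeeping of Witt divisorial sheaves $W_n\cO_X(\cdot)$ under the cover. Since the paper uses the purely quasi-$F^e$-split formulation (splitting relative to $\lfloor\Delta\rfloor$), the trace must also respect adjoint-type restriction to the reduced boundary.

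\textbf{Step 2: the Cartier case.} Now $K_Y+\Delta_Y$ is Cartier and the pair is lc of dimension two. The boundary coefficients are then integers, so $\Delta_Y$ is reduced. The classification of such pairs (after passing to the completion, which is harmless by $F$-finiteness and the perfect residue field) falls into three families: canonical (RDP) with $\Delta_Y=0$, simple elliptic or cusp singularities with $\Delta_Y=0$, and plt/lc pairs with reduced boundary on cyclic-quotient-type bases.

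\textbf{Step 3: Gorenstein reduction.} In the Gorenstein case, \cite{KTTWYY3}*{Theorem~H} reduces quasi-$F^e$-splitting for all $e$ to quasi-$F$-splitting. I then check quasi-$F$-splitting case by case:
\begin{itemize}
\item RDPs are quasi-$F$-split by the quasi-$F$-regularity of klt surface singularities \cite{KTTWYY3}.
\item Simple elliptic singularities follow from the elliptic-cone computation, since every elliptic curve has finite height at most $2$; this can be arranged through a Fedder-type criterion for Witt vectors.
\item Cusps are $F$-pure, hence quasi-$F$-split.
\end{itemize}
For a nonzero reduced boundary, I would apply inversion of adjunction. Here $(\lfloor\Delta_Y\rfloor)$ is a curve whose normalization is a node or a smooth curve, and I would lift splittings from it using the pure version of the theory.

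The hardest part is Step 1's descent of Witt-vector splittings. To handle it, I would first try to show that $W_n\cO_X(\cdot)$ is the $\mu_r$-invariant part of the corresponding sheaf on $Y$. Since $\mu_r$ is linearly reductive when $p\nmid r$, taking invariants is exact, and the splitting can then be averaged.
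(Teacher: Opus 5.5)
\textbf{The index-one cover fails for non-standard coefficients.} The theorem allows an arbitrary effective $\Q$-boundary, and your Step~1 breaks down as soon as $\Delta$ has a coefficient that is not of the form $1-\frac1m$ or $1$.

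Suppose a component $B_i$ of $\Delta$ has coefficient $c_i/r_i$ in lowest terms. The index-one cover is tamely ramified of index $r_i$ over $B_i$, so the crepant pullback $\Delta_Y=f^*\Delta-\mathrm{Ram}_f$ has coefficient $c_i-(r_i-1)$ there. This is negative unless $c_i=r_i-1$. Consequently $(Y,\Delta_Y)$ is only sub-lc, your claim that $\Delta_Y$ is reduced fails, and $Y$ itself need not be lc.

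Here is a concrete example. Take $X=\Spec k[[x,y]]$ with $k=\overline{k}$, $p\neq 3$, and $\Delta=\frac13(B_1+\cdots+B_6)$ for six distinct lines through the origin. If $\pi\colon X'\to X$ is the blow-up of the origin with exceptional curve $E$, then $\pi^*(K_X+\Delta)=K_{X'}+E+\frac13\sum B_i'$ with snc support. So $(X,\Delta)$ is lc of Cartier index $3$, and the theorem applies.

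The index-one cover is $Y=\Spec k[[x,y]][u]/(u^3-\ell_1\cdots\ell_6)$, and $f^*(K_X+\Delta)=K_Y-\mathrm{div}(u)$. Here $Y$ is a quasi-homogeneous hypersurface of weights $(1,1,2)$ and degree $6>1+1+2$, i.e.\ a cone over a genus-$4$ curve, so $Y$ is not lc.

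Multiplication by $[u^{p^l}]$ identifies $W_n\cO_Y(p^lK_Y)$ with $W_n\cO_Y(p^l(K_Y-\mathrm{div}(u)))$ compatibly with $F$ and $R$. Hence the injectivity you would need on the cover is equivalent to $Y$ being quasi-$F$-split. That is false by \cref{thm:qFs-to-lc-norm} together with \cite{KTTWYY3}*{Theorem~H}. So no descent from the cover can prove the theorem here, even though $(X,\Delta)$ is quasi-$F^\infty$-split. Your reduction is essentially \cref{prop:index-one-cov} with $a=0$, which the paper states only for standard coefficients for exactly this reason.

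\textbf{Step~3 is incomplete even with standard coefficients.} Simple elliptic singularities need not be graded, so an elliptic-cone or Fedder-type computation does not cover them. The reduced-boundary case is only gestured at; those pairs are in fact toric, hence $F$-pure.

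\textbf{The idea you are missing} is to avoid covers and classification altogether. The paper proceeds as follows.
\begin{enumerate}
\item Pass to an infinite residue field by ind-\'etale base change.
\item Use \cref{thm:red-to-dlt} (dlt blow-up, Kawamata--Viehweg vanishing, inversion of adjunction) to reduce to quasi-$F^\infty$-splitting of the one-dimensional lc center $(S,\Delta_S)$.
\item On each exceptional component, the prime-to-$p$ index gives $(p^e-1)(K_{S_i}+\Delta_{S_i})\sim 0$. Then \cref{lem:van-B_0-irr}, via stabilization of the iterated Cartier operator on a finite-dimensional space, shows $\varprojlim_n H^0_\m(B_{S,p^lD_S,n})=0$.
\item This gives injectivity of Frobenius on $H^1_\m(W\cO_S(D_S))$, and \cref{prop:infty-qFs} concludes.
\end{enumerate}
This argument treats arbitrary coefficients, elliptic curves and cycles of rational curves uniformly.
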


We briefly explain the idea of the proof of Theorem~B in the case $\Delta=0$. 
By looking at the non-klt locus of a dlt blow-up, we first reduce the question to the quasi-$F^e$-splitting of a one-dimensional simple normal crossing scheme $S$ (Theorem~\ref{thm:red-to-dlt}).
The index assumption then implies that the canonical divisor $K_{S_i}$ on each irreducible component $S_i$ of $S$ is torsion of order prime to $p$. 
The key point is to show that the global sections of the cokernel of the Witt Frobenius map 
\[W_n\mathcal O_{S_i}(K_{S_i}) \longrightarrow F_*W_n\mathcal O_{S_i}(pK_{S_i})\]
vanish in the inverse limit over $n$.
This yields the required quasi-$F^e$-splitting (Proposition~\ref{prop:van-B_0}). 

We also apply the above results to the classification of two-dimensional quasi-$F$-split singularities. 
Hara \cite{hara98} classified two-dimensional $F$-pure singularities. 
Motivated by Hara’s classification, we obtain the following classification of two-dimensional quasi-$F$-split singularities.

\begin{theoremA}[\cref{thm:class-qFs}]\label{intro:thm:class-qFs}
Let $(R,\m)$ be a two-dimensional $F$-finite Noetherian normal local domain of characteristic $p>0$ with perfect residue field.  Then the following conditions are equivalent. 
\begin{enumerate}[label=$(\arabic*)$]
\item $R$ is quasi-$F$-split.
\item $R$ is quasi-$F^e$-split for all $e \ge 1$.
\item $R$ is log canonical and satisfies one of the following conditions.
\begin{enumerate}[label=\textup{(\alph*)}]
\item $R$ has log terminal singularities.
\item $R$ is not a rational singularity.
\item $p\neq 2,3$, and the dual graph is star-shaped of type $(2,3,6)$.
\item $p\neq 3$, and the dual graph is star-shaped or twisted star-shaped of type $(3,3,3)$. 
\item $p\neq 2$, and the dual graph is of type ${}_{*}\widetilde{D}_{n+3}$, twisted ${}_{*}\widetilde{D}_{n+3}$ with $n\geq 1$, or star-shaped or twisted star-shaped of type $(2,4,4)$.
\end{enumerate}
\end{enumerate}
\end{theoremA}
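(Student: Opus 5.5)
The plan is to prove the chain $(2)\Rightarrow(1)\Rightarrow(3)\Rightarrow(2)$.

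\emph{$(2)\Rightarrow(1)$ and $(1)\Rightarrow$ log canonical.} The first implication is trivial: take $e=1$. For the second, two-dimensional normal singularities are numerically $\Q$-Gorenstein, since the intersection matrix on the minimal resolution is negative definite. If $R$ is quasi-$F$-split, I would like to show it is quasi-$F^e$-split for all $e$ and then apply Theorem~A with $\Delta=0$. I expect this step to be the main obstacle, because Theorem~A requires every $e$ and \cite{KTTWYY3}*{Theorem~H} only covers the Gorenstein case. My first attempt is to argue only that quasi-$F$-splitting already forces numerical log canonicity in dimension two, by running the argument of Theorem~\ref{thm:qFs-test-norm} with $e=1$ and the perturbation absorbed by the finite-dimensional resolution data. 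If that fails, I would fall back on Hara-type explicit computations on the minimal resolution. For these I would show that a non-lc surface singularity has an exceptional configuration whose quasi-$F$-split height is infinite, via cohomology vanishing of the Witt-type sheaves $B_n$ on the exceptional curves.

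\emph{$(1)\Rightarrow$ the list in $(3)$.} Once $R$ is known to be log canonical, I use the classification of lc surface singularities. Either $R$ is klt (case (a)), or non-rational (simple elliptic or cusp, case (b)), or rational strictly lc. In the last case the dual graph is star-shaped of type $(2,3,6)$, $(3,3,3)$, $(2,4,4)$, or $(2,2,2,2)$, i.e.\ ${}_*\widetilde D$, possibly twisted. Such singularities have index-one covers that are simple elliptic or cusp, with index $6,3,4,2$ respectively. The excluded characteristics are exactly those where $p$ divides the index, so those must be ruled out as not quasi-$F$-split. For $p\mid$ index, I would pass to the dlt blow-up $(Y,E)$ and use the reduction of Theorem~\ref{thm:red-to-dlt} in reverse, as an equivalence. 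Quasi-$F$-splitting would then force a splitting on the central curve $S\cong\mathbb P^1$ with boundary having coefficients $1-1/m_i$. Since $p\mid$ some $m_i$, the required Witt Frobenius section fails; this is to be checked as a direct non-vanishing of the relevant $H^1$ of $B_n$-type sheaves for all $n$.

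\emph{$(3)\Rightarrow(2)$.} The klt case follows from quasi-$F$-regularity of two-dimensional klt singularities \cite{KTTWYY3}, which implies quasi-$F^e$-splitting for all $e$. In the non-rational lc case $R$ is Gorenstein, so the index is $1$ and Theorem~B applies. In cases (c)--(e) the Cartier index of $K_X$ is $6$, $3$, $4$, or $2$ respectively, and the stated characteristic hypotheses say precisely that $p$ does not divide it. So Theorem~B with $\Delta=0$ gives quasi-$F^e$-splitting for every $e$; here purely quasi-$F^e$-split with $\Delta=0$ is ordinary quasi-$F^e$-split.
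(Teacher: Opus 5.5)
There is a genuine gap: your argument depends on the implication from (1) to log canonicity, and that is exactly the step you leave open. Neither of your suggestions closes it.

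Running the argument of \cref{thm:qFs-test-norm} with only $e=1$ gives, for one fixed $f$, just $f\in\cJ(R,(1-\tfrac1p)\div(f))$. This says only that $a_E(X,0)>-1-\tfrac1p\,\mathrm{ord}_E(f)$ for every divisor $E$ over $X$. The argument of \cite{ST25}*{Lemma~3.5} removes the error term $\mathrm{ord}_E(f)/p$ only by letting the coefficient $1-p^{-e}$ tend to $1$ with $f$ fixed. That is precisely why \cref{thm:qFs-to-lc-norm} needs quasi-$F^e$-splitting for every $e$. Restricting to the finitely many divisors on a log resolution does not make the error term disappear. Your fallback, Hara-type computations showing that every non-lc surface singularity has infinite quasi-$F$-split height, is a separate classification project that you do not carry out.

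The missing idea is that (1)$\Rightarrow$(2) holds in dimension two for a soft reason, with no Gorenstein hypothesis and no appeal to \cite{KTTWYY3}*{Theorem~H}. This is \cref{cor:qFs-to-qF^eS}, built on \cref{thm:qFs-to-qF^es}. Each $\sO_X(p^lK_X)$ is a rank-one reflexive module over a two-dimensional normal local domain, hence $S_2$, hence maximal Cohen--Macaulay. So $H^1_\m(\sO_X(p^lK_X))=0$ for all $l\geq 0$. With $\Delta=0$, hypothesis (ii) of \cref{thm:qFs-to-qF^es} is just quasi-$F$-splitting of $R$. The induction along $V$ in that proof then shows that Frobenius is injective on $H^2_\m(W\sO_X(p^lK_X))$ for every $l$, so $R$ is quasi-$F^e$-split for all $e$. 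This is how the paper proves (1)$\Leftrightarrow$(2), after which \cref{thm:qFs-to-lc-norm} gives log canonicity.

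With that in place, the rest of your plan is sound and close to the paper. The cases with $p$ prime to the index go through \cref{thm:qFs-lc-Z_p-index} exactly as you say; the paper handles the klt case via \cite{KTTWYY1}*{Theorem~C} plus the same equivalence. For $p$ dividing the index, the paper reduces to $p$-power index $p^a$ by \cref{prop:index-one-cov}. It then shows $(S,\Delta_S)$ is not quasi-$F^a$-split via \cref{proposition:no-gl-section}, and invokes (1)$\Leftrightarrow$(2) again to conclude.

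Your $e=1$ variant through the converse of \cref{thm:red-to-dlt} also works, but the thing to check is different from what you wrote. You need the vanishing of $H^0(S,\sO_S((1-p^l)K_S-\rdown{p^l\Delta_S}))$ for all $l\geq 1$, not a non-vanishing of $H^1$ of the $B_n$-sheaves. This is a degree count using $p\mid m_i$ for some $i$, done after base change to an algebraic closure of the residue field for the twisted graphs.
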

In particular, in characteristic $p>3$, we deduce from the above classification that quasi-$F$-splitting, quasi-$F^e$-splitting for every $e \ge 1$ and log canonicity are all equivalent for two-dimensional $F$-finite normal local domains, even when the residue field is imperfect (Theorem~\ref{thm:class-qFs imperfect}).  
We also discuss the non-normal case in Proposition \ref{prop:slc F-pure}. 

The appendix gives an alternative proof of the implication from quasi-$F^e$-splitting for every $e \geq 1$ to log canonicity in a certain setting, following an argument explained to us by Jakub Witaszek.

\begin{ackn}
The authors are grateful to Jakub Witaszek for sharing his proof and allowing them to include it in the appendix.
The first, second and third authors were supported by JSPS KAKENHI Grant Nos.~JP 24K16900, 23K22383 and 25H00399, and JP24K16889, respectively.
\end{ackn}

\section{Preliminaries}

This section provides preliminary results needed for the rest of the paper.

\subsection{Notations}\label{subseq:Notation}
\begin{enumerate}
    \item An open subset $U$ of a scheme $X$ is said to be \emph{big} if $U$ contains all points of $X$ with codimension $\le 1$.
    \item For a chain complex $K^{\bullet}$ of objects in an abelian category, we denote by $\cH^i(K^{\bullet})$ the $i$-th cohomology of $K^{\bullet}$.
    \item An $\F_p$-scheme $X$ is said to be \emph{$F$-finite} if the Frobenius morphism $F \colon X \to X$ is a finite morphism.
    By \cite{Gabber}*{Remark 13.6}, an $F$-finite Noetherian ring is a quotient of an excellent finite dimensional regular ring.
    In particular, $R$ is excellent and admits a dualizing complex.
    \item A scheme $X$ is \emph{excellent} if it is Noetherian and every stalk $\sO_{X,x}$ is excellent. 
    \item A Noetherian scheme $X$ is said to be \emph{$\Gorenstein$} if for every point $x \in X$ with codimension $\le 1$, the stalk $\sO_{X,x}$ is Gorenstein.
    \item A \emph{prime divisor} on a Noetherian scheme $X$ is an irreducible closed subset of codimension one.
    A \emph{Weil divisor} (resp.~\emph{$\Q$-Weil divisor}) on $X$ is an element of the free $\Z$-module $\WDiv(X)$ (resp.~$\Q$-module $\WDiv_{\Q}(X)$) generated by the set of all prime divisors.
    A $\Q$-Weil divisor $D= \sum_i a_i E_i$ is \emph{effective} if $a_i \ge 0$ for all $i$.
    We say that $D$ has \emph{standard coefficients} if we have
    \[
    a_i \in \{1-\frac{1}{m} \mid m \in \Z_{\ge 1}\} \cup \{1\}
    \]
    for every $i$.
\end{enumerate}

\subsection{\texorpdfstring{$\Serre{2}$}{S2} sheaves with full support}
In this subsection, we summarize some basic properties on $\Serre{2}$ sheaves with full support (cf.~\cite{TWY}*{Subsection 2.2} for the case of irreducible scheme).
Throughout this subsection, we assume that $X$ is a Noetherian scheme and $\cF$ is a coherent sheaf with full support (that is, $\Supp(\cF)=X$).

We say that $\cF$ is $\Serre{r}$ if one has $\mathrm{depth}_{\sO_{X,x}} (\cF_x) \ge \min \{r, \dim \sO_{X,x} \}$ for every point $x \in X$.
It follows from \cite{hartshorne_local_cohomology}*{Proposition 1.11 and Theorem 3.8} that the following conditions are equivalent:
\begin{enumerate}[label=\textup{(1-\alph*)}]
    \item $\cF$ is $\Serre{1}$.
    \item For all open subset $U \subseteq X$ and all dense open subset $V \subseteq U$, the restriction $\cF(U) \to \cF(V)$ is injective.
\end{enumerate}
Similarly, for an $\Serre{1}$-sheaf $\cF$ with full support, the following conditions are equivalent:
\begin{enumerate}[label=\textup{(2-\alph*)}]
    \item $\cF$ is $\Serre{2}$.
    \item For all open subset $U \subseteq X$ and all big open subset $V \subseteq U$, the restriction $\cF(U) \to \cF(V)$ is isomorphic.
\end{enumerate}

\begin{remark}\label{rem:Serre condition basic}
Let $\cF$ be a coherent sheaf with full support.
\begin{enumerate}[label=\textup{(\roman*)}]
    \item $\cF$ is $\Serre{1}$ if and only if $\cF$ is torsion free (\cite{KollarTfS2}*{Definition 11}), that is, for any non-zero coherent subsheaf $\cG \subseteq \cF$, the support $\Supp(\cG)$ of $\cG$ contains some generic point of $X$.
    \item If $\cF$ is $\Serre{2}$ and $i \colon U \into X$ is the inclusion from a big open subset $U$, then we have
\[
\cF \simeq i_* (\cF|_U).
\]
\end{enumerate}
\end{remark}

\begin{definition}
An $\Serre{2}$-hull of a coherent sheaf $\cF$ with full support is an $\sO_X$-homomorphism $\phi \colon \cF \to \cF^H$ to a coherent $\sO_X$-module $\cF^H$ satisfying the following three conditions:
\begin{enumerate}[label=\textup{(\roman*)}]
    \item $\cF^H$ is $\Serre{2}$ with full support.
    \item $\phi$ is isomorphic at every generic point of $X$.
    \item $\phi$ is surjective at every codimension one point of $X$.
\end{enumerate}
\end{definition}

\begin{remark}\label{rem:S2 hull basic}
Let $\cF$ be a coherent sheaf with full support.
\begin{enumerate}[label=\textup{(\roman*)}]
    \item An $\Serre{2}$-hull of $\cF$ is unique up to isomorphism if it exists.
    \item Let $i \colon U \into X$ be an inclusion from a big open subset $U$.
    If $\cF|_U$ is $\Serre{2}$, then the natural morphism 
    \[
    \cF \to i_* (\cF|_U) 
    \]
    is the $\Serre{2}$-hull of $\cF$.
    \item Since we assume that $\cF$ has full support, the definition of $\Serre{2}$-full of $\cF$ is equivalent to that of the \emph{torsion free $\Serre{2}$-hull} defined in \cite{KollarTfS2}*{Definition 13}.
    In particular, if $X$ is an $\Serre{2}$-scheme and $X$ is excellent, then $\Serre{2}$-hull $F^H$ exists by \cite{KollarTfS2}*{Theorem 2}.
    \item Let $\iota \colon Y \into X$ be a thickening (that is, a homeomorphic closed immersion).
    If $\cG$ is an $\Serre{2}$ coherent $\sO_Y$-module with full support, then $\iota_* \cG$ is an $\Serre{2}$ coherent $\sO_X$-module with full support.
    In particular, if $\theta \colon \cF \to \cF^H$ is the $\Serre{2}$-hull of coherent $\sO_Y$-module $\cF$ with full support, then $\iota_* \theta$ is also the $\Serre{2}$-hull of $\iota_*\cF$. 
\end{enumerate}
\end{remark}

\begin{lem}\label{lem:Hom S2}
    Let $\cF$ and $\cG$ be coherent sheaves on a Noetherian scheme $X$.
    If $\cG$ is $\Serre{2}$ and $\Supp(\cF)=\Supp(\cG)=X$, then $\cHom_{\sO_{X}}(\cF,\cG)$ is $\Serre{2}$ with full support.
\end{lem}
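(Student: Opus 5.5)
The plan is to verify the two parts of the conclusion separately, using the criteria (1-b) and (2-b) stated above. First note that $\cHom_{\sO_X}(\cF,\cG)$ is coherent, because $\cF$ is coherent and $X$ is Noetherian. Also, for every open $U\subseteq X$ its sections are
\[
\Gamma(U,\cHom_{\sO_X}(\cF,\cG))=\Hom_{\sO_U}(\cF|_U,\cG|_U).
\]
So each statement about restriction maps becomes a statement about homomorphisms of sheaves on $U$ and on a smaller open set $V$.

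\emph{Full support.} The support of a coherent sheaf is closed, so it suffices to show that the stalk at every generic point $\eta$ of $X$ is nonzero. Put $A=\sO_{X,\eta}$, which is an Artinian local ring with residue field $k$. The stalk is $\Hom_A(\cF_\eta,\cG_\eta)$, and both $\cF_\eta$ and $\cG_\eta$ are nonzero finitely generated $A$-modules because of the full-support hypothesis.
\begin{itemize}
\item By Nakayama, $\cF_\eta$ surjects onto $k$.
\item Every nonzero module over an Artinian local ring has depth $0$, so $k$ embeds into $\cG_\eta$.
\item The composite $\cF_\eta\to k\into\cG_\eta$ is therefore a nonzero homomorphism.
\end{itemize}

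\emph{$\Serre{1}$.} Let $V\subseteq U$ be a dense open subset and let $\phi\colon\cF|_U\to\cG|_U$ satisfy $\phi|_V=0$. The image $\phi(\cF|_U)$ is a coherent subsheaf of $\cG|_U$ whose support lies in $U\setminus V$. Since $V$ is dense in $U$, this closed set contains no generic point of $U$. But $\cG$ is $\Serre{2}$, hence $\Serre{1}$, hence torsion free by Remark~\ref{rem:Serre condition basic}(i). Therefore the image is zero and $\phi=0$. This proves the injectivity required in (1-b).

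\emph{$\Serre{2}$.} Let $j\colon V\into U$ be the inclusion of a big open subset. Injectivity of the restriction map was shown in the previous step, so only surjectivity remains. Given $\psi\colon\cF|_V\to\cG|_V$, form the composite
\[
\cF|_U\longrightarrow j_*(\cF|_V)\xrightarrow{\;j_*\psi\;} j_*(\cG|_V)\simeq\cG|_U .
\]
The last isomorphism is Remark~\ref{rem:Serre condition basic}(ii) applied on $U$; this is legitimate because $\cG|_U$ is $\Serre{2}$ and $V$ is big in $U$. By construction, this composite restricts to $\psi$ on $V$. Hence (2-b) holds and $\cHom_{\sO_X}(\cF,\cG)$ is $\Serre{2}$.

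I expect no real obstacle in the $\Serre{1}$ and $\Serre{2}$ steps. The only point needing care is the full-support claim at the generic points, since $X$ need not be reduced there; the socle argument over the Artinian local ring $A$ handles this.
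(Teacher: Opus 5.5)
Your proof is correct. The full-support step is the same argument the paper gives: a surjection $\cF_\eta\to\kappa(\eta)$ composed with a socle embedding $\kappa(\eta)\into\cG_\eta$ over the Artinian local ring $\sO_{X,\eta}$.

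For the $\Serre{2}$ property you take a genuinely different route. The paper simply cites Stacks Project Tag 0AXQ, which is a pointwise depth statement for $\cHom$ into an $\Serre{2}$ module. The standard proof of such a statement uses a local presentation $\sO^{\oplus m}\to\sO^{\oplus n}\to\cF\to 0$. This exhibits $\cHom(\cF,\cG)$ as the kernel of $\cG^{\oplus n}\to\cG^{\oplus m}$, and the depth lemma then gives $\mathrm{depth} \geq \min(2,\mathrm{depth}\,\cG_x)$.

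You instead work with the sheaf-theoretic characterizations (1-b) and (2-b) together with Remark~\ref{rem:Serre condition basic}(ii). The point you exploit is that uniqueness and existence of extensions of a homomorphism $\cF|_V\to\cG|_V$ across a small closed set depend only on $\cG$ having those extension properties.

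Your version is self-contained within the paper's preliminaries, and it makes transparent why $\cF$ needs nothing beyond coherence and full support. The cited lemma is a purely local-algebraic fact that does not pass through the local-cohomology equivalences underlying (1-b) and (2-b).

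You correctly establish full support before invoking (1-b) and (2-b), since those equivalences are only asserted for sheaves with full support. The one thing you leave implicit is that a big open $V\subseteq U$ is dense, because it contains every codimension-zero point of $U$. This is what lets you reuse the $\Serre{1}$ step for injectivity in the $\Serre{2}$ step.
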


\begin{proof}
    For every generic point $\eta \in X$, noting that $\sO_{X,\eta}$ is an Artin local ring, there is a non-zero homomorphism $\phi \colon \kappa(\eta) \into \cG_\eta$ from the residue field $\kappa(\eta)$ of $\sO_{X,\eta}$.
    Choosing a surjective homomorphism $\cF_{\eta} \onto \kappa(\eta)$ and composing it with $\phi$, we conclude that 
    \[
    \Hom_{\sO_{X,\eta}}(\cF_\eta, \cG_\eta) \neq 0.
    \]
    Therefore, $\cHom_{\sO_{X}}(\cF,\cG)$ has full support.
    The $\Serre{2}$-condition follows from \stacksproj{0AXQ}.
\end{proof}

\begin{lem}\label{lem:S2 hull}
    Let $\cF$ and $\cG$ be coherent sheaves with full support.
    We assume that $\cG$ is $\Serre{2}$.
    Then the following hold.
    \begin{enumerate}[label=\textup{(\arabic*)}]
        \item If $\phi \colon \cF \to \cF^H$ is $\Serre{2}$-hull, then 
        \[
        \phi^* \colon \cHom_{\sO_{X}}(\cF^H,\cG) \to \cHom_{\sO_{X}}(\cF,\cG)
        \]
        is isomorphic.
        \item If $\cL$ is a coherent sheaf which is invertible at every point of codimension $\le 1$, then the composite map
        \[
        \cHom_{\sO_X}(\cF,\cG) \to \cHom_{\sO_X}(\cF \otimes \cL,\cG \otimes \cL) \to \cHom_{\sO_X}(\cF \otimes \cL , (\cG \otimes \cL)^H)
        \]
        is isomorphic.
    \end{enumerate}
\end{lem}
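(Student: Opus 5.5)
Both parts follow one pattern. By Lemma~\ref{lem:Hom S2}, the source and target of each map are $\Serre{2}$ sheaves with full support. Hence it is enough to show that the map is an isomorphism on a big open subset $U\subseteq X$, and then apply Remark~\ref{rem:Serre condition basic}(ii) on both sides. Equivalently, I will use criterion (2-b): given a morphism of $\Serre{2}$ sheaves that is an isomorphism over a big open $U$, take sections over any open $W$ and over $W\cap U$. This gives a commutative square whose two vertical restriction maps are bijections and whose bottom map is bijective, so the top map is bijective too. To keep the argument clean I will also record that a $\cHom$ sheaf is compatible with restriction to opens, so that $\cHom(\cF,\cG)|_U=\cHom(\cF|_U,\cG|_U)$.

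For (1), the first step is to check that both $\cHom(\cF^H,\cG)$ and $\cHom(\cF,\cG)$ are $\Serre{2}$ with full support. This follows from Lemma~\ref{lem:Hom S2}, since $\cF^H$ has full support by definition. Next I choose a big open set $U$ on which $\phi$ is an isomorphism.

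I expect this to be the main obstacle, because $\phi$ is only known to be an isomorphism at generic points and surjective at codimension one points; I must show it is an isomorphism at codimension one points too. Let $x$ be such a point and set $K=\Ker(\phi_x)$. Then $K$ is supported on $\{x\}$, since $\phi$ is an isomorphism at the generic points. The target $\cF^H_x$ is $\Serre{1}$, so it has no submodules of finite length. However, that by itself does not make $K$ vanish, since $K$ lies in the source, not the target.

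I will therefore avoid needing injectivity. Write $\cF'=\mathrm{Im}(\phi)$. The map $\cHom(\cF^H,\cG)\to\cHom(\cF,\cG)$ factors as
\[
\cHom(\cF^H,\cG)\to\cHom(\cF',\cG)\to\cHom(\cF,\cG).
\]
The second map is an isomorphism. Injectivity holds because $\cF\to\cF'$ is surjective. Surjectivity holds because any map $\cF\to\cG$ kills $\Ker\phi$: that kernel is supported in codimension $\ge1$, while $\cG$ is $\Serre{1}$, hence torsion free by Remark~\ref{rem:Serre condition basic}(i), and so has no nonzero subsheaf whose support misses every generic point. Thus every map factors through $\cF'$.

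For the first map, note that the locus where $\cF'\to\cF^H$ fails to be an isomorphism is closed, being the support of the cokernel. By condition (iii) this locus contains no point of codimension $\le 1$, so its complement $U$ is big. Over $U$ the first map is an isomorphism. Both sheaves involved are $\Serre{2}$ by Lemma~\ref{lem:Hom S2}, with $\cF'$ having full support, so the first map is an isomorphism everywhere by the principle stated at the start.

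For (2), let $U$ be the open set where $\cL$ is invertible; it is big by hypothesis. Over $U$, tensoring with an invertible sheaf is an autoequivalence, so the first map is an isomorphism on $U$. Moreover $(\cG\otimes\cL)|_U$ is $\Serre{2}$, since this property is local and $\cL|_U$ is locally free. The hull map $\cG\otimes\cL\to(\cG\otimes\cL)^H$ is therefore an isomorphism on $U$; I would argue this by uniqueness of hulls, Remark~\ref{rem:S2 hull basic}(i),(ii), which identify $(\cG\otimes\cL)^H$ with $i_*((\cG\otimes\cL)|_U)$. Hence the composite is an isomorphism on $U$.

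It remains to check the $\Serre{2}$ hypotheses on the whole of $X$. The source $\cHom(\cF,\cG)$ is $\Serre{2}$ by Lemma~\ref{lem:Hom S2}. The target $\cHom(\cF\otimes\cL,(\cG\otimes\cL)^H)$ is also $\Serre{2}$ by Lemma~\ref{lem:Hom S2}: the sheaf $\cF\otimes\cL$ has full support because $\cL$ is invertible at the generic points, and $(\cG\otimes\cL)^H$ is $\Serre{2}$ with full support by the definition of a hull. Here I implicitly assume the hull exists, as the statement presupposes. The principle stated at the start then gives that the composite is an isomorphism on all of $X$.
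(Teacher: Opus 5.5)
Your proof is correct and follows essentially the same route as the paper. Both arguments use \cref{lem:Hom S2} and the $\Serre{2}$ property to reduce to a big open subset, observe that (2) is immediate on the invertible locus of $\cL$, and for (1) use torsion-freeness of $\cG$ to get $\cHom_{\sO_X}(\Ker\phi,\cG)=0$. The only cosmetic difference is that you factor $\phi$ through its image globally, whereas the paper first shrinks $X$ to the big locus where $\phi$ is surjective; the two amount to the same argument.
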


\begin{proof}
    By \cref{lem:Hom S2} and \cref{rem:Serre condition basic} (ii), we may replace $X$ by a big open subset.
    The assertion in (2) is obvious after shrinking $X$ to the invertible locus of $\cL$.

    For (1), after shrinking $X$, we may assume that $\phi$ is surjective.
    Noting that $\cG$ is torsion free (\cref{rem:Serre condition basic} (i)) and that the support $\Supp(\Ker(\phi))$ contains no generic point of $X$, we have 
    \[
    \cHom_{\sO_X}(\Ker(\phi), \cG) =0.
    \]
    Combining this with the exact sequence
    \[
    0 \to \Ker(\phi) \to \cF \xrightarrow{\phi} \cF^H \to 0,
    \]
    we conclude that $\phi^*$ is isomorphic.
\end{proof}

\subsection{Weil divisors on non-normal schemes}

Throughout this subsection, we assume that $X$ is an Noetherian reduced scheme satisfying Serre's condition $\Serre{2}$.
Let $\mathscr{K}_X$ denote the sheaf of total quotients of $X$.

A \emph{Mumford divisor} (resp.~\emph{Mumford $\Q$-divisor}) on $X$ is a Weil divisor (resp.~$\Q$-Weil divisor) $B$ such that $X$ is regular at all generic points of $\Supp(B)$.
We denote by $\MDiv(X)$ (resp.~$\MDiv_{\Q}(X)$) the set of all Mumford divisors (resp.~Mumford $\Q$-divisors).
For a Mumford divisor $D$ on $X$, we define the coherent $\sO_X$-submodule 
\[
\cO_X(D) \subseteq \mathscr{K}_X
\]
of $\mathscr{K}_X$ as in \cite{ST23}*{Section~2.2}.
We note that $\cO_X(D)$ is a reflexive $\sO_X$-module, and in particular satisfies Serre's condition $\Serre{2}$ (cf.~\cite{ST23}*{Lemma~2.14 and Lemma~2.15}).
If $D$ is a Mumford $\Q$-divisor, then we define $\sO_X(D) \coloneqq \sO_X(\lfloor D \rfloor)$.

We say that $D \in \MDiv(X)$ is \emph{Cartier} if $\sO_X(D)$ is invertible.
If $D$ is a Mumford $\Q$-divisor, then $D$ is said to be \emph{$\Q$-Cartier} if $rD \in \MDiv(X)$ is Cartier for some integer $r>0$.

\begin{remark}\label{rem:WSh basic}
For every elements $D, E \in \MDiv(X)$, we have 
\[
\sO_X(D) \cdot \sO_X(E) \subseteq \sO_X(D+E),
\]
as fractional ideals in $\mathscr{K}_X$.
If $D$ is Cartier, then this inclusion is an equality.
For every elements $D,D' \in \MDiv(X)$, if we have $D \le D'$, then one has 
\[
\sO_X(D) \subseteq \sO_X(D').
\]
\end{remark}

Let $\varphi \in \Gamma(X,\mathscr{K}_X^*)$ be an element such that $\varphi_x \in \cO_{X,x}^*$ for every codimension-one point $x$ contained in the non-normal locus of $X$.  
We define ${\rm div}_X(\varphi) \in \MDiv(X)$ by
\[
    {\rm div}_X(\varphi) := \sum_{E_i} {\rm ord}_{E_i}(\varphi)\,E_i,
\]
where $E_i$ runs through all prime divisors on $X$ whose generic point $\eta_i$ is a regular point of $X$ and ${\rm ord}_{E_i}(\varphi)$ denotes the valuation of the stalk $\varphi_{\eta_i}$ with respect to the discrete valuation ring $\sO_{X,\eta_i}$.
We note that the sheaf $\sO_X( {\rm div}(\varphi))$ coincides with the fractional ideal $\frac{1}{\varphi}\sO_X$.

\begin{remark}\label{rem:become-effective}
With the above notation, we further assume that $X =\Spec R$ for some ring $R$.
Then for every $D \in \MDiv_{\Q}(X)$, there exists $f \in R^\circ$ with the following properties:
\begin{enumerate}
\item[\textup{(i)}] $f_\fp \in R_\fp^*$ for every height-one prime ideal $\fp$ contained in the non-normal locus of $R$, and 
\item[\textup{(ii)}] ${\rm div}(f)+D$ is effective.  
\end{enumerate}
Indeed, after replacing $D$ by a smaller one, we may assume that $D \in \MDiv(X)$ and $-D$ is effective. 
Then $R(D)$ is an ideal of $R$.  
Let $\fp_1,\ldots,\fp_r$ be the height-one prime ideals in the non-normal locus of $R$.  
Noting that $R(D)_{\fp_i} \simeq R_{\fp_i}$, it follows from prime avoidance that we can choose
\[
f \in R(D) \setminus \bigcup_{i=1}^r \fp_i,
\]
which satisfies the conditions.
\end{remark}

\begin{remark}\label{rem:AC divisor}
Suppose that $X$ is a $\Serre{2}$ reduced quasi-projective scheme over a Noetherian local ring $(R,\m)$ with $R/\m$ infinite.
Let $\cF$ be an \emph{AC diviosr} on $X$, that is, $\cF$ is an $\Serre{2}$ coherent $\sO_X$-submodule of $\mathscr{K}_X$ which is invertible in codimension $\le 1$.
Then there exists an element $D \in \MDiv(X)$ such that $\cF \simeq \sO_X(D)$ (\cite{ST23}*{Lemma A.17}).
\end{remark}

Let $X$ be an Noetherian reduced scheme satisfying Serre's condition $\Serre{2}$.
The \textit{dualizing $\sO_X$-module} $\omega_X$ associated to a dualizing complex $\omega_X^{\bullet}$ is the coherent $\sO_X$-module defined as the first nonzero cohomology of $\omega_X^\bullet$.
A \textit{canonical Mumford divisor} of $X$ associated to $\omega_X^\bullet$ is any element $K_X \in \MDiv(X)$ such that $\sO_X(K_X) \cong \omega_X$.

\begin{proposition}\label{etale-cover-principal-index}
Let $f \colon Y \to X$ be a finite surjective flat morphism between Noetherian $\Serre{2}$ reduced schemes and $D$ be a Mumford divisor on $X$.
If $f^*D$ is a Cartier Mumford divisor, then $D$ is also Cartier.
\end{proposition}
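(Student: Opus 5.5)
The plan is to reduce the statement to faithfully flat descent of invertibility for the coherent sheaf $\sO_X(D)$. Two facts carry the argument: $f^*\sO_X(D)$ should agree with $\sO_Y(f^*D)$, and a finitely generated module over a local ring is free as soon as its flat base change to a (semi-)local ring is free.

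Since invertibility is local, we may assume $X=\Spec R$ with $R$ local, and then $Y=\Spec S$ with $S$ finite flat over $R$, hence free of finite rank and semi-local. First we must make sense of $f^*D$ as a Mumford divisor. Flatness gives going-down, so the codimension-one points of $Y$ map to codimension-one or generic points of $X$. The generic points of $\Supp(f^*D)$ lie over generic points of $\Supp(D)$, where $X$ is regular, and the statement implicitly asks that $Y$ be regular there as well.

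The main step is to compare $f^*\sO_X(D)=\sO_X(D)\otimes_R S$ with $\sO_Y(f^*D)$. Because $f$ is flat, pullback sends the inclusion $\sO_X(D)\subseteq\mathscr{K}_X$ to an injection $f^*\sO_X(D)\into f^*\mathscr{K}_X$. Since $f$ is finite flat, nonzerodivisors of $R$ stay nonzerodivisors in $S$, so this lands in $\mathscr{K}_Y$.

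The two subsheaves $f^*\sO_X(D)$ and $\sO_Y(f^*D)$ of $\mathscr{K}_Y$ agree at every point of codimension $\le 1$ of $Y$. At such a point either $D$ is locally principal downstairs, say generated by $1/\varphi$, and then both sheaves are $\frac{1}{f^*\varphi}\sO_Y$. Or the point lies over a point outside $\Supp D$, and then both sheaves equal $\sO_Y$.

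It remains to see that $f^*\sO_X(D)$ is $\Serre{2}$. The sheaf $\sO_X(D)$ is $\Serre{2}$ with full support. Flat finite base change preserves depth conditions: for $y\mapsto x$ the fibre is zero-dimensional, so $\operatorname{depth} f^*\cF_y=\operatorname{depth}\cF_x$ and $\dim\sO_{Y,y}=\dim\sO_{X,x}$. Hence $f^*\sO_X(D)$ is $\Serre{2}$. By Remark~\ref{rem:Serre condition basic}~(ii), two $\Serre{2}$ subsheaves of $\mathscr{K}_Y$ that coincide on a big open subset coincide, so $f^*\sO_X(D)=\sO_Y(f^*D)$. I expect this identification to be the main obstacle, mostly in bookkeeping the regular-locus and codimension conditions; everything else is formal.

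Now $\sO_Y(f^*D)$ is invertible by hypothesis. Thus $M\otimes_R S$ is locally free of rank one over the semi-local ring $S$, where $M=\sO_X(D)$ is finite over $R$; hence it is free over $S$ (a constant-rank projective over a semi-local ring is free) and so flat over $R$. Since $S$ is faithfully flat over $R$, $M$ is flat, hence free because $R$ is local Noetherian. Its rank is one, since after base change $M\otimes_R S\cong S$ and $S$ is a nonzero free $R$-module, so ranks can be compared. Therefore $\sO_X(D)$ is invertible, i.e.\ $D$ is Cartier.
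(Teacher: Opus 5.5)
Your proof is correct. It differs from the paper's in how the two main steps are carried out.

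\textbf{The paper's route.} The paper shrinks $X$ so that $f^*D$ is principal. It then uses that $\sO_X \to f_*\sO_Y$ splits for a finite faithfully flat $f$, tensors this splitting with $\sO_X(D)$, and passes to $\Serre{2}$-hulls. This gives a split map $\sO_X(D) \to (f_*f^*\sO_X(D))^H \simeq (f_*\sO_Y(f^*D))^H \simeq (f_*\sO_Y)^H$. So $\sO_X(D)$ is a direct summand of the locally free sheaf $f_*\sO_Y$.

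\textbf{The identification step.} Because the paper works with hulls, it only needs $f^*\sO_X(D)$ and $\sO_Y(f^*D)$ to agree in codimension $\le 1$. It never checks that $f^*\sO_X(D)$ is $\Serre{2}$. You instead prove the sharper identity $f^*\sO_X(D)=\sO_Y(f^*D)$, using the depth formula for flat local maps with zero-dimensional fibres. That makes the hull unnecessary.

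\textbf{The descent step.} Your faithfully-flat-descent-of-flatness step is in substance the same as the paper's splitting. Since $S\simeq R^{\oplus r}$ as an $R$-module, $M\otimes_R S\simeq M^{\oplus r}$. Hence $M$ is a direct summand of the free $R$-module $M\otimes_R S\simeq S$.

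\textbf{Trade-off.} Your route uses only textbook commutative algebra and yields the cleaner pullback identity. The paper's route is shorter given its $\Serre{2}$-hull formalism.

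\textbf{Minor point.} By the dimension formula for flat local homomorphisms with finite fibres, codimension-one points of $Y$ map exactly to codimension-one points of $X$, not to ``codimension-one or generic'' points. This does not affect your argument.
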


\begin{proof}
After shrinking $X$, we may assume that $X$ is affine and that $f^*D$ is principal.
Since $f$ is finite and faithfully flat, the homomorphism $f^{\#} \colon \sO_X \to f_*\sO_Y$ splits (cf.~\cite{Hoc77}*{Proposition 5.5}).
Taking $\Serre{2}$-hull of $f^{\#} \otimes \sO_X(D)$, we obtain the homomorphism
\[
\mathcal{O}_X(D) \longrightarrow
(f_*f^*\mathcal{O}_X(D))^H \simeq (f_*\mathcal{O}_Y(f^*D))^H \simeq (f_*\sO_Y)^H
\]
which also splits.
We note that $f_*\sO_Y$ is locally free since $f$ is flat.
Therefore, its direct summand $\mathcal{O}_X(D)$ is also locally free, as desired.
\end{proof}

\subsection{Singularities in MMP}
In this subsection, we recall the notion of numerically $\Q$-Cartier $\Q$-Weil divisors and define numerically (semi) log canonical singularities.

\begin{definition}
    Let $X$ be a Noetherian normal integral scheme and $D$ be a $\Q$-Weil divisor on $X$.
    We say that $D$ is \emph{numerically $\Q$-Cartier} if there exists a proper birational morphism $f \colon Y \to X$ from a normal integral scheme $Y$ and a $\Q$-Cartier $\Q$-Weil divisor $D'$ on $Y$ such that $f_*D'=D$ and $D'$ is numerically trivial over $X$.
\end{definition}

Let $D$ be a numerically $\Q$-Cartier $\Q$-Weil divisor on a Noetherian normal integral scheme $X$.
The numerically trivial $\Q$-Cartier divisor $D'$ in the above definition is uniquely determined for every $f \colon Y \to X$ if it exists.

For a proper birational morphism $g \colon Z \to X$ from a normal scheme $Z$, there is a proper birational morphism $h \colon W \to Z$ from a normal scheme $W$ with a $\Q$-Cartier $\Q$-Weil divisor $D'$ on $W$ which is numerically trivial over $X$ and $(g \circ h)_*D'=D$.
We define the \emph{numerically pullback} of $D$ to $Z$ by 
    \[
    g^*_{\num}D \coloneqq h_*D'.
    \]
This is independent of the choice of $h$.

\begin{remark}
    Let $X$ be an excellent $2$-dimensional normal integral scheme.
    Then every $\Q$-Weil divisor $D$ on $X$ is numerically $\Q$-Cartier, and the numerically pullback $f^*_{\num}D$ by a resolution $f \colon Y \to X$ coincides with the Mumford’s numerical pullback.
\end{remark}

Let $X$ be an excellent normal integral scheme with a dualizing complex $\omega_X^\bullet$. 
We fix a canonical divisor $K_X$ of $X$ associated to $\omega_X^\bullet$.
Given a proper birational morphism $\pi:Y \to X$ from a normal integral scheme $Y$, we always choose a canonical divisor $K_Y$ of $Y$ that is associated to $\pi^! \omega_X^{\bullet}$ and coincides with $K_X$ outside the exceptional locus $\mathrm{Exc}(f)$ of $f$. 

\begin{definition}\label{defn:sing in mmp}
With the above notation, suppose that $\Delta$ is an effective $\Q$-Weil divisor on $X$ such that $K_X+\Delta$ is numerically $\Q$-Cartier.
\begin{enumerate}[label=(\roman*)]
\item 
Given a proper birational morphism $f:Y \to X$ from a normal integral scheme $Y$, we define the $\Q$-Weil divisor $\Delta_Y^{\num}$ on $Y$ as   
\[
\Delta_Y^{\num} : = f^*_{\num}(K_X+\Delta)-K_Y.
\]
The \textit{discrepancy} $a_{E}(X, \Delta)$ of the pair $(X, \Delta)$ with respect to a prime divisor $E$ on $Y$ is defined as the coefficient of $E$ in $-\Delta_Y^{\num}$. 
\item The pair $(X, \Delta)$ is said to be \textit{numerically log canonical} (resp.~\textit{numerically log terminal}) (or \textit{numerically lc} (resp.~\textit{numerically klt}) for short) if $a_E(X, \Delta) \ge -1$ for every prime divisor $E$ on a normal scheme $Y$ proper birational to $X$.
\item The pair $(X,\Delta)$ is \emph{log canonical} (resp,~log terminal) (or \textit{lc} (resp.~\textit{klt}) for short) if it is numerically lc (resp.~numerically klt) and $K_X+\Delta$ is $\Q$-Cartier.
\end{enumerate}
\end{definition}

A two dimensional normal excellent scheme $X$ has a rational singularity if we have $R^1f_*\sO_Y=0$ for some (equivalently any) resolution $f \colon Y \to X$.
\begin{lem}\label{lem:num lc surface}
    Let $(X=\Spec R,\Delta)$ be a two dimensional numerically lc pair with $R$ local.
    \begin{enumerate}[label=\textup{(\arabic*)}]
    \item If $X$ is not a rational singularity, then $\Delta=0$ and $K_X$ is Cartier.
    \item Assume that $X$ has a rational singularity.
    Let $f \colon Y \to X$ be a resolution of singularities, $D$ be a $\Q$-Weil divisor on $X$ and $r \ge 1$ be an integer.
    Then $rD$ is Cartier if and only if $rf_{\num}^*D$ is a $\Z$-Weil divisor.
    \end{enumerate}
\end{lem}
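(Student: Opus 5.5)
For (1), I will pass to the minimal resolution $f\colon Y\to X$ and write $f^*_{\num}(K_X+\Delta)=K_Y+\Delta_Y^{\num}$ with $\Delta_Y^{\num}=f^{-1}_*\Delta+\sum_i b_iE_i$. Minimality means $K_Y$ is $f$-nef, so the negativity lemma gives $b_i\ge 0$ for every exceptional curve; numerical log canonicity gives $b_i\le 1$. The first step is to use the classification of numerically lc surface singularities (Kawamata, Koll\'ar's \emph{Flips and abundance}, and Koll\'ar--Kov\'acs in the excellent case): the non-rational ones are exactly the simple elliptic singularities and the cusps (a cycle of smooth rational curves, or a nodal rational curve). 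In both cases $Z=\sum E_i$ satisfies $(K_Y+Z)\cdot E_i=0$ for all $i$, so $f^*_{\num}K_X=K_Y+Z$ and every $b_i=1$.

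Next I show $\Delta=0$. If $\Delta\neq 0$, its strict transform meets the exceptional locus, so $f^{-1}_*\Delta\cdot E_j>0$ for some $j$. Then $K_Y+Z+f^{-1}_*\Delta$ cannot be $f$-numerically trivial unless some $b_i$ is less than $1$, and in the elliptic and cusp configurations a direct computation of the discrepancies shows that forces non-rationality to fail, or forces a coefficient above $1$ elsewhere. To see that $K_X$ is Cartier, I use the adjunction $\omega_Z\cong\sO_Z(K_Y+Z)|_Z\cong\sO_Z$, which holds because $Z$ is an elliptic curve or a cycle of rational curves with trivial dualizing sheaf. I then lift the trivialization along the thickenings $nZ$. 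The obstruction to lifting lies in groups like $H^1(\sO_Z(-nZ)\otimes\sO_Y(K_Y+Z))$, and these vanish for $n\ge 1$ by Grauert--Riemenschneider/Lipman-type vanishing for $-Z$ nef over $X$. Formal functions then give $\sO_Y(K_Y+Z)\cong\sO_Y$ near the exceptional locus, hence $\omega_X=f_*\sO_Y(K_Y+Z)\cong\sO_X$. This lifting step is the main obstacle. If the direct route fails, I would instead use Lipman's result that $\mathrm{Pic}$ of a neighbourhood is controlled by $H^1(\sO_Y)$, or cite Koll\'ar's statement that lc non-rational surface singularities are Gorenstein.

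For (2), suppose $rD$ is Cartier, say $rD=\div(g)$ locally. Then $f^*(rD)=\div_Y(g)$ is integral and numerically trivial, so it equals $rf^*_{\num}D$. Conversely, suppose $L:=rf^*_{\num}D$ is integral. Since $X$ is rational, Lipman's theorem gives $\mathrm{Pic}(Y)\cong H^2_{E}$-data: a line bundle on $Y$ that is numerically trivial on every $E_i$ is trivial after shrinking $X$. This uses $R$ local, $R^1f_*\sO_Y=0$, and the exponential-type sequence on the thickenings $nE$ with $H^1(\sO_{nE})=0$; Artin and Lipman prove this for rational singularities, with residue field assumptions. Hence $\sO_Y(L)\cong\sO_Y$, so $L=\div_Y(h)$ and $rD=f_*L=\div_X(h)$ is Cartier. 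This part is routine once Lipman's triviality of numerically trivial line bundles on the resolution of a rational surface singularity is invoked.
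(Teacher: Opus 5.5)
\textbf{Overall.} Your proposal is correct in outline. For (2) it is the argument the paper cites: \cite{kollar13}*{Proposition 10.9 (2)} is Lipman's theorem that, on a resolution of a rational surface singularity, a line bundle of degree zero on every exceptional curve is trivial. Two remarks on your sketch of it. In characteristic $p$ there is no exponential sequence; one uses the multiplicative sequences $1\to\sO_{E_i}(-D_j)\to\sO_{D_{j+1}}^*\to\sO_{D_j}^*\to 1$ along a suitable computation sequence. Also, since $R$ is local, no shrinking is needed; alternatively, pass to $\widehat R$ and descend invertibility of $\sO_X(rD)$ along the faithfully flat map.

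For (1) you take a genuinely different route. The paper only cites \cite{kollar13}*{Proposition 2.28} for $\Delta=0$ and the proof of \cite{Tanaka18}*{Theorem 4.13} for Cartierness. You instead invoke the full classification (simple elliptic or cusp) to get $f^*_{\num}K_X=K_Y+Z$ on the minimal resolution, and then argue by hand. This is more transparent. However, it rests on a heavier input, and the version of that input over excellent bases with non-closed residue fields is itself nontrivial; the paper's citations cover that generality uniformly.

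\textbf{The $\Delta=0$ step needs repair.} As written it runs the wrong way: no exceptional coefficient drops below $1$. Write $\Delta_Y^{\num}=Z+f^*_{\num}\Delta$. If $\Delta\neq0$, set $B:=f^*_{\num}\Delta-f^{-1}_*\Delta$. Then $B\cdot E_j=-f^{-1}_*\Delta\cdot E_j\le0$ for all $j$, with strict inequality for some $j$. The negativity lemma together with connectedness of $f^{-1}(\m)$ then makes every coefficient of $B$ positive. Hence every exceptional coefficient of $\Delta_Y^{\num}$ exceeds $1$, contradicting numerical log canonicity. No configuration-by-configuration computation is needed.

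\textbf{Cartierness of $K_X$.} Your thickening argument works: the obstructions are $H^1(Z,\omega_Z(-nZ))\simeq H^0(Z,\sO_Z(nZ))^{\vee}$, and these vanish because $\sO_Z(nZ)$ has nonpositive degree on every component and negative degree on some. It is shorter, and independent of the explicit geometry, to argue as follows.
\begin{enumerate}
\item Grauert--Riemenschneider vanishing $R^1f_*\sO_Y(K_Y)=0$ holds for two-dimensional excellent resolutions by Lipman. It makes $f_*\sO_Y(K_Y+Z)\to H^0(Z,\omega_Z)$ surjective.
\item Since $\deg(\omega_Z|_{E_i})=(K_Y+Z)\cdot E_i=0$ for all $i$, we get $\chi(\sO_Z)=0$. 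Hence $h^0(\omega_Z)=h^1(\sO_Z)=h^0(\sO_Z)\ge1$.
\item A nonzero section of a line bundle of degree zero on every component of a connected reduced curve vanishes nowhere.
\item A lift of such a section generates $\sO_Y(K_Y+Z)$ along $f^{-1}(\m)$, hence on all of $Y$ because $R$ is local. So $K_Y+Z\sim0$ and $K_X=f_*(K_Y+Z)$ is principal.
\end{enumerate}
This avoids the passage from the formal neighbourhood to an actual one. It also covers residue fields where ``elliptic curve or cycle of rational curves'' is only a geometric description.
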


\begin{proof}
    For (1), we assume that $X$ is not a rational singularity.
    It then follows from \cite{kollar13}*{Proposition 2.28} that we have $\Delta=0$.
    By the proof of \cite{Tanaka18}*{Theorem 4.13}, we also conclude that $K_X$ is Cartier.
    The assertion in (2) follows from \cite{kollar13}*{Proposition 10.9 (2)}.
\end{proof}

\begin{definition}\label{defn:multiplier}
    Let $X$ be an excellent normal integral scheme with a dualizing complex $\omega_X^{\bullet}$.
    \begin{enumerate}[label=\textup{(\arabic*)}]
    \item For a $\Q$-Weil divisor $\Delta \ge 0$ on $X$ with $K_X+\Delta$ numerically $\Q$-Cartier, 
    the \textit{multiplier ideal sheaf} $\mathcal{J}(X,\Delta)$ associated to $(X,\Delta)$ is defined as
    \[
    \cJ(X, \Delta) \coloneqq \bigcap_{f \colon Y \to X} f_*\sO_Y(-\rdown{\Delta_Y^{\num}}) \subseteq \sO_X,
    \]
    where $f: Y \to X$ runs through all proper birational morphisms from a normal integral scheme $Y$.
    \item For a numerically $\Q$-Cartier $\Q$-Weil divisor $\Gamma$ on $X$, the \textit{multiplier submodule} $\mathcal{J}(\omega_X,\Gamma)$ associated to $(X,\Gamma)$ is defined as
    \begin{align*}
    \cJ(\omega_X, \Gamma) & \coloneqq \bigcap_{f \colon Y \to X} f_*(\omega_Y(-\rdown{f^*_{\num} \Gamma })) \\ 
    & = \bigcap_{f \colon Y \to X} f_*\cHom_{\sO_Y}(\sO_Y(-\rdown{f^*_{\num} \Gamma }), \omega_Y)
    \subseteq \omega_X \otimes_{\sO_X} \mathscr{K}_X,
    \end{align*}
    where $f: Y \to X$ runs through all proper birational morphisms from a normal integral scheme $Y$.
    \end{enumerate}
    \end{definition}

\begin{remark}\label{rem:cJ basic}
Let $(X,\Delta)$ be as in \cref{defn:multiplier}.
\begin{enumerate}[label=\textup{(\roman*)}]
\item If we fix an isomorphism $\alpha \colon \sO_X(K_X) \xrightarrow{\sim} \omega_X$, then by identifying $\omega_X \otimes_{\sO_X} \mathscr{K}_X$ with $\mathscr{K}_X$, the multiplier submodules are considered as fractional ideals.
By this identification, we have
\[
\cJ(X, \Delta) = \cJ(\omega_X, K_X+\Delta).
\]
\item If $f: Y \to X$ is a log resolution of $(X,\Delta)$, then we have 
\[
\mathcal{J}(X,\Delta) = f_* \sO_Y(-\lfloor \Delta_Y \rfloor).
\]
In particular, if such $f$ exists, then $\mathcal{J}(X,\Delta)$ is coherent.
\end{enumerate}
\end{remark}

We next consider the singularities on non-normal schemes.
Suppose that $X$ is an excellent reduced scheme satisfying Serre's condition $\Serre{2}$. 
Let $\nu : X^n \to X$ be the normalization of $X$.
Since the coherent ideal sheaf
\[
\nu^{-1} (\mathcal{H}om_X(\nu_* \sO_{X^n}, \sO_X)) \subseteq \sO_{X^n}
\]
satisfies $\Serre{2}$ condition, there is a unique effective Weil divisor $C$ on $X^n$ such that 
\[
\sO_{X^n}(-C) = \nu^{-1} (\mathcal{H}om_X(\nu_* \sO_{X^n}, \sO_X)).
\]
We call it the \emph{conductor divisor} of $\nu$.
For an element $D \in \MDiv_{\Q}(X)$, we define the pullback $\nu^*D \in \WDiv_{\Q}(X^n)$ of $D$ as the strict transform of $D$ by the birational morphism $\nu$.
 
\begin{definition}\label{defn:slc}
Let $X$ be an excellent reduced $\Serre{2}$ and $\Gorenstein$ scheme with a dualizing complex $\omega_X^{\bullet}$.
Suppose that $\Delta$ is an effective Mumford $\Q$-divisor.
\begin{enumerate}[label=\textup{(\arabic*)}]
    \item The pair $(X, \Delta)$ is said to be \textit{numerically semi log canonical} (or \textit{numerically slc} for short) if the pair $(X^n, \nu^*\Delta+C)$ is numerically lc.
    \item We further assume that $X$ admits a canonical Mumford divisor $K_X$ associated to a dualizing complex $\omega_X^{\bullet}$.
    Then the pair $(X, \Delta)$ is said to be \textit{semi log canonical} (or \textit{slc} for short) if it is numerically slc and $K_X+\Delta$ is $\Q$-Cartier.
\end{enumerate}
\end{definition}

\begin{remark}\label{remark on slc} 
If $(X,\Delta)$ is 2-dimensional pair with $X$ normal, then $(X,\Delta)$ is numerically lc if and only if it is lc (\cref{lem:num lc surface}).
On the other hand, there exists an example of a 2-dimensional non-$\Q$-Gorenstein numerically slc scheme (see \cite{kollar13}*{Example 5.16}).
\end{remark}

\subsection{Rings of Witt vectors}\label{Witt ring}
For the convenience of the reader, we recall the notion of Witt vectors.
See \cite{KTTWYY1}*{Subsection 2.2}, \cite{illusie_de_rham_witt}*{Ch. 0, Section 1} or \cite{Serre79}*{Ch. II, Section 6} for more details.

For an $\F_p$-algebra $A$ and an integer $n > 0$, we denote by $W_nA$ the ring of Witt vectors of length $n$, that is, $W_n A$ is the set defined as
\[
W_n A \coloneqq A^{\oplus n} = \{(a_0, \dots, a_{n-1}) \mid a_i \in A\}
\]
with the suitable ring structure (see for example \cite{KTTWYY1}*{Definition 2.2}).
For an element $a \in A$, we denote by $[a]$ the Teichm\"{u}ller lift
\[
[a] \coloneqq (a,0,0, \dots, 0) \in W_nA
\]
of $a$ in $W_nA$.

\begin{example}\label{eg:multiple of Witt}
    In general, the identity map 
    \[
    \mathrm{id} : W_nA \to A^{\oplus n}
    \]
    preserves neither addition nor multiplication.
    For example, for any elements $a \in A$ and $(b_0, \dots, b_{n-1}) \in W_nA$, we have
    \[
    [a] (b_0, \dots, b_{n-1}) = (ab_0,a^pb_1, a^{p^2}b_2, \dots, a^{p^{n-1}}b_{n-1}).
    \]
\end{example}

For a ring homomorphism $f \colon A \to B$ of $\F_p$-algebras, the map 
\[
W_n f \colon W_n A \to W_n B \ ; \ (a_0,\dots, a_{n-1}) \mapsto (f(a_0), \dots, f(a_{n-1}))
\]
is a ring homomorphism.
In particular, if $A$ is a subring of $B$, then $W_nA$ is a subring of $W_nB$.
For a sheaf $\mathcal{A}$ of $\F_p$-algebra on a topological sheaf $T$, then the sheaf $W_n\mathcal{A}$ of rings on $T$ is defined by the rule
\[
\Gamma(U, W_n\mathcal{A}) \coloneqq W_n(\Gamma(U, \mathcal{A})).
\]

For a multiplicatively closed subset $S \subseteq A$, the set
\[
[S] \coloneqq \{[s] \in W_nA \mid s \in S\}
\]
of Teichm\"{u}ller lifts is multiplicatively closed and satisfies
\begin{align}\label{eq: localization and Witt}
[S]^{-1}W_n A \xrightarrow{\sim} W_n(S^{-1}A).
\end{align}
In particular, for an element $a \in A$, we have the natural isomorphism 
\begin{align*}
(W_n A)_{[a]} \xrightarrow{\sim} W_n(A_a)
\end{align*}
as rings.
This shows that for an $\F_p$-scheme $X=(X,\sO_X)$, the ringed space $(X, W_n\sO_X)$ is a scheme, which we denote by $W_nX$.
Moreover, if $X$ is $F$-finite Noetherian separated scheme, then $W_nX$ is Noetherian and separated (\cite{LZ}*{Proposition A.1 and Proposition A.4}.

For an $\F_p$-scheme $X$, the surjective ring homomorphism 
\[
R \colon W_{n+1} {\sO_X} \longrightarrow W_n {\sO_X}  \ ; \  (a_0,a_1,\dots ,a_{n}) \mapsto (a_0,a_1,\dots, a_{n-1})
\]
defines the closed immersion $W_{n} X \hookrightarrow W_{n+1}X$.
In particular, for every integer $ 1 \le m \le n$, a $W_m \sO_X$-module is naturally considered as $W_n \sO_X$-module.
Let $\mathcal{A}$ be a sheaf of $\sO_X$-algebras on an $\F_p$-scheme $X$, there exist $W_n\sO_X$-module homomorphisms
\begin{alignat*}{3}
&(\textrm{Frobenius}) &&F \colon 
W_n\mathcal{A} \longrightarrow F_*W_n\mathcal{A} & F(a_0,a_1,\dots ,a_{n-1}) &=(a_0^p,a_1^p,\dots, a_{n-1}^p) ,  \\ 
&(\textrm{Verschiebung}) \quad && V \colon 
F_*W_{n-1}\mathcal{A} \longrightarrow W_{n}\mathcal{A} \quad & V(a_0,a_1,\dots ,a_{n-2})&=(0,a_0,\dots, a_{n-2}), \\
&(\textrm{Restriction}) && R \colon 
W_{n}\mathcal{A} \longrightarrow W_{n-1}\mathcal{A} & R(a_0,a_1,\dots ,a_{n-1}) &=(a_0,a_1,\dots, a_{n-2}) ,
\end{alignat*}
where $F$ and $R$ are also the homomorphisms of $W_n\sO_X$-algebra.

By considering $(W_n\cA)_n$ as the inverse system via the restriction morphisms $R$, we define
\[
W\cA \coloneqq \varprojlim_n W_n \cA.
\]
For every open subset $U \subseteq X$, we have
\begin{align*}
\Gamma(U, W\cA) & = \varprojlim_n \Gamma(U, W_n\cA) \\
&= \varprojlim_n W_n\Gamma(U, \cA)\\
& = \{(a_0,a_1, \dots ) \mid a_i \in \Gamma(U,\cA)\}.
\end{align*}

\subsection{Witt divisorial sheaves}
Witt divisorial sheaves are defined in \cite{tanaka22}*{Section~3} on normal schemes.  
In this subsection, we generalize the construction to reduced $\Serre{2}$ schemes.  
Throughout this subsection, $X$ denotes an $F$-finite $\Serre{2}$ reduced Noetherian $\F_p$-scheme and $\mathscr{K}_X$ denotes the sheaf of total quotient of $X$.

For a Mumford $\Q$-divisor $D$ and an effective Mumford divisor $S$ (which may be zero), we define the $W_n\sO_{X}$-submodule $W_n\cI_S(D) \subseteq W_n \mathscr{K}_X$ by
\begin{align*}
\Gamma(U, W_n\cI_S(D)) &\coloneqq 
\{(\varphi_0,\ldots,\varphi_{n-1}) \mid 
\varphi_i \in \Gamma(U,\cO_X(-S+p^iD)) \text{ for every } i \}
\\
& \subseteq \Gamma(U, W_n \mathscr{K}_X)
\end{align*}
for every open subset $U \subseteq X$.
We also define the $W\sO_X$-module $W\cI_S(D)$ as 
\[
W\cI_S(D) \coloneqq \varprojlim_n W_n \cI_S(D).
\]
For every open subset $U \subseteq X$, we have
\[
\Gamma(U, W\cI_S(D)) = 
\{(\phi_0, \phi_1\ldots ) \mid 
\varphi_i \in \Gamma(U,\cO_X(-S+p^iD)) \text{ for every } i \}.
\]
In the case of $S=0$, we write $W_n\cO_X(D)$ (resp.~$W\sO_X(D)$) instead of $W_n\cI_S(D)$ (resp.~$W\cI_S(D)$).

 By the same argument as in \cite{tanaka22}*{Subsection~3.1} and \cite{KTTWYY1}*{Subsection~2.6}, 
 the following hold for $n,m \in \Z_{>0}$. 
 \begin{enumerate}[label=\textup{(\arabic*)}]
 \item 
 $W_n\cI_S(D)$ is a coherent $W_n\cO_X$-submodule of $W_n\mathscr{K}_X$ (cf.~\cite{tanaka22}*{Lemma~3.5(1) and Proposition~3.8}).
\item The $W_n\sO_X$-module homomorphisms
\[
\begin{aligned}
F &\colon W_n \mathscr{K}_X \longrightarrow F_*W_n\mathscr{K}_X, \\
V &\colon F_*W_{n-1}\mathscr{K}_X \longrightarrow W_{n}\mathscr{K}_X, \\
R &\colon W_{n}\mathscr{K}_X \longrightarrow W_{n-1}\mathscr{K}_X
\end{aligned}
\]
induce the $W_n\cO_X$-module homomorphisms
\begin{alignat*}{2}
&(\textrm{Frobenius}) &&F \colon 
W_n\cI_S(D) \longrightarrow F_*W_n\cI_S(pD),  \\ 
&(\textrm{Verschiebung}) \qquad &&V \colon 
F_*W_{n-1}\cI_S(pD) \longrightarrow W_{n}\cI_S(D), \\
&(\textrm{Restriction}) &&R \colon 
W_{n}\cI_S(D) \longrightarrow W_{n-1}\cI_S(D).
\end{alignat*}
 \item 
 We have the following exact sequence (cf.~\cite{tanaka22}*{Proposition 3.7}): 
 \begin{equation} \label{eq:key-sequence-for-WnI}
 0\to F_*^nW_m\cI_S(p^nD) \xrightarrow{V^n}  
 W_{n+m}\cI_S(D) \xrightarrow{R^m} W_n\cI_S(D) \to 0.
 \end{equation}

 \end{enumerate}

\begin{lem}\label{lem:Witt divisorial S2}
    Let $X$ be an $F$-finite $\Serre{2}$ reduced Noetherian $\F_p$-scheme and $n \ge 1$ be an integer.
    For a Mumford $\Q$-divisor $D$ and an effective Mumford divisor $S$, the following hold.
    \begin{enumerate}[label=\textup{(\arabic*)}]
    \item The $W_n\sO_X$-module $W_n\cI_S(D)$ is $\Serre{2}$ with full support.
    \item If $E$ is a Cartier Mumford divisor, then $W_n\sO_X(E)$ is an invertible $W_n\sO_X$-module and we have
    \begin{align*}
    &W_n\cI_S(D+E) \simeq W_n\cI_S(D) \otimes_{W_n\sO_X} W_n\sO_X(E) \\
    &F^*W_n\sO_X(E)  \simeq W_n\sO_X(pE) \\
    &\iota^*W_n\cI_S(E)  \simeq W_m \cI_S(E),
    \end{align*}
    where $\iota \colon W_m X \into W_n X$ is the closed immersion.
    \item For a Mumford divisor $E$, we have
    \begin{align*}
    \left((F^e_*\iota_*W_m\cI_S(D)) \otimes_{W_n\sO_X} W_n\sO_X(E)\right)^H \simeq F^e_*\iota_*W_m\cI_S(D+p^eE),
    \end{align*}
    where $\iota \colon W_m X \into W_n X$ is the closed immersion.
    \end{enumerate}
\end{lem}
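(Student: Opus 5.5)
We prove (1) by induction on $n$ using the exact sequence \eqref{eq:key-sequence-for-WnI}. For $n=1$, $W_1\cI_S(D)=\sO_X(-S+D)$, which is reflexive, and hence $\Serre{2}$ with full support, by the facts recalled from \cite{ST23}. For the inductive step, take $m=1$ in \eqref{eq:key-sequence-for-WnI}:
\[
0\to F^{n}_*\sO_X(-S+p^{n}D)\xrightarrow{V^{n}} W_{n+1}\cI_S(D)\xrightarrow{R} W_{n}\cI_S(D)\to 0 .
\]
Here $W_{n+1}X$, $W_nX$ and $X$ have the same underlying space, and the closed immersions $X\into W_nX$ are thickenings. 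By \cref{rem:S2 hull basic}(iv), the $\Serre{2}$ property and full support are preserved under pushforward along these thickenings. Since $F$ is finite, $F_*$ also preserves depth. Because depth is computed on the underlying topological space via local cohomology, and the maximal ideals of $W_n\sO_{X,x}$ and $\sO_{X,x}$ have the same radical action, we can conclude with local cohomology at each point $x$. Specifically, $H^i_x$ vanishes for $i<\min\{2,\dim\}$ on the outer terms, so the long exact sequence gives the same vanishing for the middle term. Full support is clear, because the middle term surjects onto $W_n\cI_S(D)$, which has full support.

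For (2), we work locally and may assume $E=\div(f^{-1})$ is principal, that is, $\sO_X(E)=f\sO_X$ with $f$ a unit at the non-normal codimension-one points. By Example~\ref{eg:multiple of Witt}, multiplication by $[f]$ sends $(\varphi_i)$ to $(f^{p^i}\varphi_i)$. Since $\sO_X(-S+p^iD+p^iE)=f^{p^i}\sO_X(-S+p^iD)$ (as $E$ is Cartier, \cref{rem:WSh basic}), this gives an equality $W_n\cI_S(D+E)=[f]\,W_n\cI_S(D)$. Taking $D=0$ and $S=0$ shows $W_n\sO_X(E)=[f]W_n\sO_X$, which is invertible because $[f]$ is a unit in $W_n\mathscr{K}_X$. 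The tensor formula follows from this equality. For the second formula, $F^*W_n\sO_X(E)$ is generated by $F([f])=[f^p]$, and $[f^p]W_n\sO_X=W_n\sO_X(pE)$ by the same computation. For the third, $\iota^*$ of $[f]W_n\cI_S(0)$-type modules is obtained by applying $R^{n-m}$, which is surjective; this gives $[f]W_m\cI_S(0)$, and in general $\iota^*(W_n\cI_S(E))=W_n\cI_S(E)\otimes W_m\sO_X\simeq [f]W_m\sO_X\cdot\ldots$. The cleanest route is to write $W_n\cI_S(E)=W_n\cI_S(0)\otimes W_n\sO_X(E)$ and reduce to $\iota^*W_n\cI_S(0)=W_m\cI_S(0)$. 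That last step is what I would check carefully: the kernel of $R^{n-m}$ is $V^m(\dots)$, whereas $\iota^*$ quotients by the ideal $V^mW_{n-m}\sO_X$ times the module. I expect $V^m(\cdot)\cdot W_n\cI_S$ to contain $V^mF^m_*W_{n-m}\cI_S(p^m\cdot0)$, using $V(a)\cdot x=V(a F(x))$ and $1\in$ the relevant sheaf.

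For (3), both sides are $\Serre{2}$ with full support as $W_n\sO_X$-modules: the right-hand side by (1), together with finiteness of $F^e$ and \cref{rem:S2 hull basic}(iv). By \cref{rem:S2 hull basic}(ii) it therefore suffices to construct a natural map that is an isomorphism on a big open subset $U$. The map is multiplication $F^e_*\iota_*W_m\cI_S(D)\otimes W_n\sO_X(E)\to F^e_*\iota_*W_m\cI_S(D+p^eE)$, given by $x\otimes y\mapsto x\cdot F^e(y)$. This is well defined because $\sO(-S+p^iD)\cdot\sO(p^{i+e}E)\subseteq\sO(-S+p^i(D+p^eE))$ componentwise, via Example~\ref{eg:multiple of Witt} and the Witt product formulas. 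On the big open set where $E$ is Cartier (which exists because $X$ is regular at generic points of $\Supp E$, and $X$ is $G_1$-like there), the map is an isomorphism by (2). The main obstacle is verifying well-definedness of the product on non-Teichmüller elements of $W_n\sO_X(E)$. To handle this, I would use the fact that such modules are generated by Teichmüller and Verschiebung terms, since $(a_0,a_1,\dots)=\sum V^i[a_i]$.
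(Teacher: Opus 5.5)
Your proof of (1), of the first two isomorphisms in (2), and of (3) follows the paper's route. In (1) you use the depth lemma on \eqref{eq:key-sequence-for-WnI}; peeling off the last Witt component instead of the first makes no difference. In (2) you use $W_n\cI_S(D+E)=[f]\,W_n\cI_S(D)$. In (3) you pass to the big open set where $E$ is Cartier; by \cref{rem:S2 hull basic}(ii) this makes your concern about global well-definedness of the product unnecessary.

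\textbf{The gap is the third isomorphism in (2).} The step you flag, $\iota^*W_n\cI_S(0)\simeq W_m\cI_S(0)$, is false whenever $S\neq 0$. Your justification needs $1\in\sO_X(-S)$. Without it, the identity $V^m(a)\cdot x=V^m\bigl(a\,F^m(R^m x)\bigr)$ shows that $V^m(W_{n-m}\sO_X)\cdot W_n\cI_S(0)$ only has components in the ideal generated by $p^m$-th powers of sections of $\sO_X(-S)$. That ideal is strictly smaller than $\Ker(R^{n-m})=V^mF^m_*W_{n-m}\cI_S(0)$. Concretely, take $X=\Spec k[t]$, $S=\div(t)$, $E=0$, $n=2$, $m=1$. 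Then $\iota^*W_2\cI_S(0)=W_2(tk[t])/V(t^pk[t])$. In this quotient the class of $V(t)=(0,t)$ is nonzero but killed by $[t]$, since $[t]\cdot(0,t)=(0,t^{p+1})$; so it is $t$-torsion in this $\sO_X$-module. By contrast $W_1\cI_S(0)=tk[t]$ is torsion free. So this step cannot be completed, and the third displayed formula is only valid for $S=0$.

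What the paper's argument via $W_n\sO_X(E)=[f]\,W_n\sO_X$ actually gives is $\iota^*W_n\sO_X(E)\simeq W_m\sO_X(E)$. This is immediate from $R^{n-m}[f]=[f]$, and it is all that (3) needs. On the big open set where $E$ is Cartier, apply the projection formula for the finite morphism $W_mX\to W_nX$ given by $\iota$ composed with Frobenius. This gives $\bigl(F^e_*\iota_*W_m\cI_S(D)\bigr)\otimes W_n\sO_X(E)\simeq F^e_*\iota_*\bigl(W_m\cI_S(D)\otimes W_m\sO_X(p^eE)\bigr)\simeq F^e_*\iota_*W_m\cI_S(D+p^eE)$. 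Here $S$ appears only in the first tensor factor, so this uses only the $S=0$ pullback formula, the Frobenius formula and the tensor formula from (2). Replace the third formula by its $S=0$ version, drop the attempt at the general case, and the rest of your argument goes through.
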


\begin{proof}
    For (1), by induction hypothesis on $n$, we may assume that $F_*W_{n-1}(pD)$ is $\Serre{2}$ with full support.
    By \eqref{eq:key-sequence-for-WnI}, we have the exact sequence
    \[
    0 \to F_*W_{n-1}\cI_S(pD) \to W_{n}\cI_S(D) \to \sO_X(D-S) \to 0.
    \]
    Applying the depth lemma (\stacksproj{00LX}) to this sequence, we conclude that $W_n \cI_S(D)$ is $\Serre{2}$ with full support.
    
    For (2), after replacing $X$ by its open covering, we may assume that $E = -\div(\phi)$ for some $\phi \in \Gamma(X, \mathscr{K}_X^*)$ such that $\phi_x \in \sO_{X,x}^*$ for every codimension one point $x$ contained in the non-normal locus of $X$.
    Then the assertion follows from the equation
    \[
    W_n \sO_X( -\div(\phi)) = [\phi] W_n\sO_X,
    \]
    where $[\phi] \in \Gamma(X, W_n\mathscr{K}_X)$ is the Teichm\"{u}ller lift of $\phi$.

    For (3), we note that $F^e_*\iota_*W_m\cI_S(D+p^eE)$ is $\Serre{2}$ with full support.
    It then follows from (1) and \cref{rem:Serre condition basic} that we may replace $X$ by its big open subset and we may assume that $E$ is Cartier.
    The assertion now follows from (2).
\end{proof}

\begin{proposition}\label{etale-witt}
Let $f \colon Y \to X$ be a finite \'etale morphism of $F$-finite normal Noetherian $\F_p$-schemes.
Let $D$ be a $\Q$-Weil divisor and $S$ an effective Weil divisor on $X$.
Then, for every $e \in \Z_{\ge 0}$ and $n \in \Z_{\geq 1}$, there is a canonical isomorphism
\[
(W_nf)^*F^e_*W_n\cI_S(D) \;\xrightarrow{\ \sim\ }\; F^e_*W_n\cI_{f^*S}(f^*D),
\]
where $W_nf \colon W_n Y \to W_nX $ is the morphism induced by $f$.
\end{proposition}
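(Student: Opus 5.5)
First I construct the natural map. Since $f$ is étale, $W_nf \colon W_nY \to W_nX$ is étale (standard: $W_n$ of an étale map is étale, and $W_n\sO_Y \cong W_n\sO_X \otimes_{W_n\sO_X} \ldots$ satisfies $W_n(B)\simeq W_n(A)\otimes_{W_nA}$-base change for étale $A\to B$, as in van der Kallen / Langer--Zink). In particular $W_n\sO_Y$ is flat over $W_n\sO_X$. Moreover, for étale maps the relative Frobenius square is cartesian: $W_nY \cong W_nX \times_{W_nX,F} W_nY$. Combined with flat base change this gives
\[
(W_nf)^*F^e_*\cG \simeq F^e_*(W_nf)^*\cG
\]
for any quasi-coherent $W_n\sO_X$-module $\cG$. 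This reduces the claim to the case $e=0$, i.e.\ to constructing an isomorphism $(W_nf)^*W_n\cI_S(D) \simeq W_n\cI_{f^*S}(f^*D)$.

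For $e=0$, pullback of components gives $f^*\sO_X(-S+p^iD)\to\sO_Y(-f^*S+p^if^*D)$ inside $\mathscr{K}_Y$, so $W_nf$ maps $W_n\cI_S(D)$ into $W_n\cI_{f^*S}(f^*D)\subseteq W_n\mathscr{K}_Y$. By adjunction this yields a $W_n\sO_Y$-linear map
\[
\alpha\colon (W_nf)^*W_n\cI_S(D)\to W_n\cI_{f^*S}(f^*D).
\]
Canonicity is clear from this construction.

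To show $\alpha$ is an isomorphism, note two facts. The source is $\Serre{2}$ with full support: $W_nf$ is flat with fibres of dimension zero, and $W_n\cI_S(D)$ is $\Serre{2}$ by \cref{lem:Witt divisorial S2}(1). The target is also $\Serre{2}$ by the same lemma. So by \cref{rem:Serre condition basic}(ii) it suffices to check that $\alpha$ is an isomorphism over a big open subset of $Y$. I take the preimage of the big open set $U\subseteq X$ where $X$ is regular and $S$, $\lfloor p^iD\rfloor$ are Cartier. Its preimage is big because $f$ is finite étale and hence preserves codimension.

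Over $U$ I argue by induction on $n$ using the exact sequence \eqref{eq:key-sequence-for-WnI} with $m=1$:
\[
0\to F^{n-1}_*\sO(-S+p^{n-1}D)\to W_n\cI_S(D)\to W_{n-1}\cI_S(D)\to0.
\]
Pulling back is exact because $W_nf$ is flat, and $\alpha$ is compatible with $V$ and $R$. By the five lemma it is then enough to treat the outer terms. For the left term I use the Frobenius base change above together with the usual isomorphism $f^*\sO_X(-S+p^{n-1}D)\simeq\sO_Y(-f^*S+p^{n-1}f^*D)$ for étale $f$, valid because the divisor is Cartier on $U$ and étale pullback commutes with rounding. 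The induction hypothesis handles the right term.

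The main obstacle is the base change input. I need that $W_nf$ is étale and that $W_nY\to W_nX\times_{F,W_nX}W_nY$ is an isomorphism, and that the identifications respect the $V$-map. I would cite Langer--Zink or Illusie (Ch.~0, Prop.~1.5.8) for $W_n(B)\simeq W_n(A)\otimes_{W_nA}B$-type étale base change, and verify compatibility with $V$ via the projection formula $V(F(x)y)=xV(y)$.
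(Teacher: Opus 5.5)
Your proof is correct and has the same skeleton as the paper's: the adjoint of the natural map into $W_n\mathscr{K}_Y$, a reduction to a big open set via $\Serre{2}$-ness, and an induction on $n$ along a $V$/$R$ exact sequence with the five lemma. The difference is how you dispose of $e$.

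You first remove $e$ using the Witt-vector Frobenius base change $W_n\sO_Y\otimes_{W_n\sO_X,F}W_n\sO_X\simeq W_n\sO_Y$. You then induct with $0\to F^{n-1}_*\cO_X(p^{n-1}D-S)\to W_n\cI_S(D)\to W_{n-1}\cI_S(D)\to 0$.

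The paper instead keeps $(e,D)$ free and uses the other filtration $0\to F^{e+1}_*W_{n-1}\cI_S(pD)\xrightarrow{V}F^e_*W_n\cI_S(D)\to F^e_*\cO_X(D-S)\to 0$. The sub-term is then the induction hypothesis for $(e+1,pD,n-1)$. The quotient needs only the classical fact that \'etale base change commutes with the absolute Frobenius of an $\F_p$-scheme. So the paper never needs the Witt Frobenius base change you import. In fact that statement is exactly the case $D=S=0$ of the proposition, which the paper's induction proves rather than assumes.

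Your route is fine given a correct citation; your formula ``$W_n(B)\simeq W_n(A)\otimes_{W_nA}B$'' is garbled. You should also state explicitly the other input both proofs rely on: $W_n\sO_Y\otimes_{W_n\sO_X}W_m\sO_X\simeq W_m\sO_Y$ (Langer--Zink). You use it silently in two places:
\begin{itemize}
\item to apply the induction hypothesis to the quotient $W_{n-1}\cI_S(D)$;
\item to identify the pullback of the left-hand term with $F^{n-1}_*f^*\cO_X(p^{n-1}D-S)$.
\end{itemize}

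Conversely, you observe that $(W_nf)^*W_n\cI_S(D)$ is $\Serre{2}$ because $W_nf$ is flat with zero-dimensional fibres. This spells out a point the paper leaves implicit when it shrinks $X$ to a big open subset.
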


\begin{proof}
The natural map $F^e_*\sK_X \to f_*F^e_*\sK_Y$ induces
\[
F^e_*\cO_X(p^eD-S) \longrightarrow f_*F^e_*\cO_Y(p^ef^*D-f^*S).
\]
Accordingly, the map $F^e_*W_n\sK_X \to f_*F^e_*W_n\sK_Y$ induces
\[
F^e_*W_n\cI_S(D) \longrightarrow f_*F^e_*W_n\cI_{f^*S}(f^*D).
\]
Since $(W_nf)^*$ is right adjoint to $f_*=(W_nf)_*$, we obtain
\[
\varphi^e_{D,n} \colon (W_nf)^*F^e_*W_n\cI_S(D) \longrightarrow F^e_*W_n\cI_{f^*S}(f^*D).
\]

We prove that $\varphi^e_{D,n}$ is an isomorphism for all $\Q$-Weil divisor $D$, $e>0$ and $n \geq 1$ by induction on $n$.
Replacing $X$ by a big open subset if necessary, we may assume $\cO_X(p^eD-S)$ is invertible for all $e \geq 0$ (by \cref{lem:Witt divisorial S2}).

For $n=1$, noting that the \'{e}tale base change of the Frobenius morphism is the Frobenius morphism (\stacksproj{0EBS}), we have
\[
\varphi^e_{D,1} \colon f^*F^e_*\cO_X(D-S) 
  \;\simeq\; F^e_*f^*\cO_X(D-S)
  \;\simeq\; F^e_*\cO_Y(f^*D-f^*S).
\]

For $n \geq 2$,  we first note that $W_nf$ is flat and the following commutative diagram 
\[
\begin{tikzcd}
    W_nY \arrow[r,"W_nf"] & W_nX \\
    W_mY \arrow[r,"W_mf"] \arrow[u,hook] & W_mX \arrow[u,hook]
\end{tikzcd}
\]
is Cartesian for every $m < n$ (\cite{LZ}*{Proposition A.8}).
Therefore, for every coherent $W_m\sO_X$-module $\cG$, we have the natural isomorphism
\[
(W_nf)^* \cG \cong (W_mf)^*\cG.
\]
We consider the following commutative diagram with exact rows:
 \footnotesize
\[
\begin{tikzcd}[column sep=0.55cm]
0 \arrow[r] & (W_nf)^* F^{e+1}_*W_{n-1}\cI_S(pD) \arrow[r] \arrow[d,"\sim",sloped] 
& (W_nf)^* F^e_*W_n\cI_S(D) \arrow[r] \arrow[dd,"\varphi^e_{D,n}"] 
& (W_nf)^* \cO_X(D-S) \arrow[d,"\sim",sloped] \arrow[r] & 0 \\ 
& (W_{n-1}f)^* F^{e+1}_*W_{n-1}\cI_S(pD) \arrow[d,"\varphi^{e+1}_{pD,n-1}"] & & f^* \cO_X(D-S) \arrow[d,"\varphi^e_{D,1}"] & \\
0 \arrow[r] & F^{e+1}_*W_{n-1}\cI_{f^*S}(pf^*D) \arrow[r]  
& F^e_*W_n\cI_{f^*S}(f^*D) \arrow[r]  
& \cO_Y(f^*D-f^*S)  \arrow[r] & 0. \\ 
\end{tikzcd}
\]
\normalsize
By the induction hypothesis, $\varphi^{e+1}_{pD,n-1}$ and $\varphi^e_{D,1}$ are isomorphisms.
Hence $\varphi^e_{D,n}$ is also isomorphic, as desired.
\end{proof}

\subsection{Witt dualizing modules}\label{subsec:Witt dualizing}

In this subsection, we summarize some basic properties on dualizing complexes of Witt rings.

\begin{definition}
Let $X$ be a Noetherian $\F_p$-scheme and $W_n\omega_X^{\bullet}$ be a dualizing complex of $W_nX$.
We say that the sequence $\{W_n \omega_X^{\bullet}\}_{n \ge 1}$ satisfies the condition $(*)$ if the following two properties hold:
\begin{itemize}
    \item For every integers $e \ge 0$, $n \ge m \ge 1$, there is an isomorphism
    \[
    \rho^e_{m,n} \colon W_m \omega_X^{\bullet} \xrightarrow{\sim} (\iota^e_{m,n})^! W_n \omega_X^{\bullet},
    \]
    where $\iota^e_{m,n}$ is the composite map
    \[
    \iota^e_{m,n} \colon W_mX \xrightarrow{F^e} W_mX \into W_n X.
    \]
    \item For all $e, e' \ge 0$ and $n \ge m \ge l \ge 1$, the following diagram is commutative:
    \[
    \begin{tikzcd}
        W_l\omega_X^{\bullet} \arrow[r,"\rho^{e+e'}_{l,n}"] \arrow[d, "\rho^{e}_{l,m}"] & (\iota^{e+e'}_{l,n})^!W_n \omega_X^{\bullet} \arrow[d, "\sim" sloped]\\
        (\iota^{e}_{l,m})^!W_m\omega_X^{\bullet} \arrow[r,"(\iota^{e}_{l,m})^!\rho^{e'}_{m,n}"] & (\iota^{e}_{l,m})^!((\iota^{e'}_{m,n})^!W_n \omega_X^{\bullet}) \\
    \end{tikzcd}
    \]
\end{itemize}
\end{definition}

\noindent
    {\scshape Convention:} 
    \begin{enumerate}[label=\textup{(\Roman*)}]
        \item Let $R$ be an $F$-finite Noetherian $\F_p$-algebra.
        In this paper, we always choose a dualizing complex $W_n\omega_R^{\bullet}$ on $\Spec W_n R$ so that the sequence $\{W_n \omega_R^{\bullet}\}_{n}$ satisfies the condition $(*)$ above.
        \item Moreover, for a scheme $X$ separated of finite type over $R$, we always define
        \[
        W_n\omega_X^{\bullet} \coloneqq (W_n\pi)^!W_n \omega_R^{\bullet},
        \]
        where $W_n \pi \colon W_n X \to \Spec W_n R$ is the natural morphism.
        \item When $R$ is a local ring, we also assume that $\omega_R^{\bullet} = W_1\omega_R^{\bullet}$ is normalized (see \stacksproj{0A7M} for the definition).
    \end{enumerate}

\vskip 6pt
\smallskip

\begin{remark}\label{rem:remark on convention}
Let $R$ be an $F$-finite Noetherian $\F_p$-algebra.
\begin{enumerate}[label=\textup{(\arabic*)}]
    \item By \cite{KTTWYY3}*{Theorem 9.1}, there is a sequence $\{W_n \omega_R^{\bullet}\}_n$ which satisfies the condition $(*)$.
    \item For a scheme $X$ separated of finite type over $R$, the sequence $\{W_n \omega_X^{\bullet}\}_n$ defined in Convention (II) also satisfies the condition $(*)$.
    \item When $R$ is local, noting that any shift of dualizing complex is again a dualizing complex, we are always able to attain Condition (III).
    \item We identify $W_1 X$ with $X$ and we write $\omega_X^{\bullet} \coloneqq W_1\omega_X^{\bullet}$. 
\end{enumerate}
\end{remark}

Let $X$ be a separated scheme of finite type over an $F$-finite Noetherian $\F_p$-algebra $R$ and $W_n \omega_X^{\bullet}$ be a dualizing complex as in Convention above.
Noting that $(\iota^e_{m,n})^!$ is the right adjoint of $(\iota^e_{m,n})_*=(F^e)_*$, we have the natural morphism
\[
T^e_{m,n} \colon F^e_*W_m\omega_X^{\bullet} \to W_n\omega_X^{\bullet}
\]
of elements in $D^b(W_n\sO_X)$.
It follows from \cref{rem:remark on convention} (2) and \stacksproj{0AU3} that for any integers $e \ge 0$, $n \ge m \ge 1$ and any element $K^{\bullet} \in D^{-}_{\mathrm{coh}}(W_m\sO_X)$, we have
\begin{equation}\label{eq:duality}
    F^e_*R\cHom_{W_m\sO_X}(K^{\bullet}, W_m\omega_X^{\bullet}) \simeq R\cHom_{W_n\sO_X}(F^e_*K^{\bullet}, W_n\omega_X^{\bullet}).
\end{equation}

We denote by $W_n\omega_X$ the first non-zero cohomology of $W_n \omega_X^{\bullet}$ and call it the \emph{dualizing $W_n\sO_X$-module} associated to $W_n\omega_X^{\bullet}$.
It follows from \cref{rem:dualizing module} below that the morphism $T^e_{m,n} \colon F^e_*W_m \omega_X^{\bullet} \to W_n\omega_X^{\bullet}$ induces the $W_m\sO_X$-module homomorphism
\begin{align}\label{eq:Trace map}
F^e_*W_m \omega_X \to W_n\omega_X,
\end{align}
which we also denote by the same symbol $T^e_{m,n}$.

\begin{remark}\label{rem:dualizing module}
With the above notation, let $\delta_n$ be the minimal integer such that $\cH^{\delta_n}(W_n\omega_X^{\bullet}) \neq 0$.
Then we have $\delta_n=\delta_1$ for every $n$.
In fact, by \eqref{eq:key-sequence-for-WnI}, we have the following exact sequence
\[
0 \to F_* W_{n-1}\sO_X \xrightarrow{V} W_n\sO_X \to \sO_X \to 0.
\]
Taking $R\cHom_{W_n\sO_X}(-,W_n \omega_X^{\bullet})$, combining with the isomorphism \eqref{eq:duality}, we have the exact triangle
\[
\omega_X^{\bullet} \to W_n \omega_X^{\bullet} \to F_*W_{n-1}\omega_X^{\bullet} \xrightarrow{+1}.
\]
By taking cohomology, we obtain the exact sequence
\[
0 \to \cH^{\delta_1-1}(W_n \omega_X^{\bullet}) \to F_*\cH^{\delta_1-1}(W_{n-1}\omega_X^{\bullet}) \to \cH^{\delta_1}(W_1\omega_X^{\bullet}) \to \cH^{\delta_1}(W_n \omega_X^{\bullet})
\]
and the isomorphisms
\[
\cH^i(W_n\omega_X^{\bullet}) \xrightarrow{\sim} F_*\cH^{i}(W_{n-1}\omega_X^{\bullet})
\]
for all $i <\delta_1-1$.
Therefore, by induction on $n$, we conclude that $\delta_n=\delta_1$.
\end{remark}

\begin{lem}\label{lem:adjoint and duality}
With the above notation, let $\cG$ be a coherent $W_m\sO_X$-module.
Then we have
\[
F^e_*\cHom_{W_m\sO_X}(\cG, W_m\omega_X) \simeq \cHom_{W_n\sO_X}(F^e_*\cG, W_n \omega_X).
\]
\end{lem}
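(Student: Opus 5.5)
Apply the derived duality isomorphism \eqref{eq:duality} with $K^{\bullet}=\cG$ placed in degree $0$, and then compare the lowest-degree cohomology sheaves of the two sides. Write $\delta=\delta_1$. By \cref{rem:dualizing module}, $\delta$ is also the smallest integer with $\cH^{\delta}(W_n\omega_X^{\bullet})\neq 0$ for every $n$. Hence $W_m\omega_X=\cH^{\delta}(W_m\omega_X^{\bullet})$ and $W_n\omega_X=\cH^{\delta}(W_n\omega_X^{\bullet})$, and both complexes are concentrated in degrees $\ge \delta$.

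First, I would record that for any complex $\omega^{\bullet}$ with $\cH^i(\omega^{\bullet})=0$ for $i<\delta$ and any coherent module $\cG$ (in degree $0$), the complex $R\cHom(\cG,\omega^{\bullet})$ vanishes in degrees $<\delta$. Moreover,
\[
\cH^{\delta}R\cHom(\cG,\omega^{\bullet})\simeq \cHom(\cG,\cH^{\delta}(\omega^{\bullet})).
\]
To see this, replace $\omega^{\bullet}$ by its truncation $\tau_{\ge\delta}\omega^{\bullet}$, which is quasi-isomorphic to it. Then use the triangle
\[
\cH^{\delta}(\omega^{\bullet})[-\delta]\to\omega^{\bullet}\to\tau_{\ge\delta+1}\omega^{\bullet}\xrightarrow{+1}.
\]
Since $R\cHom(\cG,\tau_{\ge\delta+1}\omega^{\bullet})$ lives in degrees $\ge\delta+1$, the long exact sequence gives the identification in degree $\delta$.

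Next, I would apply this on both sides of \eqref{eq:duality}. On the left, $F^e_*$ is exact, being the pushforward along an affine, indeed finite, morphism of underlying topological spaces. It therefore commutes with taking $\cH^{\delta}$, so the left side has $\cH^{\delta}$ equal to $F^e_*\cHom_{W_m\sO_X}(\cG,W_m\omega_X)$. On the right, $F^e_*\cG$ is a coherent $W_n\sO_X$-module; coherence follows from $F$-finiteness and because $W_mX\into W_nX$ is a closed immersion. So the right side has $\cH^{\delta}$ equal to $\cHom_{W_n\sO_X}(F^e_*\cG,W_n\omega_X)$. Taking $\cH^{\delta}$ of the isomorphism \eqref{eq:duality} yields the claimed isomorphism.

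The only subtle point is that the lowest nonvanishing degree is the same for $W_m\omega_X^{\bullet}$ and $W_n\omega_X^{\bullet}$. This is exactly what \cref{rem:dualizing module} provides. If one also wants the isomorphism to be induced by the trace map $T^e_{m,n}$ of \eqref{eq:Trace map}, that follows from the naturality of \eqref{eq:duality}, since it is obtained from the adjunction $(\iota^e_{m,n})_*\dashv(\iota^e_{m,n})^!$.
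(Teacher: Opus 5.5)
Your proposal is correct and follows essentially the same route as the paper: take $\cH^{\delta}$ of both sides of \eqref{eq:duality}, identify each side with $\cHom$ into the lowest nonzero cohomology sheaf, and use $\delta_m=\delta_n$ from \cref{rem:dualizing module}. The only difference is cosmetic. You identify $\cH^{\delta}R\cHom(\cG,\omega^{\bullet})$ via the truncation triangle, whereas the paper reads it off the hypercohomology spectral sequence $E_2^{p,q}=\mathcal{E}xt^p(\cG,\cH^q(\omega^{\bullet}))$.
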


\begin{proof}
By the spectral sequence for hypercohomology (\cite{Huybrechts}*{Proposition 2.66}), we have
\begin{align*}
E_2^{p,q} = \mathcal{E}xt_{W_n\sO_X}^p(F^e_*\cG, H^q(W_n\omega_X^{\bullet})) \Rightarrow & \cH^{p+q}(R\cHom_{W_n\sO_X}(F^e_*\cG, W_n\omega_X^{\bullet})).
\end{align*}
Therefore, one has
\[
\cH^{\delta_n}(R\cHom_{W_n\sO_X}(F^e_*\cG, W_n\omega_X^{\bullet})) \simeq \cHom_{W_n\sO_X}(F^e_*\cG, W_n \omega_X).
\]
Similarly, we have
\[
\cH^{\delta_m}(R\cHom_{W_m\sO_X}(\cG, W_m\omega_X^{\bullet})) \simeq \cHom_{W_m\sO_X}(\cG, W_m \omega_X).
\]
Then the assertion follows from the isomorphism \eqref{eq:duality} and the equation $\delta_m=\delta_n$ (\cref{rem:dualizing module}).
\end{proof}

\begin{definition}
    With the above notation, we further assume that $X$ is reduced and $\Serre{2}$.
   Let $D$ be a Mumford divisor on $X$.
    We define the coherent $W_n\sO_X$-module $W_n\omega_X(D)$ as follows
\[
W_n\omega_X(D) \coloneqq \cHom_{W_n\sO_X}(W_n \sO_X(-D), W_n \omega_X).
\]
\end{definition}

\begin{prop}\label{prop:dualizing module basic}
Let $X$ be a connected reduced $\Serre{2}$ separated scheme of finite type over an $F$-finite Noetherian $\F_p$-algebra $R$ and $D$ be a Mumford divisor on $X$.
We further assume that the dualizing $\sO_X$-module $\omega_X = W_1\omega_X$ has full support.
Then the following hold.
    \begin{enumerate}[label=\textup{(\arabic*)}]
        \item $W_n\omega_X(D)$ is an $\Serre{2}$ coherent $W_n\sO_X$-module with full support.
        \item For every integers $e \ge 0$, $n \ge m \ge 1$ and every coherent $W_m\sO_X$-module $\cG$ with full support, we have 
        \[
        F^e_*\cHom_{W_m\sO_X}(\cG, W_m\omega_X(p^eD)) \simeq \cHom_{W_n\sO_X}(F^e_*\cG, W_n \omega_X(D)).
        \]
    \end{enumerate}
\end{prop}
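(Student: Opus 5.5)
We prove (1) by reducing to the fact that $W_n\omega_X$ is itself $\Serre{2}$ with full support, and then invoking \cref{lem:Hom S2}. First, $W_n\sO_X(-D)$ is $\Serre{2}$ with full support by \cref{lem:Witt divisorial S2}(1). It therefore suffices to show that $W_n\omega_X$ has full support and is $\Serre{2}$ as a $W_n\sO_X$-module. We argue by induction on $n$ using the triangle $\omega_X^{\bullet} \to W_n\omega_X^{\bullet} \to F_*W_{n-1}\omega_X^{\bullet} \xrightarrow{+1}$ from \cref{rem:dualizing module}. Since $\delta_n=\delta_1$, taking $\cH^{\delta_1}$ gives an exact sequence
\[
0 \to \omega_X \to W_n\omega_X \to F_*W_{n-1}\omega_X.
\]
Full support of $W_n\omega_X$ follows from the injection of $\omega_X$, which has full support by hypothesis.

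For the $\Serre{2}$ property, we use that the lowest cohomology of a dualizing complex is always $\Serre{2}$ on its support. Indeed, the lowest cohomology sheaf of a dualizing complex is the dual of the structure sheaf, $\cH^{\delta}(R\cHom(\sO,\omega^\bullet))$, and such modules satisfy $\Serre{2}$ by local duality (\stacksproj{0AWE}-type statements, or Kollár's result on $\Serre{2}$ of canonical modules). This needs the support to be equidimensional locally, and we obtain it from the full support of $W_n\omega_X$ together with the fact that $X$ is $\Serre{2}$, hence locally equidimensional after localizing. Alternatively, we can stay inside the induction: $F_*W_{n-1}\omega_X$ is $\Serre{2}$ by the induction hypothesis, the image of $W_n\omega_X$ in it has depth $\ge 1$, and the depth lemma applied to $0\to\omega_X\to W_n\omega_X\to \mathrm{image}\to 0$ then gives $\Serre{2}$, provided the image is $\Serre{1}$ with full support. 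Then (1) follows from \cref{lem:Hom S2}.

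For (2), we start from \cref{lem:adjoint and duality} applied to $\cG\otimes_{W_m\sO_X} W_m\sO_X(-p^eD)$, i.e.\ via Hom--tensor adjunction. We obtain
\[
F^e_*\cHom_{W_m}(\cG, W_m\omega_X(p^eD)) \simeq F^e_*\cHom_{W_m}(\cG\otimes W_m\sO_X(-p^eD), W_m\omega_X) \simeq \cHom_{W_n}(F^e_*(\cG\otimes W_m\sO_X(-p^eD)), W_n\omega_X).
\]
The target is $\cHom_{W_n}(F^e_*\cG\otimes W_n\sO_X(-D), W_n\omega_X) = \cHom_{W_n}(F^e_*\cG, W_n\omega_X(D))$. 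Both sides are $\Serre{2}$ with full support by (1) and \cref{lem:Hom S2}, so by \cref{rem:Serre condition basic}(ii) it suffices to construct a natural map and check that it is an isomorphism on a big open subset. Shrinking to the locus where $D$ is Cartier, \cref{lem:Witt divisorial S2}(2) gives $W_n\sO_X(-D)$ invertible and $F^{e*}\iota^*W_n\sO_X(-D)\simeq W_m\sO_X(-p^eD)$. The projection formula then identifies $F^e_*(\cG\otimes W_m\sO_X(-p^eD))$ with $F^e_*\cG\otimes W_n\sO_X(-D)$, and the displayed chain becomes an isomorphism. The natural map on all of $X$ is obtained from the canonical morphism $F^e_*\cG\otimes W_n\sO_X(-D)\to F^e_*(\cG\otimes W_m\sO_X(-p^eD))$, using \cref{lem:S2 hull}(1) to pass through the $\Serre{2}$-hull as in \cref{lem:Witt divisorial S2}(3).

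We expect the main obstacle to be the $\Serre{2}$ property of $W_n\omega_X$ in (1). The inductive depth argument requires controlling the cokernel of $W_n\omega_X\to F_*W_{n-1}\omega_X$. Our first approach is instead to use directly that the lowest nonzero cohomology of any dualizing complex on a scheme whose relevant support is equidimensional is $\Serre{2}$.
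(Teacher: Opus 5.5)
Your proposal is correct and is essentially the paper's proof. For (1) you use that the lowest cohomology of the dualizing complex $W_n\omega_X^{\bullet}$ is $\Serre{2}$ (Stacks Project Tag 0AWK) and then \cref{lem:Hom S2}; for (2) you shrink to the Cartier locus of $D$ and combine \cref{lem:adjoint and duality} with \cref{lem:Witt divisorial S2}(2), exactly as the paper does. The one difference is that you get full support of $W_n\omega_X$ from the injection $\omega_X \hookrightarrow W_n\omega_X$ coming from \cref{rem:dualizing module}, whereas the paper deduces it from the dimension function $\delta$ being constant at the generic points.

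Your remark about local equidimensionality is superfluous once full support is known. Your fallback depth-lemma argument would only give depth $\geq 1$, since it would need the image in $F_*W_{n-1}\omega_X$ to have depth $\geq 2$. Neither is needed for the main argument.
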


\begin{proof}
    Let $\delta \colon X \to \Z$ be the dimension function associated to $\omega_X^{\bullet}$ defined in \stacksproj{0AWF}.
    Combining \stacksproj{0AWK} with the assumption $\Supp(\omega_X)=X$, the integer $\delta(\eta)$ is constant for all generic point $\eta \in X$.
    Noting that $W_nX \xrightarrow{\mathrm{id}} X \xrightarrow{\delta} \Z$ is a dimension function, it again follows from \stacksproj{0AWK} that $W_n\omega_X$ is $\Serre{2}$ with full support.
    The assertion (1) follows from \cref{lem:Hom S2}.

    For (2), since both sheaves are $\Serre{2}$ (\cref{lem:Hom S2}), after shrinking $X$, we may assume that $D$ is Cartier.
    Since $W_n\sO_X(D)$ and $W_m\sO_X(D)$ are invertible (\cref{lem:Witt divisorial S2} (2)), we have
    \begin{align*}
        F^e_*\cHom_{W_m\sO_X}(\cG,W_m\omega_X(p^eD)) & \simeq F^e_*\cHom_{W_m\sO_X}(\cG \otimes_{W_m\sO_X} W_m\sO_X(-p^eD), W_m \omega_X) \\
        & \simeq \cHom_{W_n\sO_X}(F^e_*(\cG \otimes_{W_m\sO_X} W_m\sO_X(-p^eD)), W_n \omega_X) \\
        & \simeq \cHom_{W_n\sO_X}(F^e_*\cG \otimes_{W_n\sO_X} W_n\sO_X(-D), W_n \omega_X) \\
        & \simeq \cHom_{W_n\sO_X}(F^e_*\cG, W_n \omega_X(D)),
    \end{align*}
    where the second line follows from \cref{lem:adjoint and duality} and the third line follows from \cref{lem:Witt divisorial S2} (2).
\end{proof}

In the following remark, we provide a sufficient condition ensuring that $\Supp(\omega_X) = X$, as assumed in \cref{prop:dualizing module basic} above.

\begin{remark}\label{rem:omega full support}
    Let $X$ be a connected reduced $\Serre{2}$ scheme and $\omega_X$ be a dualizing module.
    \begin{enumerate}
        \item If $X$ is irreducible, then $\omega_X$ has full support since $X$ has only one generic point (cf.~\stacksproj{0AWK}).
        \item If there is a proper morphism $\pi \colon X \to \Spec R$ to the spectrum of a Noetherian local ring $(R,\m)$, $X$ is equidimensional and there is a dualizing complex $\omega_R^{\bullet}$ of $R$ which satisfies $\omega_X^{\bullet} \simeq f^{!}\omega_R^{\bullet}$, then $\omega_X$ has full support by \stacksproj{0AWN}.
    \end{enumerate}
\end{remark}

\subsection{Local cohomology}
Let $(R,\m)$ be a Noetherian local ring and $f \colon X \to \Spec R$ be a morphism.
For a sheaf of rings $\cA$ on $X$, we denote by $\Mod_X(\cA)$ the category of sheaves of $\cA$-modules.

\begin{definition}
    For every sheaf $\cF \in \Mod_X(\cA)$, we define the $\Gamma(X,\cA)$-module $H^i_{\m}(\cF)$ by
    \[
    H^i_{\m}(\cF) \coloneqq H^i_{f^{-1}(\{\m\})} (X, \cF),
    \]
    where the right hand side is the local cohomology with support in $f^{-1}(\{\m\})$ (see \cite{hartshorne_local_cohomology}*{Section 1} for the definition).
\end{definition}
Noting that the local cohomology of flasque sheaves vanish (\cite{hartshorne_local_cohomology}*{Proposition 1.10}), it follows from \stacksproj{015M} that one has the isomorphism 
\[
R\Gamma_{f^{-1}(\{\m\})}(X, -) = R (\Gamma_{\{\m\}} \circ f_*) \simeq R\Gamma_{\{\m\}} \circ Rf_*.
\]
Therefore, we have the isomorphism
\begin{equation}\label{eq:derived functor of composition}
H^i_{\m}(\cF) \simeq \cH^i\left( R\Gamma_{\{\m\}} \circ Rf_* (\cF) \right).
\end{equation}
as $\Gamma(X, \cA)$-modules.

Suppose that $(R,\m)$ is an $\F_p$-algebra.
If $\cF$ is a $W_n\sO_X$-module, then $H^i_{\m}(\cF)$ is a $W_nR$-module.
In this case, identifying the underlying topological space of $\Spec W_n R$ with that of $\Spec R$, the isomorphism \eqref{eq:derived functor of composition} defines the isomorphism 
\begin{align*}
H^i_\m(\cF) \simeq  \cH^i (R\Gamma_{\m_n} R\Gamma(W_nX,\cF))
\end{align*}
as $W_nR$-modules, where $\m_n$ denotes the maximal ideal of $W_nR$.
Suppose that $E_n$ is the injective hull of $W_n R /\m_n \cong R/\m$ as a $W_nR$-module. 
For a $W_nR$-module $M$, we denote the Matlis dual by
    \[
    M^{\vee} \coloneqq \Hom_{W_n R}(M,E_n).
    \]
    We also denote by $M^{\wedge}$ the $\m_n$-adic completion of $M$.
    
\begin{lem}[\textup{\cite{TWY}*{Lemma 2.9}}]\label{lem:local duality}
    Let $(R, \m)$ be a Noetherian local ring of characteristic $p>0$.     Suppose that $f \colon X \to \Spec R$ is a proper morphism from a $d$-dimensional scheme $X$ and $\cF$ is a coherent $W_n\sO_X$-module.
    Then the following hold.
    \begin{enumerate}[label=\textup{(\arabic*)}]
        \item We have the isomorphism
        \[
        H^i_{\m}(\mathcal{F})^{\vee}\simeq \Ext^{-i}_{W_n\sO_X}(\mathcal{F},W_n\omega_X^{\bullet})^{\wedge} \coloneqq \cH^{-i}(R\Hom_{W_n\sO_X}(\mathcal{F},W_n\omega_X^{\bullet}))^{\wedge}
        \]
        as $W_nR$-modules.
        \item In particular, one has
        \[
        H^{d}_{\m}(\mathcal{F})^{\vee} \simeq \Hom_{W_n \sO_X}(\mathcal{F},W_n \omega_X)^{\wedge}.
        \]
    \end{enumerate}
\end{lem}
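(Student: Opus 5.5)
The plan is to reduce both statements to Grothendieck duality for the proper morphism $W_nf \colon W_nX \to \Spec W_nR$, followed by local duality on the Noetherian local ring $W_nR$. We view $\cF$ as a coherent sheaf on the Noetherian scheme $W_nX$; when $X$ is $F$-finite this is justified by the finiteness results of \cite{LZ}, and in general we use that $W_nX \to \Spec W_nR$ is proper because it is a homeomorphic thickening-compatible version of $f$.

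First we use \eqref{eq:derived functor of composition}, in the $W_nR$-module form noted after it:
\[
H^i_\m(\cF) \simeq \cH^i\bigl(R\Gamma_{\m_n} R(W_nf)_*\cF\bigr).
\]
Since $W_nf$ is proper, $R(W_nf)_*\cF$ lies in $D^b_{\mathrm{coh}}(W_nR)$. Local duality for the local ring $(W_nR,\m_n)$ with the dualizing complex $W_n\omega_R^{\bullet}$, which is normalized because $\omega_R^\bullet$ is (Convention (III)), then gives
\[
H^i_{\m_n}(M^\bullet)^\vee \simeq \cH^{-i}\bigl(R\Hom_{W_nR}(M^\bullet, W_n\omega_R^\bullet)\bigr)^\wedge
\]
for every $M^\bullet \in D^b_{\mathrm{coh}}(W_nR)$.

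Here is the main point to check. If the normalization of $W_n\omega_R^\bullet$ is not automatic from condition $(*)$, we argue as follows. The trace map identifies $W_n\omega_R^{\bullet}$ restricted along $R \to W_nR$, namely $(\iota^0_{1,n})^!W_n\omega_R^\bullet \simeq \omega_R^\bullet$. A dualizing complex on $\Spec W_nR$ is normalized exactly when its $!$-restriction to the residue field sits in degree $0$, and this can be checked after restricting along the closed immersion $\Spec R \to \Spec W_nR$. Hence $W_n\omega_R^\bullet$ is normalized.

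Next, Grothendieck duality for the proper morphism $W_nf$ gives
\[
R(W_nf)_*R\cHom(\cF, (W_nf)^!W_n\omega_R^\bullet) \simeq R\Hom_{W_nR}\bigl(R(W_nf)_*\cF, W_n\omega_R^\bullet\bigr),
\]
and $(W_nf)^!W_n\omega_R^\bullet = W_n\omega_X^\bullet$ by Convention (II). Taking global sections and $\cH^{-i}$ yields (1).

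For (2), we need to show that $\Ext^{-d}(\cF, W_n\omega_X^\bullet) \simeq \Hom(\cF, W_n\omega_X)$. By the hypercohomology spectral sequence used in \cref{lem:adjoint and duality}, it suffices to know that $\cH^j(W_n\omega_X^\bullet) = 0$ for $j < -d$ and that $W_n\omega_X = \cH^{-d}(W_n\omega_X^\bullet)$. By \cref{rem:dualizing module}, the lowest nonzero degree $\delta_n$ equals $\delta_1$. For $n=1$ we have $\delta_1 \ge -d$ because $\omega_X^\bullet = f^!\omega_R^\bullet$ with $\omega_R^\bullet$ normalized and $\dim X = d$. If $\delta_1 > -d$, both sides vanish in degree $-d$, since then $H^d_\m(\cF)=0$ by the same spectral sequence. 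Under the paper's conventions $W_n\omega_X$ is defined as the first nonzero cohomology, so it is the $(-d)$-th one.

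The main obstacle is the bookkeeping: applying Grothendieck duality and local duality on the non-reduced schemes $W_nX$, and checking normalization.
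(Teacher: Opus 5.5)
Your part (1) follows the paper's argument: Grothendieck duality for the proper map $W_nf$, followed by local duality on $(W_nR,\m_n)$. Your check that $W_n\omega_R^\bullet$ is normalized is correct; it uses $(\iota^0_{1,n})^!W_n\omega_R^\bullet\simeq\omega_R^\bullet$ and the fact that the residue field is an $R$-module, and it is a detail the paper leaves to \cite{TWY}. However, you should drop the aside about the non-$F$-finite case. There $W_nR$ need not even be Noetherian: for a field $k$ with $[k:k^p]=\infty$, the $W_2k$-submodules of the ideal $V(F_*k)\subseteq W_2k$ are exactly the $k^p$-subspaces of $k$. In any case, the complexes $W_n\omega^\bullet$ are only set up for $F$-finite $R$ (Convention (I)).

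The genuine gap is in (2). The spectral-sequence step is fine, but it needs $\delta_1=-d$, and you only justify $\delta_1\ge -d$.

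\emph{Why your treatment of $\delta_1>-d$ fails.} In that case $H^d_\m(\cF)=0$ for every $\cF$. But $W_n\omega_X$ is by definition $\cH^{\delta_1}(W_n\omega_X^\bullet)\neq 0$. Taking $\cF=W_n\omega_X$, the right-hand side $\Hom_{W_n\sO_X}(\cF,W_n\omega_X)^{\wedge}$ contains the identity and is nonzero. So in that case (2) would be false, not trivially true. Your closing sentence (``so it is the $(-d)$-th one'') assumes exactly what has to be proved.

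\emph{The missing input} is the nonvanishing $\cH^{-d}(\omega_X^\bullet)\neq 0$. It follows from the dimension function:
\begin{enumerate}
\item Since $f$ is proper and $\omega_R^\bullet$ is normalized, every closed point of $X$ lies over $\m$ with finite residue field extension. Hence the dimension function of $\omega_X^\bullet=f^!\omega_R^\bullet$ is $\delta(x)=\dim\overline{\{x\}}$.
\item Let $\eta$ be the generic point of a $d$-dimensional component. Then $\sO_{X,\eta}$ is Artinian, and $(\omega_X^\bullet)_\eta$ is its normalized dualizing complex (the injective hull of the residue field, in degree $0$) shifted by $\delta(\eta)=d$. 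So $\cH^{-d}(\omega_X^\bullet)_\eta\neq 0$.
\item The inequality $\delta(x)+\dim\sO_{X,x}\le d$ gives your lower bound $\delta_1\ge -d$.
\end{enumerate}
This is precisely the fact $W_n\omega_X\simeq\cH^{-d}(W_n\omega_X^\bullet)$ that the paper takes from \stacksproj{0AWI}. With it, \cref{rem:dualizing module} gives $\delta_n=\delta_1=-d$, and your spectral-sequence argument goes through.
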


\begin{proof}
    The assertion in (1) follows from the local duality on $W_n R$ (cf.~the proof of \cite{TWY}*{Lemma 2.9} for more details).
    For (2), we first note that $W_n \omega_X \simeq \cH^{-d}(W_n \omega_{X}^{\bullet})$ (\stacksproj{0AWI}).
    Then the assertion follows from (1) and the spectral sequence 
    \[
    E_2^{p,q} = \Ext^p_{W_n \sO_X} (\cF, \cH^{q}(W_n\omega_X^{\bullet})) \Rightarrow \Ext^{p+q}_{W_n\sO_X}(\mathcal{F},W_n\omega_X^{\bullet})
    \]
    for hypercohomology (\cite{Huybrechts}*{Proposition 2.66}).
\end{proof}

\begin{remark}\label{rem:local coh is Artin}
    Let $X$ and $(R,\m)$ be as in \cref{lem:local duality}.    
    Then the local cohomology $H^i_{\m}(\cF)$ is an Artinian $W_nR$-module for every $i \ge 0$.
    In fact, since $E_2^{p,q}$ in the proof of \cref{lem:local duality} is a Noetherian $W_nR$-module, so is $\Ext^{p+q}_{W_n\sO_X}(\mathcal{F},W_n\omega_X^{\bullet})$.
    Therefore, its Matlis dual is Artinian.
\end{remark}

We next consider the compatibility of local cohomologies and inverse limits.

\begin{lem}[\textup{cf.~\cite{tanaka22}*{Lemma 4.1}}]\label{lem:Rlim}
    Let $(X,\cA)$ be a ringed space, $R$ be a ring and $M_{\bullet}=(M_n)_{n \ge 1}$ be an inverse system of sheaves of $\cA$-modules on $X$.
    Suppose that $\mathcal{B}$ is an open basis of $X$ and 
    \[
    G \colon \Mod_X(\cA) \to \Mod(R)
    \]
    is a left exact functor to the category of $R$-modules which commutes with inverse limits.
    We further assume that the following three conditions are satisfied:
    \begin{enumerate}[label=\textup{(\alph*)}]
        \item $H^i(U, M_n)=0$ for every $U \in \mathcal{B}$, $n>0$ and $i >0$.
        \item The inverse system $\{H^0(U,M_n)\}_{n \ge 1}$ satisfies the Mittag-Leffler condition for every $U \in \mathcal{B}$
        \item The inverse system $\{R^iG(M_n)\}_{n \ge 1}$ satisfies the Mittag-Leffler condition for every $i \ge 0$.
    \end{enumerate}
    Then we have
    \[
    R^jG ( \varprojlim_n M_n) \cong \varprojlim_n R^jG(M_n).
    \]
\end{lem}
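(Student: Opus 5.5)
We will compute $R^jG(\varprojlim_n M_n)$ by a Grothendieck-type composite functor argument: realize $G$ as $\varprojlim_n \circ G$ on inverse systems, and show that under (a)--(c) the derived inverse limit contributes nothing. Concretely, write $M=\varprojlim_n M_n$ and consider the category of inverse systems of $\cA$-modules on $X$. Since $G$ commutes with inverse limits, $G(M)=\varprojlim_n G(M_n)$, so $G\circ\varprojlim = \varprojlim\circ G_\bullet$, where $G_\bullet$ applies $G$ termwise. Both composites admit right derived functors (the category of inverse systems has enough injectives, and termwise-injective systems with split surjective transition maps are acyclic). This yields two identifications of $R(G\circ \varprojlim)(M_\bullet)$, and we will compare them.

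The first step is to show that $R\varprojlim M_\bullet \simeq M$ in $D(\Mod_X(\cA))$, that is, $R^1\varprojlim M_\bullet=0$ as sheaves (higher $R^i\varprojlim$ vanish for countable systems once computed sectionwise). Following \cite{tanaka22}*{Lemma 4.1}, we compute $R^i\varprojlim$ of sheaves via the presheaves $U\mapsto R^i\Gamma(U,\varprojlim_n M_n)$. On $U\in\mathcal{B}$, condition (a) gives $R\Gamma(U,M_n)=H^0(U,M_n)$, hence
\[
R\Gamma(U, R\varprojlim M_\bullet)\simeq R\varprojlim_n R\Gamma(U,M_n)=R\varprojlim_n H^0(U,M_n).
\]
By (b), the right-hand side is concentrated in degree $0$. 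Since $\mathcal{B}$ is a basis, the sheaf $R^1\varprojlim M_\bullet$, which is the sheafification of $U\mapsto \cH^1$, vanishes. Thus $R\varprojlim M_\bullet\simeq M$, and $RG(M)\simeq R(G\circ\varprojlim)(M_\bullet)$. Here one has to check that $\varprojlim$ sends injective inverse systems to $G$-acyclic objects; it sends them to injective sheaves, being right adjoint to the exact constant-system functor.

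The second step is the other factorization: $R(G\circ\varprojlim)(M_\bullet)\simeq R\varprojlim_n RG(M_n)$. This requires that $G_\bullet$ send injective systems to $\varprojlim$-acyclic systems; injective systems have injective terms and split-surjective transitions, so their images under $G$ have surjective transitions and are Mittag-Leffler. We then have the Milnor-type short exact sequences
\[
0\to R^1\varprojlim_n R^{j-1}G(M_n)\to R^jG(M)\to \varprojlim_n R^jG(M_n)\to 0.
\]
By (c), the left-hand term vanishes, which gives the claim.

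The main obstacle is the first step: making rigorous that sheaf-level $R\varprojlim$ is computed on the basis $\mathcal{B}$ via sections, which requires commuting $R\Gamma(U,-)$ with $R\varprojlim$. If that proves delicate, I would instead argue directly as in \cite{tanaka22}. Take a termwise Čech or Godement resolution $M_n\to I_n^\bullet$ compatible in $n$ with surjective transition maps. Using (a) and (b), show that $\varprojlim_n I_n^\bullet$ is a $G$-acyclic resolution of $M$. Then apply $G$ and use (c) together with the Mittag-Leffler exact sequence for inverse limits of complexes.
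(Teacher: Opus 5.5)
Your proposal is correct and takes essentially the same route as the paper: both compute $R^j(G\circ\varprojlim)(M_\bullet)$ through the two factorizations $G\circ\varprojlim=\varprojlim\circ G^{\bN}$, using (a) and (b) to kill $R^{>0}\varprojlim M_\bullet$, and using (c) to collapse the spectral sequence $R^p\varprojlim R^qG(M_n)\Rightarrow R^{p+q}(\varprojlim\circ G^{\bN})(M_\bullet)$. The only difference is that the paper cites \cite{CR12}*{Lemma 1.5.1} for the vanishing of $R^{>0}\varprojlim M_\bullet$, whereas you sketch its proof by commuting $R\Gamma(U,-)$ with $R\varprojlim$ for $U\in\mathcal{B}$; this step goes through by the same argument as your second step, since injective systems are sent to split-surjective, hence Mittag-Leffler, systems.
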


\begin{proof}
The proof is similar to that of \cite{tanaka22}*{Lemma 4.1}, but for the convenience of the reader, we provide some details.

Before the proof, we introduce some notation and briefly discuss the underlying theory.
For an abelian category $\cC$, we denote by $\cC^{\bN}$ the category of inverse systems with values in $\cC$, which is again an abelian category (\stacksproj{02MZ}).
If $N$ is an object of $\cC$, then we denote by $\tilde{N}_{\bullet}$ the inverse system $\{N\}_{n \ge 1}$ with all values $N$ and all transition maps $\mathrm{id}_N$.
For a sequence $J_1, J_2, \dots$ of objects in $\cC$, we write 
\[
J^{\oplus}_{\bullet} \coloneqq \{J_1 \oplus J_2 \oplus \cdots \oplus J_n\}_{n \ge 1},
\]
where the transition map
\[
J_1 \oplus J_2 \oplus \cdots \oplus J_n \to J_1 \oplus J_2 \oplus \cdots \oplus J_{n-1}
\]
is the projection.
We note that for every object $N$ in $\cC$, one has the bijection
\begin{align}\label{eq:inverse system}
\Mor_{\cC^{\bN}}(\tilde{N}_{\bullet}, J^{\oplus}_{\bullet}) \cong \prod_{n \ge 1} \Mor_{\cC}(N,J_n).
\end{align}
By the proof of \cite{CR12}*{Lemma 1.5.1}, for an inverse system $E_{\bullet}=(E_n)_{n \ge 1}$, the following conditions are equivalent:
\begin{itemize}
    \item $E_{\bullet}$ is an injective object in $\cC^{\bN}$.
    \item $E_n$ is injective and the transition map $E_{n+1} \to E_n$ is a splitting surjection for every $n \ge 1$.
    \item There exist injective objects $J_1, J_2 \dots $ such that $E_{\bullet} \cong J^{\oplus}_{\bullet}$.
    \end{itemize}
Combining this with \eqref{eq:inverse system}, if $\cC$ has enough injectives, then so is $\cC^{\bN}$.
Moreover, if the inverse limit $\varprojlim_n E_n$ of an injective object $\{E_n\}_{n \ge 1}$ exists, then it is an injective object of $\cC$.

Suppose that the inverse limit exists for every inverse system in $\cC$.
Then the functor $\varprojlim \colon \cC^{\bN} \to \cC$ is left exact since it is a right adjoint of the functor $N \mapsto \tilde{N}_{\bullet}$.
When we have $\cC=\mathfrak{Mod}(R)$, it follows from \cite{hartshorne77}*{II Proposition 9.1} that $R^i\varprojlim E =0$ for every $i>0$ and every inverse system $E$ in $\mathfrak{Ab}^{\bN}$ satisfying the Mittag-Leffler condition.

If $F \colon \cC \to \cD$ is a left exact functor, then so is the functor
\[
F^{\bN} \colon \cC^{\bN} \to \cD^{\bN} \ ; \ \{E_n\}_{n \ge 1} \mapsto \{F(E_n)\}_{n \ge 1}.
\]
Suppose that $\cC$ has enough injectives.
Noting that the collection of functors 
\[
(R_iF)^{\bN} \colon \cC^{\bN} \to \cD^{\bN} \ ; \ \{E_n\}_{n \ge 1} \mapsto \{R^iF(E_n)\}_{n \ge 1}
\]
is an effaceable $\delta$-functor, it follows from \cite{hartshorne77}*{III Theorem 1.3A and Corollary 1.4} that we have $(R^iF)^{\bN} \cong R^i(F^{\bN})$.

We now prove the proposition.
Consider the following commutative diagram:
\[
\begin{tikzcd}
    \Mod(\cA)^{\bN} \arrow[r,"\varprojlim"] \arrow[d,"G^{\bN}"] & \Mod(\cA) \arrow[d,"G"] \\
    \Mod(R)^{\bN} \arrow[r,"\varprojlim"] & \Mod(R)
\end{tikzcd}.
\]
Since $\varprojlim$ sends injective objects to injective objects, we have $R(G \circ \varprojlim) \cong RG \circ R\varprojlim$ (\stacksproj{015M}).
On the other hand, the assumptions (a) and (b) imply the vanishing
\[
R^i\varprojlim_n M_n =0
\]
for every $i>0$ (\cite{CR12}*{Lemma 1.5.1}).
Therefore, we have
\begin{align}\label{eq:Rlim}
R^i (G \circ \varprojlim) (M_{\bullet}) \cong R^iG (\varprojlim_n M_n).
\end{align}

Let $E_{\bullet}$ be an injective object in $\Mod(\cA)^{\bN}$.
Since every transition map of $E_{\bullet}$ is a splitting surjection, so is $G^{\bN}(E_{\bullet})$.
Since a Mittag-Leffler sequence is $\varprojlim$-acyclic in $\Mod(R)$, we have $R^i\varprojlim (G^{\bN}(E_{\bullet}))=0$ for $i>0$.
Therefore, we have $R(\varprojlim \circ G^{\bN}) \cong R\varprojlim \circ R G^{\bN}$, which induces the spectral sequence
\[
E_2^{p,q} = R^p\varprojlim R^qG^{\bN}(M_{\bullet}) \Rightarrow R^{p+q} (\varprojlim \circ G^{\bN})(M_{\bullet}).
\]
It then follows from the assumption (c) that we have 
\[
R^i(\varprojlim \circ G) (M_{\bullet}) \cong  \varprojlim_{n} R^iG^{\bN}(M_{\bullet}) = \varprojlim_n R^iG(M_n).
\]
Combining this with the equation \eqref{eq:Rlim}, we conclude the proof.
\end{proof}

\begin{proposition}\label{prop:lim and coh}
    Let $(R,\m)$ be an $F$-finite Noetherian local ring and $f \colon X \to \Spec R$ be a proper morphism from a reduced $\Serre{2}$-scheme $X$.
    For an effective Mumford divisor $S$ and a Mumford $\Q$-divisor $D$, we have
    \[
    H^i_{\m}(W\cI_S(D)) \cong \varprojlim_n H^i_{\m}(W_n\cI_S(D)).
    \]
\end{proposition}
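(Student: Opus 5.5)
The plan is to apply \cref{lem:Rlim} with $X$ the underlying space of $X$, $\cA$ the sheaf of rings $W\sO_X$, and $M_n \coloneqq W_n\cI_S(D)$ with the restriction maps $R$ as transition maps. For the functor $G$ we take $G(\cF) \coloneqq \Gamma_{f^{-1}(\{\m\})}(X,\cF)$, viewed as a $W R$-module. This $G$ is left exact. It commutes with inverse limits because sections with support form a kernel, $\Gamma_Z(X,\cF)=\Ker(\Gamma(X,\cF)\to\Gamma(X\setminus Z,\cF))$, and both $\Gamma(X,-)$ and $\Gamma(X\setminus Z,-)$ commute with $\varprojlim$. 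By definition $R^iG = H^i_\m(-)$, so once the three hypotheses are checked, \cref{lem:Rlim} gives exactly $H^i_\m(\varprojlim_n W_n\cI_S(D)) \cong \varprojlim_n H^i_\m(W_n\cI_S(D))$, which is the statement since $W\cI_S(D)=\varprojlim_n W_n\cI_S(D)$ by definition.

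For $\mathcal{B}$ we take the affine open subsets of $X$.

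\emph{Condition (a).} Since $W_n\cI_S(D)$ is a coherent $W_n\sO_X$-module and $W_nU$ is an affine scheme (it is $\Spec W_n\Gamma(U,\sO_X)$ by \eqref{eq: localization and Witt}), the vanishing $H^i(U,W_n\cI_S(D))=0$ for $i>0$ follows from Serre vanishing on the affine Noetherian scheme $W_nU$ (Noetherian by \cite{LZ}).

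\emph{Condition (b).} On an affine $U$, the sequence \eqref{eq:key-sequence-for-WnI} with $m=1$,
\[
0\to F^n_*\sO_X(p^nD-S)\to W_{n+1}\cI_S(D)\to W_n\cI_S(D)\to 0,
\]
remains exact on $H^0(U,-)$ because $H^1(U,F^n_*\sO_X(p^nD-S))=0$. Hence all transition maps $H^0(U,M_{n+1})\to H^0(U,M_n)$ are surjective, and Mittag-Leffler holds trivially. (One could also see this directly from the componentwise description of sections.)

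\emph{Condition (c).} This is the main obstacle: the inverse system $\{H^i_\m(W_n\cI_S(D))\}_n$ must be Mittag-Leffler for each $i$. Each $H^i_\m(W_n\cI_S(D))$ is an Artinian $W_nR$-module by \cref{rem:local coh is Artin}, using properness of $f$. The transition maps are $W_{n+1}R$-linear, and every $W_nR$-module is a $W_{n+1}R$-module via $R$, so the images of $H^i_\m(M_k)\to H^i_\m(M_n)$ for $k\ge n$ form a descending chain of $W_nR$-submodules (indeed $W R$-submodules) of the Artinian $W_nR$-module $H^i_\m(M_n)$. Such a chain stabilizes, which gives Mittag-Leffler. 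The one point to verify carefully is that these images really are $W_nR$-submodules, so that Artinianity applies; this follows because the maps are induced by the $W\sO_X$-linear restriction maps $R$, and the $W R$-action on $H^i_\m(M_n)$ factors through $W_nR$.

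With (a), (b), (c) established, \cref{lem:Rlim} applies and yields the proposition.
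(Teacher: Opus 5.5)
Your proposal is correct and matches the paper's proof. Both apply \cref{lem:Rlim} to $M_n = W_n\cI_S(D)$ with $G=\Gamma_\m$ and $\mathcal{B}$ the affine opens: (a) comes from affineness of $W_nU$ and coherence, (b) from surjectivity of $H^0(U,R)$ on affines, and (c) from the Artinianity in \cref{rem:local coh is Artin}. Your check of (c) for every $i$ (the paper writes only $H^d_\m$) and your derivation of (b) from \eqref{eq:key-sequence-for-WnI} are simply more detailed versions of the same steps.
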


\begin{proof}
    Since the functor $\Gamma_\m \colon \Mod_X(W\sO_X) \to \Mod(W(R))$ is left exact and commutes with inverse limit, it suffices to show that the assumptions (a), (b) and (c) in \cref{lem:Rlim} hold for $M_{\bullet}=\{W_n \cI_S(D)\}_{n \ge 1}$.

    The condition (a) holds for an open affine subset $U \subseteq X$ since $W_nU = (U , W_n\sO_U)$ is also affine and $W_n \cI_S(D)$ is a coherent $W_n \sO_U$-module.
    The condition (b) follows from the surjectivity of the map 
    \[
    H^0(U, R) \colon H^0(U, W_n \cI_S(D)) \to H^0(U, W_{n-1}\cI_S(D))
    \]
    for every open affine subset $U \subseteq X$.
    The condition (c) holds since $\{H^d_{\m}(W_n\cI_S(D))\}_{n \ge 1}$ is an inverse system of Artinian $W(R)$-modules (\cref{rem:local coh is Artin}).
\end{proof}

\section{Pure quasi-\texorpdfstring{$F^e$}{Fe}-splitting}

\subsection{Definition}
In this subsection, we extend the notion of purely $n$-quasi-$F^e$-splitting, as introduced in \cite{TWY}*{Definition 3.34}, to the non-normal setting.
For the moment, we shall work under the following setup.

\begin{setting}\label{setting:define purely qFS}
    Let $R$ be an $F$-finite Noetherian ring of characteristic $p>0$.
    Suppose that $X$ is a reduced separated scheme of finite type over $\Spec R$ satisfying $\Serre{2}$ and $\Gorenstein$ conditions, and $\Delta$ is an effective Mumford $\Q$-divisor on $X$.
    Let $W_n\omega_R^{\bullet}$ and $W_n \omega_X^{\bullet}$ be dualizing complexes of $W_nR$ and $W_nX$ as in Convention in Subsection \ref{subsec:Witt dualizing}.
    We further assume that 
    \begin{enumerate}[label=\textup{(\roman*)}]
        \item $S \coloneqq \rdown{\Delta}$ is reduced (or $0$).
        \item $\omega_X$ has full support.
        \item There exists a canonical Mumford divisor $K_X$.
        We then fix an isomorphism $\alpha \colon \sO_X(K_X) \xrightarrow{\sim } \omega_X$.
    \end{enumerate}
\end{setting}

\begin{remark}\label{rem:on setting}
\
\begin{enumerate}[label=\textup{(\roman*)}]
\item If $X$ is a normal integral separated scheme of finite type over an $F$-finite Noetherian ring of characteristic $p>0$ and $\Delta$ is an effective divisor on $X$ with coefficients at most one, then the pair $(X,\Delta)$ satisfies all the assumptions in \cref{setting:define purely qFS}.
\item  In \cref{setting:define purely qFS}, we assume that $(R, \m)$ is local and $X$ is projective over $R$.
    \begin{enumerate}[label=\textup{(\arabic*)}]
        \item By \cref{rem:omega full support}, the assumption (ii) in \cref{setting:define purely qFS} follows if $X$ is equidimensional.
        \item By \cref{rem:AC divisor}, the assumption (iii) automatically follows from (ii) if $R/\m$ is infinite. 
\end{enumerate}
\item 
        We need not impose assumption (iii) once AC divisors are employed.
        To this end, we define $W_n\mathcal{I}_S(\mathcal{F})$ for an AC divisor $\mathcal{F}$ on $X$, and more generally, we define $W_n\mathcal{I}_S(\mathcal{F} + D)$ for an AC divisor $\mathcal{F}$ together with an element $D \in \WDiv^{*}_{\Q}(X)$  as follows:
        \begin{align*}
        \Gamma(U, W_n\cI_S(\cF+D)) &\coloneqq 
        \{(\varphi_0,\ldots,\varphi_{n-1}) \mid 
        \varphi_i \in \Gamma(U,\cO_X(p^i\mathcal{F}+\rdown{p^iD}-S))\}\\
        & \subseteq \Gamma(U, W_n \mathscr{K}_X),
        \end{align*}
        where the Mumford divisor $\rdown{p^iD}-S$ is naturally considered as an AC divisor (cf.~\cite{ST23}*{Subsection A.1}).
        Using this terminology, all results in this paper extend naturally to the setting of AC divisors.
        Observing that $\omega_X$ is an AC divisor under assumption (ii), we can consequently dispense with assumption (iii).
        For brevity, we omit the details.
\end{enumerate}
\end{remark}

With the notation as in \cref{setting:define purely qFS}, let $n \ge 1$ be an integer.
We consider the trace map 
\[
T_n \coloneqq T^0_{1,n} \colon \omega_X \to W_n\omega_X
\]
defined in \eqref{eq:Trace map}.
For a Mumford divisor $E \in \MDiv(X)$, it follows from \cref{rem:S2 hull basic}, \cref{lem:S2 hull} and \cref{lem:Witt divisorial S2} that the $\Serre{2}$-hull of $T_{n} \otimes_{W_n\sO_X} \mathrm{id}_{W_n\sO_X(E)}$ defines the $W_n\sO_X$-homomorphism
\[
T_n(E) \colon \omega_X(E) \to W_n\omega_X(E).
\]
On the other hand, the isomorphism $\alpha \colon \sO_X(K_X) \xrightarrow{\sim} \omega_X$ given in \cref{setting:define purely qFS} induces the isomorphism
\[
\alpha^* \colon \sO_X \xrightarrow{\sim} \omega_X(-K_X) = \cHom_{\sO_X}(\sO_X(K_X),\omega_X).
\]

\begin{definition}\label{defn:p qFs}
With the notation as in \cref{setting:define purely qFS}, let $e,n \geq 1$ be integers. 
We say that $(X,\Delta)$ is \emph{purely $n$-quasi-$F^e$-split} if there exists a $W_n\cO_X$-module homomorphism
\[
\varphi \colon F^e_*W_n\cI_S(p^e\Delta) \longrightarrow W_n\omega_X(-K_X)
\]
such that the following diagram commutes:
\begin{equation}\label{eq:def:p-qFs}
\begin{tikzcd}
W_n\cI_S(\Delta) \arrow[rrr,"F^e"] \arrow[d,"R^{n-1}"'] & & & F^e_*W_n\cI_S(p^e\Delta) \arrow[d,"\varphi"] \\
\cO_X \arrow[r,"\alpha^*"] & \omega_X(-K_X)  \arrow[rr,"T_n(-K_X)"]  & & W_n\omega_X(-K_X),
\end{tikzcd}
\end{equation}
This definition is independent of the choice of $\alpha$, $K_X$ and $\{W_m\omega_X^{\bullet}\}_m$.
\end{definition}

We note that if $(X,\Delta)$ is purely $n$-quasi-$F^e$-split, then it is both purely $(n+1)$-quasi-$F^e$-split and purely $n$-quasi-$F^{e-1}$-split.

\begin{definition}
    Let $(X,\Delta)$ and $S$ be as in \cref{setting:define purely qFS}.
    \begin{enumerate}[label=\textup{(\arabic*)}]
    \item We say that $(X,\Delta)$ is \emph{purely quasi-$F^e$-split} for an integer $e \geq 1$ if there exists an integer $n \geq 1$ such that $(X,\Delta)$ is purely $n$-quasi-$F^e$-split.
    \item We say that $(X,\Delta)$ is \emph{purely quasi-$F^{\infty}$-split} if for every integer $e >0$, there exists an integer $n_e \geq 1$ such that $(X,\Delta)$ is purely $n_e$-quasi-$F^e$-split.
    \item If $S=0$ and $(X,\Delta)$ is purely $n$-quasi-$F^e$-split (resp.~purely quasi-$F^e$-split, purely quasi-$F^{\infty}$-split), then we simply say that $(X,\Delta)$ is $n$-quasi-$F^e$-split (resp.~quasi-$F^e$-split, quasi-$F^{\infty}$-split).
    \end{enumerate}
\end{definition}

\begin{remark}\label{rem:p-qFs-to-qFs}
Let $X$, $\Delta$, and $S$ be as in \cref{defn:p qFs}.  
If $(X,\Delta)$ is purely quasi-$F^e$-split, then $(X,\tfrac{p^e-1}{p^e}\Delta)$ is quasi-$F^e$-split.  

Indeed, since $(X,\Delta)$ is purely $n$-quasi-$F^e$-split for some integer $n \geq 1$, there exists a $W_n\cO_X$-module homomorphism
\[
\varphi \colon F^e_*W_n\cI_S(p^e\Delta) \longrightarrow W_n\omega_X(-K_X)
\]
fitting into the commutative diagram \eqref{eq:def:p-qFs}.  
We then obtain the following commutative diagram:
\[
\begin{tikzcd}
    W_n\cO_X\!\left(\tfrac{p^e-1}{p^e}\Delta\right) \arrow[r] \arrow[d,"\beta"] 
        & F^e_*W_n\cO_X((p^e-1)\Delta) \arrow[d,"\gamma"] \\
    W_n\cI_S(\Delta) \arrow[r] \arrow[d] 
        & F^e_*W_n\cI_S(p^e\Delta) \arrow[d,"\varphi"] \\
    \omega_X(-K_X) \arrow[r] & W_n\omega_X(-K_X),
\end{tikzcd}
\]
where $\beta$ and $\gamma$ are natural injections, which follow from the inequalities
\[
\lfloor p^m\tfrac{p^e-1}{p^e}S \rfloor \;\leq\; (p^m-1)S 
\quad\text{and}\quad 
p^m(p^e-1)S \;\leq\; (p^{e+m}-1)S
\]
for every integer $m \geq 1$.  
Therefore, $\varphi \circ \gamma$ gives the desired $W_n\cO_X$-module homomorphism.
\end{remark}

\begin{proposition}\label{proposition:no-gl-section}
With the notation as in \cref{setting:define purely qFS}, we assume that $S=\rdown{\Delta} = 0$.
If $(X,\Delta)$ is quasi-$F^e$-split for some integer $e \geq 1$, then 
\[
H^0(\cO_X((1-p^l)K_X-\rdown{p^l\Delta})) \neq 0
\]
for some integer $l \geq e$.
\end{proposition}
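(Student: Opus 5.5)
Assume $(X,\Delta)$ is $n$-quasi-$F^e$-split with $S=0$, and let
\[
\varphi\colon F^e_*W_n\cO_X(p^e\Delta)\to W_n\omega_X(-K_X)
\]
satisfy $\varphi\circ F^e=T_n(-K_X)\circ\alpha^*\circ R^{n-1}$ on $W_n\cO_X(\Delta)$. Evaluating at $1$ shows $\varphi(1)$ is the image of the generator of $\omega_X(-K_X)\simeq\cO_X$, so $\varphi\neq 0$. The idea is to peel off the Witt layers of $\varphi$ one at a time, using the $V$-filtration, until we find a nonzero map on some graded piece $F^{l}_*\cO_X(p^l\Delta)$ with $l\ge e$.

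\emph{Filtration.} The exact sequence \eqref{eq:key-sequence-for-WnI}, with $D=p^e\Delta$ and $S=0$, reads
\[
0\to F^{e+1}_*W_{n-1}\cO_X(p^{e+1}\Delta)\xrightarrow{V} F^e_*W_n\cO_X(p^e\Delta)\to F^e_*\cO_X(p^e\Delta)\to 0.
\]
Iterating it filters $F^e_*W_n\cO_X(p^e\Delta)$ with graded pieces $F^{e+j}_*\cO_X(p^{e+j}\Delta)$ for $0\le j\le n-1$. Since $\varphi\neq0$, choose the largest $j$ such that $\varphi$ is nonzero on the $j$-th filtered piece $V^jF^{e+j}_*W_{n-j}\cO_X(p^{e+j}\Delta)$. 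Then $\varphi$ vanishes on $V^{j+1}(\cdots)$, so it induces a nonzero $W_n\cO_X$-homomorphism
\[
\psi\colon F^{l}_*\cO_X(p^l\Delta)\to W_n\omega_X(-K_X),\qquad l=e+j\ge e.
\]
Its source is a $W_1\cO_X$-module, i.e.\ it is killed by $V$ (more precisely by the kernel of $R^{n-1}\colon W_n\cO_X\to\cO_X$).

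\emph{Dualizing.} Apply \cref{prop:dualizing module basic}(2) with $m=1$, $D=-K_X$, $\cG=\cO_X(p^l\Delta)$ and $e$ replaced by $l$. This gives
\[
\Hom_{W_n\cO_X}\bigl(F^l_*\cO_X(p^l\Delta),W_n\omega_X(-K_X)\bigr)\simeq H^0\Bigl(F^l_*\cHom_{\cO_X}\bigl(\cO_X(p^l\Delta),\omega_X(-p^lK_X)\bigr)\Bigr).
\]
By reflexivity (shrink to the big open set where everything is invertible, then use \cref{rem:Serre condition basic}(ii)), together with $\omega_X\simeq\cO_X(K_X)$, the sheaf on the right is $\cO_X(K_X-p^lK_X-\rdown{p^l\Delta})=\cO_X((1-p^l)K_X-\rdown{p^l\Delta})$. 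So $\psi\neq0$ gives the required nonzero global section.

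The step needing the most care is extracting $\psi$: we must check that the graded pieces of the $V$-filtration map through $W_n\omega_X(-K_X)$ as $W_n\cO_X$-modules, and that the restriction of $\varphi$ to the $j$-th piece really factors through $F^{e+j}_*\cO_X(p^{e+j}\Delta)$. This follows from the exact sequences \eqref{eq:key-sequence-for-WnI} and the maximality of $j$. Applying the duality statement in \cref{prop:dualizing module basic} with the Frobenius twist $F^l$ then closes the argument.
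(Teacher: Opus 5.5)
Your proof is correct and takes essentially the paper's route: restrict the splitting to a graded piece $F^{l}_*\cO_X(p^{l}\Delta)$ of the $V$-filtration, then dualize via \cref{prop:dualizing module basic}(2). The only difference is that the paper takes $n$ minimal and uses the bottom layer $V^{n-1}$ (so $l=e+n-1$, with nonvanishing deduced from minimality of $n$), whereas you fix any $n$ and take the deepest layer on which $\varphi$ is nonzero, which avoids having to check that a splitting vanishing on that layer descends to an $(n-1)$-quasi-$F^e$-splitting.
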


\begin{proof}
Let $n \geq 1$ be the minimal integer such that $(X,\Delta)$ is $n$-quasi-$F^e$-split.  
Then there exists a $W_n\cO_X$-module homomorphism 
\[
\varphi \colon F^e_*W_n\cO_X(p^e\Delta) \to W_n\omega_X(-K_X)
\]
that fits into the commutative diagram \eqref{eq:def:p-qFs}.  
Consider the following commutative diagram, where the top row is exact (cf.~\eqref{eq:key-sequence-for-WnI}:
\[
\begin{tikzcd}[column sep=small]
    0 \arrow[r] & F^{e+n-1}_*\cO_X(p^{e+n-1}\Delta) \arrow[r] \arrow[rd,"\beta"'] & 
    F^e_*W_n\cO_X(p^e\Delta) \arrow[r,"R"] \arrow[d,"\varphi"] & 
    F^e_*W_{n-1}\cO_X(p^e\Delta) \arrow[r] & 0 \\
    & & W_n\omega_X(-K_X). & &
\end{tikzcd}.
\]
We note that when $n=1$, we set $F^e_*W_{n-1}\cO_X(p^e\Delta) =0$.
By the minimality of $n$, the composite map $\beta$ is nonzero. 
Hence $\beta$ corresponds to a nonzero element of
\[
\Hom_{W_n\sO_X}\!\left(F^{e+n-1}_*\cO_X\bigl(p^{e+n-1}\Delta \bigr),\,W_n\omega_X(-K_X) \right).
\]
By \cref{prop:dualizing module basic}, this module is isomorphic to 
\begin{multline*}
F^{e+n-1}_*\Hom_{\cO_X}\!\left(\cO_X\bigl(p^{e+n-1}\Delta \bigr),\,\omega_X(-p^{e+n-1}K_X) \right) \\
\simeq\; F^{e+n-1}_*\,H^0\!\left(\cO_X\bigl((1-p^{e+n-1})K_X-\lfloor p^{e+n-1}\Delta\rfloor\bigr)\right)
\end{multline*}
Therefore, the assertion of the proposition holds for $l=e+n-1$.
\end{proof}

\begin{corollary}\label{peff-qfs}
With the notation as in \cref{setting:define purely qFS}, we further assume that $X$ is normal integral and $S=\rdown{\Delta} = 0$.
If $(X,\Delta)$ is quasi-$F^{\infty}$-split, then $-(K_X+\Delta)$ is pseudo-effective over $\Spec R$.
\end{corollary}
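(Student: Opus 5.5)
The plan is to apply \cref{proposition:no-gl-section} for each $e$ and pass to a limit in the space of divisor classes. By assumption, for every $e \geq 1$ the pair $(X,\Delta)$ is quasi-$F^e$-split. By \cref{proposition:no-gl-section} there is then an integer $l_e \geq e$ with
\[
H^0\bigl(X, \cO_X((1-p^{l_e})K_X-\rdown{p^{l_e}\Delta})\bigr) \neq 0 .
\]
Since $X$ is normal integral, a nonzero section gives an effective Weil divisor
\[
G_e \sim (1-p^{l_e})K_X-\rdown{p^{l_e}\Delta} .
\]
Write $\rdown{p^{l_e}\Delta}=p^{l_e}\Delta-\{p^{l_e}\Delta\}$, where the fractional part $\{p^{l_e}\Delta\}$ is an effective $\Q$-divisor with coefficients in $[0,1)$ and support contained in $\Supp\Delta$. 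Dividing by $p^{l_e}$ we get the $\Q$-linear equivalence
\[
-(K_X+\Delta) \sim_{\Q} \frac{1}{p^{l_e}}\Bigl(G_e - K_X - \{p^{l_e}\Delta\}\Bigr),
\]
so that
\[
-(K_X+\Delta) + \frac{1}{p^{l_e}}\bigl(K_X+\{p^{l_e}\Delta\}\bigr) \sim_{\Q} \frac{1}{p^{l_e}}G_e \;\ge\; 0 .
\]

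Next we interpret pseudo-effectivity over $\Spec R$. The natural definition for a $\Q$-Weil divisor $D$ on a normal $X$ is that for some (every) ample $A$ over $R$ and every $\epsilon>0$ the divisor $D+\epsilon A$ is big over $R$. Equivalently, $D$ is a limit in $N^1$ of effective classes; for Weil divisors one passes to a resolution or small modification where classes make sense.

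Fix a relatively ample Cartier divisor $A$. Choose $m$ large so that $mA - K_X$ and $mA - B$ are effective up to linear equivalence (or big), where $B$ is the reduced divisor $\Supp\Delta$. This is possible because $A$ is relatively ample and $X$ is quasi-projective over $R$. Since $\{p^{l_e}\Delta\}\le B$, we get
\[
-(K_X+\Delta) + \frac{2m}{p^{l_e}}A \;\sim_{\Q}\; \frac{1}{p^{l_e}}\Bigl(G_e + (mA-K_X) + (mA - \{p^{l_e}\Delta\})\Bigr),
\]
and the right side is effective, indeed big after adding any further positive multiple of $A$. As $l_e\ge e\to\infty$, the coefficient $2m/p^{l_e}\to 0$. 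Hence $-(K_X+\Delta)+\epsilon A$ is big, or at least effective, for all rational $\epsilon>0$, which gives pseudo-effectivity over $\Spec R$.

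The main obstacle is this last step: making the definition of pseudo-effective precise for non-$\Q$-Cartier Weil divisors and choosing $m$ uniformly in $e$. Uniformity holds because $m$ depends only on $K_X$ and $B$, which are fixed. The convergence then follows immediately.
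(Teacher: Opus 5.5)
Your proof is correct and follows the same route as the paper: apply \cref{proposition:no-gl-section} for each $e$, read the nonzero section as an effective divisor in the class of $(1-p^{l_e})K_X-\rdown{p^{l_e}\Delta}$, and let $l_e\to\infty$. The only difference is how the rounding error is handled. The paper uses $\rdown{p^{l}\Delta}\ge (p^{l}-r)\Delta$ to show that $-(K_X+\frac{p^{l_e}-r}{p^{l_e}-1}\Delta)$ is $\Q$-effective and then ``takes the limit''. You instead absorb $\frac{1}{p^{l_e}}(K_X+\{p^{l_e}\Delta\})$ into the vanishing multiple $\frac{2m}{p^{l_e}}A$ of an ample divisor, which makes that limit step explicit.
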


\begin{proof}
Take an integer $r \geq 1$ such that every coefficient of $r \Delta$ is larger than $1$, then we have $ (m-r)\Delta \leq \rdown{m\Delta}$ for every integer $m \geq 1$.
By \cref{proposition:no-gl-section}, for every $e \geq 1$, there exists an integer $l_e \geq e$ such that
\[
(1-p^l)(K_X+\frac{p^{l_e}-r}{p^{l_e}-1}\Delta)
\]
has a non-zero global section, thus
\[
-(K_X+\frac{p^{l_e}-r}{p^{l_e}-1}\Delta)
\]
is pseudo-effective.
Taking limit, the divisor $-(K_X+\Delta)$ is pseudo-effective, as desired.
\end{proof}

\subsection{Equivalent conditions}
In this subsection, we present several equivalent characterizations of purely $n$-quasi-$F^e$-splitting (\cref{prop:p-qFS equivalent def}).
We begin with the notation that will be used repeatedly throughout this paper.

\begin{notation}\label{notation:Q and K}
    With the notation as in \cref{setting:define purely qFS}, let $e,n \ge 1$ be integers and $D$ be a Mumford $\Q$-divisor.
    \begin{enumerate}[label=\textup{(\roman*)}]
        \item We define the $W_n\cO_X$-modules $Q^{S,e}_{X,D,n}$ and the $W_n\cO_X$-module homomorphism $\Phi^{S,e}_{X,D,n}$ by the following pushout diagram:
\begin{equation}\label{e-def-Q^{e,S}_D}
\begin{tikzcd}
W_n\cI_S(D) \arrow[r,"F^e"] \arrow[d,"R^{n-1}"'] & F_*^e(W_n\cI_S(p^eD)) \arrow[d] \\
\cO_X(D-S) \arrow[r,"\Phi^{S,e}_{X,D,n}"] & Q^{S,e}_{X,D,n}. 
\end{tikzcd}
\end{equation}
\item We define the $W_n R$-homomorphism  
\[
(\Phi^{S,e}_{X,K_X+\Delta,n})^* \colon \Hom_{W_n\cO_X}\!\left(Q^{S,e}_{X,K_X+\Delta,n},\, W_n\omega_X\right) \to H^0(X,\sO_X)
\]
as the composite of the dual 
\[
\Hom_{W_n\cO_X}\!\left(Q^{S,e}_{X,K_X+\Delta,n},\, W_n\omega_X \right) \to \Hom_{W_n\cO_X}\!\left(\sO_X(K_X),\, W_n\omega_X \right)
\]
of $\Phi^{S,e}_{X,K_X+\Delta,n}$ and the isomorphism 
\begin{align*}
\Hom_{W_n\cO_X}\!\left(\sO_X(K_X),\, W_n\omega_X \right)
&\simeq \Hom_{\cO_X}(\cO_X(K_X),\omega_X) \\
& \simeq H^0(X,\sO_X),
\end{align*}
where the first line follows from \cref{lem:adjoint and duality}.
        \item When the base ring $R$ is local, we define the $R$-module $K^{S,e}_{X,D,n}$ by
        \[
        K^{S,e}_{X,D,n} := \Ker\!\left(H^d_\m(W_n\cI_S(D)) \xrightarrow{F^e} F^e_*H^d_\m(W_n\cI_S(p^eD))\right),
        \]
        where $\m$ is the maximal ideal of $R$ and $d$ is the dimension of $X$.
        When there is no confusion, we simply denote it by $K^{e}_{D,n}$.
        For an integer $m \le n$, the homomorphism 
        \[
        H^d_{\m}(R^{n-m}) \colon H^d_{\m} (W_n\cI_S(D)) \to H^d_{\m}(W_m\cI_S(D))
        \]
        induces $H^d_{\m}(R^{n-m}) \colon K^{S,e}_{X,D,n} \to K^{S,e}_{X,D,m}$ which fits into the following commutative diagram:
        \[
        \begin{tikzcd}
            0 \arrow[r] & K^{e}_{D,n} \arrow[r] \arrow[d,"H^d_{\m}(R^{n-m})"] & H^d_{\m}(W_n\cI_S(D)) \arrow[r,"F^e"] \arrow[d,"H^d_{\m}(R^{n-m})"] & H^d_{\m}(W_n\cI_S(p^eD)) \arrow[d,"H^d_{\m}(R^{n-m})"]\\
            0 \arrow[r] & K^{e}_{D,m} \arrow[r] & H^d_{\m}(W_m\cI_S(D)) \arrow[r,"F^e"]  & H^d_{\m}(W_m\cI_S(p^eD))
        \end{tikzcd}
        \]
    \end{enumerate}
\end{notation}

\begin{prop}\label{prop:p-qFS equivalent def}
    With the notation as in \cref{setting:define purely qFS} and \cref{notation:Q and K}, let $e, n \ge 1$ be any integers.
    Then the following are equivalent.
    \begin{enumerate}[label=\textup{(\alph*)}]
        \item $(X,\Delta)$ is purely $n$-quasi-$F^e$-split.
        \item The $W_nR$-homomorphism is 
        \[
(\Phi^{S,e}_{X,K_X+\Delta,n})^* \colon \Hom_{W_n\cO_X}\!\left(Q^{S,e}_{X,K_X+\Delta,n},\, W_n\omega_X\right) \to H^0(X,\sO_X)
\]
defined in \cref{notation:Q and K} (ii) is surjective.
    \end{enumerate}
    If moreover the base ring $(R,\m)$ is local and $X$ is proper over $R$, then the above are equivalent to the following conditions.
    \begin{enumerate}[label=\textup{(\alph*)}]
    \setcounter{enumi}{2}
        \item The $W_nR$-homomorphism 
        \[
        H^d_{\m}(\Phi^{S,e}_{X,K_X+\Delta,n})\colon H^d_{\m}(\sO_X(K_X)) \to H^d_{\m}(Q^{S,e}_{X,K_X+\Delta,n})
        \]
        is injective.
        \item The $W_nR$-homomorphism
        \[
        H^d_{\m}(R^{n-1}) \colon K^{e}_{K_X+\Delta, n} \to K^{e}_{K_X+\Delta,1}
        \] defined in \cref{notation:Q and K} (iii) is a zero map.
    \end{enumerate}
\end{prop}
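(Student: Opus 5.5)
We prove (a)$\Leftrightarrow$(b) by twisting the defining diagram \eqref{eq:def:p-qFs} by $K_X$, and then (b)$\Leftrightarrow$(c)$\Leftrightarrow$(d) by Matlis duality (\cref{lem:local duality}) together with a diagram chase in local cohomology.

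\textbf{(a)$\Leftrightarrow$(b).} Tensoring \eqref{eq:def:p-qFs} with $W_n\sO_X(K_X)$ and taking $\Serre{2}$-hulls, \cref{lem:Witt divisorial S2}(3) and \cref{lem:S2 hull} turn a map $\varphi\colon F^e_*W_n\cI_S(p^e\Delta)\to W_n\omega_X(-K_X)$ into a map $\psi\colon F^e_*W_n\cI_S(p^e(K_X+\Delta))\to W_n\omega_X$. This correspondence is bijective, and it sends the diagram \eqref{eq:def:p-qFs} to the analogous diagram with top row $W_n\cI_S(K_X+\Delta)\xrightarrow{F^e}F^e_*W_n\cI_S(p^e(K_X+\Delta))$, left column $R^{n-1}$ onto $\sO_X(K_X)$, and bottom row $\sO_X(K_X)\xrightarrow{\alpha}\omega_X\xrightarrow{T_n}W_n\omega_X$. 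For the left column, note that $S\le\lfloor\Delta\rfloor$ gives $\sO_X(K_X+\Delta-S)=\sO_X(K_X)$. By the universal property of the pushout \eqref{e-def-Q^{e,S}_D}, such commuting pairs are exactly the homomorphisms $Q^{S,e}_{X,K_X+\Delta,n}\to W_n\omega_X$ whose composite with $\Phi$ is $T_n\circ\alpha$. Under $\Hom_{W_n\sO_X}(\sO_X(K_X),W_n\omega_X)\simeq\Hom(\sO_X(K_X),\omega_X)\simeq H^0(\sO_X)$, the element $T_n\circ\alpha$ corresponds to $1$. Hence (a) holds iff $1$ lies in the image of $(\Phi)^*$. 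Since that image is an $H^0(\sO_X)$-submodule (via Teichmüller lifts), this is equivalent to surjectivity. The main technical point here is checking that the identification of \cref{lem:adjoint and duality} carries $T_n\circ\alpha$ to $1$; this holds because $T_n=T^0_{1,n}$ is by definition the adjunction counit.

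\textbf{(b)$\Leftrightarrow$(c).} Now let $R$ be local and $X$ proper. By \cref{lem:local duality}(2), $H^d_\m(-)^\vee\simeq\Hom(-,W_n\omega_X)^\wedge$, functorially, so the Matlis dual of $H^d_\m(\Phi)$ is $(\Phi)^*$ completed. The completion functor is faithfully flat, so surjectivity of $(\Phi)^*$ is equivalent to surjectivity of its completion. Matlis duality is exact and faithful on these modules (Artinian by \cref{rem:local coh is Artin}, and finitely generated after completion). Therefore injectivity of $H^d_\m(\Phi)$ is equivalent to surjectivity of its dual. Here we must be careful that $H^0(\sO_X)^\wedge$ is the dual of $H^d_\m(\sO_X(K_X))$, which follows from the same lemma.

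\textbf{(c)$\Leftrightarrow$(d).} Apply $H^d_\m$ to the pushout square \eqref{e-def-Q^{e,S}_D}. Since $d=\dim X$, the functor $H^d_\m$ is right exact on coherent sheaves, so it preserves pushouts (pushouts are cokernels). Thus $H^d_\m(Q)$ is the pushout of $H^d_\m(R^{n-1})$ and $H^d_\m(F^e)$, where $H^d_\m(R^{n-1})\colon H^d_\m(W_n\cI_S(D))\to H^d_\m(\sO_X(K_X))$ is surjective by \eqref{eq:key-sequence-for-WnI} and right exactness. An element $x\in H^d_\m(\sO_X(K_X))$ maps to $0$ in the pushout iff $(0,x)$ lies in the image of $(F^e,-R^{n-1})$. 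That is, $x=H^d_\m(R^{n-1})(y)$ for some $y\in K^e_{D,n}$. So $\Ker H^d_\m(\Phi)$ equals the image of $K^e_{D,n}$ in $H^d_\m(\sO_X(K_X))$. This image lies in $K^e_{D,1}\subseteq H^d_\m(\sO_X(K_X))$ by the commutative diagram in \cref{notation:Q and K}(iii), so it vanishes iff the map in (d) is zero.
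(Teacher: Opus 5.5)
Your proof is correct and follows the paper's argument. For (a)$\Leftrightarrow$(b) both twist by $W_n\sO_X(K_X)$, take $\Serre{2}$-hulls, apply the universal property of the pushout, and use that $1$ generates $H^0(X,\sO_X)$ over $W_nH^0(X,\sO_X)$; for (b)$\Leftrightarrow$(c) both use local duality. The one small deviation is in (c)$\Leftrightarrow$(d): instead of using right exactness of $H^d_{\m}$ to preserve the pushout, the paper uses that $F^e$ and $\Phi^{S,e}_{X,K_X+\Delta,n}$ are injective with isomorphic cokernels and compares the $H^{d-1}_{\m}$ connecting maps. Both routes give $\Ker H^d_{\m}(\Phi^{S,e}_{X,K_X+\Delta,n}) = H^d_{\m}(R^{n-1})(K^e_{K_X+\Delta,n})$.
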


\begin{proof}
        After tensoring the diagram \eqref{eq:def:p-qFs} with $W_n\sO_X(E)$ and taking the $\Serre{2}$-hulls, it follows from \cref{lem:Witt divisorial S2} (3) that the condition (a) is equivalent to the following condition:
    \begin{enumerate}
        \item[\textup{(a')}] For some (equivalently, for all) $E \in \MDiv(X)$, there exists a $W_n\cO_X$-module homomorphism
        \[
        \varphi \colon F^e_*W_n\cI_S(p^e (\Delta+E)) \longrightarrow W_n\omega_X(-K_X+E)
        \]
        such that the following diagram commutes:
        \[
        \begin{tikzcd}
        W_n\cI_S(\Delta+E) \arrow[rrr,"F^e"] \arrow[d,"R^{n-1}"'] & & & F^e_*W_n\cI_S(p^e(\Delta+E)) \arrow[d,"\varphi"] \\
        \cO_X(E) \arrow[r,"\alpha^*(E)"] & \omega_X(-K_X+E)  \arrow[rr,"T_n(-K_X+E)"]  & & W_n\omega_X(-K_X+E),
        \end{tikzcd}
        \]
    \end{enumerate} 
    By the universality of pushout, the condition (a') for $E=K_X$ is equivalent to the following condition:
    \begin{enumerate}
        \item[\textup{(b')}] the image of $(\Phi^{S,e}_{X,K_X+\Delta,n})^*$ contains $1$.
    \end{enumerate}
Since $H^0(X,\sO_X)$ is generated by $1$ as the $H^0(X,W_n \sO_X) =W_nH^0(X,\sO_X)$-module, this is equivalent to (b).

From now on, we assume that $R$ is local and $X$ is proper over $R$.
The equivalence (b) $\Leftrightarrow$ (c) follows from the local duality (\cref{lem:local duality}).

For the equivalence (c) $\Leftrightarrow$ (d), we consider the following commutative diagram:
{\small
\[
\begin{tikzcd}
    0 \arrow[r] & W_n\cI_S(K_X+\Delta) \arrow[r,"F^e"] \arrow[d,"R^{n-1}"] & F^e_*W_n \cI_S(p^e ( K_X+\Delta)) \arrow[r] \arrow[d] & \Coker(F^e) \arrow[r] \arrow[d,"\sim",sloped] & 0 \\
    0 \arrow[r] & \sO_X(K_X) \arrow[r,"\Phi^{S,e}_{X,K_X+\Delta,n}"] & Q^{S,e}_{X,K_X+\Delta,n} \arrow[r] & \Coker(\Phi^{S,e}_{X,K_X+\Delta,n}) \arrow[r] & 0.
\end{tikzcd}
\]
}
Taking local cohomology, we obtain 
\[
\begin{tikzcd}
H^{d-1}_{\m}(\Coker(F^e)) \arrow[r,twoheadrightarrow] \arrow[d,"\sim",sloped]& K^{e}_{K_X+\Delta,n} \arrow[r,hookrightarrow] \arrow[d] & H^d_{\m}(W_n\cI_S(K_X+\Delta)) \arrow[d,"H^d_{\m}(R^{n-1})"]\\
H^{d-1}_{\m}(\Coker(\Phi^{S,e}_{X,K_X+\Delta,n}) \arrow[r,twoheadrightarrow] & \Ker(H^d_{\m}(\Phi^{S,e}_{X,K_X+\Delta,n})) \arrow[r,hookrightarrow] & H^d_{\m}(\sO_X(K_X)),
\end{tikzcd}
\]
which implies the equation 
\[
\Ker(H^d_{\m}(\Phi^{S,e}_{X,K_X+\Delta,n})) = H^d_{\m}(R^{n-1}) (K^e_{K_X+\Delta,n}).
\]
This completes the proof of the proposition.

\end{proof}

In the following lemma, we also prove a variant of the condition (d) in \cref{prop:p-qFS equivalent def}.

\begin{lem}\label{lem:pqFs to zero map}
With the notation as in \cref{setting:define purely qFS} and \cref{notation:Q and K}, let $e,n \ge 1$ be integers.
We further assume that the base ring $(R,\m)$ is local.
If $(X,\Delta)$ is purely $n$-quasi-$F^e$-split, then for every element $E \in \MDiv(X)$, the $W_nR$-homomorphism
\[
H^{d}_{\m}(R^{n-1}) \colon K^{S, e}_{X, \Delta+E,n} \to K^{S, e}_{X, \Delta+E,1}
\]
is a zero map.
\end{lem}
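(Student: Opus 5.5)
The plan is to use condition (a') from the proof of \cref{prop:p-qFS equivalent def}. Purely $n$-quasi-$F^e$-splitting of $(X,\Delta)$ is equivalent to (a') for every $E\in\MDiv(X)$. Fix $E$. We then have a $W_n\sO_X$-homomorphism
\[
\varphi_E \colon F^e_*W_n\cI_S(p^e(\Delta+E)) \to W_n\omega_X(-K_X+E)
\]
satisfying $\varphi_E\circ F^e = T_n(-K_X+E)\circ\alpha^*(E)\circ R^{n-1}$ as maps out of $W_n\cI_S(\Delta+E)$.

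Apply $H^d_\m$ to this commutative square. Take $x\in K^{S,e}_{X,\Delta+E,n}$, so $F^e(x)=0$ in $H^d_\m(W_n\cI_S(p^e(\Delta+E)))$. Then
\[
H^d_\m(T_n(-K_X+E)\circ\alpha^*(E))\bigl(H^d_\m(R^{n-1})(x)\bigr)=H^d_\m(\varphi_E)(F^e x)=0 .
\]
Since $\alpha^*(E)$ is an isomorphism, it therefore suffices to show that
\[
H^d_\m(T_n(-K_X+E))\colon H^d_\m(\omega_X(-K_X+E))\to H^d_\m(W_n\omega_X(-K_X+E))
\]
is injective. Once this holds, $H^d_\m(R^{n-1})(x)=0$. (We have used $\cO_X(E)\simeq\omega_X(-K_X+E)$ to identify the source with $H^d_\m(\cO_X(\Delta+E-S))$, which contains the image of $K^{S,e}_{X,\Delta+E,1}$.)

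The injectivity is the main step, and I would prove it by local duality (\cref{lem:local duality}). Injectivity of a map of Artinian modules is equivalent to surjectivity of its Matlis dual. By \cref{lem:local duality}(2), that dual is the completion of the map
\[
\Hom_{W_n\sO_X}(W_n\omega_X(-K_X+E),W_n\omega_X)\to\Hom_{W_n\sO_X}(\omega_X(-K_X+E),W_n\omega_X)\simeq\Hom_{\sO_X}(\omega_X(-K_X+E),\omega_X),
\]
where the last isomorphism is \cref{lem:adjoint and duality}. The target is $H^0(X,\sO_X(K_X-E))$ after the identifications, and the source contains $H^0(X,W_n\sO_X(K_X-E))$ by the $\Serre{2}$ reflexivity of the Witt divisorial sheaves. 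So the map becomes $H^0(R^{n-1})\colon H^0(W_n\sO_X(K_X-E))\to H^0(\sO_X(K_X-E))$. This is surjective because $V$ gives the exact sequence \eqref{eq:key-sequence-for-WnI} and the Teichm\"uller lift $s\mapsto[s]$ provides a set-theoretic section.

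An alternative that avoids identifying $W_n\omega_X(D)$ with Witt divisorial sheaves is to note that $T_n$ is split by the dual of $R^{n-1}\colon W_n\sO_X\to\sO_X$, which is surjective with a multiplicative section. Concretely, $\cHom(\cdot,W_n\omega_X)$ applied to $R^{n-1}$ yields $T_n$ by duality, and the Teichm\"uller map shows that $R^{n-1}$ is surjective on sections after twisting. The delicate point is checking that the twisted trace $T_n(-K_X+E)$ is still dual to $R^{n-1}$ on $W_n\sO_X(K_X-E)$. I expect to handle this on a big open set where $E$ and $K_X$ are Cartier, using \cref{lem:S2 hull} and \cref{prop:dualizing module basic}, and then extend to $X$ by the $\Serre{2}$ property.
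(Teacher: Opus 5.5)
Your proposal is correct and is essentially the paper's own proof. The paper also twists the splitting diagram by $E$: it tensors with $W_n\sO_X(E)$ and takes $\Serre{2}$-hulls via \cref{lem:S2 hull}, which is exactly your condition (a'). It then applies $H^d_\m$ and obtains injectivity of $H^d_\m(T_n(-K_X+E))$ from \cref{lem:local duality}, since the Matlis dual of that map is the completion of the surjection $H^0(X,R^{n-1})\colon H^0(X,W_n\sO_X(K_X-E))\to H^0(X,\sO_X(K_X-E))$.

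Your final ``alternative'' paragraph adds nothing and can be dropped. Note also that $R^{n-1}$ is not split as a module map; it only has a set-theoretic section via Teichm\"uller lifts.
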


\begin{proof}
Since $(X,\Delta)$ is purely $n$-quasi-$F^e$-split, there is a homomorphism
\[
\varphi \colon F^e_*W_n\cI_S(p^e\Delta) \to W_n\omega_X(-K_X)
\]
such that the following diagram commutes:
\[
\begin{tikzcd}
    W_n\cI_S(\Delta) \arrow[rr,"F^e"] \arrow[d,"R^{n-1}"] 
        && F^e_*W_n\cI_S(p^e\Delta)) \arrow[d] \\
    \sO_X \arrow[r,"\sim"] & \omega_X(-K_X) \arrow[r] & W_n\omega_X(-K_X)
\end{tikzcd} 
\]
Tensoring with $W_n\cO_X(E)$ and taking the $\Serre{2}$ hulls, it follows from \cref{lem:S2 hull} (1) that we obtain the following commutative diagram.
\[
\begin{tikzcd}
    W_n\cI_S(\Delta+E) \arrow[rr,"F^e"] \arrow[d,"R^{n-1}"] 
        && F^e_*W_n\cI_S(p^{e}(\Delta+E)) \arrow[d] \\
    \sO_X(E) \arrow[r,"\sim"] & \omega_X(-K_X+E) \arrow[r] & W_n\omega_X(-K_X+E)
\end{tikzcd} .
\]
Applying $H^d_\m(-)$, we get
\[
\begin{tikzcd}
    H^d_\m(W_n\cI_S(\Delta+E)) \arrow[rr,"F^e"] \arrow[d,"H^{d}_{\m}(R^{n-1})"] 
        && F^e_*H^d_\m(W_n\cI_S(p^e(\Delta+E))) \arrow[d] \\
    H^d_{\m}(\sO_X(E)) \arrow[r,"\sim"] & H^d_\m(\omega_X(-K_X+E)) \arrow[r,"(\star_1)"] & H^d_\m(W_n\omega_X(-K_X+E)).
\end{tikzcd} 
\]
By the local duality (\cref{lem:local duality}) and \cref{lem:Hom S2}, the Matlis dual of $(\star_1)$ as $W_nR$-module is the completion of 
\[
H^0(X, R^{n-1}) \colon H^0\left(X,W_n\cO_X(K_X-E)\right)\to H^0\left(X, \cO_X(K_X-E)\right).
\]
Since $H^0(X,R^{n-1})$ is surjective, the dual $(\star_1)$ is injective.
Thus we have 
\[
H^d_{\m}(R^{n-1}) (K^{S,e}_{X,\Delta+E,n})=0,
\]
as desired.
\end{proof}

\begin{lemma}\label{completion}
With the notation as in \cref{setting:define purely qFS}, we further assume that $X=\Spec R$ and $(R,\m)$ is local.
Then $(\Spec{R},\Delta)$ is purely $n$-quasi-$F^e$-split if and only if so is the pair $(\Spec{\widehat{R}},\iota^*\Delta)$, where $\iota \colon \Spec{\widehat{R}} \to \Spec{R}$ is the completion with respect to $\m$.
\end{lemma}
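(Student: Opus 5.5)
The plan is to use characterization (c) of \cref{prop:p-qFS equivalent def}. Since $X=\Spec R$ is proper over the local ring $R$, that criterion applies to $(\Spec R,\Delta)$. It also applies to $(\Spec \widehat{R},\iota^*\Delta)$, once we check that the latter satisfies \cref{setting:define purely qFS}. For this we use that $\widehat{R}$ is again $F$-finite, and that $R\to \widehat{R}$ is flat with geometrically regular fibers because $R$ is excellent. Hence $\widehat R$ is reduced, $\Serre{2}$ and $\Gorenstein$, and $\iota^*$ of Mumford divisors is defined. Moreover $\rdown{\iota^*\Delta}=\iota^*S$ stays reduced, $\omega_{\widehat R}=\omega_R\otimes_R\widehat R$ has full support, and $K_{\widehat R}=\iota^*K_X$ with the isomorphism $\alpha\otimes\widehat R$. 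So it suffices to show that injectivity of
\[
H^d_{\m}(\Phi^{S,e}_{X,K_X+\Delta,n})\colon H^d_\m(\sO_X(K_X))\to H^d_\m(Q^{S,e}_{X,K_X+\Delta,n})
\]
is unchanged under completion.

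\textbf{Step 1 (compatibility of Witt divisorial sheaves with completion).} First, $W_n\widehat R$ is the $\m_n$-adic completion of $W_nR$, because $W_n R$ is a finite $W_nR$-algebra extension filtered by $V$ and the completions agree with the $[\m]$-adic ones. Second, $F^e_*W_n\cI_S(D)\otimes_{W_nR}W_n\widehat R\simeq F^e_*W_n\cI_{\iota^*S}(\iota^*D)$. To prove the second claim we argue by induction on $n$ using the exact sequence \eqref{eq:key-sequence-for-WnI}, exactly as in the proof of \cref{etale-witt}. Completion $-\otimes_{W_nR}W_n\widehat R$ is exact on finitely generated modules. The case $n=1$ reduces to $F^e_*\sO_X(D-S)\otimes_R\widehat R\simeq F^e_*\sO_{\widehat X}(\iota^*D-\iota^*S)$, which holds for the following reasons. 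Since $R$ is $F$-finite, $F^e_*\widehat R\simeq (F^e_*R)\otimes_R\widehat R$. Divisorial sheaves are reflexive, and reflexive hulls commute with flat base change. Completion preserves the pushout \eqref{e-def-Q^{e,S}_D} since it is right exact. Therefore $Q^{S,e}_{X,D,n}\otimes W_n\widehat R\simeq Q^{\iota^*S,e}_{\widehat X,\iota^*D,n}$, compatibly with $\Phi$. The trace/$\alpha$ data in the definition are not needed for criterion (c), and this is the reason for working with (c).

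\textbf{Step 2 (local cohomology is insensitive to completion).} For a finitely generated $W_nR$-module $M$ we have $H^d_{\m}(M)\simeq H^d_{\widehat\m}(M\otimes W_n\widehat R)$, since local cohomology supported at the maximal ideal is unchanged by completion. This isomorphism is natural in $M$. Therefore the map in (c) for $\widehat R$ is identified with the map in (c) for $R$, and injectivity of one is equivalent to injectivity of the other.

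The main obstacle is Step 1. We must check carefully that forming $F^e_*$, passing to Witt vectors of length $n$, and completing all commute. The key input is $F$-finiteness, which gives $F^e_*\widehat R=\widehat R\otimes_R F^e_*R$. An alternative is to use criterion (b) together with Matlis duality, which would equivalently require the $\Hom$ modules to be finite and compatible with completion. I prefer (c), because it avoids having to compare $W_n\omega$ and the trace maps.
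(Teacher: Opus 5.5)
Your proposal is correct and follows essentially the same route as the paper. You reduce to criterion (c) of \cref{prop:p-qFS equivalent def}, show that the Witt divisorial sheaves, and hence $Q^{S,e}_{X,K_X+\Delta,n}$ and $\Phi$, commute with $-\otimes_{W_nR}W_n\widehat{R}$ (via $W_n\widehat{R}\simeq\widehat{W_nR}$ and the $V$-filtration, as in \cref{etale-witt}), and conclude because $H^d_{\m}$ is unchanged by completion. The only difference is that the paper cites an external result for $W_n\widehat{R}\simeq\widehat{W_nR}$. Your one-line justification of that fact is garbled; it would need, for instance, the nilpotence of $VW_{n-1}R$ and the finiteness of $F$ to be made precise.
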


\begin{proof}
Let $D$ be a Mumford $\Q$-divisor on $X=\Spec R$.
We first note that 
\[
F^l_*R(D) \otimes_R \widehat{R} \simeq F^l_*(R(D) \otimes_R \widehat{R}) \simeq F^l_*\widehat{R}(\iota^*D)
\]
for every integer $l \ge 0$.
On the other hand, since $W_m\widehat{R}$ is the completion of $W_mR$ (\cite{KTY25}*{Proposition 2.2}), we have
\[
W_m\widehat{R} \cong W_m R \otimes_{W_nR} W_n\widehat{R}
\]
for every integers $1 \le m \le n$,
Therefore, as in the proof of \cref{etale-witt},
\[
F^l_*W_m\mathcal{I}_S(D)\otimes_{W_mR} W_m\widehat{R}
\simeq
F^l_*W_m\mathcal{I}_{\iota^*S}(\iota^*D)
\]
for every $l\ge 0$ and $m\ge 1$.
Combining this with the exact sequence
\[
0 \to F_*W_{n-1}\mathcal{I}_S(D)
\xrightarrow{VF^e}
F^e_*W_n\mathcal{I}_S(p^eD)
\to Q^{S,e}_{R,D,n} \to 0,
\]
we obtain
\[
Q^{S,e}_{R,D,n}\otimes_{W_nR} W_n\widehat{R}
\simeq
Q^{\iota^*S,e}_{\widehat{R},\iota^*D,n}.
\]
Finally, it also follows from $W_m\widehat{R} \cong \widehat{W_mR}$ (\cite{KTY25}*{Proposition~2.2}) that for every $W_nR$-module $M$, one has 
\[
H^d_{\widehat{m}}(M \otimes_{W_nR} W_n \widehat{R}) \cong H^d_{\m}(M ) \otimes_{W_nR} W_n\widehat{R}.
\]
The assertion now follows from \cref{prop:p-qFS equivalent def}~(c).
\end{proof}

\begin{remark}[\textup{cf.~\cite{TWY}*{Introduction}}]\label{rem:compare with original definition}
    With the notation as in \cref{setting:define purely qFS} and \cref{notation:Q and K}, we consider the case of $e=1$.
    In this case, as in the proof of \cite{TWY}*{Proposition 3.12}, we see that $Q^{S,1}_{X,\Delta,n}$ is an $\sO_X$-module.
    It then follows from \cref{lem:adjoint and duality} that one has
    \begin{align*}
    \Hom_{W_n\sO_X}(Q^{S,1}_{X,\Delta,n}, W_n\omega_X(-K_X)) & \cong \Hom_{\sO_X}(Q^{S,1}_{X,\Delta,n}, \omega_X(-K_X)) \\
    & \cong \Hom_{\sO_X}(Q^{S,1}_{X,\Delta,n}, \sO_X).
    \end{align*}
    This implies that $(X,\Delta)$ is purely $n$-quasi-$F^1$-split if and only if we have an $\sO_X$-homomorphism $\phi$ which fits into the following commutative diagram:
    \[
    \begin{tikzcd}
        W_n\cI_S(\Delta) \arrow[r,"F"] \arrow[d,"R^{n-1}"'] & W_n\cI_S(p\Delta) \arrow[ld,"\phi"] \\
        \sO_X & 
    \end{tikzcd}
    \]
\end{remark}

\subsection{Pure quasi-\texorpdfstring{$F^e$}{Fe}-splitting criterion via the injectivity of Frobenius}
In this subsection, we establish a sufficient condition for purely quasi-$F^e$-splitting in terms of the injectivity of the Frobenius action on local cohomology (\cref{prop:infty-qFs}).
We further examine the converse direction (\cref{thm:qFs-to-qF^es} and \cref{cor:qFs-to-qF^eS}).
Henceforth, we will adopt the following notation.
\begin{setting}\label{setting:global setting}
Let $X,\Delta, S$ and $R$ be as in \cref{setting:define purely qFS}.
We further assume that $(R,\m)$ is local and $X$ is proper over $R$ of dimension $d$.
\end{setting}

With the notation as in \cref{setting:global setting} and \cref{notation:Q and K}, let $e \geq 1$ be an integer and $D$ be a Mumford $\Q$-divisor.
We set 
\[
K^{S,e}_{X,D, \infty} \coloneqq \Ker\left(H^d_{\m} \left(W\cI_S(D) \right) \xrightarrow{F^e} F^e_* H^d_{\m}\left( W\cI_S(p^eD) \right)\right).
\]
When there is no confusion, we simply denote it by $K^e_{D,\infty}$.
Since the inverse limit is left exact, it follows from \cref{prop:lim and coh} that we have
\[
K^{e}_{D,\infty} \simeq \varprojlim_{n} K^{e}_{D,n}.
\]
\begin{proposition}\label{prop:infty-qFs}
With the above notation, we set $D \coloneqq K_X+\Delta$.
\begin{enumerate}[label=\textup{(\arabic*)}]
\item $(X,\Delta)$ is purely quasi-$F^e$-split if and only if the projection 
\[
K^{e}_{D,\infty} \to K^{e}_{D,1}
\]
is a zeromap.
\item In particular, if the map
\[
H^d_{\m} \left(W\cI_S(D) \right) \xrightarrow{F^e} F^e_* H^d_{\m}\left( W\cI_S(p^eD) \right)
\]
is injective, then $(X,\Delta)$ is purely quasi-$F^e$-split.
\end{enumerate}
\end{proposition}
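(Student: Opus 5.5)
Throughout, we use the characterization of purely $n$-quasi-$F^e$-splitting given by \cref{prop:p-qFS equivalent def}~(d). By that result, $(X,\Delta)$ is purely quasi-$F^e$-split if and only if there exists $n\ge 1$ such that the map
\[
H^d_{\m}(R^{n-1}) \colon K^e_{D,n}\to K^e_{D,1}
\]
vanishes. So (1) amounts to the equivalence: some projection $K^e_{D,n}\to K^e_{D,1}$ is zero if and only if the projection $K^e_{D,\infty}\to K^e_{D,1}$ is zero.

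The projection from the limit factors through each finite level, via $K^e_{D,\infty}\simeq\varprojlim_n K^e_{D,n}$, which follows from \cref{prop:lim and coh} and the left exactness of inverse limits. Hence if some $K^e_{D,n}\to K^e_{D,1}$ is zero, so is $K^e_{D,\infty}\to K^e_{D,1}$. This gives the implication from purely quasi-$F^e$-split to the vanishing in (1).

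For the converse we use that each $K^e_{D,n}$ is Artinian. It is a $W_nR$-submodule of $H^d_\m(W_n\cI_S(D))$, which is Artinian by \cref{rem:local coh is Artin}, and all of these modules can be regarded as $W(R)$-modules. The images
\[
I_n:=\mathrm{Im}\bigl(K^e_{D,n}\to K^e_{D,1}\bigr)
\]
form a decreasing chain of submodules of the Artinian module $K^e_{D,1}$, so they stabilize at some $I_N$. More generally, for each fixed $m$ the images of $K^e_{D,n}$ in $K^e_{D,m}$ stabilize, so the system $\{K^e_{D,n}\}_n$ is Mittag-Leffler. For a Mittag-Leffler system, the image of $\varprojlim_n K^e_{D,n}$ in $K^e_{D,1}$ equals the stable image $I_N$. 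To see this, take the stable images $K'_m\subseteq K^e_{D,m}$; the transition maps $K'_{m+1}\to K'_m$ are surjective, so every element of $K'_1$ lifts compatibly to the limit. Therefore, if $K^e_{D,\infty}\to K^e_{D,1}$ is zero, then $I_N=0$, i.e.\ $K^e_{D,N}\to K^e_{D,1}$ is zero, and $(X,\Delta)$ is purely $N$-quasi-$F^e$-split by \cref{prop:p-qFS equivalent def}~(d). This proves (1).

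The only delicate point is the identification of the image from the limit with the stable image. This requires the Artinian property to hold uniformly across the levels, which it does since each level is Artinian over $W_nR$ and hence over $W(R)$ through the surjection $W(R)\to W_nR$. Statement (2) is then immediate: injectivity of $F^e$ on $H^d_\m(W\cI_S(D))$ means exactly $K^e_{D,\infty}=0$, so the projection $K^e_{D,\infty}\to K^e_{D,1}$ is zero and (1) applies.
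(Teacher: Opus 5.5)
Your proposal is correct and follows essentially the same route as the paper: the Artinian property of each $K^e_{D,n}$ (from \cref{rem:local coh is Artin}) makes $\{K^e_{D,n}\}_n$ Mittag-Leffler. Hence the projection from $K^e_{D,\infty}\simeq\varprojlim_n K^e_{D,n}$ to $K^e_{D,1}$ vanishes if and only if some $K^e_{D,n}\to K^e_{D,1}$ vanishes, and \cref{prop:p-qFS equivalent def}~(d) then gives (1), with (2) immediate because injectivity means $K^e_{D,\infty}=0$. The only difference is that you spell out the stable-image argument, which the paper leaves implicit.
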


\begin{proof}
Since $K^e_{D,n}$ is an Artinian $R$-module (\cref{rem:local coh is Artin}) for every $n$, the inverse system $\{K^{e}_{D,n}\}_{n \ge 1}$ satisfies the Mittag-Leffler condition.
Therefore, the projection $\varprojlim_{n} K^{e}_{D,n} \to K^{e}_{D,1}$ is a zeromap if and only if so is the map $K^{e}_{D,n} \to K^{e}_{D,1}$ for some $n \ge 1$.
Then the assertion of (1) follows from \cref{prop:p-qFS equivalent def}.
\end{proof}

We next give a sufficient condition for the injectivity of the Frobenius action on the local cohomology.

\begin{theorem}\label{thm:qFs-to-qF^es}
With the notation as in \cref{setting:global setting}, we set $D \coloneqq K_X+\Delta$.
We further assume the following conditions are satisfied:
\begin{enumerate}[label=\textup{(\roman*)}]
\item $H^{d-1}_{\m}(\sO_X(p^lD-S)) =0$ for every integer $l \ge 0$, and
\item $(X,S+\{p^l\Delta\})$ is purely quasi-$F$-split for every $l \geq 0$.
\end{enumerate}
Then for every $e \ge 1$, the map
\[
H^d_{\m} \left(W\cI_S(D) \right) \xrightarrow{F^e} F^e_* H^d_{\m}\left( W\cI_S(p^eD) \right)
\]
is injective, and in particular, $(X,\Delta)$ is purely quasi-$F^{\infty}$-split (\cref{prop:infty-qFs}).
\end{theorem}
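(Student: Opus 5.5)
We first reduce to $e=1$. Since $F^e$ on $H^d_{\m}(W\cI_S(D))$ factors as the composite of the maps
\[
F \colon H^d_{\m}(W\cI_S(p^lD)) \longrightarrow F_*H^d_{\m}(W\cI_S(p^{l+1}D)), \qquad 0 \le l \le e-1,
\]
it suffices to prove that $K^{1}_{p^lD,\infty}=0$ for every $l \ge 0$. We will do this by combining hypothesis (ii), which makes the projection $K^1_{p^lD,\infty}\to K^1_{p^lD,1}$ vanish, with hypothesis (i), which makes $V$ injective on top local cohomology. An iteration then pushes every element of the kernel arbitrarily deep into the $V$-filtration.

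\emph{Step 1 (the projection to level one vanishes).} Fix $l\ge 0$ and put $\Delta_l \coloneqq S+\{p^l\Delta\}$ and $E_l \coloneqq p^lK_X+\lfloor p^l\Delta\rfloor - S$. Then $E_l$ is a Mumford (integral) divisor, $\lfloor \Delta_l\rfloor = S$, and $\Delta_l+E_l = p^l(K_X+\Delta)=p^lD$ as $\Q$-divisors, so $W_n\cI_S(\Delta_l+E_l)=W_n\cI_S(p^lD)$. By (ii) the pair $(X,\Delta_l)$ is purely $n$-quasi-$F$-split for some $n$. Hence \cref{lem:pqFs to zero map} with $E=E_l$ gives that $H^d_{\m}(R^{n-1})\colon K^1_{p^lD,n}\to K^1_{p^lD,1}$ is zero. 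Since $K^1_{p^lD,\infty}\simeq\varprojlim_n K^1_{p^lD,n}$, the projection $K^1_{p^lD,\infty}\to K^1_{p^lD,1}$ is zero for every $l$.

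\emph{Step 2 ($V$ is injective and has the right image).} Pass to the limit in \eqref{eq:key-sequence-for-WnI}, using surjectivity of $R$ on affines for Mittag-Leffler. This gives the exact sequence
\[
0\to F_*W\cI_S(p^{l+1}D)\xrightarrow{V} W\cI_S(p^lD)\xrightarrow{R^\infty}\cO_X(p^lD-S)\to 0.
\]
Take local cohomology, with the limits compatible by \cref{prop:lim and coh} and the argument of \cref{lem:Rlim}. Hypothesis (i) then says that $V\colon H^d_{\m}(W\cI_S(p^{l+1}D))\to H^d_{\m}(W\cI_S(p^lD))$ is injective, and that its image is exactly the kernel of the map to $H^d_{\m}(\cO_X(p^lD-S))$. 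Identifying this kernel with the kernel of projection to the first Witt component is where I expect the main care. The concern is that we must work with $W\cI_S$ rather than a fixed finite level, so the vanishing of the relevant $\varprojlim^1$ terms must be justified. The Artinian property (\cref{rem:local coh is Artin}) should handle this.

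\emph{Step 3 (iteration).} Let $x\in K^1_{p^lD,\infty}$. By Step 1 its image in $H^d_{\m}(\cO_X(p^lD-S))$ vanishes, so by Step 2 we can write $x=Vy$. Using $FV=VF$ on Witt divisorial sheaves, we get $V(Fy)=F(Vy)=0$, and injectivity of $V$ (Step 2, applied at $l+1$) gives $Fy=0$. Thus $y\in K^1_{p^{l+1}D,\infty}$. Repeating, we obtain $x\in V^k H^d_{\m}(W\cI_S(p^{l+k}D))$ for every $k$. The image of $V^k$ dies in $H^d_{\m}(W_k\cI_S(p^lD))$, so all finite-level components of $x$ vanish, and $x=0$ by \cref{prop:lim and coh}. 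The final assertion then follows from \cref{prop:infty-qFs}.
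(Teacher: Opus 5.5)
Your proof is correct and follows the paper's argument. Step 1 is the paper's base case $m=1$ via \cref{lem:pqFs to zero map}, and your iterated $V$-divisibility $x\in V^kH^d_{\m}(W\cI_S(p^{l+k}D))$ is the paper's induction on $m$ proving $\pi^l_m(K^1_{p^lD,\infty})=0$ for all $l$. The $\varprojlim^1$ worry in Step 2 is unnecessary: the paper takes the inverse limit of the finite-level sequences, which needs only left exactness of $\varprojlim$, and then applies \cref{prop:lim and coh}, which already identifies $H^d_{\m}(R^{\infty})$ with the level-one projection.
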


\begin{proof}
For an integer $m \ge 1$, we denote by
\[
\pi^l_{m} \colon H^d_{\m}( W\cI_S(p^lD))  \to H^d_{\m}(W_m\cI_S(p^lD))
\]
the natural projection.

We show, by induction on $m \ge 1$, that for every $l \geq 0$, we have $\pi^l_{m}(K^1_{p^lD,\infty})=0$.
When $m=1$, noting that we have
\[
p^l D = \{p^lD\} + \rdown{p^lD} = (S+\{p^lD\}) + (\rdown{p^lD} -S),
\]
it follows from \cref{lem:pqFs to zero map} that we have $\pi^l_{1}(K^1_{p^lD,\infty})=0$.

We next assume $m \geq 2$.
By the assumption $H^{d-1}_\m(\cO_X(p^lD-S))=0$ and the left exactness of the inverse limit, we have the following exact sequence
\small
\[
\begin{tikzcd}
0 \arrow[r] 
    & \varprojlim_n F_* H^d_\m(W_n\cI_S(p^{l+1}D)) \arrow[r,"V"] 
    & \varprojlim_n H^d_\m(W_n\cI_S(p^lD)) \arrow[r,"\pi^l_{1}"] 
    & H^d_\m(\cO_X(p^lD-S)).
\end{tikzcd}
\]
\normalsize
Combining this with \cref{prop:lim and coh}, we obtain the exact sequence
\[
\begin{tikzcd}
0 \arrow[r] 
    &  F_* H^d_\m(W\cI_S(p^{l+1}D)) \arrow[r,"V"] 
    & H^d_\m(W\cI_S(p^lD)) \arrow[r,"\pi^l_{1}"] 
    & H^d_\m(\cO_X(p^lD-S)).
\end{tikzcd}
\]
Take an element $\alpha \in K^1_{p^lD,\infty}$.
By the case $m=1$, we have $\pi^l_{1}(\alpha)=0$, hence there exists $\beta \in H^d_\m(W\cI_S(p^{l+1}D))$ with $V(F_*\beta)=\alpha$.
Combining the injectivity of $F_*V$ with the following commutative diagram
\[
\begin{tikzcd}
 F_* H^d_\m(W\cI_S(p^{l+1}D)) \arrow[r,"V"] \arrow[d,"F"] 
    & H^d_\m(W\cI_S(p^lD)) \arrow[d,"F"] \\
    F^2_* H^d_\m(W\cI_S(p^{l+2}D)) \arrow[r,"F_*V"]
    & F_* H^d_\m(W\cI_S(p^{l+1}D)),
\end{tikzcd}
\]
we have $\beta \in K^1_{p^{l+1}D,\infty }$.
By induction hypothesis, we have $\pi^{l+1}_{m-1}(\beta)=0$.  
It then follows from the following commutative diagram
\[
\begin{tikzcd}
     F_* H^d_\m(W\cI_S(p^{l+1}D)) \arrow[r,"V"] \arrow[d,"F_*\pi^{l+1}_{m-1}"] 
    & H^d_\m(W\cI_S(p^lD)) \arrow[d,"\pi^l_{m}"] \\
     F_*H^d_\m(W_{m-1}\cI_S(p^{l+1}D)) \arrow[r,"V"] 
    & H^d_\m(W_m\cI_S(p^lD))
\end{tikzcd}
\]
that we have 
\[
\pi^{l}_{m}(\alpha)=V(F_*\pi^{l+1}_{m-1}(\beta))=0,
\]
which shows the equality $\pi^{l}_{m}(K^1_{p^lD, \infty})=0$, as desired.

Therefore, we have $K^1_{p^lD, \infty} = \varprojlim_m K^1_{p^lD,m} =0$ and so the Frobenius map
\[
F \colon H^d_\m(W\cI_S(p^lD)) \to F_*H^d_\m(W\cI_S(p^{l+1}D))
\]
is injective for all $l \ge 0$.
Consequently, the composite map
\[
F^e \colon H^d_\m(W\cI_S(D)) \to H^d_\m(W\cI_S(pD)) \to \cdots \to H^d_\m(W\cI_S(p^eD))
\]
is injective for every $e \geq 1$.
\end{proof}

\begin{cor}\label{cor:qFs-to-qF^eS}
With the notation as in \cref{setting:define purely qFS}, we further assume that the base ring $(R,\m)$ is local and $X = \Spec R$. 
Moreover, suppose that 
\begin{enumerate}[label=\textup{(\roman*)}]
\item $\sO_X(p^lD-S)$ is Cohen-Macaulay for every integer $l \ge 0$.
\item $\Delta$ has standard coefficients.
\end{enumerate}
Then the following conditions are equivalent:
\begin{enumerate}[label=\textup{(\arabic*)}]
\item $(X,\Delta)$ is purely quasi-$F$-split.
\item $(X,\Delta)$ is purely quasi-$F^{\infty}$-split.
\item The map
\[
H^d_{\m} \left(W\cI_S(D) \right) \xrightarrow{F} F_* H^d_{\m}\left( W\cI_S(pD) \right)
\]
is injective.
\item The map
\[
H^d_{\m} \left(W\cI_S(D) \right) \xrightarrow{F^e} F^e_* H^d_{\m}\left( W\cI_S(p^eD) \right)
\]
is injective for every $e \ge 1$.
\end{enumerate}
\end{cor}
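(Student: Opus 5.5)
Here $D = K_X+\Delta$. The plan is to prove the cycle of implications $(4)\Rightarrow(2)\Rightarrow(1)$, $(4)\Rightarrow(3)$, $(3)\Rightarrow(4)$, and $(1)\Rightarrow(4)$. The easy implications come from results already established.

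\textbf{Easy implications.}
\begin{itemize}
\item $(4)\Rightarrow(2)$ is \cref{prop:infty-qFs}~(2), applied for each $e$.
\item $(2)\Rightarrow(1)$ is immediate from the definitions, taking $e=1$.
\item $(4)\Rightarrow(3)$ is trivial, taking $e=1$.
\end{itemize}

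\textbf{$(3)\Rightarrow(4)$.} Write $F^e$ as the composite of the maps
\[
F \colon H^d_\m(W\cI_S(p^lD)) \to F_*H^d_\m(W\cI_S(p^{l+1}D)), \qquad l=0,\dots,e-1.
\]
The task is to show that injectivity for $l=0$ propagates to all $l$. This is less formal than it looks. I would route it through $(1)$ instead: $(3)$ with \cref{prop:infty-qFs}~(1) gives $(1)$, and then $(1)\Rightarrow(4)$ below finishes.

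\textbf{Main implication $(1)\Rightarrow(4)$.} I apply \cref{thm:qFs-to-qF^es}, which requires two hypotheses.

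Hypothesis (i) is $H^{d-1}_\m(\sO_X(p^lD-S))=0$ for every $l\ge 0$. Since $X=\Spec R$ and $\sO_X(p^lD-S)$ is assumed Cohen--Macaulay, its local cohomology vanishes below degree $d$. This module has full support, so its dimension is $d$, and we may take $d\ge 2$; in dimension one one must check the $S_2$/CM condition directly.

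Hypothesis (ii) is that $(X,S+\{p^l\Delta\})$ is purely quasi-$F$-split for every $l\ge 0$. I would use the standard-coefficient assumption here. Write a coefficient of $\Delta$ as $1-\frac1m$ with $m$ prime to $p$ or not. Then the fractional part $\{p^l(1-\frac1m)\}$ is again of the form $1-\frac{a}{m}$, and it is at most ... I expect the cleaner claim to be $S+\{p^l\Delta\}\le \Delta'$ for some divisor comparable to $\Delta$. Concretely, I would show $\{p^l(1-\frac1m)\}\le 1-\frac1m$ fails in general, so I must instead exploit monotonicity: pure quasi-$F$-splitting of $(X,\Delta)$ implies that of $(X,\Gamma)$ for $\Gamma\le\Delta$ with the same $\lfloor\Gamma\rfloor$, via the natural inclusions $W_n\cI_S(\Gamma)\subseteq W_n\cI_S(\Delta)$, as in \cref{rem:p-qFs-to-qFs}.

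The difficulty is that $\{p^l(1-\frac1m)\}=1-\frac{r}{m}$, where $r\equiv p^l \pmod m$ with $1\le r\le m$, and this is $\le 1-\frac1m$. So indeed $\{p^l\Delta\}\le \Delta-S$ holds coefficientwise, including the cases where $p\mid m$, in which the value is $\le 1-\frac1m$ anyway. Together with the $S$ part this gives $S+\{p^l\Delta\}\le\Delta$ with the same round-down, and monotonicity yields hypothesis (ii) from $(1)$. Then \cref{thm:qFs-to-qF^es} gives $(4)$.

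\textbf{Expected obstacle.} The main obstacle is the monotonicity step. One must check that the splitting map $\varphi$ restricts compatibly along $F^e_*W_n\cI_S(p\Gamma)\subseteq F^e_*W_n\cI_S(p\Delta)$ and that the diagram still commutes. This holds because all the sheaves involved are submodules of $W_n\mathscr K_X$ and the vertical map $R^{n-1}$ lands in $\sO_X$ in both cases.
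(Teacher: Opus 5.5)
Your proposal is correct and follows the paper's proof: the trivial implications plus \cref{prop:infty-qFs}, then $(1)\Rightarrow(4)$ via \cref{thm:qFs-to-qF^es}, using the Cohen--Macaulay vanishing of $H^{d-1}_\m(\sO_X(p^lD-S))$ and the inequality $S+\{p^l\Delta\}\le\Delta$ together with the monotonicity you spell out (which the paper uses implicitly). Two pieces of cleanup: delete the stray claim that $\{p^l(1-\frac1m)\}\le 1-\frac1m$ ``fails in general,'' since your own computation shows it always holds; and drop the dimension-one hedge, since Cohen--Macaulayness gives $H^{d-1}_\m=0$ in every dimension.
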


\begin{proof}
    The implications (4) $\Rightarrow$ (3) and (2) $\Rightarrow$ (1) is obvious.
    The implications (3) $\Rightarrow$ (1) and (4) $\Rightarrow$ (2) follows from \cref{prop:infty-qFs}.
    Finally, we will verify the implication (1) $\Rightarrow$ (4).

    Since $\sO_X(p^lD-S)$ is Cohen-Macaulay, we have the vanishing
    \[
    H^{d-1}_{\m}(\sO_X(p^lD-S))=0
    \]
    for every $l$.
    On the other hand, since $\Delta$ has standard coefficients, we have
    \[
    S+\{p^l \Delta\} \le \Delta.
    \]
    for every $l$.
    Therefore, the assumption (ii) in \cref{thm:qFs-to-qF^es} follows from the the condition (1).
    Then applying \cref{thm:qFs-to-qF^es}, the condition (4) holds. 
\end{proof}

\subsection{Finite covers}

In this subsection, we consider the behavior of quasi-$F$-splitting under finite \'etale morphisms (\cref{etale-cov}), base changes (\cref{cor:asc-des ind-etale}, \cref{prop:base change descent}) and index one covers (\cref{prop:index-one-cov}).

\begin{proposition}\label{etale-cov}
With the notation as in \cref{setting:global setting}, we further assume that $X$ is normal and integral.
Let $f \colon Y \to X$ be a finite \'etale morphism from a normal integral scheme $Y$ and we set $\Delta_Y := f^*\Delta$.
Then the following hold
\begin{enumerate}[label=\textup{(\arabic*)}]
    \item If $(X,\Delta)$ is purely $n$-quasi-$F^e$-split, then so is $(Y,\Delta_Y)$.
    \item We further assume that the morphism $\cO_X \to f_*\cO_Y$ splits as a morphism of $\cO_X$-modules, then the converse implication also holds.
\end{enumerate}
\end{proposition}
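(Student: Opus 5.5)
Throughout we use criterion (b) of \cref{prop:p-qFS equivalent def}: $(X,\Delta)$ is purely $n$-quasi-$F^e$-split if and only if
\[
(\Phi^{S,e}_{X,K_X+\Delta,n})^* \colon \Hom_{W_n\cO_X}(Q^{S,e}_{X,K_X+\Delta,n}, W_n\omega_X) \to H^0(X,\sO_X)
\]
is surjective, equivalently $1$ lies in its image. The plan is to show that everything in this criterion is compatible with the finite flat morphism $W_nf\colon W_nY\to W_nX$, and then use the diagram of criterion (a') to move splittings up or down.

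\textbf{Step 1 (base change of the pushout square).} Since $f$ is étale, $f^*K_X$ is a canonical divisor $K_Y$ and $f^!\omega_X^\bullet\simeq f^*\omega_X^\bullet$. The corresponding Witt statement $(W_nf)^*W_n\omega_X\simeq W_n\omega_Y$ should follow from flatness and étaleness of $W_nf$ (\cite{LZ}*{Proposition A.8}) and Convention (II), together with the compatibility of the trace maps $T_n$. By \cref{etale-witt}, $(W_nf)^*$ carries $W_n\cI_S(D)$ and $F^e_*W_n\cI_S(p^eD)$ to the corresponding sheaves on $Y$, compatibly with $F^e$ and $R^{n-1}$. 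Pullback is right exact, so it preserves pushouts. Hence
\[
(W_nf)^*Q^{S,e}_{X,K_X+\Delta,n}\simeq Q^{f^*S,e}_{Y,K_Y+\Delta_Y,n},
\]
and this isomorphism is compatible with $\Phi$.

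\textbf{Step 2 (proof of (1)).} Take $\varphi\colon Q_X\to W_n\omega_X$ with $(\Phi_X)^*(\varphi)=1$. Pulling back gives
\[
(W_nf)^*\varphi \colon Q_Y \to W_n\omega_Y .
\]
Its composite with $\Phi_Y$ is $f^*$ of the standard map, so it corresponds to $f^*1=1\in H^0(Y,\sO_Y)$. Thus $(Y,\Delta_Y)$ is purely $n$-quasi-$F^e$-split.

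\textbf{Step 3 (proof of (2)).} Let $\psi\colon Q_Y\to W_n\omega_Y$ satisfy $(\Phi_Y)^*(\psi)=1$. Push it forward along $W_nf$ and compose with the trace $\mathrm{Tr}\colon f_*W_n\omega_Y\to W_n\omega_X$. By Step 1 and the projection formula, $f_*Q_Y=Q_X\otimes_{W_n\sO_X}f_*W_n\sO_Y$, so restricting along $Q_X\to f_*Q_Y$ gives
\[
\varphi\colon Q_X\to W_n\omega_X,\qquad (\Phi_X)^*(\varphi)=\mathrm{Tr}_{Y/X}(1)=\deg f\in H^0(X,\sO_X).
\]
This element may fail to be a unit, for instance when $p\mid\deg f$. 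We fix this with the given splitting $s\colon f_*\cO_Y\to\cO_X$. By duality, $s$ corresponds to an element $u\in H^0(f_*\omega_Y(-f^*K_X))=H^0(Y,\sO_Y)$ with $\mathrm{Tr}(u)=1$.

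The image of $(\Phi_Y)^*$ is a $W_nH^0(Y,\sO_Y)$-submodule containing $1$. Hence it contains $u$, realized by $[\tilde u]\cdot\psi$ where $\tilde u$ is the Teichmüller lift of $u$. Applying the trace construction to $[\tilde u]\psi$ produces some $\varphi$ with $(\Phi_X)^*(\varphi)=\mathrm{Tr}(u)=1$, so $(X,\Delta)$ is purely $n$-quasi-$F^e$-split.

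\textbf{Main obstacle.} The hardest step is the compatibility used in Step 3: the trace $f_*W_n\omega_Y\to W_n\omega_X$ must intertwine with the maps $T_n$, so that $(\Phi^*)$ on $Y$ followed by $\mathrm{Tr}$ equals $(\Phi^*)$ on $X$ after pushforward. I would verify this by Grothendieck duality for the finite morphism $W_nf$, using the functoriality $(\iota^e_{m,n})^!\circ(W_nf)^!\simeq(W_mf)^!\circ(\iota^e_{m,n})^!$ of condition $(*)$. If this becomes too heavy, an alternative is to work on the cokernel side: use (2) directly, lift $s$ to a $W_n\sO_X$-splitting of $W_n\sO_X\to f_*W_n\sO_Y$ via étaleness, and tensor it with the $Y$-diagram.
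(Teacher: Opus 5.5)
Your part (1) is the paper's argument: pull the splitting back along the étale map $W_nf$, using \cref{etale-witt} and the identification $W_n\omega_Y\simeq (W_nf)^*W_n\omega_X$ compatible with $T_n$.

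For part (2) you take a genuinely different, dual route. The paper never writes down a splitting on $X$. It uses criterion (c) of \cref{prop:p-qFS equivalent def} and considers the square formed by $H^d_\m(\Phi^{S,e}_{X,K_X+\Delta,n})\colon H^d_\m(\cO_X(K_X))\to H^d_\m(Q^{S,e}_{X,K_X+\Delta,n})$, the corresponding map on $Y$, and the natural vertical maps $\beta,\gamma$. This square commutes, which is essentially your Step 1. Moreover $\beta$ is injective, because the splitting of $\cO_X\to f_*\cO_Y$, tensored with $\cO_X(K_X)$, splits $\cO_X(K_X)\to f_*\cO_Y(K_Y)$. Injectivity of the bottom row then forces injectivity of the top row.

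Your construction (an element $u$ with $\mathrm{Tr}(u)=1$, then $[u]\psi$ pushed forward and composed with the trace) is the Matlis-dual form of this argument: surjectivity of $\mathrm{Tr}\colon H^0(Y,\cO_Y)\to H^0(X,\cO_X)$ is dual to injectivity of $\beta$.

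What each route buys:
\begin{itemize}
\item The paper applies criterion (c) separately on $X$ and on $Y$. So it never needs a compatibility between the Grothendieck trace of $W_nf$ and the maps $T_n$, which is exactly the step you flag as the main obstacle.
\item Your route produces an explicit splitting and uses only criterion (b). It therefore works verbatim in \cref{setting:define purely qFS}, without $R$ local or $X$ proper.
\end{itemize}

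The compatibility you need is true and not hard to establish.
\begin{itemize}
\item For finite étale $W_nf$ one has $(W_nf)^!\simeq (W_nf)^*$, and the Grothendieck trace becomes the algebraic trace of $(W_nf)_*W_n\cO_Y$ over $W_n\cO_X$.
\item That trace is natural in the coefficient module; apply this naturality to $T_n\colon\omega_X\to W_n\omega_X$.
\item By the Cartesian square for $W_nf$ used in \cref{etale-witt}, the trace reduces to $\mathrm{Tr}_{Y/X}$ after applying $R^{n-1}$.
\end{itemize}
With this written out, your proof is complete.

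One minor slip: the element to use is $[u]$, the Teichmüller lift of $u$, not $[\tilde u]$.
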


\begin{proof}
Since $f$ is \'etale, we have $f^*S = \rdown{\Delta_Y}$.
It follows from \stacksproj{0ATX} and \stacksproj{0FWI} that we have
\[
\omega_Y \cong f^* \omega_X.
\]
In particular, $K_Y \coloneqq f^*K_X$ is a canonical divisor on $Y$.
Similarly, since $W_nf$ is \'etale by \cite{LZ}*{Proposition~A.8}, we have the natural isomorphism 
\[
W_n\omega_Y \cong (W_nf)^*W_n\omega_X 
\]
which fits into the following commutative diagram:
\[
\begin{tikzcd}
    f^*\omega_X \arrow[r,"\sim"] \arrow[d,"\sim" sloped] & (W_nf)^*\omega_X \arrow[r,"(W_nf)^*T_n"] & (W_nf)^*W_n\omega_X \arrow[d,"\sim" sloped] \\
    \omega_Y \arrow[rr,"T_n"] && W_n \omega_Y.
\end{tikzcd}
\]

For (1), since we assume that $(X,\Delta)$ is $n$-quasi-$F^e$-split, there exists a $W_n\cO_X$-module homomorphism
\[
\varphi \colon F^e_*W_n\cI_S(p^e\Delta) \longrightarrow W_n\omega_X(-K_X)
\]
such that the following diagram commutes:
\begin{equation*}
\begin{tikzcd}
W_n\cI_S(\Delta) \arrow[rrr,"F^e"] \arrow[d,"R^{n-1}"'] & & & F^e_*W_n\cI_S(p^e\Delta) \arrow[d,"\varphi"] \\
\cO_X \arrow[r,"\alpha^*"] & \omega_X(-K_X)  \arrow[rr,"T_n(-K_X)"]  & & W_n\omega_X(-K_X).
\end{tikzcd}
\end{equation*}
Taking pullback by $W_nf$, we obtain the commutative diagram
\begin{equation*}
\begin{tikzcd}
W_n\cI_{f^*S}(f^*\Delta) \arrow[rrr,"F^e"] \arrow[d,"R^{n-1}"'] & & & F^e_*W_n\cI_{f^*S}(p^ef^*\Delta) \arrow[d,"f^*\varphi"] \\
\cO_Y \arrow[r,"\alpha^*"] & \omega_Y(-K_Y)  \arrow[rr,"T_n(-K_Y)"]  & & W_n\omega_Y(-K_Y)
\end{tikzcd}
\end{equation*}
by \cref{etale-witt}.
Therefore, the pair $(Y,\Delta_Y)$ is purely $n$-quasi-$F^e$-split, as desired.

For (2), we set $D:=K_X+\Delta$ and $D_Y:=K_Y+\Delta_Y$.
Consider the following commutative diagram
\[
\begin{tikzcd}
    H^d_\m(\cO_X(K_X)) \arrow[r,"\Phi^{S,e}_{X,D,n}"] \arrow[d,"\beta"] 
        & H^d_\m(Q^{S,e}_{X,D,n}) \arrow[d,"\gamma"] \\
    H^d_\m(\cO_Y(K_Y)) \arrow[r,"\Phi^{f^*S,e}_{Y,D_Y,n}"] 
        & H^d_\m(Q^{f^*S,e}_{Y,D_Y,n}).
\end{tikzcd}
\]
Since $\cO_X \to f_*\cO_Y$ splits, the homomorphism $\beta$ is injective.
In particular, if $(Y,\Delta_Y)$ is $n$-quasi-$F^e$-split, then so is $(X,\Delta)$.
\end{proof}

A Noetherian local ring $(R,\m)$ is said to be a \emph{$G$-ring} if the completion $R \to \widehat{R}$ is a regular map (\stacksproj{07PT}).

\begin{proposition}\label{ind-etale}
Let $(R,\m)$ be a Noetherian local $G$-ring and $\{(R_n,\m_n)\}_{n \geq 1}$ be a direct system of local $R$-algebras such that the ring homomorphism $\varphi_{m,n} \colon R_m \to R_n$ is a localization of an \'etale homomorphism with $\m_n=\m_m R_n$ for every $ n \ge m  \ge 1$.
We set $R_{\infty}:=\varinjlim_{n } R_n$ and $\m_{\infty}:=\varinjlim_{n} \m_n$.
\begin{enumerate}[label=\textup{(\arabic*)}]
    \item $(R_\infty,\m_{\infty})$ is a Noetherian local $G$-ring.
    \item If the residue field $R_\infty/\m_{\infty}$ is $F$-finite, then so is $R_{\infty}$.
\end{enumerate}
\end{proposition}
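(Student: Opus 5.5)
We first show that $(R_\infty,\m_\infty)$ is a Noetherian local ring whose completion coincides with $\widehat{R}$; everything else will follow from this. Each $R_m \to R_n$ is a localization of an étale map with $\m_mR_n=\m_n$, so it is flat and local, hence faithfully flat, and it is unramified. Therefore $R_\infty$ is a filtered colimit of local rings along flat local maps with $\m_\infty=\m_1R_\infty$, and $R_\infty$ is local with maximal ideal $\m_\infty$. To get Noetherianity, we use the standard criterion (\stacksproj{0AS8} or the analogous tag for directed colimits with flat transition maps): a directed colimit of Noetherian local rings along flat local homomorphisms such that $\m_m R_n=\m_n$ is Noetherian. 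Its proof passes through the completions. Concretely, the maps $R_m/\m_m^k \to R_n/\m_n^k$ are flat and finite over Artinian rings, and we control the residue field extensions. We then set $\widehat{R_\infty}=\varprojlim_k R_\infty/\m_\infty^k$ and check that it is flat over $\widehat{R}$, with $\widehat{R_\infty}/\m\widehat{R_\infty}$ equal to the residue field $\kappa_\infty=\varinjlim \kappa_n$, a separable algebraic extension of $\kappa=R/\m$.

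\textbf{Step 1 ($G$-ring).} Let $\kappa_\infty/\kappa$ be this separable algebraic extension. Choose a Cohen-type description: $\widehat{R_\infty}\cong \widehat{R}\widehat{\otimes}$ (the unramified extension with residue field $\kappa_\infty$). The map $\widehat{R}\to\widehat{R_\infty}$ is then formally étale, hence regular. Now consider the square formed by $R\to R_\infty$, $R\to\widehat R$, $R_\infty\to\widehat{R_\infty}$ and $\widehat R\to\widehat{R_\infty}$. We must show that $R_\infty\to\widehat{R_\infty}$ is regular. Each $R_n$ is essentially of finite type over the $G$-ring $R$, so it is a $G$-ring, and $R_n\to\widehat{R_n}$ is regular. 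The ring $\widehat{R_\infty}$ is also the completion of $\varinjlim_n \widehat{R_n}$. Regularity of fibers is checked at primes of $R_\infty$, each of which comes from some $R_n$. The fiber of $R_\infty\to\widehat{R_\infty}$ over $\fp$ is a filtered colimit (completed appropriately) of fibers of $R_n\to\widehat{R_n}$ base-changed along ind-étale maps. A cleaner route is to cite \stacksproj{0AS8}/\stacksproj{07PV}-type results: a local ring is a $G$-ring iff its formal fibers are geometrically regular. Here the formal fibers of $R_\infty$ are obtained from those of $R$ by base change along the formally étale map $\widehat R\to\widehat{R_\infty}$ followed by localization, so they remain geometrically regular. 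I expect this step to be the main obstacle, because I must make precise that $\widehat{R_\infty}\otimes_{R_\infty}\kappa(\fp)$ is a localization of $(\widehat{R}\otimes_R\kappa(\fp_0))\otimes_{\widehat R}\widehat{R_\infty}$. This needs the identity $R_\infty\otimes_R\widehat R\to\widehat{R_\infty}$ to be a localization/ind-étale. I would try to prove that identity first, at each finite level $n$: for $R_n$ essentially étale local over $R$ with trivial residue degree change up to separable extension, $R_n\otimes_R\widehat R\to\widehat{R_n}$ is a localization at a maximal ideal. This is the henselian property of $\widehat R$.

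\textbf{Step 2 ($F$-finiteness).} By \cite{Gabber}, or Kunz's theorem, a Noetherian local ring is $F$-finite iff it is a $G$-ring (or excellent) with $F$-finite residue field and $F$-finite completion. The completion is $F$-finite iff its residue field is, by the Cohen structure theorem. So $\widehat{R_\infty}$, a complete local ring with $F$-finite residue field $\kappa_\infty$, is $F$-finite. By Step 1, $R_\infty\to\widehat{R_\infty}$ is regular. We then apply the criterion that a Noetherian local $G$-ring whose completion is $F$-finite is $F$-finite (e.g. via $F_*R_\infty\otimes\widehat{R_\infty}\cong F_*\widehat{R_\infty}$, which uses regularity and relative Frobenius being an isomorphism for regular maps via Radu–André). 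Faithfully flat descent of finite generation then yields that $F_*R_\infty$ is finite over $R_\infty$.
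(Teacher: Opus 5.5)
Your overall architecture matches the paper's. There, (1) is proved by noting that each $R_n$ is a $G$-ring and rerunning the Stacks Project proofs that the henselization of a Noetherian local $G$-ring is again a Noetherian local $G$-ring (Tags 06LJ, 07QR). Part (2) follows from the Cohen structure theorem plus exactly the criterion you invoke, which is \cite{Hashimoto15}*{Corollary~20}, not Gabber or Kunz.

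\textbf{The failing step in Step~1.} You say the crux needs the identity that $R_\infty\otimes_R\widehat R\to\widehat{R_\infty}$ is a localization (or ind-\'etale). This is false whenever $\kappa_\infty/\kappa$ is infinite, which is precisely the case the paper uses ($R_\infty$ a localization of $R\otimes_{\F_p}\overline{\F}_p$). Your finite-level claim is correct: $\widehat{R_n}$ is the localization of $R_n\otimes_R\widehat R$ at the unique prime over $\m$, since $\widehat R$ is henselian. But in the colimit this only shows that $B:=\varinjlim_n\widehat{R_n}$ is a localization of $R_\infty\otimes_R\widehat R$. The ring $\widehat{R_\infty}$ is the completion of $B$, not a localization of it. For $R=\F_p[t]_{(t)}$ and $R_\infty=\overline{\F}_p[t]_{(t)}$ one gets $R_\infty\otimes_R\widehat R=B=\bigcup_l\F_{p^l}[[t]]\subsetneq\overline{\F}_p[[t]]$, and $\overline{\F}_p[[t]]$ is not even contained in $\operatorname{Frac}(B)$. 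So this route only moves the problem to the formal fibres of $B$.

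\textbf{The fix.} What you actually need holds formally and never involves $R_\infty\otimes_R\widehat R$. For $\fp\in\Spec R_\infty$ with $\fp_0=\fp\cap R$, one has $(\widehat R\otimes_R\kappa(\fp_0))\otimes_{\widehat R}\widehat{R_\infty}=\widehat{R_\infty}\otimes_{R_\infty}(R_\infty\otimes_R\kappa(\fp_0))$. Here $R_\infty\otimes_R\kappa(\fp_0)=\varinjlim_n R_n\otimes_R\kappa(\fp_0)$ is a colimit of essentially \'etale $\kappa(\fp_0)$-algebras, hence reduced and zero-dimensional, so its localization at $\fp$ is $\kappa(\fp)$. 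Thus $\widehat{R_\infty}\otimes_{R_\infty}\kappa(\fp)$ is a localization of $\widehat{R_\infty}\otimes_R\kappa(\fp_0)$. The latter is regular over $\kappa(\fp_0)$: it is the composite of $\kappa(\fp_0)\to\widehat R\otimes_R\kappa(\fp_0)$ (regular since $R$ is a $G$-ring) with a base change of $\widehat R\to\widehat{R_\infty}$. That last map is regular by Andr\'e's theorem, because $\widehat R$ is complete and the closed fibre $\kappa_\infty$ is separable algebraic over $\kappa$. Your phrase ``formally \'etale, hence regular'' needs exactly this input.

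You also skip passing from geometric regularity over $\kappa(\fp_0)$ to geometric regularity over $\kappa(\fp)$. This holds because $\kappa(\fp)/\kappa(\fp_0)$ is separable algebraic, but it must be said.

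\textbf{Step~2.} Your parenthetical is wrong as stated: Radu--Andr\'e gives \emph{flatness}, not bijectivity, of the relative Frobenius of a regular map; it already fails to be an isomorphism for $\F_p\to\F_p[x]$. For $R_\infty\to\widehat{R_\infty}$ the isomorphism $F_*R_\infty\otimes_{R_\infty}\widehat{R_\infty}\cong F_*\widehat{R_\infty}$ does hold, but for additional reasons:
\begin{enumerate}
\item The relative Frobenius is surjective. Its image $R_\infty\cdot\widehat{R_\infty}^{\,p}$ is a dense $\widehat{R_\infty}^{\,p}$-submodule of the finite $\widehat{R_\infty}^{\,p}$-module $\widehat{R_\infty}$, so Nakayama applies.
\item Its source is local, being integral over $\widehat{R_\infty}$ with local closed fibre.
\item A flat surjection out of a local ring is injective.
\end{enumerate}
With this repair your faithfully flat descent argument works; otherwise simply cite Hashimoto, as the paper does.
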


\begin{proof}
We first note that for every $n \ge 1$, $R_n$ is also a $G$-ring by \stacksproj{07PV}.
The assertion (1) follows from the similar argument as in the proof of \stacksproj{06LJ} and \stacksproj{07QR}.

For (2), it follows from the Cohen's structure theorem that the $\m_{\infty}$-adic completion of $R_{\infty}$ is $F$-finite.
Since $R_{\infty}$ is $G$-ring, the $F$-finiteness of $R_{\infty}$ follows from \cite{Hashimoto15}*{Corollary~20}.
\end{proof}

\begin{cor}\label{cor:asc-des ind-etale}
    With the notation as in \cref{setting:global setting}, we further assume that $X$ is normal and integral.
    Let $\{(R_l,\m_l)\}_{l \ge 1}$ be a direct system of local rings which are the localization of a finite \'etale $R$-algebras by their maximal ideals and $(R_{\infty},\m_{\infty})$ be its direct limit.
    We further assume that the residue field $R_{\infty}/\m_{\infty}$ is $F$-finite.
    Let $f \colon Y \coloneqq X \times_R \Spec R_{\infty} \to X$ be the projection and $\Delta_Y := f^*\Delta$ be the flat pullback of $\Delta$.
    Then the following hold:
\begin{enumerate}[label=\textup{(\arabic*)}]
    \item If $(X,\Delta)$ is purely $n$-quasi-$F^e$-split, then so is $(Y,\Delta_Y)$.
    \item We further assume that the morphism $R \to R_{\infty}$ splits as $R$-modules, then the converse implication also holds.
\end{enumerate}
\end{cor}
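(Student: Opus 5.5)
The plan is to reduce to the local-cohomology criterion of \cref{prop:p-qFS equivalent def}~(c): $(X,\Delta)$ is purely $n$-quasi-$F^e$-split if and only if
\[
H^d_{\m}(\Phi^{S,e}_{X,D,n})\colon H^d_{\m}(\sO_X(K_X)) \to H^d_{\m}(Q^{S,e}_{X,D,n}), \qquad D=K_X+\Delta,
\]
is injective. We then compare this map for $X$ and for $Y$.

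First we check that $(Y,\Delta_Y)$ lies in \cref{setting:global setting} over $(R_\infty,\m_\infty)$.

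\begin{enumerate}[label=\textup{(\roman*)}]
\item By \cref{ind-etale}, $R_\infty$ is an $F$-finite Noetherian local ring. Here the $G$-ring hypothesis on $R$ holds since $R$ is $F$-finite, hence excellent.
\item $R\to R_\infty$ is flat and ind-\'etale with $\m R_\infty=\m_\infty$. So $Y$ is normal and proper over $R_\infty$, and $K_Y:=f^*K_X$ is canonical, by flat base change of dualizing complexes.
\item By the flat-base-change argument of \cref{etale-witt} (equivalently \cref{completion}), together with \cite{LZ}*{Proposition A.8} giving that $W_nR\to W_nR_\infty$ is flat with $W_mR_\infty=W_mR\otimes_{W_nR}W_nR_\infty$, we get
\[
F^l_*W_m\cI_S(D)\otimes_{W_nR}W_nR_\infty\simeq F^l_*W_m\cI_{f^*S}(f^*D).
\]
Using the exact sequence defining $Q$ (as in the proof of \cref{completion}), this yields
\[
Q^{S,e}_{X,D,n}\otimes_{W_nR}W_nR_\infty\simeq Q^{f^*S,e}_{Y,D_Y,n}.
\]
\end{enumerate}

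Since $\m_nW_nR_\infty$ is primary to the maximal ideal, flat base change of local cohomology identifies $H^d_{\m_\infty}(\Phi^{f^*S,e}_{Y,D_Y,n})$ with
\[
H^d_{\m}(\Phi^{S,e}_{X,D,n})\otimes_{W_nR}W_nR_\infty .
\]

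For (1): injectivity is preserved by the flat base change $-\otimes_{W_nR}W_nR_\infty$. So injectivity for $X$ gives injectivity for $Y$, and \cref{prop:p-qFS equivalent def}~(c) finishes. Alternatively, mimic \cref{etale-cov}~(1) by pulling back the splitting $\varphi$ along $W_nf$, using $W_n\omega_Y\simeq (W_nf)^*W_n\omega_X$.

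For (2): argue as in \cref{etale-cov}~(2). We have the commutative square whose horizontal maps are $H^d_\m(\Phi)$ for $X$ and for $Y$, and whose vertical maps are the base change maps
\[
\beta\colon H^d_\m(\sO_X(K_X))\to H^d_{\m_\infty}(\sO_Y(K_Y)) = H^d_\m(\sO_X(K_X))\otimes_R R_\infty .
\]
Since $R\to R_\infty$ splits as $R$-modules, $\beta$ is injective. Hence injectivity of the bottom map forces injectivity of the top map.

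The main technical obstacle is the Witt-vector flat base change. We need that $W_nR\to W_nR_\infty$ is flat and compatible with $F^e_*$, i.e.\ that $F$ commutes with this ind-\'etale base change. This follows from the \'etale case (\cref{etale-witt}, \cite{LZ}*{Proposition A.8}) by passing to the colimit, since everything is compatible with filtered colimits of the finite \'etale $R_l$ and the modules are finitely presented. A secondary point is that the convention-compatible dualizing complexes $W_n\omega^\bullet$ on $R_\infty$ can be taken as base changes of those on $R$. This is not really needed here, because criterion (c) involves only local cohomology, not duality over $R_\infty$.
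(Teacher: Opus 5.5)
Your proposal is correct and follows the paper's proof closely. In both, $R_\infty$ is shown to be $F$-finite Noetherian by \cref{ind-etale}, and the Witt-vector data are base changed along the ind-\'etale map; the paper does this through the Cartesian square $W_mY\simeq W_mX\times_{\Spec W_mR}\Spec W_mR_\infty$, using \cite{LZ}*{Corollary A.12} at each finite level and then passing to the colimit. Part (2) is exactly the argument of \cref{etale-cov}~(2), using $H^d_{\m_\infty}=H^d_\m$ on $Y$.

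The only difference is in (1). The paper pulls back the splitting $\varphi$ at each finite level via \cref{etale-cov}~(1) and takes the direct limit; this is why it checks that the $(W_mg)^*W_m\omega^\bullet_R$ are dualizing complexes satisfying $(*)$. Your main route, flat base change of criterion (c), needs that compatibility only for $m=1$, namely to know that $K_Y=f^*K_X$ is a canonical divisor.
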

\begin{proof}
    Since $R_{l+1}$ is a localization of \'etale $R_l$-algebra by a maximal ideal (cf.~\stacksproj{00U7}), it follows from \cref{ind-etale} that $R_{\infty}$ is Noetherian and $F$-finite.
    We first prove that 
    \[
    W_m\omega_{R_{\infty}}^{\bullet} \coloneqq (W_mg)^*W_m\omega_R^{\bullet}
    \]
    is a dualizing complex of $W_m R_{\infty}$, where $g \colon \Spec R_{\infty} \to \Spec R$ is the natural morphism.
    Since $W_mR_l$ is the localization by a maximal ideal of an \'etale $W_mR$-algebra (\cite{LZ}*{Proposition A.8} and \eqref{eq: localization and Witt}), the maximal ideal $\m_{m,l}$ of the local ring $W_mR_l$ coincides with $\m_m \cdot W_mR_l$, where $\m_m$ is the maximal ideal of $W_mR$.
    Therefore, the maximal ideal 
    \[
    \m_{m,\infty} = \varinjlim_l \m_{m,l}
    \]
    of $W_mR_{\infty}$ coincides with $\m_{m} \cdot W_mR_{\infty}$.
    It then follows from \stacksproj{0AWD} that the flat pullback $W_m\omega_{R_{\infty}}^{\bullet} \coloneqq (W_mg)^*W_m\omega_R^{\bullet}$ is a dualizing complex, as claimed.
    We also note that the sequence $\{W_m\omega_{R_{\infty}}^{\bullet}\}_m$ satisfies the condition $(*)$ in Subsection \ref{subsec:Witt dualizing} by \stacksproj{0AA8} and \stacksproj{0ATQ}.

    We next show that the following diagram is Cartesian:
    \[
    \begin{tikzcd}
        W_m Y \arrow[r,"W_mf"] \arrow[d,"W_m\pi_Y"] & W_m X \arrow[d, "W_m\pi_X"] \\
        \Spec W_m R_{\infty} \arrow[r, "W_mg"] & \Spec W_m R,
    \end{tikzcd}
    \]
    where $\pi_X \colon X \to \Spec R$ and $\pi_Y \colon Y \to \Spec R_{\infty}$ are the natural morphisms.
    In order to prove this, we may assume that $X = \Spec A$ for some ring $A$.
    Then the above diagram is Cartesian by the following isomorphism 
    \begin{align*}
        W_m(A \otimes_R R_{\infty}) & \cong \varinjlim_l W_m(A \otimes_R R_l) \\
        & \cong \varinjlim_l (W_m A \otimes_{W_mR} W_mR_l) \\
        & \cong W_m A \otimes_{W_m R} W_m R_{\infty},
    \end{align*}
    where the second line follows from \cite{LZ}*{Corollary A.12} and the isomorphism \eqref{eq: localization and Witt}.
    
    As in \cref{setting:define purely qFS}, we define the dualizing complex $W_m\omega_Y^{\bullet}$ of $W_m Y$ by $W_m\omega_Y^{\bullet} \coloneqq (W_m\pi_Y)^{!}W_m\omega_{R_{\infty}}^{\bullet}$.
    It follows from \stacksproj{0AA8} that we have the natural isomorphism 
    \[
    W_m\omega_Y^{\bullet} \cong (W_mf)^*W_m \omega_{X}^{\bullet}.
    \]
    In particular, $K_Y \coloneqq f^*K_X$ is the canonical divisor of $Y$ associated to $W_1\omega_Y^{\bullet}$.
    
    Finally, we write $Y \xrightarrow{g_l} Y_l \coloneqq X \times_R \Spec R_l \xrightarrow{f_l} X$. 
    Then we have $(W_mf)^*\cF \cong \varinjlim_l (W_mg_l)^{-1}(W_mf_l)^* \cF$ for a coherent $W_mX$-module $\cF$ since one has 
    \[
    W_m Y \cong W_m X \times_{\Spec W_mR} \Spec (\varinjlim_l W_mR_l).
    \]

    For (1), since $(X, \Delta)$ is purely $n$-quasi-$F^e$-split, there is a homomorphism
    \[
    \varphi \colon F^e_*W_n\cI_S(p^e\Delta) \to W_n\omega_X(-K_X)
    \]
    which fits into the commutative diagram \eqref{eq:def:p-qFs}.
    Then, by \cref{etale-cov} (1), we obtain the following commutative diagram:
\[
\begin{tikzcd}
W_n\cI_{f_i^*S}(f_i^*\Delta) \arrow[rrr,"F^e"] \arrow[d,"R^{n-1}"'] & & &
F^e_*W_n\cI_{f_i^*S}(p^ef_i^*\Delta) \arrow[d,"(W_nf_i)^*\varphi"] \\
\cO_{X_i} \arrow[r,"\alpha^*"] &
\omega_{X_i}(-K_{X_i}) \arrow[rr,"T_n(-K_{X_i})"] & &
W_n\omega_{X_i}(-K_{X_i})
\end{tikzcd}
\]
    Applying the functor $(W_ng_i)^{-1}$ and taking the direct limit, we conclude that $(Y,\Delta_Y)$ is purely $n$-quasi-$F^e$-split, as desired.

    For (2), noting that $\m R_{\infty}$ is the maximal ideal $\m_{\infty}$ of $R_{\infty}$, we have $H^d_{\m_{\infty}}(\cF) \cong H^d_{\m}(\cF)$ for a sheaf of modules $\cF$ on $Y$.
    Then the assertion follows from the similar argument as in the proof of \cref{etale-cov} (2).
\end{proof}

\begin{prop}\label{prop:base change descent}
    Let $X$ be an equidimensional reduced projective scheme over an $F$-finite infinite field $k$ satisfying $\Serre{2}$ condition, and $\Delta \ge 0$ be a Mumford $\Q$-divisor on $X$.
    Suppose that $l$ be a field extension of $k$ which is $F$-finite.
    We further assume that 
    \begin{enumerate}[label=\textup{(\roman*)}]
    \item $X$ is a locally complete intersection over $k$ in codimension one,
    \item the base change $X_l \coloneqq X \times_k \Spec l$ is reduced, and 
    \item the flat pullback $\Delta_l \coloneqq f^*\Delta$ of $\Delta$ is a Mumford $\Q$-divisor with coefficients less than $1$, where $f \colon X_l \to X$ is the projection.
    \end{enumerate}
    If $(X_{l},\Delta_l)$ is $n$-quasi-$F^e$-split, then so is $(X,\Delta)$.
\end{prop}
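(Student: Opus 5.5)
The plan is to use criterion (c) of \cref{prop:p-qFS equivalent def}, with base ring $R=k$ and $\m=0$. Since $X$ is proper over the field $k$, $f^{-1}(\{\m\})=X$, so $H^d_{\m}(-)=H^d(X,-)$ with $d=\dim X$. Because the coefficients of $\Delta_l$ are less than $1$, we have $\rdown{\Delta_l}=0$, and hence also $S=\rdown{\Delta}=0$. Thus, writing $D\coloneqq K_X+\Delta$, it suffices to prove injectivity of
\[
H^d(\Phi^{0,e}_{X,D,n}) \colon H^d(X,\sO_X(K_X)) \to H^d(X,Q^{0,e}_{X,D,n}),
\]
knowing the analogous map on $X_l$ is injective. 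The whole argument then comes down to a commutative square comparing the two $\Phi$'s.

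\emph{Step 1: canonical divisors and base change.} Since $f$ is flat, $\omega_{X_l}^{\bullet}\simeq f^*\omega_X^{\bullet}$, which gives $\omega_{X_l}\simeq f^*\omega_X$. Assumptions (i) and (ii) ensure that $X_l$ is reduced, $\Serre{2}$, and Gorenstein in codimension one, and that $f^*K_X$ is a Mumford divisor. I will check that $f^*\sO_X(K_X)\simeq \sO_{X_l}(f^*K_X)$:
\begin{itemize}
\item both sheaves are $\Serre{2}$, since flat base change along a field extension preserves depth;
\item they agree on the big open subset where $K_X$ is Cartier.
\end{itemize}
Hence $K_{X_l}\coloneqq f^*K_X$ is a canonical Mumford divisor of $X_l$. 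Flat base change for cohomology then gives
\[
H^d(X_l,\sO_{X_l}(K_{X_l}))\simeq H^d(X,\sO_X(K_X))\otimes_k l,
\]
so the natural map $\beta\colon H^d(X,\sO_X(K_X))\to H^d(X_l,\sO_{X_l}(K_{X_l}))$ is injective.

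\emph{Step 2: comparison map on $Q$.} The inclusion $W_n\mathscr{K}_X\to f_*W_n\mathscr{K}_{X_l}$ restricts to maps
\[
W_n\sO_X(D)\to f_*W_n\sO_{X_l}(f^*D), \qquad F^e_*W_n\sO_X(p^eD)\to f_*F^e_*W_n\sO_{X_l}(p^ef^*D).
\]
This works because $f^*\sO_X(p^iD)\subseteq \sO_{X_l}(p^i f^*D)$, which follows from $\rdown{p^i f^*D}=f^*\rdown{p^iD}$. These maps commute with $F^e$ and $R^{n-1}$, so the universal property of the pushout \eqref{e-def-Q^{e,S}_D} yields a map $\gamma\colon Q^{0,e}_{X,D,n}\to f_*Q^{0,e}_{X_l,f^*D,n}$ compatible with the $\Phi$'s. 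Since $f$ is affine, taking $H^d$ gives a commutative square
\[
H^d(\Phi_{X_l})\circ\beta = H^d(\gamma)\circ H^d(\Phi_X).
\]
By hypothesis and \cref{prop:p-qFS equivalent def} applied to $(X_l,\Delta_l)$ over $l$, the map $H^d(\Phi_{X_l})$ is injective. Combined with the injectivity of $\beta$, the left-hand side is injective, so $H^d(\Phi_X)$ is injective, and $(X,\Delta)$ is $n$-quasi-$F^e$-split.

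\emph{Main obstacle.} Step 1, and in particular its compatibility with the Witt-level setup, is where I expect the real work. The conventions must be chosen consistently: the dualizing complex of $W_n l$ should be the flat pullback of that of $W_n k$, as in the proof of \cref{cor:asc-des ind-etale}, so that condition $(*)$ holds and the identification $K_{X_l}=f^*K_X$ is the one used in the definition of $\Phi_{X_l}$. Here $l/k$ need not be separable, so $W_n l$ need not be étale over $W_n k$, and the Frobenius pushforwards do not commute with base change. For this reason I avoid any claim that $Q$ commutes with base change: I only need the comparison map $\gamma$ and the injectivity of $\beta$, and $\beta$ involves only $\sO_X(K_X)$ on the first Witt level.
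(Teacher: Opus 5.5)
Your proposal is correct and follows essentially the paper's own route. You reduce via \cref{prop:p-qFS equivalent def}~(c) over $R=k$, where $H^d_\m=H^d(X,-)$, to a commutative square comparing $\Phi^{0,e}_{X,K_X+\Delta,n}$ with $\Phi^{0,e}_{X_l,K_{X_l}+\Delta_l,n}$, and you get injectivity of $\beta$ from the faithful flatness of $k\to l$; the paper says the same thing as a $k$-linear splitting of $k\hookrightarrow l$.

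Two small corrections are needed. First, $f^*K_X$ need not be a Mumford divisor for an arbitrary choice of $K_X$: this fails if $X_l$ is singular at a codimension-one point lying over a component of $K_X$. You therefore have to choose $K_X$ suitably, as the paper does using that $k$ is infinite (\cite{ST23}*{Lemma A.17}). Second, the equality $\rdown{p^if^*D}=f^*\rdown{p^iD}$ can fail when $l/k$ is inseparable and pullback multiplicities exceed $1$. However, the inequality $\rdown{p^if^*D}\ge f^*\rdown{p^iD}$ always holds, and that is all your construction of $\gamma$ needs.
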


\begin{proof}
    It follows from \cref{rem:on setting} that $(X,\Delta)$ and $(X_l, \Delta_l)$ satisfies the assumptions in \cref{setting:define purely qFS}.
    Since $k$ is infinite, we may choose a canonical Mumford divisor $K_X$ such that the flat pullback $f^*K_X$ is also a Mumford divisor (\cite{ST23}*{Lemma A.17}).
    We also note that $K_{X_l} \coloneqq f^*K_X$ is a canonical divisor on $Y$ (\stacksproj{0EA0} and \cite{hartshorne77}*{Proposition II.8.10}).
    
    Consider the following commutative diagram:
    \[
        \begin{tikzcd}
            H^d(X, \sO_X(K_X)) \arrow[rr,"\Phi^{0,e}_{X,K_X+\Delta,n}"] \arrow[d,"\beta"] && H^d(X, Q^{0, e}_{X,K_X+\Delta,n}) \arrow[d]\\
            H^d(X_l, \sO_{X_l}(K_{X_l})) \arrow[rr,"\Phi^{0,e}_{X_l,K_{X_l}+\Delta_l,n}"] && H^d( X_l, Q^{0,e}_{X_l, K_{X_l}+\Delta_l,n}).
        \end{tikzcd}
    \]
    Noting that the inclusion $k \into l$ splits as $k$-modules, the natural morphism 
    \[
    \sO_X(K_X) \to f_* \sO_{X_l}(K_{X_l}) \cong \sO_X(K_X) \otimes_k l
    \]
    splits as $\sO_X$-modules.
    This implies that the morphism $\beta$ is injective.
    Combining this with \cref{prop:p-qFS equivalent def} (c), we complete the proof of the proposition.
\end{proof}

\begin{proposition}\label{prop:index-one-cov}
With the notation as in \cref{setting:global setting}, we further assume that
\begin{enumerate}[label=\textup{(\alph*)}]
    \item $X$ is normal and integral,
    \item $\Delta$ has standard coefficients, and 
    \item $N(K_X+\Delta)$ is principal, where $N$ is the Cartier index of $K_X+\Delta$.
\end{enumerate}
Let $a,r \ge 0$ be the integers such that $r$ is not divisible by $p$ and $N=p^ar$.
Then there exists a finite separable morphism $f \colon Y \to X$ from a normal integral scheme $Y$ which satisfies the following properties:
\begin{enumerate}[label=\textup{(\arabic*)}]
    \item $\Delta_Y \coloneqq f^*\Delta - \mathrm{Ram}_f$ has standard coefficients,
    \item the principal index of $K_Y+\Delta_Y$ (that is, the smallest number $n>0$ with $n(K_Y+\Delta_Y)$ is principal) is $p^a$, and
    \item $(X,\Delta)$ is purely $n$-quasi-$F^e$-split if and only if so is $(Y,\Delta_Y)$,
\end{enumerate}
where $\mathrm{Ram}_f$ is the ramification divisor of $f$ (see \cite{schwedetucker14}*{Definition 4.5} for the definition).
\end{proposition}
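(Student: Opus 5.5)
The cover $Y$ will be the index-$r$ cyclic cover attached to $p^a(K_X+\Delta)$. By (c) write $N(K_X+\Delta) = \mathrm{div}(h)$ for some $h \in K(X)^*$. Set $L \coloneqq p^a(K_X+\Delta)$, which is $\Q$-Weil and in general not integral, so we work with the $\Z$-Weil divisor $M\coloneqq \rdown{L}$ together with the fractional part. Put
\[
Y \coloneqq \Spec_X \bigoplus_{i=0}^{r-1} \sO_X(-\rdown{iL})\, t^i
\]
normalized, with $t^r = h^{-1}$ (the sign conventions to be fixed). Since $p \nmid r$, the extension $K(Y)/K(X)$ is Kummer of degree dividing $r$. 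It is therefore separable, and $f$ is tamely ramified, with ramification only along the components of $\Delta$ with nonzero coefficient and along the non-Cartier locus.

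For (1) and (2) we compute locally at codimension one points of $X$. At a prime $E$ with coefficient $1-\frac1m$ in $\Delta$, the ramification index of $f$ over $E$ divides the denominator of the coefficient of $E$ in $p^a\Delta$, which is $m$ up to a $p$-power. The tame Hurwitz formula $K_Y = f^*K_X + \mathrm{Ram}_f$ then gives coefficients of $\Delta_Y = f^*\Delta-\mathrm{Ram}_f$ of the form $1-\frac{m'}{m}e$ with $e\mid m$, hence again standard. Moreover $K_Y+\Delta_Y = f^*(K_X+\Delta)$, and by construction $p^a f^*(K_X+\Delta) = f^*L$ becomes principal, since $f^*L = \mathrm{div}(t^{-1})$ up to the chosen normalization. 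For minimality, suppose $p^b(K_Y+\Delta_Y)$ were principal with $b<a$. Taking norms, $r p^b(K_X+\Delta)$ would be principal up to $r$-torsion, contradicting minimality of $N$. Here one has to check that the norm argument respects the $p$-part of the index, which is true because $\deg f$ divides a power of $r$.

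For (3) we use \cref{etale-cov}-style arguments, since $f$ is not étale. Let $U\subseteq X$ be the big open subset where $X$ is regular and $\Delta$ is snc-free (codimension one points suffice). By the $\Serre{2}$ property (\cref{lem:Witt divisorial S2}) and \cref{lem:S2 hull}, all Hom sheaves and the pushout data $Q^{S,e}$ are determined on big opens. The aim is to establish, analogously to \cref{etale-witt}, a trace-compatible isomorphism
\[
(W_nf)_*W_n\cI_{S_Y}(p^e(K_Y+\Delta_Y)) \supseteq W_n\cI_S(p^e(K_X+\Delta)) \text{ as a direct summand,}
\]
induced by the $\mu_r$-grading on $f_*\sO_Y$. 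The degree zero component of the grading recovers the $X$-sheaves precisely because $K_Y+\Delta_Y=f^*(K_X+\Delta)$, and the grading is preserved by $F$, $V$ and $R$. The grading is compatible with Witt vectors because the Teichmüller lift $[t]$ is multiplicative, and $F$ multiplies degrees by $p$, which is invertible mod $r$. It follows that $H^d_\m(\Phi^{S,e}_{X,K_X+\Delta,n})$ is the degree-$0$ part of $H^d_\m(\Phi^{S_Y,e}_{Y,K_Y+\Delta_Y,n})$, with compatible splittings on both sides. Injectivity in \cref{prop:p-qFS equivalent def}~(c) therefore descends from $Y$ to $X$. For the ascent, $F$ permutes the graded pieces (by multiplication by $p$ on $\Z/r$), so injectivity on $Y$ reduces to injectivity on each graded piece. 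Each piece $\sO_X(K_X - \rdown{iL}\cdots)$ is a twist to which \cref{lem:pqFs to zero map} applies, in the form of condition (d) with $E$ replaced by the appropriate Mumford divisor.

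The main obstacle is this last step: the precise graded decomposition of the Witt divisorial sheaves $(W_nf)_*W_n\cI_{S_Y}(D_Y)$ into $X$-Witt divisorial sheaves, and the fact that $F$ moves degree $i$ to degree $pi$. I would first try to prove the decomposition on the regular big open subset, where $f$ is a Kummer cover of a DVR and everything is explicit via Teichmüller lifts. I would then extend by $\Serre{2}$-hulls.
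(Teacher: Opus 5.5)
Your construction, the Hurwitz computation in (1), and the descent half of (3) agree in substance with the paper. The paper first passes to a component of $X\times_{\F_p}\Spec\F_q$ (via \cref{etale-cov}) to get a primitive $r$-th root of unity. It then uses the Galois action, of order prime to $p$, to split $\cO_X(D_X-S_X)\to f_*\cO_Y(D_Y-S_Y)$ and $Q^{S_X,e}_{X,D_X,n}\to f_*Q^{S_Y,e}_{Y,D_Y,n}$, and your degree-zero projection provides exactly this splitting. Your norm argument also supplies the minimality in (2), which the paper does not spell out.

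There is a gap in the ascent $X\Rightarrow Y$. Use the paper's convention $\mathrm{div}_Y(t)=f^*L$ with $L=p^a(K_X+\Delta)$. Multiplication by $[t^i]$ on the source, and by $F^e_*[t^{ip^e}]$ on the target, identifies the degree-$i$ piece of $f_*$ of the pushout diagram for $Q^{S_Y,e}_{Y,D_Y,n}$ with the pushout diagram for $Q^{S,e}_{X,D_X+iL,n}$. Its $j$-th Witt component lives in $\cO_X(\lfloor p^j(D_X+iL)\rfloor-S)$, so the twist is by the $\Q$-divisor $iL$, not by $\lfloor iL\rfloor$. Suppose some coefficient $1-\frac1m$ of $\Delta$ has prime-to-$p$ part of $m$ not dividing $i$. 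Then $iL$ is not a Mumford divisor, and the fractional part of $(1+ip^a)\Delta$ differs from that of $\Delta$. \cref{lem:pqFs to zero map} only treats $\Delta+E$ with $E$ integral, so it does not apply to these pieces as you claim.

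The step can be repaired within your approach, but it needs an extra idea.
\begin{itemize}
\item Because $\Delta$ has standard coefficients, any fractional part of a multiple of $1-\frac1m$ is at most $1-\frac1m$.
\item Hence $D_X+iL=K_X+\Delta_i+E_i$ with $E_i$ a Mumford divisor, $\lfloor\Delta_i\rfloor=S$ and $\Delta_i\le\Delta$.
\item The inclusions $W_n\cI_S(\Delta_i)\subseteq W_n\cI_S(\Delta)$ and $F^e_*W_n\cI_S(p^e\Delta_i)\subseteq F^e_*W_n\cI_S(p^e\Delta)$ are compatible with $F^e$ and $R^{n-1}$. So restricting a splitting of $(X,\Delta)$ shows $(X,\Delta_i)$ is purely $n$-quasi-$F^e$-split.
\item Now \cref{lem:pqFs to zero map} applies to $(X,\Delta_i)$ with the Mumford divisor $K_X+E_i$, giving injectivity on the degree-$i$ piece.
\end{itemize}

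The paper instead never looks at the nonzero degrees.
\begin{itemize}
\item The image of $(\Phi^{S_Y,e}_{Y,D_Y,n})^*$ is an ideal of $H^0(Y,\cO_Y)$.
\item Its image under the trace $\alpha^*$ is all of $H^0(X,\cO_X)$, because $Q_X\to f_*Q_Y$ splits.
\item After localizing so that $H^0(X,\cO_X)=R$, the ring $H^0(Y,\cO_Y)=\bigoplus_m H^0(X,\cO_X(mp^aD_X))t^m$ is local.
\item The trace does not map its maximal ideal onto $R$ \cite{CST}*{Lemma~2.10}, so the image ideal is the unit ideal.
\end{itemize}
As written, your ascent argument does not go through without one of these additions.
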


\begin{proof}
Let $\F_{q}$ be a finite field which contains a primitive $r$-th root of unity and $X'$ be a connected component of $X \times_{\F_p} \Spec \F_{q}$.
Noting that the natural morphism $\phi \colon X' \to X$ is finite and \'etale, the pair $(X',\phi^*\Delta)$ also satisfies the assumptions of the proposition.
Moreover, it follows from \cref{etale-cov} that $(X,\Delta)$ is purely $n$-quasi-$F^e$-split if and only if so is $(X'\phi^*\Delta)$.
Since $\phi^*(K_X+\Delta) = K_{X'} + \phi^*\Delta$, it follows from \cref{etale-cover-principal-index} that the principal index of $K_{X'}+\phi^*\Delta$ is also $p^ar$.
After replacing $X$ by $X'$, we may assume that $H^0(\cO_X)$ contains a primitive $r$-th root of unity.

We will prove that the index-one cover 
\[
f \colon Y=\mathbf{Spec}_X\left(\bigoplus_{m=0}^{r-1}\sO_X(mp^a(K_X+\Delta))t^m \right) \to X
\]
of $p^a(K_X+\Delta)$ satisfies the properties (1), (2) and (3).
The assertion (2) follows from
\[
p^a(K_Y+\Delta_Y) = p^af^*(K_X+\Delta) = \div_Y(t).
\]

For (1), we write $\Delta=\Delta^{=1}+\sum (1-\tfrac{1}{m_i})E_i$.
Take integers $a_i,r_i \ge 0$ so that $m_i=p^{a_i}r_i$ and $r_i$ is not divisible by $p$. 
We compute the coefficients of $\Delta_Y$.
Since the denominator of $\operatorname{ord}_{E_i}(p^a\Delta)$ is $r_i$, we have
\[
f^*E_i = r_i(F_{i,1}+\cdots+F_{i,l_i})
\]
for some prime divisors $F_{i,1},\ldots,F_{i,l_i}$ (cf. \cite{Yoshikawa22}*{Lemma~4.12}).
As in the proof of \cite{hartshorne77}*{Proposition IV 2.2}, if the ramification index $e_F$ at a prime divisor $F \subseteq Y$ is coprime to $p$, then we have $\mathrm{ord}_F(\mathrm{Ram}_f) = e_F-1$.
Therefore, if $m_i \neq 1$, then
\[
\operatorname{ord}_{F_{i,j}}(\Delta_Y)
= - (r_i-1) + \Bigl(1-\tfrac{1}{m_i}\Bigr) r_i
= 1 - \tfrac{1}{p^{a_i}}.
\]
Furthermore, since $f$ is \'etale at the generic points of $\Delta^{=1}$, we conclude that $\Delta_Y$ has standard coefficients.

For (3), we set $S_X = \rdown{\Delta}$ and $S_Y = \rdown{\Delta_Y}$.
It follows from the proof of (2) that we have $S_Y=f^*S_X$.
We also set $D_X=K_X+\Delta$ and $D_Y=K_Y+\Delta_Y$.
Since we have $K_Y=f^*K_X + \mathrm{Ram}_f$ (\cite{schwedetucker14}*{Proposition 4.8}), one has $D_Y = f^*D_X$.

Consider the following commutative diagram
\[
\begin{tikzcd}
    H^d_\m(\cO_X(D_X-S_X)) \arrow[r] \arrow[d,"\alpha"'] &
        H^d_\m(Q^{S_X,e}_{X,D_X,n}) \arrow[d,"\beta"] \\
    H^d_\m(\cO_Y(D_Y-S_Y)) \arrow[r] &
        H^d_\m(Q^{S_Y,e}_{Y,D_Y,n}).
\end{tikzcd}
\]
Noting that $f$ is a Galois cover of degree not divisible by $p$, it follows from the similar argument as in \cite{TWY}*{Proposition~3.21} that the natural maps
\[
\cO_X(D_X-S_X) \to f_*\cO_Y(D_Y-S_Y) \ \textup{and }
Q^{S_X,e}_{X,D_X,n} \to f_* Q^{S_Y,e}_{Y,D_Y,n}
\]
split.
Thus the vertical maps $\alpha$ and $\beta$ in the above commutative diagram are injective.
In particular, if $(Y,\Delta_Y)$ is purely $n$-quasi-$F^e$-split, then so is $(X,\Delta)$.

For the converse implication, we assume that $(X,\Delta)$ is purely $n$-quasi-$F^e$-split.
For a maximal ideal $\n \subseteq A \coloneqq H^0(X,\sO_X)$, we write $X_\n \coloneqq X \times_{A} \Spec A_\n$ and $Y_\n \coloneqq Y \times_A \Spec A_\n$.
We note that the induced morphism $Y_\n \to X_\n$ is also the index one covering of $p^a(K_{X_{\n}}+\Delta|_{X_\n})$.
Moreover, it follows from \cref{prop:p-qFS equivalent def} (b) that $(X, \Delta)$ (resp.~$(Y,\Delta)$) is purely $n$-quasi-$F^e$-split if and only if its localization by every maximal ideal $\n \subseteq A$ is purely $n$-quasi-$F^e$-split.
Therefore, after replacing $X$, $Y$ and $R$ by $X_{\n}$, $Y_{\n}$ and $A_{\n}$, respectively, we may assume that $H^0(X,\sO_X)=R$.

Since $(X,\Delta)$ is purely $n$-quasi-$F^e$-splitting, the composition
\[
H^d_\m(\cO_X(K_X)) \xrightarrow{\alpha} H^d_\m(\cO_Y(K_Y))
  \xrightarrow{\ \Phi^{S_Y,e}_{Y,D_Y,n}\ } H^d_\m(Q^{S_Y,e}_{Y,D_Y,n})
\]
is injective.
Taking Matlis duals, we obtain a surjection
\[
\Hom_{W_nY}\!\bigl(Q^{S_Y,e}_{Y,D_Y,n},\,W_n\omega_Y \bigr)
\longrightarrow H^0(Y, \cO_Y) \xrightarrow{\ \alpha^*\ } H^0(X, \cO_X).
\]
Thus it suffices to show that for every maximal ideal $\n \subset H^0(Y, \cO_Y)$ we have $\alpha^*(\n)\neq H^0(X, \cO_X)$.

Since one has
\[
H^0(Y, \cO_Y)=\bigoplus_{m=0}^{r-1} H^0(X, \cO_X(mp^aD_X))\,t^m,
\]
it is a local ring with the maximal ideal
\[
\n = \m \oplus H^0(X, \cO_X(p^aD_X))\,t \oplus \cdots \oplus H^0(X, \cO_X((r-1)p^aD_X))\,t^{r-1}.
\]
Since $\alpha^*$ coincides with the trace map for the finite extension $H^0( \cO_X) \to H^0(\cO_Y)$, 
the claim follows from \cite{CST}*{Lemma~2.10}.
Hence $(Y,\Delta_Y)$ is purely $n$-quasi-$F^e$-split, as required.
\end{proof}

\subsection{Normalization}
A reduced ring $A$ is said to be \emph{weakly normal} (resp.~\emph{seminormal}) if the weak normalization (resp.~seminormalization) of $A$ in its total ring of fractions coincides with $A$ itself (see \cite{RRS96}*{Introduction} for the definition of the weak normalization and seminormalization).

\begin{proposition}\label{prop:normalization-qFs}
With the notation as in \cref{setting:define purely qFS}, we further assume that $X=\Spec A$ for some ring $A$.
If $X$ is quasi-$F$-split, then $A$ is weakly normal.
\end{proposition}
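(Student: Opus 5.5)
The plan is to use the Frobenius criterion for weak normality in positive characteristic, and to produce the required elements from the quasi-$F$-splitting by applying the splitting map to Teichm\"uller lifts.

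\textbf{Step 1: reduce to a statement about $p$-th powers.} For a reduced Noetherian $\F_p$-algebra $A$ with total ring of fractions $K$, $A$ is weakly normal if and only if every $b\in K$ with $b^p\in A$ already lies in $A$ (cf.~\cite{RRS96}). Such a $b$ is automatically integral over $A$, so this is the usual characterization of the weak normalization in characteristic $p$. It therefore suffices to show that every $b \in K$ with $a\coloneqq b^p\in A$ belongs to $A$.

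\textbf{Step 2: obtain a splitting map landing in $A$.} Assume $X$ is $n$-quasi-$F$-split, i.e.\ $\Delta=0$, $S=0$ and $e=1$. By \cref{rem:compare with original definition}, the condition is equivalent to the existence of a $W_nA$-linear map
\[
\phi\colon F_*W_nA \longrightarrow A \quad\text{with}\quad \phi\circ F=R^{n-1}.
\]
Here $W_nA$ acts on $A$ through $R^{n-1}$. In particular, for $c\in A$ the Teichm\"uller lift $[c]$ acts on $A$ by multiplication by $c$, while it acts on $F_*W_nA$ by multiplication by $[c]^p=[c^p]$. Linearity therefore gives
\[
c\cdot\phi(F_*x)=\phi\bigl(F_*([c^p]x)\bigr) \quad \text{for all } c\in A,\ x\in W_nA.
\]

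\textbf{Step 3: apply $\phi$ to the lift of $a$.} Choose a non-zerodivisor $c\in A$ with $cb\in A$; this is possible because $b\in K$. Then $[cb]\in W_nA$, and
\[
F([cb]) = [c^pb^p] = [c^p][a].
\]
Applying $\phi$ and using $\phi\circ F=R^{n-1}$ together with the linearity identity from Step 2, I get
\[
cb = R^{n-1}([cb]) = \phi\bigl(F_*([c^p][a])\bigr) = c\cdot\phi\bigl(F_*[a]\bigr).
\]
Since $c$ is a non-zerodivisor in $K$, this yields $b=\phi(F_*[a])\in A$. By Step 1, $A$ is weakly normal.

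The main point requiring care is Step 2: checking precisely that the splitting supplied by \cref{rem:compare with original definition} is $W_nA$-linear with the module structures described above, in particular that $[c]$ acts on the source by $[c^p]$ and on the target by $c$. I should also confirm that the $p$-th power criterion in Step 1 is stated in \cite{RRS96} (or elsewhere) for arbitrary reduced rings and not only for domains.
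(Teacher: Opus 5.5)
Your proposal is correct and follows the paper's proof essentially verbatim. The paper also reduces, via \cite{RRS96}, to showing that an element of the normalization whose $p$-th power lies in $A$ already lies in $A$; it clears denominators by some $a\in A^\circ$ and uses $W_nA$-linearity of the splitting on Teichm\"uller lifts to get $a\cdot\varphi(F_*[x^p])=\varphi(F_*[ax]^p)=ax$, hence $x=\varphi(F_*[x^p])\in A$. Your two flagged concerns are not issues: working with $b$ in the total ring of fractions is equivalent (such $b$ is integral over $A$), and the paper applies the same $p$-th power criterion from \cite{RRS96} to arbitrary reduced rings.
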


\begin{proof}
Since $A$ is quasi-$F$-split, it follows from \cref{rem:compare with original definition} that there exists an integer $n \geq 1$ and a $W_nA$-module homomorphism 
\[
\varphi \colon F_*W_nA \to A
\]
such that $\varphi(F_*1)=1$.
In order to prove that $A$ is weakly normal, it suffices to show that if $x \in A^n$ satisfies $x^p \in A$, then $x \in A$ by \cite{RRS96}*{Theorem~4.3,~6.8}.

Let $x \in A^n$ with $x^p \in A$.
Take $a \in A^\circ$ such that $ax \in A$.
Then
\[
a \cdot \varphi(F_*[x]^p) = \varphi(F_*[ax]^p) = ax.
\]
Since $a$ is a non-zero divisor, it follows that $\varphi(F_*[x]^p) = x \in A$, as desired.
 \qedhere
\end{proof}

\begin{proposition}\label{prop:normalization-qFs-pair}
With the notation as in \cref{setting:define purely qFS}, let $\nu \colon X^n \to X$ be the normalization of $X$.
If $(X,\Delta)$ is purely quasi-$F^e$-split, then so is $(X^n,C+\nu^*\Delta)$, where $C$ is the conductor divisor.
\end{proposition}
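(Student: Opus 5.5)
The plan is to use the criterion of \cref{prop:p-qFS equivalent def}~(b). We will produce a splitting on $X^n$ by restricting a splitting on $X$ to the conductor ideal and then using the duality of $\nu$. Since purely $n$-quasi-$F^e$-splitting can be checked locally on the base (via (b) and localization at maximal ideals of $H^0(\sO_X)$), we may assume $X=\Spec A$ is affine.

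\textbf{Divisors.} Set $S^n \coloneqq \rdown{C+\nu^*\Delta}=C+\nu^*S$, which is reduced because $S$ is a Mumford divisor and so shares no components with the non-normal locus. Take $K_{X^n}=\nu^*K_X-C$. This is justified because $\omega_{X^n}\simeq \cHom_{\sO_X}(\nu_*\sO_{X^n},\omega_X)$, which equals $\nu_*\sO_{X^n}(-C)\cdot\omega_X$ in codimension one, using the Gorenstein property of $X$ in codimension one ($\Gorenstein$). Then $K_{X^n}+C+\nu^*\Delta=\nu^*(K_X+\Delta)$.

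\textbf{Key inclusion.} For each $i$ we compare $\nu_*\sO_{X^n}(-S^n+p^i(C+\nu^*\Delta))$ with $\sO_X(-S+p^i\Delta)$. The relevant sheaf is the conductor-twisted one,
\[
\nu_*\sO_{X^n}\bigl(-C-\nu^*S+\rdown{p^i\nu^*\Delta}+p^iC\bigr)\cdot \sO_{X^n}(-(p^i-1)C).
\]
We will show that this lies in $\sO_X(-S+p^i\Delta)$ because $\sO_{X^n}(-C)\subseteq \sO_X$. Consequently multiplication by Teichm\"uller lifts of conductor elements, together with the natural inclusion $W_n\nu_*\mathscr{K}_{X^n}=W_n\mathscr{K}_X$, gives a map from the Witt divisorial sheaves on $X^n$ to those on $X$. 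The precise statement we aim for: there is a natural inclusion
\[
\nu_*W_n\cI_{S^n}(C+\nu^*\Delta)\hookrightarrow W_n\cI_S(\Delta)\otimes(\text{conductor twist}),
\]
and its pushforward is compatible with $F^e$ and $R^{n-1}$. The cleanest route is to carry out the verification on the big open subset where $X$ is nodal or regular in codimension one and then extend using $\Serre{2}$ (\cref{rem:Serre condition basic}), using \cref{lem:S2 hull} to pass between hulls.

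\textbf{Local cohomology criterion.} It is cleaner to use criterion (c) or (d) after localizing at a point and taking $X$ local and proper over itself. \cref{prop:p-qFS equivalent def}(d) says purely $n$-quasi-$F^e$-splitting is equivalent to $H^d_\m(R^{n-1})$ vanishing on $K^e_{K+\Delta,n}$. We will build a commutative diagram of maps
\[
H^d_\m(W_m\cI_{S^n}(K_{X^n}+C+\nu^*\Delta))\to H^d_\m(W_m\cI_S(K_X+\Delta)),
\]
coming from the trace of $\nu$. By duality these correspond to $\nu_*W_m\omega_{X^n}=\cHom(\nu_*W_m\sO_{X^n},W_m\omega_X)$ and \cref{lem:adjoint and duality}. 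The diagram must commute with $F^e$ and $R$, so that it sends $K^e_{n}(X^n)$ into $K^e_n(X)$. If, in addition, the map at level $m=1$ is injective, the vanishing of $R^{n-1}$ on $X$ forces vanishing on $X^n$. The level-one map is $H^d_\m(\sO_{X^n}(K_{X^n}))=H^d_\m(\nu_*\omega_{X^n})\to H^d_\m(\omega_X)$, which is Matlis dual to $H^0(\sO_X)\to H^0(\sO_{X^n})$. Injectivity of the former therefore amounts to this restriction map being injective, which holds because $X$ is reduced.

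\textbf{Main obstacle.} The hardest step is constructing the Witt-level trace maps $\nu_*W_m\cI_{S^n}(\cdots K_{X^n}\cdots)\to W_m\cI_S(\cdots K_X\cdots)$ compatibly with $F^e$, $V$ and $R$. The first approach to try is to define them as inclusions of subsheaves of $W_m\mathscr{K}_X$: after twisting by $-K$, the condition $\varphi_i\in \sO_{X^n}(-C+p^i\nu^*D)$ should imply $\varphi_i\in\sO_X(p^iD)$, using $\sO_{X^n}(-C)\subseteq \sO_X$ componentwise. Inclusions automatically commute with $F$, $V$ and $R$. What must be checked is that the coefficient $-C$ rather than $-p^iC$ still suffices at each level $i$, and this is exactly where the choice $S^n=C+\nu^*S$ is used.
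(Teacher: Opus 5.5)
There is a genuine gap at the decisive step of your local cohomology argument. Your inclusions are fine. With $K_{X^n}=\nu^*K_X-C$, one has $\nu_*W_m\cI_{C+\nu^*S}(\nu^*D)\subseteq W_m\cI_S(D)$ inside $W_m\mathscr{K}_X$ because $\nu_*\sO_{X^n}(-C)\subseteq\sO_X$; this is exactly the inclusion the paper uses.

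The problem is the last step. You need the level-one map $\tau_1\colon H^d_\m(\nu_*\omega_{X^n})\to H^d_\m(\omega_X)$ to be injective, and you justify this by the injectivity of its Matlis dual $H^0(\sO_X)\to H^0(\sO_{X^n})$. But Matlis duality exchanges injections and surjections: $\tau_1$ is injective if and only if $\widehat{H^0(\sO_X)}\to\widehat{H^0(\sO_{X^n})}$ is surjective. In the local affine case you reduced to, this map is $\widehat{R}\to\widehat{R^n}$, which is surjective only when $R$ is normal. So $\Ker\tau_1\neq 0$ in every case the proposition is about. For the node $k[[x,y]]/(xy)$, for instance, $\Ker\tau_1$ is dual to $R^n/R\simeq k$. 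Consequently $\tau_1(R^{n-1}\alpha)=0$ does not give $R^{n-1}\alpha=0$.

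The missing idea is to use the module structure over the normalization. The image $N:=R^{n-1}(K^e_n(X^n))$ is a $W_n(R^n)$-submodule of $H^d_\m(\omega_{X^n})$, hence an $R^n$-submodule. Moreover, $\Ker\tau_1$ contains no nonzero $R^n$-submodule. Under local duality, $b\in\widehat{R^n}$ corresponds to the functional $z\mapsto\theta(\tau_1(bz))$, where $\theta\colon H^d_\m(\omega_R)\simeq E$ and $E$ is the injective hull of the residue field. Hence $\tau_1(R^n z)=0$ forces $z=0$.

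Equivalently, you can stay with criterion (b), as you first announced. Dualizing your inclusions gives a map $\Hom(Q^{S,e}_{X,D,n},W_n\omega_X)\to\Hom(Q^{S^n,e}_{X^n,\nu^*D,n},W_n\omega_{X^n})$ compatible with $H^0(\sO_X)\hookrightarrow H^0(\sO_{X^n})$, which sends $1\mapsto 1$. Only $1$ needs to lie in the image on $X^n$, not all of $H^0(\sO_{X^n})$.

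This is in substance the paper's proof, done at the level of sheaf maps. It composes the splitting $\varphi$ with $\nu_*F^e_*W_n\cI_{C+\nu^*S}(p^e\nu^*\Delta)\subseteq F^e_*W_n\cI_S(p^e\Delta)$. It then transports the result through $\cHom_{\sO_X}(\nu_*(-),W_n\omega_X)\simeq\nu_*\cHom_{\sO_{X^n}}(-,W_n\omega_{X^n})$, absorbing the twist via $W_n\omega_{X^n}(-\nu^*K_X)=W_n\omega_{X^n}(-K_{X^n}-C)\subseteq W_n\omega_{X^n}(-K_{X^n})$. Finally it checks commutativity at generic points, using that the target is torsion free.

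Separately, you never show that $C$ is reduced; disjointness of $C$ from $\nu^*S$ is not enough. Without reducedness, $(X^n,C+\nu^*\Delta)$ does not satisfy \cref{setting:define purely qFS}(i), so the conclusion is not even well posed. The paper obtains reducedness from \cref{prop:normalization-qFs}: $X$ is weakly normal, hence seminormal, so the conductor divisor is reduced.
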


\begin{proof}
Since $X$ is weakly normal (\cref{prop:normalization-qFs}), it is seminormal, and thus the conductor divisor $C$ is reduced (\cite{Tra70}*{Lemma 1.3}).
Set $S:=\rdown{\Delta}$ and $S':=\nu^*S$.
Since $(X,\Delta)$ is quasi-$F^e$-split, there exists a $W_n\cO_X$-module homomorphism
\[
\varphi \colon F^e_*W_n\cI_S(p^e\Delta) \to W_n\omega_X(-K_X)
\]
fitting into the commutative diagram \eqref{eq:def:p-qFs}.
Noting that we have $R^n(-C)\subset R$, we obtain an $W_n\cO_X$-module homomorphism
\[
\psi' \colon 
\nu_*F^e_*W_n\cI_{C+S'}(p^e\nu^*\Delta)
   \to F^e_*W_n\cI_S(p^e\Delta)
   \to W_n\omega_X(-K_X).
\]

Next, observe that
\[
\begin{aligned}
\Hom_{\cO_X}&\!\left(\nu_*F^e_*W_n\cI_{C+S'}(p^e\nu^*\Delta),W_n\omega_X(-K_X)\right) \\
&\simeq \Hom_{\cO_X}\!\left(\nu_*F^e_*W_n\cI_{C+S'}(p^e\nu^*(K_X+\Delta)),W_n\omega_X\right) \\
&\simeq \Hom_{\cO_{X^n}}\!\left(F^e_*W_n\cI_{C+S'}(p^e\nu^*(K_X+\Delta)),W_n\omega_{X^n}\right) \\
&\simeq \Hom_{\cO_{X^n}}\!\left(F^e_*W_n\cI_{C+S'}(p^e(C+\nu^*\Delta)),W_n\omega_{X^n}(-K_{X^n})\right).
\end{aligned}
\]
Thus we obtain a $W_n\cO_{X^n}$-module homomorphism
\[
\psi \colon F^e_*W_n\cI_{C+S'}(p^e(C+\nu^*\Delta))
\to W_n\omega_{X^n}(-K_{X^n})
\]
such that the following diagram commutes:
\[
\begin{tikzcd}
    \nu_*F^e_*W_n\cI_{C+S'}(p^e\nu^*\Delta) \arrow[r,"\psi'"] \arrow[dd,hookrightarrow] 
      & W_n\omega_X(-K_X) \\
     & \nu_*W_n\omega_{X^n}(-\nu^*K_X)
        =\nu_*W_n\omega_{X^n}(-K_{X^n}-C) \arrow[u,"T_{\nu}"] \arrow[d,hookrightarrow] \\
    \nu_*F^e_*W_n\cI_{C+S'}(p^e(C+\nu^*\Delta)) \arrow[r,"\psi"] 
      & \nu_*W_n\omega_{X^n}(-K_{X^n}), 
\end{tikzcd}
\]
where $T_\nu$ is induced by the Grothendieck trace map $\nu_*W_n\omega_{X^n} \to W_n\omega_X$.
In particular, $\psi$ coincides with $\psi'$ in the total quotient ring of $\cO_X$.
Therefore, $\psi$ fits into the commutative diagram \eqref{eq:def:p-qFs}, as desired.
\end{proof}

\section{Quasi-test ideals for numerically \texorpdfstring{$\Q$}{Q}-Cartier pairs}

\begin{notation}\label{notation:test-section}
Let $(R,\m)$ be an $F$-finite Noetherian normal local ring of characteristic $p>0$,
and set $d:=\dim R$.
For a $\Q$-Weil divisor $D$ on $\Spec R$, we define the following homomorphisms:
\begin{align*}
    F^e_{D,n} &\colon H^d_\m(W_nR(D)) \longrightarrow F^e_*H^d_\m(W_nR(p^eD)),
        && (n \geq 1), \\
    V_{D,n} &\colon F_*H^d_\m(W_{n-1}R(pD)) \longrightarrow H^d_\m(W_nR(D)),
        && (n \geq 2), \\
    R_{D,n} &\colon H^d_\m(W_nR(D)) \longrightarrow H^d_\m(R(D)),
        && (n \geq 1),
\end{align*}
where the maps are induced by $F^e$, $V$, and $R^{n-1}$, respectively.
\end{notation}

\begin{definition}\label{defn:t-c 0}
We use \cref{notation:test-section}.
Let $D$ be a $\Q$-Weil divisor on $\Spec R$.
Take integers $n>0$ and $e\geq 0$. 
\begin{enumerate}
\item For $c \in R^{\circ}$, we define the $W_nR$-modules 
$Q^e_{R,D,n}$ and $Q^{e,c}_{R,D,n}$,
together with the homomorphisms 
$\Phi^e_{R,D,n}$ and $\Phi^{e,c}_{R,D,n}$,
by the following pushout diagram:
\begin{equation}\label{e-def-Q^{e,c}_D}
\begin{tikzcd}
W_nR(D) \arrow[r,"F^e"] \arrow[d,"R^{n-1}"'] 
  & F_*^e(W_nR(p^eD)) \arrow[d] \arrow[r,"{\cdot\,F_*^e[c]}"] 
  & F_*^e(W_nR(p^eD)) \arrow[d] \\
R(D) \arrow[r,"\Phi^e_{R,D,n}"] \arrow[rr,"\Phi^{e,c}_{R,D,n}"',bend right]
  & Q^e_{R,D,n} \arrow[r] & Q^{e,c}_{R,D,n}.
\end{tikzcd}
\end{equation}

\item For $c \in R^{\circ}$, set
\[
\widetilde{K^{e,c}_{R,D,n}}
:= \Ker \Bigl(
H^d_\m(W_nR(D)) \xrightarrow{F^e} 
H^d_\m(F^e_*W_nR(p^eD))
\xrightarrow{\cdot F^e_*[c]} 
H^d_\m(F^e_*W_nR(p^eD))
\Bigr),
\]
a $W_nR$-submodule of $H^d_\m(W_nR(D))$.  
We define  
\[
\widetilde{0_{R,D,n}^{*}}
:= \bigcup_{\substack{c \in R^{\circ}\\ e_0>0}} 
   \bigcap_{e \geq e_0} \widetilde{K^{e,c}_{R,D,n}}
   \subseteq H^d_\m(W_nR(D)).
\]

Equivalently, $z\in H^d_\m(W_nR(D))$ lies in 
$\widetilde{0_{R,D,n}^{*}}$ if and only if there exist 
$c \in R^{\circ}$ and $e_0>0$ such that 
$z \in \widetilde{K^{e,c}_{R,D,n}}$ for all $e \geq e_0$.
Thus $\widetilde{0_{R,D,n}^{*}}$ is a $W_nR$-submodule.

\item For $c \in R^{\circ}$, set
\[
K^{e,c}_{R,D,n}
:= \Ker\!\left(
H^d_\m(R(D)) \xrightarrow{\Phi^{e,c}_{R,D,n}}
H^d_\m(Q^{e,c}_{R,D,n})
\right),
\]
an $R$-submodule of $H^d_\m(R(D))$.
We define the {\em $n$-quasi-tight closure}
\[
0_{R,D,n}^{*}
:= \bigcup_{\substack{c \in R^{\circ}\\ e_0>0}}
   \bigcap_{e \geq e_0} K^{e,c}_{R,D,n}
   \subseteq H^d_\m(R(D)).
\]

Equivalently, $z \in H^d_\m(R(D))$ lies in $0_{R,D,n}^{*}$ 
if and only if there exist $c \in R^{\circ}$ and $e_0>0$ such that 
$z \in K^{e,c}_{R,D,n}$ for all $e \geq e_0$.
Thus $0_{R,D,n}^{*}$ is an $R$-submodule of $H^d_\m(R(D))$.
\end{enumerate}
\end{definition}

\begin{definition}\label{defn:van-cond}
Using \cref{notation:test-section}, 
we say that $D$ satisfies the condition $(\star)$ if there exists $s \in R^\circ$
such that 
\[
[s]\cdot \Ker\!\left(V_{p^lD,n}\right) = 0
\quad \text{for all integers $n \geq 1$ and $l \geq 0$},
\]
where $[s]$ denotes the Teichm\"{u}ller lift of $s$ in $W_nR$.
\end{definition}

\begin{remark}\label{rem:cond-star}
With the same notation as above, suppose that there exists $s \in R^\circ$ such that
\[
s \cdot H^{d-1}_\m(R(p^lD)) = 0 \quad \text{for all $l \geq 0$}.
\]
Then, by the exact sequence \eqref{eq:key-sequence-for-WnI}, $D$ satisfies the condition $(\star)$.
\end{remark}

\begin{proposition}[cf.~\cite{KTTWYY3}*{Proposition~4.16}]\label{prop:V-R-0^*}
With \cref{notation:test-section}, let $D$ be a $\Q$-Weil divisor on $\Spec R$ satisfying the condition $(\star)$.
Then the following holds. 
\begin{enumerate}
    \item $R(\wt{0_{R,D, n+1}^{*}}) \subseteq \wt{0_{R,D, n}^{*}}$. 
    \item $V_{D,n}^{-1}(\wt{0_{R,D, n}^{*}}) = F_*\wt{0_{R,pD, n-1}^{*}}$.
\end{enumerate}
In particular, we obtain the exact sequence
\begin{equation}\label{eq:ex-0^*-pre}
0 \to F_*\frac{H^d_\m(W_{n-1}R(p^{l+1}D))}{\wt{0^*_{R,p^{l+1}D,n-1}}} \xrightarrow{V_{p^lD,n}} \frac{H^d_\m(W_nR(p^lD))}{\wt{0^*_{R,p^lD,n}}} \xrightarrow{R_{p^lD,n}} \frac{H^d_\m(R(p^lD))}{R_{p^lD,n}(\wt{0^*_{R,p^lD,n}})} \to 0
\end{equation}
for all integers $n \geq 2$ and $l \geq 0$.
\end{proposition}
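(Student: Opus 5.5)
We argue directly from the definitions, using the compatibilities of $F$, $V$, $R$ and Teichmüller multiplication on Witt divisorial sheaves. We then pass to local cohomology, and use $(\star)$ to absorb the failure of injectivity of $V$.

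For (1), take $z\in\wt{0^*_{R,D,n+1}}$, witnessed by $c$ and $e_0$, so that $[c]F^e(z)=0$ in $H^d_\m(W_{n+1}R(p^eD))$ for all $e\ge e_0$. The restriction $R\colon W_{n+1}R(D)\to W_nR(D)$ commutes with $F^e$ and with multiplication by $F^e_*[c]$, since $R$ is a ring map sending $[c]$ to $[c]$. Applying $H^d_\m(R)$ therefore gives $[c]F^e(R(z))=0$ for all $e\ge e_0$, so $R(z)\in\wt{0^*_{R,D,n}}$ with the same witnesses.

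For (2), we use the identities $FV=VF$ and $[c]\,V(x)=V([c]^p x)=V([c^p]x)$. First take $y\in F_*\wt{0^*_{R,pD,n-1}}$, witnessed by $c,e_0$. Then
\[
[c^p]F^e(V(y))=V\bigl([c]^{p}\cdot F^e(y)\bigr)\cdot(\text{unit adjustment})=V\bigl([c^{p}]F^e(y)\bigr),
\]
which vanishes because $[c]F^e(y)=0$. Hence $V(y)\in\wt{0^*_{R,D,n}}$ with witness $c^p$; to be careful, one checks that $[c]$ acts on the target of $V$ via $[c]^p$ on the source.

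Conversely, suppose $V(y)\in\wt{0^*_{R,D,n}}$ with witnesses $c,e_0$. Then $V([c]^pF^e(y))=[c]F^e(V(y))=0$, so $[c]^pF^e(y)\in\Ker(V_{p^{e+1}D,n})$. By $(\star)$, $[s]$ kills this kernel uniformly in $l$ and $n$, which gives $[sc^p]F^e(y)=0$ for all $e\ge e_0$. Hence $y\in\wt{0^*_{R,pD,n-1}}$ with witness $sc^p$. This converse direction is the main obstacle, and it is exactly where the uniformity of $s$ over all $l$ in $(\star)$ is needed. Keeping track of the Frobenius twists $p^{l}$ on the divisor, and of the $F_*$ twist of the module structure, is the only delicate bookkeeping.

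For the exact sequence, start from the long exact sequence of local cohomology of \eqref{eq:key-sequence-for-WnI} for $p^lD$:
\[
F_*H^d_\m(W_{n-1}R(p^{l+1}D))\xrightarrow{V}H^d_\m(W_nR(p^lD))\xrightarrow{R}H^d_\m(R(p^lD))\to0.
\]
This is right exact because $H^{d+1}_\m=0$. We then pass to quotients. By (2) applied to $p^lD$ (which also satisfies $(\star)$ with the same $s$), the preimage under $V$ of $\wt{0^*_{R,p^lD,n}}$ is $F_*\wt{0^*_{R,p^{l+1}D,n-1}}$, so the induced map on quotients is injective. The map $R$ on quotients is surjective onto $H^d_\m(R(p^lD))/R_{p^lD,n}(\wt{0^*})$ by construction. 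For exactness in the middle, take an element whose image under $R$ lies in $R(\wt{0^*})$. Subtracting an element of $\wt{0^*}$, we may assume it lies in $\Ker R=\operatorname{Im} V$. So it is in the image of $V$, as required.
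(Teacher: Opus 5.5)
Your proof is correct and follows essentially the route the paper indicates, namely the argument of \cite{KTTWYY3}*{Proposition~4.16} run directly from the definitions, with $(\star)$ used exactly where you use it: in the reverse inclusion of (2), to kill $[c^p]F^e(y)\in\Ker(V_{p^eD,n})$ by a fixed Teichm\"uller lift independent of $e$. The slips are only cosmetic. In the forward inclusion the identity should read $[c]F^e(V(y))=V([c^p]F^e(y))=0$, so $c$ itself is a witness and no ``unit adjustment'' is involved. The relevant kernel is that of $V_{p^eD,n}$, not $V_{p^{e+1}D,n}$, which is harmless since $(\star)$ is uniform in $l$.
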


\begin{proof}
We may apply the same argument as the proof of \cite{KTTWYY3}*{Proposition~4.16} by using the condition $(\star)$.
\end{proof}

\begin{proposition}[cf.~\cite{KTTWYY3}*{Proposition~4.17}]\label{prop:van-tilde-0*}
With \cref{notation:test-section}, let $D$ be a $\Q$-Weil divisor on $\Spec R$ satisfying the condition $(\star)$. 
Choose $t \in R^\circ$ such that $t \in \tau(R,\{p^lD\})$ for every integer $l \geq 0$.
Such an element exists because $D$ is a $\Q$-Weil divisor.
Then 
\[
[t^2] \cdot \wt{0^*_{R,p^lD,n}}=0
\]
for all $l \geq 0$ and $n \geq 1$. 
In particular, 
\[
[t^2] \cdot \Ker(F^e_{p^lD,n})=0
\]
for all $l \geq 0$, $n \geq 1$ and $e \geq 1$. 
\end{proposition}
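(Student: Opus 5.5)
We argue by induction on $n$, following the proof of \cite{KTTWYY3}*{Proposition~4.17}, using the exact sequence \eqref{eq:ex-0^*-pre} of \cref{prop:V-R-0^*} to pass from length $n-1$ to length $n$. The claim to prove, for every $l \ge 0$, is that $[t^2]$ kills $\wt{0^*_{R,p^lD,n}}$.

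\textbf{Base case $n=1$.} Here $W_1R(p^lD)=R(p^lD)$, and $\wt{0^*_{R,p^lD,1}}$ is the classical tight closure of zero in $H^d_\m(R(p^lD))$. We write
\[
R(p^lD)=R(\rdown{p^lD}) \quad\text{and}\quad p^{e}\cdot p^lD=p^e\rdown{p^lD}+p^e\{p^lD\}.
\]
Reduced to the divisor $\rdown{p^lD}$, the condition $c\,F^e(z)=0$ for all large $e$ becomes a tight closure condition for the pair $(R,\{p^lD\})$. Its annihilator is then governed by the test ideal $\tau(R,\{p^lD\})$, via the usual duality between the test ideal and the tight closure of zero in top local cohomology (twisted by $\rdown{p^lD}$). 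This shows that $t$ annihilates $\wt{0^*_{R,p^lD,1}}$, hence so does $t^2$.

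\textbf{Inductive step.} Take $z\in\wt{0^*_{R,p^lD,n}}$. By \cref{prop:V-R-0^*}(1), the image $R_{p^lD,n}(z)$ lies in $\wt{0^*_{R,p^lD,1}}$, so the base case gives $t\cdot R_{p^lD,n}(z)=0$. Since $R_{p^lD,n}$ is $[t]$-linear, $R_{p^lD,n}([t]z)=0$, and so $[t]z=V_{p^lD,n}(F_*y)$ for some $y\in H^d_\m(W_{n-1}R(p^{l+1}D))$. The element $[t]z$ still lies in $\wt{0^*_{R,p^lD,n}}$, a $W_nR$-submodule, so \cref{prop:V-R-0^*}(2) gives $y\in\wt{0^*_{R,p^{l+1}D,n-1}}$. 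The induction hypothesis, applied to $p^{l+1}D$, then gives $[t^2]y=0$. Using $[a]V(F_*y)=V(F_*([a]^p y))$ we get
\[
[t]\cdot[t]z=V(F_*([t^p]y)),
\]
and $[t^p]y=0$ whenever $p\ge 2$, since $[t^2]y=0$. Hence $[t^2]z=0$.

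\textbf{Main obstacle.} The exponent is delicate. A naive induction would make the annihilator grow with $n$; the fix is that the extra factor of $t$ passes through $V$ and becomes $t^p$, which is absorbed by the $t^2$ already killing $y$. Formally one must check $[t][t]z=V(F_*([t^{p}]y))$ against the kernel of $V$. It is exactly here that condition $(\star)$ enters, via the injectivity built into \eqref{eq:ex-0^*-pre}, which is after quotienting by $\wt{0^*}$; no additional factor $[s]$ should be needed.

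\textbf{Final assertion.} It suffices to show $\Ker(F^e_{p^lD,n})\subseteq\wt{0^*_{R,p^lD,n}}$. If $F^e(z)=0$, then $F^{e'}(z)=0$ for all $e'\ge e$, so $z\in\wt{K^{e',1}_{R,p^lD,n}}$ for all $e'\ge e$ with $c=1$. Hence $z\in\wt{0^*_{R,p^lD,n}}$, and it is killed by $[t^2]$.
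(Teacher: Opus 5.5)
Your proof is correct and is essentially the paper's own argument. The base case comes from the choice of $t$, and the inductive step writes $[t]z=V(F_*y)$, uses \cref{prop:V-R-0^*}(2) to get $y\in\wt{0^*_{R,p^{l+1}D,n-1}}$, and absorbs the extra factor via $[t]V(F_*y)=V(F_*([t^p]y))=0$; the final assertion then follows from $\Ker(F^e_{p^lD,n})\subseteq\wt{0^*_{R,p^lD,n}}$, exactly as you say.

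One small correction to your ``main obstacle'' paragraph: the identity $[t]V(F_*y)=V(F_*([t^p]y))$ holds on the nose, so nothing needs to be checked against $\Ker V$, and condition $(\star)$ enters only through \cref{prop:V-R-0^*}(2).
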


\begin{proof}
We argue by induction on $n$.

If $n=1$, then $[t] \cdot \wt{0^*_{R,p^lD,1}}=0$ for every $l \geq 0$ by the choice of $t$.

Assume $n \geq 2$ and take $\alpha \in \wt{0^*_{R,p^lD,n}}$.  
By \cref{prop:V-R-0^*}(1) we have $R_{p^lD,n}([t]\alpha)=0$.  
Hence, by the exact sequence \eqref{eq:ex-0^*-pre}, there exists 
$\beta \in \wt{0^*_{R,p^{\,l+1}D,n-1}}$ such that 
\[
V(F_*\beta) = [t]\alpha.
\]
Therefore,
\[
[t^2]\alpha = [t]\,V(F_*\beta) = V(F_*[t^p]\beta) 
\overset{(\star_1)}{=} 0,
\]
where $(\star_1)$ follows from the induction hypothesis (since $\beta \in \wt{0^*_{R,p^{\,l+1}D,n-1}}$ and $p \geq 2$).  

Finally, since $\Ker(F^e_{p^lD,n}) \subseteq \wt{0^*_{R,p^lD,n}}$, the last assertion follows immediately.
\end{proof}

\begin{proposition}[cf.~\cite{KTTWYY3}*{Proposition~4.19}]\label{prop:stab-tilde}
We use the same notation as in \cref{notation:test-section}.
Let $D$ be a $\Q$-divisor on $\Spec R$ satisfying the condition $(\star)$.
Take $t \in R^\circ$ such that $t \in \tau(R,\{p^lD\})$ for every integer $l \geq 0$. 
Fix $c \in R^{\circ} \cap (t^4)$. 
Then the following hold for every integer $n \geq 1$. 
\begin{enumerate}
    \item We have the descending chain
    \[
    H^d_{\m}(W_nR(D)) \;\supseteq\;
    \wt{K^{0,c}_{R,D,n}} \;\supseteq\;
    \wt{K^{1,c}_{R,D,n}} \;\supseteq\; \cdots 
    \;\supseteq\; \wt{K^{e,c}_{R,D,n}} 
    \;\supseteq\; \wt{K^{e+1,c}_{R,D,n}} \;\supseteq\; \cdots.
    \]
    \item There exists an integer $e_0 \geq 0$ such that 
    \[
    \wt{0_{R,D, n}^{*}} \;=\; \wt{K^{e,c}_{R,D, n}}
    \quad\text{for every $e \geq e_0$.}
    \]
\end{enumerate}    
\end{proposition}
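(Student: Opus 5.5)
The plan is to prove (1) by a direct computation with Teichm\"uller lifts, using that $[t^2]$ annihilates the kernels of the Witt Frobenius maps (\cref{prop:van-tilde-0*}). Part (2) will then follow from the Artinian property of local cohomology together with a second application of \cref{prop:van-tilde-0*}. Throughout, write $c=t^4u$ with $u\in R$; note that $u \neq 0$ since $c \in R^\circ$.

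For (1), let $z\in \wt{K^{e+1,c}_{R,D,n}}$, i.e.\ $[c]\,F^{e+1}z=0$ in $H^d_\m(W_nR(p^{e+1}D))$. Put
\[
y:=[t^2u]\,F^e z\in H^d_\m(W_nR(p^eD)).
\]
Since $F$ is a ring endomorphism with $F([a])=[a^p]$, we get
\[
F(y)=[t^{2p}u^p]\,F^{e+1}z=[t^{2p-4}u^{p-1}]\,[c]\,F^{e+1}z=0,
\]
where we use $2p\ge 4$. Hence $y\in\Ker(F^1_{p^eD,n})$. By \cref{prop:van-tilde-0*} (with $l=e$), $[t^2]y=0$, that is, $[c]F^ez=[t^4u]F^ez=0$. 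So $z\in\wt{K^{e,c}_{R,D,n}}$. The case $e=0$ is identical, with $F^0=\mathrm{id}$.

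For (2), $H^d_\m(W_nR(D))$ is an Artinian $W_nR$-module by \cref{rem:local coh is Artin}. Hence the descending chain from (1) stabilizes: there is $e_0$ with $\wt{K^{e,c}_{R,D,n}}=\wt{K^{e_0,c}_{R,D,n}}$ for all $e\ge e_0$. Taking $c$ itself and this $e_0$ in the definition of $\wt{0^*_{R,D,n}}$ shows $\wt{K^{e,c}_{R,D,n}}\subseteq\wt{0^*_{R,D,n}}$ for $e\ge e_0$.

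For the reverse inclusion, let $z\in\wt{0^*_{R,D,n}}$, witnessed by some $c'\in R^\circ$ and $e_1>0$, so that $[c']F^{e'}z=0$ for all $e'\ge e_1$. Fix any $e\ge0$ and set $w:=F^ez\in H^d_\m(W_nR(p^eD))$. Then $[c']F^{e''}w=[c']F^{e''+e}z=0$ for all $e''\ge e_1$. Hence $w\in\wt{0^*_{R,p^eD,n}}$, and \cref{prop:van-tilde-0*} gives $[t^2]w=0$. Since $t^2$ divides $c$, we get $[c]F^ez=0$, i.e.\ $z\in\wt{K^{e,c}_{R,D,n}}$ for every $e$. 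Combining the two inclusions gives equality for $e\ge e_0$.

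The only delicate point is (1). The naive attempt to show $[c]F^ez=0$ directly fails, because Frobenius raises $c$ to the power $p$ rather than preserving it. The remedy is the auxiliary element $y$ built from only half of the $t$-power in $c$: this leaves exactly enough room ($2p\ge4$) for $F(y)$ to vanish while $[t^2]y$ recovers $[c]F^ez$. This is precisely why the hypothesis asks for $c\in(t^4)$ rather than merely $c\in(t^2)$.
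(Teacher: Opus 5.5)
Your proposal is correct and follows the paper's proof essentially verbatim. For (1), the paper also applies one more Frobenius to the auxiliary element $[c't^2]F^e(z)$, where $c=c't^4$, and then kills it with $[t^2]$ via \cref{prop:van-tilde-0*}. For (2), the paper likewise uses Artinian stabilization of the chain together with $[t^2]\cdot\wt{0^*_{R,p^eD,n}}=0$. You also make explicit the step $F^e(z)\in\wt{0^*_{R,p^eD,n}}$, which the paper leaves implicit.
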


\begin{proof}
We take $c' \in R^\circ$ with $c = c't^4$.

We prove (1).
Let $z \in \wt{K^{e+1,c}_{R,D, n}}$. Then
\[
0 \;=\; F^{e+1}_*\!\bigl[c'^{\,p-1}t^{2p-4}c\bigr]\cdot F^{e+1}(z)
   \;=\; F\!\left(F^e_*[c't^2]\cdot F^e(z)\right).
\]
By \cref{prop:van-tilde-0*}, we obtain 
\[
[t^2][c't^2]F^e(z) \;=\; [c]F^e(z) \;=\; 0,
\]
as desired.

Next, we prove (2).
Since $H^d_\m(W_nR(D))$ is an Artinian $W_nR$-module, (1) implies that there exists $e_0>0$ such that
\[
\wt{K^{e, c}_{R,D, n}} = \wt{K^{e_0, c}_{R,D, n}}
\quad\text{for all $e \geq e_0$.}
\]
From \cref{defn:t-c 0}(2) we obtain
\[
\wt{0^{*}_{R,D, n}}
\;\supseteq\; \bigcap_{e \geq 0}\wt{K^{e, c}_{R,D, n}}
\;=\; \wt{K^{e_0, c}_{R,D, n}}.
\]

Conversely, let $z \in \wt{0^{*}_{R,D, n}}$ and $e \geq 0$. Then
\begin{align*}
F_*^e[c]\cdot F^e(z) 
&\in F_*^e\bigl([c]\cdot \wt{0^{*}_{R,p^eD, n}}\bigr) \\
&=F_*^e\bigl([c't^4]\cdot \wt{0^{*}_{R,p^eD, n}}\bigr) \\
&=0,
\end{align*}
where the last equality follows from \cref{prop:van-tilde-0*}.
Hence $z \in \bigcap_{e \geq 0}\wt{K^{e, c}_{R,D, n}}
= \wt{K^{e_0, c}_{R,D, n}}$, proving (2).
\end{proof}

\begin{theorem}[cf.~\cite{KTTWYY3}*{Theorem~4.20}]\label{thm: lifting thm pair}
We use  \cref{notation:test-section}. 
Let $D$ be a $\Q$-Weil divisor on $X$ satisfying the condition $(\star)$.
Then 
\[
R_{D,n}(\wt{0^{*}_{R,D, n}})=0^{*}_{R,D, n}.
\]
In particular, we obtain the exact sequence
\begin{equation}\label{eq:ex-0^*}
    0 \to 
    F_*\frac{H^d_\m(W_{n-1}R(p^{l+1}D))}{\wt{0^*_{R,p^{l+1}D,n-1}}}
    \xrightarrow{V_{p^lD,n}} 
    \frac{H^d_\m(W_nR(p^lD))}{\wt{0^*_{R,p^lD,n}}}
    \xrightarrow{R_{p^lD,n}} 
    \frac{H^d_\m(R(p^lD))}{0^*_{R,p^lD,n}} 
    \to 0
\end{equation}
for all integers $l \geq 0$ and $n \geq 2$.
\end{theorem}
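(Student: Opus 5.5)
The plan is to identify each $K^{e,c}_{R,D,n}$ with the image of $\wt{K^{e,c}_{R,D,n}}$ under $R_{D,n}$. Once that identification holds, the stabilization result \cref{prop:stab-tilde} lets us pass from the intersections over $e$ to a single large $e$, and both inclusions follow.

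\emph{Step 1: the key identity.} We claim that for every $c\in R^\circ$, $e\ge 0$ and $n\ge 1$,
\[
K^{e,c}_{R,D,n}=R_{D,n}\bigl(\wt{K^{e,c}_{R,D,n}}\bigr).
\]
This is the same argument as in the equivalence (c)$\Leftrightarrow$(d) of \cref{prop:p-qFS equivalent def}. The composite $[c]F^e\colon W_nR(D)\to F^e_*W_nR(p^eD)$ is injective, since all Witt divisorial sheaves here are torsion-free submodules of $W_n\mathscr{K}_X$. Because the diagram \eqref{e-def-Q^{e,c}_D} is a pushout, $\Phi^{e,c}_{R,D,n}$ is also injective, and the induced map on cokernels $\Coker([c]F^e)\to\Coker(\Phi^{e,c}_{R,D,n})$ is an isomorphism. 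Applying $H^d_\m$ to the resulting morphism of short exact sequences, the connecting maps from $H^{d-1}_\m$ of the common cokernel surject onto the respective kernels. Commutativity of the diagram then gives $\Ker H^d_\m(\Phi^{e,c}_{R,D,n})=R_{D,n}(\wt{K^{e,c}_{R,D,n}})$.

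\emph{Step 2: the inclusion $R_{D,n}(\wt{0^*_{R,D,n}})\subseteq 0^*_{R,D,n}$.} Choose $t$ as in \cref{prop:van-tilde-0*} and take $c\in R^\circ\cap(t^4)$. By \cref{prop:stab-tilde}(2) there is $e_0$ with $\wt{0^*_{R,D,n}}=\wt{K^{e,c}_{R,D,n}}$ for all $e\ge e_0$. Step 1 then gives $R_{D,n}(\wt{0^*_{R,D,n}})\subseteq K^{e,c}_{R,D,n}$ for all $e\ge e_0$, and hence $R_{D,n}(\wt{0^*_{R,D,n}})\subseteq 0^*_{R,D,n}$.

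\emph{Step 3: the reverse inclusion.} Let $z\in 0^*_{R,D,n}$, so that $z\in K^{e,c}_{R,D,n}$ for some $c\in R^\circ$ and all $e\ge e_1$. Set $c'=ct^4\in R^\circ\cap(t^4)$. Enlarging $c$ can only enlarge the kernel: multiplication by $F^e_*[t^4]$ on $F^e_*W_nR(p^eD)$ passes to a map $Q^{e,c}_{R,D,n}\to Q^{e,c'}_{R,D,n}$ compatible with $\Phi^{e,c}_{R,D,n}$ and $\Phi^{e,c'}_{R,D,n}$. Hence $z\in K^{e,c'}_{R,D,n}$ for all $e\ge e_1$.

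Now apply \cref{prop:stab-tilde} to $c'$, and fix a single $e\ge\max(e_0,e_1)$. Step 1 gives
\[
z\in R_{D,n}\bigl(\wt{K^{e,c'}_{R,D,n}}\bigr)=R_{D,n}\bigl(\wt{0^*_{R,D,n}}\bigr).
\]
The point of using \cref{prop:stab-tilde} is that we never need to commute $R_{D,n}$ with an infinite intersection. I expect the main care to be needed in Step 1, namely checking exactness and injectivity so that the image equals the kernel exactly. The hypothesis $(\star)$ enters only through \cref{prop:van-tilde-0*} and \cref{prop:stab-tilde}.

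\emph{The exact sequence.} Finally, \eqref{eq:ex-0^*} follows by replacing $D$ with $p^lD$, which still satisfies $(\star)$ by \cref{defn:van-cond}. We then substitute the equality $R_{p^lD,n}(\wt{0^*_{R,p^lD,n}})=0^*_{R,p^lD,n}$ into the right-hand term of \eqref{eq:ex-0^*-pre} from \cref{prop:V-R-0^*}.
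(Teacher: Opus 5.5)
Your proof is correct and follows essentially the same route as the paper, which defers to the argument of \cite{KTTWYY3}*{Theorem~4.20} with \cref{prop:stab-tilde} replacing the stabilization result used there. That argument combines the identity $R_{D,n}(\wt{K^{e,c}_{R,D,n}})=K^{e,c}_{R,D,n}$ (from the pushout and connecting-map argument, which the paper also invokes right after this theorem) with stabilization for $c\in(t^4)$, and your replacement of an arbitrary $c$ by $ct^4$ is exactly what handles the reverse inclusion.
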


\begin{proof}
We may apply the same argument as in the proof of \cite{KTTWYY3}*{Theorem~4.20}, using \cref{prop:stab-tilde} in place of \cite{KTTWYY3}*{Proposition~4.19}.
\end{proof}

\begin{proposition}[cf.~\cite{KTTWYY3}*{Proposition~4.21}]\label{prop: test element pair non-tilda}
We use  \cref{notation:test-section}.
Let $D$ be a $\Q$-Weil divisor on $X$ satisfying the condition $(\star)$.
Take $t \in R^\circ$ such that $t \in \tau(R,\{rD\})$ for every integer $r$.
Fix $c \in R^{\circ} \cap (t^4)$.
Then the following hold for every integer $n \geq 1$:
\begin{enumerate}
    \item 
    \[
    H^d_{\m}(R(D)) \supseteq K^{0,c}_{R,D,n}
    \supseteq K^{1,c}_{R,D,n} \supseteq \cdots \supseteq K^{e,c}_{R,D,n}
    \supseteq K^{e+1,c}_{R,D,n} \supseteq \cdots.
    \]
    \item 
    There exists an integer $e_1 \geq 0$ such that 
    \[
    0^{*}_{R,D,n} = K^{e,c}_{R,D,n}
    \]
    for every integer $e \geq e_1$. 
    \item 
    If there exists an integer $e \geq 0$ such that $K^{e,c}_{R,D,n}=0$, then $0^{*}_{R,D,n}=0$.
\end{enumerate}
\end{proposition}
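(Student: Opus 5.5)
The plan is to reduce everything to the tilde-versions already controlled by \cref{prop:stab-tilde} and \cref{thm: lifting thm pair}. The key observation is the relation between the two kinds of kernels. Since $Q^{e,c}_{R,D,n}$ is the pushout of $F^e_*W_nR(p^eD)\xrightarrow{\cdot F^e_*[c]}F^e_*W_nR(p^eD)$ along $F^e$ and $R^{n-1}$, we have the exact sequences
\[
0\to W_nR(D)\xrightarrow{[c]F^e} F^e_*W_nR(p^eD)\to \Coker([c]F^e)\to 0,\qquad 0\to R(D)\xrightarrow{\Phi^{e,c}_{R,D,n}} Q^{e,c}_{R,D,n}\to \Coker\to 0,
\]
with isomorphic cokernels. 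Arguing exactly as in the proof of (c)$\Leftrightarrow$(d) of \cref{prop:p-qFS equivalent def}, taking local cohomology yields
\[
K^{e,c}_{R,D,n}=R_{D,n}\bigl(\wt{K^{e,c}_{R,D,n}}\bigr).
\]
(Injectivity of $[c]F^e$ on $W_nR(D)$ is clear because $R$ is a domain and $c\neq 0$.) I would first record this identity.

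For (1), I will apply $R_{D,n}$ to the descending chain of \cref{prop:stab-tilde}(1). This requires $c\in R^\circ\cap(t^4)$ with $t\in\tau(R,\{p^lD\})$ for all $l$. Since $\{rD\}$ for all integers $r$ includes every $\{p^lD\}$, the present choice of $t$ satisfies the hypothesis there. Images of a descending chain form a descending chain, so (1) follows.

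For (2), \cref{prop:stab-tilde}(2) gives $e_0$ with $\wt{K^{e,c}_{R,D,n}}=\wt{0^*_{R,D,n}}$ for all $e\ge e_0$. Applying $R_{D,n}$ and using \cref{thm: lifting thm pair}, we get $K^{e,c}_{R,D,n}=R_{D,n}(\wt{0^*_{R,D,n}})=0^*_{R,D,n}$ for $e\ge e_1:=e_0$. Statement (3) is then immediate: if $K^{e,c}_{R,D,n}=0$ for some $e$, then by (1) $K^{e',c}_{R,D,n}=0$ for all $e'\ge e$, and taking $e'\ge\max(e,e_1)$ in (2) gives $0^*_{R,D,n}=0$.

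The main point to verify carefully is the identity $K^{e,c}=R_{D,n}(\wt{K^{e,c}})$. This comes from the ladder of long exact sequences, using that the map $H^{d-1}_\m$ of the cokernels is an isomorphism, so the images of the connecting maps correspond under $R_{D,n}$. If this identification turns out to be problematic, there is a fallback. One inclusion, $R_{D,n}(\wt{K^{e,c}})\subseteq K^{e,c}$, is formal from the pushout diagram. Together with stabilization and the theorem, that inclusion already gives $0^*\subseteq K^{e,c}$ for large $e$. The reverse inclusion holds by the definition of $0^*$ once the chain in (1) is established.
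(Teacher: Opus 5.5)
Your proposal is correct and is the paper's argument: the paper also gets (1) and (2) by applying $R_{D,n}$ to \cref{prop:stab-tilde}, combined with \cref{thm: lifting thm pair} and the identity $R_{D,n}(\wt{K^{e,c}_{R,D,n}})=K^{e,c}_{R,D,n}$, and then deduces (3) immediately from (1) and (2). The paper states that identity without proof; your ladder-of-local-cohomology argument (as in (c)$\Leftrightarrow$(d) of \cref{prop:p-qFS equivalent def}) supplies the missing justification, so the fallback is not needed.
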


\begin{proof}
The assertions \textup{(1)} and \textup{(2)} follow from \cref{prop:stab-tilde}, 
\cref{thm: lifting thm pair}, and the equality 
\[
R_{D,n}(\wt{K^{e,c}_{R,D,n}})=K^{e,c}_{R,D,n}.
\]
Assertion \textup{(3)} follows immediately from (1) and (2).
\end{proof}

\begin{lemma}\label{lemma:univ-van-H^d-1}
We use \cref{notation:test-section}.
Let $D$ be a numerically $\Q$-Cartier $\Q$-Weil divisor on $\Spec R$.
Then $D$ satisfies the condition $(\star)$.
\end{lemma}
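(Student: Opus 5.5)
By \cref{rem:cond-star}, it suffices to find $s \in R^\circ$ with $s \cdot H^{d-1}_\m(R(p^lD)) = 0$ for every integer $l \ge 0$. Since $R$ is $F$-finite, it is excellent and has a normalized dualizing complex. By local duality, $H^{d-1}_\m(R(E))^\vee \simeq \Ext^{-d+1}_R(R(E),\omega_R^\bullet)^\wedge$. So it is enough to find one $s \in R^\circ$ that annihilates these finitely generated modules, uniformly in $E = p^lD$.

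The key point is that, although $D$ may not be $\Q$-Cartier, the sheaves $R(p^lD) = R(\lfloor p^lD\rfloor)$ should range over only finitely many isomorphism classes up to a controlled twist. I would proceed in three steps.

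\textbf{Step 1 (resolution side).} Take a proper birational $f\colon Y \to X=\Spec R$ from a normal $Y$ with a $\Q$-Cartier $D'$ on $Y$ such that $f_*D'=D$ and $D'\equiv_X 0$. Let $r>0$ be such that $rD'$ is Cartier. Write $p^l = q_l r + b_l$ with $0 \le b_l < r$. The rounded-down divisors $\lfloor p^lD'\rfloor$ then differ from $q_l rD' + \lfloor b_lD'\rfloor$ by an integral divisor supported on the components of $D'$, bounded independently of $l$. Hence each $\lfloor p^lD'\rfloor$ lies, up to twisting by the numerically trivial Cartier divisor $q_l rD'$, in a finite set $\{G_1,\dots,G_N\}$ of Weil divisors on $Y$.

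\textbf{Step 2 (comparison).} Compare $R(\lfloor p^lD\rfloor)$ with $f_*\mathcal{O}_Y(\lfloor p^lD'\rfloor)$. These agree outside the image of $\Exc(f)$, a closed set of codimension $\ge 2$, and the latter is reflexive once saturated. To control local cohomology, I would use the Grothendieck spectral sequence for $R\Gamma_\m\circ Rf_*$ together with the following fact. Since $L = \mathcal{O}_Y(q_l rD')$ is numerically trivial over $X$, the higher direct images $R^if_*(\mathcal{O}_Y(G_j)\otimes L)$ are annihilated by an element independent of $L$. On the generic fiber, or after restricting to a big open set where everything is Cartier, this reduces to a statement about numerically trivial line bundles on the exceptional fibers; there, uniform annihilation should follow from boundedness of the family of numerically trivial line bundles (a Picard-scheme / Fujita-vanishing type uniformity). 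Concretely, I would pick a nonzero $s$ in the conductor-like ideal $\mathfrak a$ that cuts out the non-$\Q$-Cartier locus $Z$ of $D$. Away from $Z$, the modules $R(p^lD)$ are locally of the form $R(b D)\otimes(\text{invertible})$ with $0 \le b < r$, so $H^{d-1}_\m$ of them is killed by a power of the ideal defining the non-Cohen–Macaulay locus of the finitely many $R(bD)$.

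\textbf{Step 3 (main obstacle).} The difficulty is at points of $Z$, where $p^lD$ is not locally a twist of finitely many sheaves. There I would argue via the $S_2$-ness of $R(p^lD)$ and the identification $H^{d-1}_\m(R(E)) \cong H^{d-2}(U\setminus\{\m\},\mathcal{O}(E))$ for $d\ge 3$. I would then use that $\mathcal{O}_Y(q_l rD')$ restricted to the fibers over $Z$ is numerically trivial, so its cohomology is uniformly annihilated, with the annihilator obtained by a generic-flatness/Noetherian-induction argument over the finitely many $G_j$. For $d = 2$, $H^1_\m(R(E))=0$ by $S_2$, so the case is trivial. The proof ends by taking $s$ to be the product of the elements obtained in Steps 2 and 3. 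The uniformity in Step 3 is where I expect the real work, and a more robust fallback is to invoke the known result that numerically $\Q$-Cartier divisors have the property that $\bigoplus_l R(lD)$-type sheaves have uniformly annihilated intermediate cohomology, which I would cite from the Sato–Takagi framework.
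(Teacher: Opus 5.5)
You correctly reduce the problem via \cref{rem:cond-star}, and your choice of $f\colon Y\to X$ with $D'$ $\Q$-Cartier and $f$-numerically trivial is the right start. The fact you isolate in Step~2 is also true: $R^if_*(\mathcal{O}_Y(G_j)\otimes L)$, $i\ge 1$, is killed by a single element of $R^\circ$ for all $f$-numerically trivial $L$. However, the proposal has a genuine gap. It never converts this fact into an annihilator of $H^{d-1}_\m(R(p^lD))$, and the part you call the real work is not an argument.

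The ``away from $Z$'' step cannot give uniformity. $H^{d-1}_\m$ lives at $\m\in Z$, so information on $X\setminus Z$ can only enter through the duals $\Ext^{-(d-1)}_R(R(p^lD),\omega_R^\bullet)$. Off $Z$ these agree with finitely many fixed modules, but that only shows a fixed $s_0$ sends them to modules supported on $Z$. Nothing bounds, independently of $l$, the power of the ideal of $Z$ needed to kill them.

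Step~3 does not close this. Fiberwise generic flatness or Noetherian induction over the finitely many $G_j$ does not control the infinite family of twists $\mathcal{O}_Y(q_lrD')$, which is non-torsion whenever $D$ is not $\Q$-Cartier. The ``fallback'' citation is the statement itself.

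The missing idea is to work entirely on $Y$, where $D'$ is $\Q$-Cartier everywhere. The argument runs in three steps.
\begin{enumerate}
\item $f_*\mathcal{O}_Y(\lfloor p^lD'\rfloor)=R(p^lD)$ exactly, with no saturation needed. Apply the negativity lemma to the $f$-numerically trivial divisor $\div_Y(\phi)+p^lD'$, whose pushforward is effective.
\item Keeler's form of Fujita vanishing gives one ample Cartier divisor $A$, which you may take effective, with $R^if_*\mathcal{O}_Y(\lfloor p^lD'\rfloor+A)=0$ for all $i\ge1$ and all $l$. Choose $a\in R^\circ$ with $\div_Y(a)\ge A$. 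Then multiplication by $a$ on $R(p^lD)$ factors as $f_*\mathcal{O}_Y(\lfloor p^lD'\rfloor)\to Rf_*\mathcal{O}_Y(\lfloor p^lD'\rfloor)\to Rf_*\mathcal{O}_Y(\lfloor p^lD'\rfloor+A)\simeq f_*\mathcal{O}_Y(\lfloor p^lD'\rfloor+A)\xrightarrow{\cdot a}R(p^lD)$. Hence $a$ kills the kernel of $H^{d-1}_\m(R(p^lD))\to H^{d-1}_\m(\mathcal{O}_Y(\lfloor p^lD'\rfloor))$.
\item You still have to kill $H^{d-1}_\m(\mathcal{O}_Y(\lfloor p^lD'\rfloor))$ uniformly, which your plan never addresses. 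By duality it is Matlis dual to $\mathcal{H}^{-(d-1)}Rf_*R\mathcal{H}om_{\mathcal{O}_Y}(\mathcal{O}_Y(\lfloor p^lD'\rfloor),\omega_Y^\bullet)$. The spectral sequence reduces this to two pieces. The first is $R^1f_*\mathcal{O}_Y(K_Y-\lfloor p^lD'\rfloor)$, killed uniformly by the same Keeler argument. The second is $\mathcal{E}xt^j(\mathcal{O}_Y(\lfloor p^lD'\rfloor),\omega_Y^\bullet)$ for $j\ge -d+1$. These are line-bundle twists of finitely many coherent sheaves not supported at the generic point, so they have a common annihilator in $R^\circ$.
\end{enumerate}
The product of these elements is the required $s$. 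For $d=2$ your observation that $H^1_\m(R(p^lD))=0$ is correct, and the argument above is only needed for $d\ge 3$.
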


\begin{proof}
Take a projective birational morphism $f \colon Y \to \Spec R$ such that there exists a $\Q$-Cartier divisor $D_Y$ on $Y$ such that $f_*D_Y = D$ and $D_Y$ is $f$-numerically trivial.
Since $f_*\cO_Y(rD_Y)=R(rD)$, we obtain an $R$-module homomorphism
\[
\varphi_r \colon H^{d-1}_\m(R(rD)) \simeq H^{d-1}_\m(f_*\cO_Y(rD_Y)) \to H^{d-1}_\m(\cO_Y(rD_Y)).
\]

By \cite{Keeler03}*{Theorem~1.5}, there exists an ample Cartier divisor $A$ on $Y$ such that 
\[
R^if_*\cO_Y(rD_Y+A)=0 \quad \text{for all $i \geq 1$ and $r \in \Z$}.
\]
By \cref{rem:become-effective}, there exists $a \in R^\circ$ with ${\rm div}_R(a) \geq f_*A$, hence ${\rm div}_Y(a) \geq A$ by the negativity lemma. 
Therefore $a \cdot \Ker(\varphi_r)=0$.
Thus it suffices to show that there exists a nonzerodivisor in $R$ annihilating $H^{d-1}_\m(\cO_Y(rD_Y))$ for all $r$.

Since $H^{d-1}_\m(\cO_Y(rD_Y))$ is Matlis dual to 
\[
\mathcal{H}^{-(d-1)}Rf_*R\cHom_{\cO_Y}(\cO_Y(rD_Y),\omega_Y^\mydot),
\]
where $\omega_R^\mydot$ is a normalized dualizing complex of $R$ and $\omega_Y^\mydot:=f^!\omega_R^\mydot$, 
it is enough, by considering the associated spectral sequence, to find $s_1,s_2 \in R^\circ$ such that 
\[
s_1 \cdot R^if_*\cO_Y(K_Y-\rdown{rD_Y})=0 \quad (i \geq 1), 
\qquad 
s_2 \cdot \mathcal{E}xt^j(\cO_Y(rD_Y),\omega_Y^\mydot)=0 \quad (j \geq -d+1).
\]

The existence of $s_1$ follows from the same argument used to construct $a$ above. 
Moreover, for each $r$ there exists $s_{2,r} \in R^\circ$ such that 
\[
s_{2,r} \cdot \mathcal{E}xt^j(\cO_Y(rD_Y),\omega_Y^\mydot)=0 \quad \text{for all $j \geq -d+1$}.
\]
Since $D_Y$ is $\Q$-Cartier, the set $\{\cO_Y(rD_Y)\}_{r \in \Z}$ is finite up to isomorphism. 
Hence we can choose a single $s_2 \in R^\circ$ working for all $r$, as required.
\end{proof}

\begin{definition}
We use \cref{notation:test-section}.
Let $D$ be a numerically $\Q$-Cartier $\Q$-Weil divisor on $\Spec R$.
Take $t \in R^\circ$ with $t \in \tau(R,\{p^lD\})$ for every integer $l \geq 0$, and fix $c \in (t^4) \cap R^\circ$.
\begin{enumerate}
    \item We define a submodule $\tau^c(W_n\omega_R,D)$ of $W_n\omega_R(-D)$ by
    \[
    \tau^c(W_n\omega_R,D) := \sum_{e \geq 1} T^e_n\bigl(F^e_*([c]\,W_n\omega_R(-p^eD))\bigr),
    \]
    where $T^e_n$ denotes the $W_n\omega_R$-dual of 
    \[
    F^e \colon W_nR(D) \longrightarrow F^e_*W_nR(p^eD).
    \]
    Furthermore, we define a submodule $\tau^c_n(\omega_R,D)$ of $\omega_R(-D)$ by
    \[
    \tau^c_n(\omega_R,D) := T_{1,n}^{-1}\bigl(\tau^c(W_n\omega_R,D)\bigr),
    \]
    where $T_{1,n} \colon \omega_R(-D) \to W_n\omega_R(-D)$ is the $W_n\omega_R$-dual of the restriction map $W_nR(D) \to R(D)$.
    \item Since $\tau^c_n(\omega_R,D) \subseteq \tau^c_{n+1}(\omega_R,D)$ for every integer $n \geq 1$, 
    there exists an integer $n_0 \geq 1$ such that 
    \[
    \tau^c_{n_0}(\omega_R,D) = \tau^c_n(\omega_R,D)
    \quad \text{for every } n \geq n_0.
    \]
    We then define 
    \[
    \tau^{c,q}(\omega_R,D) := \tau^c_{n_0}(\omega_R,D).
    \]
\end{enumerate}
\end{definition}

\begin{proposition}[cf.~\cite{KTTWYY3}*{Proposition~4.25, Theorem~4.17}]\label{prop:tau-dual}
We use \cref{notation:test-section}.
Let $D$ be a numerically $\Q$-Gorenstein $\Q$-Weil divisor on $\Spec R$.
Take $t \in R^\circ$ with $t \in \tau(R,\{p^lD\})$ for every integer $l \geq 0$, and fix $c \in (t^4) \cap R^\circ$.
\begin{enumerate}
    \item If $(R,\m)$ is local, then
    \begin{align*}
        \Bigl( \frac{W_n\omega_R(-D)}{\tau(W_n\omega_R,D)} \Bigr)^\vee &\simeq \wt{0^*_{R,D,n}}, \\
        \Bigl( \frac{\omega_R(-D)}{\tau(\omega_R,D)} \Bigr)^\vee &\simeq 0^*_{R,D,n},
    \end{align*}
    where $(-)^\vee$ denotes the Matlis dual.
    \item The submodules $\tau^c(W_n\omega_R,D)$, $\tau^c_n(\omega_R,D)$, and $\tau^{c,q}(\omega_R,D)$ are independent of the choice of $c$.
    \item We have an isomorphism
    \[
    \tau^{c,q}(\omega_R,D) \otimes_R \hat{R} \;\simeq\; \tau^{c,q}(\omega_{\hat{R}},\iota^*D),
    \]
    where $\iota \colon \Spec \hat{R} \to \Spec R$ is the natural morphism.
\end{enumerate}
\end{proposition}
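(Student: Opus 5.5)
The plan is to follow the proof of \cite{KTTWYY3}*{Proposition~4.25, Theorem~4.17}. The numerically $\Q$-Cartier hypothesis enters only through the condition $(\star)$, which holds by \cref{lemma:univ-van-H^d-1}. Thus \cref{prop:van-tilde-0*}, \cref{prop:stab-tilde}, \cref{thm: lifting thm pair} and \cref{prop: test element pair non-tilda} are all available for $D$ and for each $p^lD$. Here $\tau(W_n\omega_R,D)$ and $\tau(\omega_R,D)$ in (1) mean $\tau^c(W_n\omega_R,D)$ and $\tau^c_n(\omega_R,D)$.

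For (1), I would apply local duality (\cref{lem:local duality} with $X=\Spec R$) together with \cref{prop:dualizing module basic}. This identifies the Matlis dual of $F^e_*[c]\cdot F^e\colon H^d_\m(W_nR(D))\to H^d_\m(F^e_*W_nR(p^eD))$ with the composite
\[
F^e_*W_n\omega_R(-p^eD)\xrightarrow{\cdot F^e_*[c]}F^e_*W_n\omega_R(-p^eD)\xrightarrow{T^e_n}W_n\omega_R(-D).
\]
Matlis duality turns images into annihilators of kernels, so the annihilator of $\wt{K^{e,c}_{R,D,n}}$ in the dual is the image $T^e_n(F^e_*[c]W_n\omega_R(-p^eD))$.

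By \cref{prop:stab-tilde}(1) the kernels $\wt{K^{e,c}_{R,D,n}}$ form a descending chain, and by \cref{prop:stab-tilde}(2) it stabilizes at $\wt{0^*_{R,D,n}}$. Dually the images form an ascending chain whose sum $\tau^c(W_n\omega_R,D)$ equals the image for $e\ge e_0$. Hence $\tau^c(W_n\omega_R,D)$ is the annihilator of $\wt{0^*_{R,D,n}}$, which gives the first isomorphism; completion is harmless because $\wt{0^*}$ is Artinian. For the second isomorphism I would dualize the restriction $R_{D,n}$, whose dual is $T_{1,n}$. The submodule $T_{1,n}^{-1}(\tau^c(W_n\omega_R,D))$ is then the annihilator of $R_{D,n}(\wt{0^*_{R,D,n}})$, and this equals $0^*_{R,D,n}$ by \cref{thm: lifting thm pair}.

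For (2), the modules $\wt{0^*_{R,D,n}}$ and $0^*_{R,D,n}$ are defined without any choice of $c$. By (1) their annihilators are $\tau^c(W_n\omega_R,D)$ and $\tau^c_n(\omega_R,D)$, so both are independent of $c$ in the local case. For nonlocal $R$, I would check the equalities after localizing at each maximal ideal. This uses that the formation of $T^e_n$, $W_n$ and the divisorial sheaves commutes with localization by \eqref{eq: localization and Witt}, and that test elements $t$ stay test elements after localization. Independence of $\tau^{c,q}$ then follows, because the stabilization index $n_0$ does not depend on $c$.

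For (3), I would compare both sides through (1). By the base change isomorphisms in the proof of \cref{completion}, $H^d_\m(W_nR(p^eD))$ is unchanged on passing to $\hat R$, compatibly with $F^e$, $V$ and $R$. So the chains $\wt{K^{e,c}}$ agree for the same $c$, and $c$ remains admissible on $\hat R$ because $\tau(R,\{p^lD\})\hat R\subseteq \tau(\hat R,\{p^l\iota^*D\})$. One also needs $\iota^*D$ to satisfy $(\star)$; this follows from \cref{rem:cond-star}-type annihilators, which survive flat base change. Taking Matlis duals, which are unchanged by completion, yields $\tau^c_n(\omega_R,D)\otimes\hat R\simeq\tau^c_n(\omega_{\hat R},\iota^*D)$ for each $n$. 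The stabilization index is the same on both sides by faithful flatness, which gives (3).

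The main obstacle is the duality identification in (1). One must verify that the Matlis dual of the Frobenius composed with multiplication by $F^e_*[c]$ is exactly $T^e_n\circ(\cdot F^e_*[c])$ on $W_n\omega_R$-modules. This requires \cref{prop:dualizing module basic}(2) and the compatibility condition $(*)$ on the traces $T^e_{m,n}$. My first attempt would be to reduce to the case where $D$ is Cartier on a big open set, using $\Serre{2}$-hulls as in \cref{lem:Witt divisorial S2}(3).
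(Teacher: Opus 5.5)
Your proposal is correct and follows the same route as the paper. The paper defers (1) to the Matlis-duality argument of \cite{KTTWYY3}*{Proposition~4.25} and deduces (2) and (3) from (1); your write-up makes those steps explicit, with condition $(\star)$ from \cref{lemma:univ-van-H^d-1}, stabilization from \cref{prop:stab-tilde}, and \cref{thm: lifting thm pair} for the second isomorphism. The step you flag as the main obstacle is essentially definitional: $T^e_n$ is defined as the $W_n\omega_R$-dual of $F^e$ and local duality is functorial, so nothing beyond \cref{prop:dualizing module basic} (no separate $\Serre{2}$-hull reduction) is needed.
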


\begin{proof}
The assertion (1) follows by the same argument as in the proof of \cite{KTTWYY3}*{Proposition~4.25}.
Assertions (2) and (3) follow from (1).
\end{proof}

\begin{definition}\label{defn:tau^q}
We use \cref{notation:test-section}.
Let $D$ be a numerically $\Q$-Gorenstein and $\Q$-Weil divisor on $\Spec R$.
Take $t \in R^\circ$ with $t \in \tau(R,\{p^lD\})$ for every integer $l \geq 0$, and fix $c \in (t^4) \cap R^\circ$.
Then, for simplicity, we denote 
\[
\tau^{c}(W_n\omega_R,D), \quad \tau^c_n(\omega_R,D), \quad \tau^{c,q}(\omega_R,D)
\]
by
\[
\tau(W_n\omega_R,D), \quad \tau_n(\omega_R,D), \quad \tau^q(\omega_R,D),
\]
respectively.
\end{definition}

\begin{theorem}[cf.~\cite{KTTWYY3}*{Corollary~5.8}]\label{thm:comp-test-ideal-mult-ideal}
We use \cref{notation:test-section}.
Let $D$ be a numerically $\Q$-Cartier divisor on $X$.
Then we have $\tau^q(\omega_R,D) \subseteq \mathcal{J}(R,D-K_R)$.
\end{theorem}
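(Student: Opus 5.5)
We will follow the strategy of \cite{KTTWYY3}*{Corollary~5.8}, replacing $\Q$-Cartierness by numerical $\Q$-Cartierness via a birational model on which $D$ becomes an honest $\Q$-Cartier, relatively numerically trivial divisor. Since $\mathcal{J}(R,D-K_R)=\mathcal{J}(\omega_R,D)$ is defined as an intersection over all proper birational models, it is enough to fix a proper birational $f\colon Y\to \Spec R$ and show $\tau^q(\omega_R,D)\subseteq f_*\omega_Y(-\rdown{f^*_{\num}D})$. Enlarging $Y$ only shrinks the right-hand side, so we may assume $f$ is projective and that $D_Y:=f^*_{\num}D$ is $\Q$-Cartier and $f$-numerically trivial. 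By \cref{prop:tau-dual}(3) we may also pass to the completion, so $R$ is complete local.

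\textbf{Dual statement.} By \cref{prop:tau-dual}(1), $0^*_{R,D,n}$ is Matlis dual to $\omega_R(-D)/\tau_n(\omega_R,D)$. By local duality (\cref{lem:local duality}), $f_*\omega_Y(-\rdown{D_Y})$ is dual to $H^d_\m(R(D))/\Ker\bigl(H^d_\m(R(D))\to H^d_\m(\cO_Y(\rdown{D_Y}))\bigr)$. It therefore suffices to prove
\[
\Ker\bigl(H^d_\m(R(D))\to H^d_\m(\cO_Y(\rdown{D_Y}))\bigr)\subseteq 0^*_{R,D,n}
\]
for all $n\gg0$. Concretely, given $z$ in the kernel, we must find $c\in R^\circ$ such that $\Phi^{e,c}_{R,D,n}(z)=0$ in $H^d_\m(Q^{e,c}_{R,D,n})$ for all large $e$. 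By the pushout description, this amounts to lifting $z$ to some $\tilde z\in H^d_\m(W_nR(D))$ with $[c]F^e(\tilde z)=0$.

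\textbf{Witt-vector lifting on $Y$.} We mimic the proof in \cite{KTTWYY3} and factor through $Y$. Using the exact sequences \eqref{eq:key-sequence-for-WnI} on $Y$ and induction on $n$, we show that the image of $H^d_\m(W_nR(D))\to H^d_\m(W_n\cO_Y(D_Y))$ behaves well, and that $F^e$ kills elements mapping to zero on $Y$ after multiplication by a uniform $[c]$. The required uniformity comes from two ingredients:
\begin{itemize}
\item[(a)] Fujita/Keeler vanishing (\cite{Keeler03}*{Theorem~1.5}), as in \cref{lemma:univ-van-H^d-1}. This provides an ample $A$ on $Y$ and a single $a\in R^\circ$ with $\div_Y(a)\ge A$ that annihilate $R^if_*\cO_Y(\rdown{p^lD_Y}+A)$-type obstructions and the kernels of the comparison maps $H^{d-1}_\m(R(p^lD))\to H^{d-1}_\m(\cO_Y(p^lD_Y))$ for all $l$.
\item[(b)] The finiteness of $\{\cO_Y(rD_Y)\}_r$ up to isomorphism, since $D_Y$ is $\Q$-Cartier.
\end{itemize}
Condition $(\star)$ holds for $D$ by \cref{lemma:univ-van-H^d-1}, so \cref{thm: lifting thm pair} and \cref{prop: test element pair non-tilda} apply. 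These reduce $0^*_{R,D,n}$ to a single $K^{e,c}_{R,D,n}$ with $c=t^4c'$, where we take $c'$ to absorb the element $a$ from (a).

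\textbf{Main obstacle.} The hardest step is to show that the Frobenius iterates $F^e(\tilde z)$, pushed to $H^d_\m(W_n\cO_Y(p^eD_Y))$, are killed by a fixed $[c]$ independent of $e$. In the $\Q$-Cartier case this used $f^*D$ being pulled back. Here we instead use that $p^eD_Y$ is still $\Q$-Cartier and numerically trivial, so by (b) the twists $\cO_Y(\rdown{p^eD_Y}+A)$ run over finitely many isomorphism classes up to the numerically trivial part. Keeler's relative Fujita vanishing applies uniformly to these twists. We then compare $H^d_\m(W_nR(p^eD))$ with $H^d_\m(W_n\cO_Y(p^eD_Y))$ through the kernel bounds of (a), inducting on $n$ with the $V$–$R$ sequences exactly as in the proof of \cref{prop:van-tilde-0*}. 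The cost is a factor $[a^2]$ at each step, which $c$ can absorb since $n$ is fixed.
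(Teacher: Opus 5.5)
Your reduction is the paper's: pass to a projective $f\colon Y\to\Spec R$ on which $D_Y=f^*_{\num}D$ is $\Q$-Cartier and $f$-numerically trivial, dualize, and show $\Ker\bigl(H^d_\m(R(D))\to H^d_\m(\cO_Y(D_Y))\bigr)\subseteq 0^*_{R,D,n}$. The key step, however, is organized differently.

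\textbf{The paper's route.} It first chooses the lift $\tilde z\in H^d_\m(W_nR(D))$ so that $\tilde z$ already dies in $H^d_\m(W_n\cO_Y(D_Y))$. This uses the $V$--$R$ sequences on $\Spec R$ and on $Y$, together with the surjectivity of $F_*H^d_\m(W_{n-1}R(pD))\to F_*H^d_\m(W_{n-1}\cO_Y(pD_Y))$. Your phrase ``the image behaves well'' has to mean exactly this, and the step is genuinely needed.

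The paper then upgrades Keeler's vanishing $R^if_*\cO_Y(p^lD_Y+A)=0$ (for all $l$), via the $V$-filtration, to $R^if_*W_n\cO_Y(p^lD_Y+A)=0$. So $H^d_\m(W_n\cO_Y(p^eD_Y+A))$ is computed on $\Spec R$. For $c$ with $\div_Y(c)\ge A$, the map $[c]\circ F^e$ on $H^d_\m(W_nR(D))$ then factors through $H^d_\m(W_n\cO_Y(D_Y))$, for all $e$ at once. Hence $[c]F^e(\tilde z)=0$ for every $e$ with a single $c$, and no annihilators need to be tracked.

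\textbf{Your route.} You prove by induction on $n$ that $\Ker\bigl(H^d_\m(W_nR(rD))\to H^d_\m(W_n\cO_Y(rD_Y))\bigr)$ is killed by some $[c_n]$ independent of $r$, and apply this to $F^e(\tilde z)$. Note that $F^e(\tilde z)$ dies on $Y$ by naturality. So your ``main obstacle'' is really transporting that vanishing back to $\Spec R$, not killing anything on $Y$.

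This route also works, but its inputs are not the ones listed in your (a).
\begin{itemize}
\item The base case needs a uniform annihilator of $\Ker\bigl(H^d_\m(R(rD))\to H^d_\m(\cO_Y(rD_Y))\bigr)$. You get it by factoring multiplication by $a$ through $H^d_\m(\cO_Y(rD_Y+A))\simeq H^d_\m(f_*\cO_Y(rD_Y+A))$.
\item The inductive step needs a uniform annihilator of $H^{d-1}_\m(\cO_Y(rD_Y))$, to control $\Ker V$ on $Y$. This is supplied by the second half of the proof of \cref{lemma:univ-van-H^d-1}; each step then costs a factor $[as]$.
\item The kernels of $H^{d-1}_\m(R(p^lD))\to H^{d-1}_\m(\cO_Y(p^lD_Y))$ control $\Ker V$ on $\Spec R$ and play no role here.
\end{itemize}

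\textbf{Comparison.} Your route yields a stronger intermediate statement, namely uniform annihilation of the comparison kernels, at the cost of this bookkeeping. The paper's factorization is shorter and uses only the vanishing of higher direct images.
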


\begin{proof}
We may assume that $(R,\m)$ is local and set $d:=\dim R$.
It is enough to show that for every projective birational morphism $f \colon Y \to \Spec R$ from a normal integral scheme $Y$, we have
\[
{\rm Ker}\Big(H^d_\m(R(D)) \to H^d_\m(\cO_Y(D_Y))\Big) \subseteq 0^*_{R,D,n}
\]
for every integer $n \geq 1$, where $D_Y:=f^*_{num}D$.

We consider the following commutative diagram, in which each horizontal sequence is exact:
\[
\begin{tikzcd}
    F_*H^d_\m(W_{n-1}R(pD)) \arrow[r,"V"] \arrow[d,twoheadrightarrow] & H^d_\m(W_nR(D)) \arrow[r] \arrow[d,twoheadrightarrow] & H^d_\m(R(D)) \arrow[r] \arrow[d,twoheadrightarrow] & 0 \\
    F_*H^d_\m(W_{n-1}\cO_Y(pD_Y)) \arrow[r,"V"] & H^d_\m(W_n\cO_Y(D_Y)) \arrow[r] & H^d_\m(\cO_Y(D_Y)) \arrow[r] & 0.
\end{tikzcd}
\]

Thus, it is enough to prove that 
\[
{\rm Ker}\Big(H^d_\m(W_nR(D)) \to H^d_\m(W_n\cO_Y(D_Y))\Big) \subseteq \wt{0^*_{R,D,n}}
\]
by \cref{thm: lifting thm pair}.

By the proof of \cref{lemma:univ-van-H^d-1}, there exists an ample Cartier divisor $A$ on $Y$ and $c \in R^\circ$ such that $R^if_*\cO_Y(p^lD_Y+A)=0$ for all $i \geq 1$, $l \geq 0$ and $\mathrm{div}_Y(c) \geq A$.
It follows that 
\[
R^if_*W_n\cO_Y(p^lD_Y+A)=0 \quad \text{for all $i,n \geq 1$, $l \geq 0$}.
\]
Therefore we obtain the commutative diagram
\[
\begin{tikzcd}
    H^d_\m(W_nR(D)) \arrow[r] \arrow[d] & F^e_*H^d_\m(W_nR(p^eD+\mathrm{div}_R(c)))  \\
    H^d_\m(W_n\cO_Y(D)) \arrow[r] & F^e_*H^d_\m(W_n\cO_Y(p^eD_Y+A)) \arrow[u]
\end{tikzcd}
\]
for all $e,n \geq 1$.

Hence
\[
{\rm Ker}\Big(H^d_\m(W_nR(D)) \to H^d_\m(W_n\cO_Y(D_Y))\Big) 
\subseteq \bigcap_{e \geq 1}\wt{K^{e,c}_{R,D,n}} 
\subseteq \wt{0^*_{R,D,n}},
\]
as required.
\end{proof}

\section{Log canonicity of quasi-\texorpdfstring{$F$}{F}-splitting}

\begin{proposition}\label{prop:comp-norm}
We use \cref{notation:test-section}.
Let $D$ be a numerically $\Q$-Cartier $\Q$-Weil divisor on $\Spec R$ and $n \geq 1$ an integer. 
Let $g \in R^\circ$ and $e_0 \in \Z_{\geq 1}$.
We take $t \in R^\circ$ such that $t \in \tau(R,\{p^lD\})$.
Let $c \in (t^8g^4) \cap R^\circ$.
We set $f:=c^pg$ and $E:=(1-1/p^{e_0}){\rm div}(f)$.
Let $\psi_n \colon H^d_\m(W_nR(D)) \to H^d_\m(W_nR(D+E))$ be the natural map for $n \in \Z_{\geq 1}$.
Then there exists an integer $e' \geq 1$ such that: 
\begin{enumerate}
    \item  $\psi_n^{-1}(\wt{0^*_{R,D+E,n}})=\wt{K^{e,cf^{p^e-p^{e-e_0}}}_{R,D,n}}$ for all  $e \geq e'$,
    \item $\psi_1^{-1}(0^*_{R,D+E,n})=K^{e,cf^{p^e-p^{e-e_0}}}_{R,D,n}$ for all  $e \geq e'$, and
    \item $K^{e,c^pf^{p^e-p^{e-e_0}}}_{R,D,n} \subseteq K^{e_0,c^pf^{p^{e_0}-1}}_{R,D,n}$ for every integer $e \geq e_0$.
\end{enumerate}
In particular, 
\[
\psi_1^{-1}(0^*_{R,D+E,n}) \subseteq K^{e_0,c^pf^{p^{e_0}-1}}_{R,D,n}.
\]
\end{proposition}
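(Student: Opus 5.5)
The plan is to reduce everything to a comparison of Witt divisorial sheaves for $D$ and $D+E$ via Teichm\"uller multiplication, and then to invoke the stabilization results \cref{prop:stab-tilde} and \cref{prop: test element pair non-tilda} for the divisor $D+E$.

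First I would note that $D+E$ is again numerically $\Q$-Cartier, since $E$ is a rational multiple of the principal divisor ${\rm div}(f)$. Hence $D+E$ satisfies $(\star)$ by \cref{lemma:univ-van-H^d-1}. Next I would produce a test element for the family $\{p^l(D+E)\}_l$ out of $t$ and $g$. The fractional parts $\{p^l(D+E)\}$ differ from $\{p^lD\}$ by fractional multiples of ${\rm div}(f)={\rm div}(c^pg)$, and $c$ is already divisible by $t^8g^4$. So I expect something like $t':=t^2g$ (or a similar product) to lie in $\tau(R,\{p^l(D+E)\})$ for all $l$. Then $c\in(t'^4)$, which is exactly why $c$ is chosen in $(t^8g^4)$. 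This gives the hypotheses of \cref{prop:stab-tilde} and \cref{prop: test element pair non-tilda} for $D+E$ with the same $c$. These yield $e'$ such that $\wt{0^*_{R,D+E,n}}=\wt{K^{e,c}_{R,D+E,n}}$ and $0^*_{R,D+E,n}=K^{e,c}_{R,D+E,n}$ for all $e\ge e'$; I would take $e'\ge e_0$.

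The key computation is then, for $e\ge e_0$,
\[
p^{e+i}E=p^i(p^e-p^{e-e_0})\,{\rm div}(f),
\]
which is an integral multiple of ${\rm div}(f)$. Hence, componentwise,
\[
W_nR(p^e(D+E))=[f^{-(p^e-p^{e-e_0})}]\cdot W_nR(p^eD)
\]
inside $W_n\mathscr{K}_X$, using \cref{eg:multiple of Witt}. Under this identification, $F^e_*[c]\cdot F^e(\psi_n z)$ in $F^e_*H^d_\m(W_nR(p^e(D+E)))$ vanishes if and only if $F^e_*[cf^{p^e-p^{e-e_0}}]\cdot F^e(z)$ vanishes in $F^e_*H^d_\m(W_nR(p^eD))$. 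This gives $\psi_n^{-1}(\wt{K^{e,c}_{R,D+E,n}})=\wt{K^{e,cf^{p^e-p^{e-e_0}}}_{R,D,n}}$, and combining with the stabilization above proves (1). For (2) I would run the same argument on the pushout modules $Q^{e,c}$. The Teichm\"uller identification is compatible with $R^{n-1}$ and with the pushout diagram \eqref{e-def-Q^{e,c}_D}, so we get $\psi_1^{-1}(K^{e,c}_{R,D+E,n})=K^{e,cf^{p^e-p^{e-e_0}}}_{R,D,n}$. Alternatively, one can apply $R_{D,n}$ to (1) using \cref{thm: lifting thm pair}.

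For (3) I would factor $F^e=F^{e-e_0}\circ F^{e_0}$ and use
\[
f^{p^e-p^{e-e_0}}=\bigl(f^{p^{e_0}-1}\bigr)^{p^{e-e_0}}.
\]
This gives
\[
[c^pf^{p^e-p^{e-e_0}}]\,F^e(z)=[c^p]\cdot F^{e-e_0}\bigl([f^{p^{e_0}-1}]F^{e_0}(z)\bigr).
\]
Vanishing of this class then says that $[c^{p}]$-twisted $F^{e-e_0}$ kills $w:=[f^{p^{e_0}-1}]F^{e_0}(z)$. Moving $[c^p]=F([c])$ inside, as in the proof of \cref{prop:stab-tilde}(1), the element $[c]\,w$ (up to a Teichm\"uller factor) lies in the kernel of a Frobenius on $H^d_\m(W_nR(p^{e_0}D))$ or its $Q$-analogue. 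By \cref{prop:van-tilde-0*} this kernel is killed by $[t^2]$. Since $c$ is divisible by $t^2$ with room to spare, the remaining factor $c^{p}$ absorbs this and gives $[c^pf^{p^{e_0}-1}]F^{e_0}(z)=0$, that is, membership in $K^{e_0,c^pf^{p^{e_0}-1}}_{R,D,n}$. The final statement follows from (2), since $c^p\mid$ the relevant element makes $K^{e,cf^{\cdots}}\subseteq K^{e,c^pf^{\cdots}}$, combined with (3) for $e\ge e'$.

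I expect the main obstacle to be the bookkeeping in (3), and the choice of a test element for $D+E$. One has to match exactly which powers of $c$ and $t$ are available so that \cref{prop:van-tilde-0*} applies at each Frobenius step. One also has to transport the argument from $\wt{K}$ to the non-tilde $K$ through the pushout, which is where \cref{thm: lifting thm pair} is needed.
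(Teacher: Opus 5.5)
Your proposal follows the paper's proof: the Teichm\"uller isomorphism $\cdot[f^{p^e-p^{e-e_0}}]\colon W_nR(p^e(D+E))\xrightarrow{\sim} W_nR(p^eD)$ together with \cref{prop:stab-tilde} for $D+E$ gives (1). As in the paper, the test element there is $t^2g\in tg\cdot\tau(R,\{p^lD\})\subseteq\tau(R,\{p^l(D+E)\})$; (2) is then deduced from (1) through \cref{thm: lifting thm pair}, and (3) comes from $[t^2]\cdot\Ker(F^{e}_{p^lD,n})=0$ (\cref{prop:van-tilde-0*}) combined with $c^p\in(ct^2)$. The only difference is cosmetic: in (3) you factor $F^e=F^{e-e_0}\circ F^{e_0}$ and kill $[c]\,[f^{p^{e_0}-1}]F^{e_0}(z)\in\Ker(F^{e-e_0})$ in one step, whereas the paper descends one Frobenius at a time from $e+1$ to $e$.
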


\begin{proof}
We set $D':=D+E$ and $f_e:=f^{p^e-p^{e-e_0}}$.
Since $\cdot [f_e] \colon H^d_\m(W_nR(p^eD')) \to H^d_\m(W_nR(p^eD))$ is an isomorphism, we have 
\begin{equation}\label{eq:eq1}
    \psi_n^{-1}(\wt{K^{e,c'}_{R,D',n}}) = \wt{K^{e,c'f_e}_{R,D,n}}
\end{equation}
for $c' \in R^\circ$.
 
We note
\begin{align*}
    t^2g &\in tg \cdot \tau(R,\{p^lD\})
          = \tau(R,\{p^lD\}+{\rm div}(tg)) \\
         &\subseteq \tau(R,\{p^lD'\}).
\end{align*}
Hence there exists an integer $e' \geq 1$ such that
\begin{equation}\label{eq:eq2}
\wt{K^{e,c}_{R,D',n}}=\wt{0^*_{R,D',n}}
\end{equation}
for all $e \geq e'$ by \cref{prop:stab-tilde}.
Combining \eqref{eq:eq1} and \eqref{eq:eq2} gives (1).

For (2), if $n=1$ the assertion follows from (1).  
Assume $n \geq 2$.  
Applying (1) to $(n,D,f)$ and $(n-1,pD,f^p)$ and replacing $e'$, we obtain
\begin{align*}
    \frac{H^d_\m(F_*W_{n-1}R(pD'))}{\wt{0^*_{R,pD',n-1}}} &\simeq \frac{H^d_\m(F_*W_{n-1}R(pD))}{\wt{K^{e,c^pf_{e+1}}_{R,pD,n}}}, \\
    \frac{H^d_\m(W_nR(D'))}{\wt{0^*_{R,D',n}}} &\simeq \frac{H^d_\m(W_nR(D))}{\wt{K^{e,cf_e}_{R,D,n}}}
\end{align*}
for all $e \geq e'$.  
Thus, by the exact sequence in  \cref{thm: lifting thm pair},
\[
K^{e,cf_e}_{R,D,n}=R^{n-1}(\wt{K^{e,cf_e}_{R,D,n}})=\psi_1^{-1}(0^*_{R,D',n}),
\]
as desired.

For (3), let $e \geq e_0$ and $\alpha \in K^{e+1,c^pf_{e+1}}_{R,D,n}$.  
Then
\[
0=[c^pf_{e+1}]F^{e+1}(\alpha)=F([cf_e]F^e(\alpha)).
\]
By \cref{prop:van-tilde-0*}, we have $[t^2cf_e]F^e(\alpha)=0$.  
Since $c^p \in (ct^2)$, it follows that $[c^pf_e]F^e(\alpha)=0$.  
Hence $\alpha \in K^{e,c^pf_e}_{R,D,n}$, proving (3).
\end{proof}

\begin{proposition}\label{prop:comp-split-regular-norm}
We use \cref{notation:test-section}.
Assume  that $K_R$ is effective.
Let $\Delta$ be an effective $\Q$-Weil divisor.
Take $g \in R^\circ$ such that ${\rm div}(g) \geq D:=K_R+\Delta$.
Let $f \in R^\circ$ and set $c:=fg$.
For integers $e,n \geq 0$ set
\[
D_e:=K_R+\tfrac{p^e-1}{p^e}\Delta.
\]
Then there exists a natural map
\[
H^d_\m(R(D)) \xrightarrow{\ \cdot f\ } H^d_\m(R(D_e)).
\]
Moreover, we have
\[
f \cdot K^{e,cf^{p^e-1}}_{R,D,n} \subseteq {\rm Ker}\big(H^d_\m(\Phi^{e}_{R,D_e,n})\big).
\]
\end{proposition}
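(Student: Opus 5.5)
We work at the level of fractional ideals in $K(R)$, identifying all reflexive modules $W_nR(\cdot)$ with submodules of $W_n K(R)$. The map ``$\cdot f$'' is meant as a map of modules rather than a map between divisorial sheaves, so the first step is a containment of divisors: we need $\mathrm{div}(f)+D_e \ge D$, or at least $f\cdot R(D)\subseteq R(D_e)$. A direct comparison of $D$ with $D_e$ gives only $D-D_e=\tfrac{1}{p^e}\Delta$, which is not obviously controlled by $\mathrm{div}(f)$. I therefore expect the intended map to factor through the $e$-th Frobenius twist, and I would test $\mathrm{div}(f)\ge \tfrac1{p^e}\Delta$ on the relevant graded pieces. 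If that fails, the remedy is to replace it by the inclusion $f\cdot R(D)\subseteq R(D+\mathrm{div}(f))$ followed by identifying $R(D)$ with a submodule of $R(D_e)$ after multiplication by $f$.

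The main content is the kernel statement. Let $z\in K^{e,cf^{p^e-1}}_{R,D,n}$. By the definition of $Q^{e,c}$ as a pushout, together with the lifting theorem (\cref{thm: lifting thm pair}, or more elementarily the pushout description), $z$ lifts to $\tilde z\in H^d_\m(W_nR(D))$. The lift satisfies $[cf^{p^e-1}]F^e(\tilde z)\in \mathrm{Im}\bigl(V\colon F_*H^d_\m(W_{n-1}R(pD))\to\cdots\bigr)$ after Frobenius, that is, its image in $H^d_\m(Q^{e,c f^{p^e-1}}_{R,D,n})$ vanishes. We then consider $[f]\tilde z\in H^d_\m(W_nR(D_e))$ and compute
\[
F^e([f]\tilde z)=[f^{p^e}]F^e(\tilde z)=[f]\cdot [f^{p^e-1}]F^e(\tilde z).
\]
The key divisor inequality is
\[
p^eD_e = p^eK_R+(p^e-1)\Delta \ge p^e D - \Delta \ge p^eD-\mathrm{div}(g)+K_R.
\]
Using $\mathrm{div}(g)\ge K_R+\Delta$ and $K_R\ge 0$, this shows that multiplication by $[c]=[fg]$ sends $F^e_*W_nR(p^eD)$ into $F^e_*W_nR(p^eD_e)$, compatibly with $V$ and $R^{n-1}$ (componentwise, using $p^i$-twists and \cref{eg:multiple of Witt}). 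Hence the pushout diagram for $(D,e,n,cf^{p^e-1})$ maps to the pushout diagram for $(D_e,e,n)$ with left vertical map $\cdot f$. This yields a morphism $Q^{e,cf^{p^e-1}}_{R,D,n}\to Q^e_{R,D_e,n}$ compatible with $\Phi$. Applying $H^d_\m$ gives $f\cdot K^{e,cf^{p^e-1}}_{R,D,n}\subseteq\Ker H^d_\m(\Phi^e_{R,D_e,n})$.

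The main obstacle is verifying that the right vertical map is well defined on every Witt component. At the $i$-th component one must check
\[
p^i\bigl(p^eD\bigr)-p^i\,\mathrm{ord}(c f^{p^e-1})\le p^i p^e D_e - p^i\,\mathrm{ord}(f).
\]
Here the factor $[f]$ on the left turns into $f^{p^i}$ by \cref{eg:multiple of Witt}, so everything scales by $p^i$ and reduces to the $i=0$ inequality above. Compatibility with the map from $W_nR(D)$ on the left must also be checked; this holds because $F^e$ commutes with Teichmüller multiplication.
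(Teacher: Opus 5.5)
There is a genuine gap, and it sits exactly at the two points you hedged. Your overall strategy matches the paper's: build a morphism of pushout squares with left vertical map $\cdot[f]$, then apply $H^d_\m$. But neither step that uses the hypotheses goes through as you set it up.

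First, you never establish $f\cdot R(D)\subseteq R(D_e)$, and with ``$c:=fg$, $f$ arbitrary'' it is false. Take $f=1$, $e\ge1$, and a prime divisor appearing in $\Delta$ with coefficient $1$. Then $\rdown{D}$ has a strictly larger coefficient there than $\rdown{D_e}$, so $R(D)\not\subseteq R(D_e)$. The paper's proof obtains the inclusion from $f\cdot R(D)\subseteq g\cdot R(D)=R(D-\mathrm{div}(g))\subseteq R(\tfrac{p^e-1}{p^e}D)\subseteq R(D_e)$, and the first step uses $g\mid f$. This is the relation $f=cg$ under which the proposition is applied in \cref{thm:qFs-test-norm} (there $c=(t^8g^4)^p$ and $f=cg$), so the statement should be read that way.

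Second, commutativity of the right-hand square is not automatic. Since $F^e([f]z)=[f^{p^e}]F^e(z)$, a right vertical map $\cdot F^e_*[u]$ makes the square commute only if $u\,cf^{p^e-1}=f^{p^e}$.
\begin{itemize}
\item Your implicit choice $u=c$ gives $[c^2f^{p^e-1}]$ along the top path. The inequality in your last paragraph checks a containment, not commutativity.
\item Under $c=fg$ you would need $u=g^{-1}$, which does not map $W_nR(p^eD)$ into $W_nR(p^eD_e)$. Equivalently, the hypothesis $z\in K^{e,gf^{p^e}}_{R,D,n}$ is too weak, because $K^{e,c'}_{R,D,n}$ only grows as $c'$ becomes more divisible.
\item Under $f=cg$ the right choice is $u=g$, since $[g]\,[cf^{p^e-1}]=[f^{p^e}]$. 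The printed diagram labels this arrow $[f]$, but only $[g]$ makes it commute.
\end{itemize}
For well-definedness of $[g]$, your own inequality $p^eD-\mathrm{div}(g)\le p^eD_e-K_R\le p^eD_e$ suffices: scaled by $p^i$ on the $i$-th Witt component, it shows $[g]$ maps $F^e_*W_nR(p^eD)$ into $F^e_*W_nR(p^eD_e)$. The universal property of the pushout then gives $Q^{e,cf^{p^e-1}}_{R,D,n}\to Q^e_{R,D_e,n}$ compatible with the maps $\Phi$. Applying $H^d_\m$ yields the kernel inclusion directly. No lift to $H^d_\m(W_nR(D))$ and no appeal to \cref{thm: lifting thm pair} is needed.
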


\begin{proof}
We have
\[
f\cdot R(D) \subseteq g \cdot R(D) = R(D-{\rm div}(g)) 
\subseteq R\!\left(\tfrac{p^e-1}{p^e}D\right) 
\overset{(\star_1)}{\subseteq} R(D_e),
\]
where $(\star_1)$ follows from the assumption that $K_R$ is effective.
Applying $H^d_\m(-)$, we obtain the natural map
\[
\cdot f \colon H^d_\m(R(D)) \longrightarrow H^d_\m(R(D_e)).
\]

Next, consider the commutative diagram
\begin{equation}\label{eq:comm-prop-comp'}
\begin{tikzcd}
    W_nR(D) \arrow[r,"F^e"] \arrow[d,"\cdot \lbrack f \rbrack"] 
      & F^e_*W_nR(p^eD) \arrow[r,"\cdot F^e_*\lbrack cf^{p^e-1} \rbrack"] 
          & F^e_*W_nR(p^eD) \arrow[d,"\cdot F^e_*\lbrack f \rbrack"] \\
    W_nR(D_e) \arrow[rr,"F^e"] & & W_nR(p^eD_e), 
\end{tikzcd}
\end{equation}
where the right vertical map exists because
\[
[f]W_nR(p^eD) \subseteq W_nR(p^eD-{\rm div}(g)) 
\subseteq W_nR((p^e-1)D) \subseteq W_nR(p^eD_e).
\]

From \eqref{eq:comm-prop-comp'}, we obtain the commutative diagram
\[
\begin{tikzcd}[column sep=2cm]
    R(D) \arrow[r,"\Phi^{e,cf^{p^e-1}}_{R,D,n}"] \arrow[d,"\cdot f"] 
        & Q^{e,cf^{p^e-1}}_{R,D,n} \arrow[d,"\cdot \lbrack f \rbrack"] \\
    R(D_e) \arrow[r,"\Phi^e_{R,D_e,n}"] & Q^e_{R,D_e,n}.
\end{tikzcd}
\]
Therefore,
\[
f \cdot K^{e,cf^{p^e-1}}_{R,D,n}
   = f \cdot {\rm Ker}(\Phi^{e,cf^{p^e-1}}_{R,D,n})
   \subseteq {\rm Ker}(\Phi^e_{R,D_e,n}),
\]
as desired.
\end{proof}

\begin{theorem}\label{thm:qFs-test-norm}
We use \cref{notation:test-section}.
Let $\Delta$ be an effective $\Q$-Weil divisor such that $D:=K_R+\Delta$ is numerically $\Q$-Cartier.
If $(R,(p^e-1)/p^e \Delta)$ is quasi-$F^e$-split for every $e \in \Z_{\geq 1}$, then there exists $f \in R^\circ$ such that for every rational number $\varepsilon >0$, we have
\[
f \in \tau^q(\omega_{R},D+(1-\varepsilon){\rm div}(f)).
\]
\end{theorem}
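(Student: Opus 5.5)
The plan is to prove the membership by Matlis duality, converting it into an annihilation statement for quasi-tight closures, and then to use \cref{prop:comp-norm} and \cref{prop:comp-split-regular-norm} to feed in the quasi-$F^e$-splitting hypothesis.

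\textbf{Reductions and choice of $f$.} First I would reduce to the case $(R,\m)$ local; if needed I would also pass to the completion, using \cref{prop:tau-dual}(3) and \cref{completion}. Replacing $K_R$ by $K_R+{\rm div}(h)$ for suitable $h$ does not change the statement up to a unit twist, so I may assume $K_R$ is effective. By \cref{lemma:univ-van-H^d-1}, $D$ satisfies $(\star)$. I then fix:
\begin{itemize}
\item $g\in R^\circ$ with ${\rm div}(g)\ge D$;
\item a uniform element $t\in R^\circ\cap\bigcap_l\tau(R,\{p^lD\})$;
\item $c\in(t^8g^4)\cap R^\circ$;
\item and finally $f:=c^pg$.
\end{itemize}
This is exactly the element appearing in \cref{prop:comp-norm}, and it does not depend on $\varepsilon$. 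Given $\varepsilon>0$, I choose $e_0$ with $p^{-e_0}\le\varepsilon$ and put $E:=(1-p^{-e_0}){\rm div}(f)$. Then $D+(1-\varepsilon){\rm div}(f)\le D+E$. Since $\tau^q(\omega_R,-)$ is antitone in the divisor, it suffices to show $f\in\tau^q(\omega_R,D+E)$.

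\textbf{Duality step.} By \cref{prop:tau-dual}(1), for $n\ge n_0$ (the stabilization index), $\tau^q(\omega_R,D+E)$ is the annihilator, under the pairing $\omega_R(-(D+E))\times H^d_\m(R(D+E))\to H^d_\m(\omega_R)$, of $0^*_{R,D+E,n}$. So I need $f\cdot 0^*_{R,D+E,n}=0$, with $f$ regarded in $\omega_R(-D-E)$ via $K_R\ge0$ and ${\rm div}(f)\ge E$.

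The map $\psi_1\colon H^d_\m(R(D))\to H^d_\m(R(D+E))$ is surjective, because its cokernel is $H^d_\m$ of a module of dimension $<d$. Hence it suffices to show that $f$ kills $\psi_1^{-1}(0^*_{R,D+E,n})$, with the pairing computed on $H^d_\m(R(D))$. Checking that the two pairings are compatible through $\psi_1$ is routine bookkeeping.

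\textbf{Annihilation step.} By \cref{prop:comp-norm},
\[
\psi_1^{-1}(0^*_{R,D+E,n})\subseteq K^{e_0,c^pf^{p^{e_0}-1}}_{R,D,n}.
\]
Since $f^{p^{e_0}}g=(c^pf^{p^{e_0}-1})g^2$ is a multiple of $c^pf^{p^{e_0}-1}$, and kernels grow when the multiplier is multiplied, this module lies in $K^{e_0,(fg)f^{p^{e_0}-1}}_{R,D,n}$. Now \cref{prop:comp-split-regular-norm}, applied with its $c$ equal to $fg$, gives
\[
f\cdot K^{e_0,(fg)f^{p^{e_0}-1}}_{R,D,n}\subseteq\Ker\bigl(H^d_\m(\Phi^{e_0}_{R,D_{e_0},n})\bigr),
\qquad D_{e_0}=K_R+\tfrac{p^{e_0}-1}{p^{e_0}}\Delta.
\]
By hypothesis, $(R,\tfrac{p^{e_0}-1}{p^{e_0}}\Delta)$ is $n$-quasi-$F^{e_0}$-split for some $n$, and this persists for larger $n$. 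So \cref{prop:p-qFS equivalent def}(c), with $S=0$, shows that this kernel vanishes once $n\ge\max(n_0,n_{e_0})$. Thus $f$ annihilates $\psi_1^{-1}(0^*_{R,D+E,n})$, hence $f\cdot 0^*_{R,D+E,n}=0$, and duality gives the claim.

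\textbf{Main obstacle.} I expect the hardest part to be the compatibility of the pairings and multiplication maps. Specifically, one has to identify the map ``$\cdot f$'' of \cref{prop:comp-split-regular-norm}, landing in $H^d_\m(R(D_{e_0}))$, with the duality pairing of $f\in\omega_R(-D-E)$ against $H^d_\m(R(D+E))$. This should be done via $H^d_\m(R(D_{e_0}))\to H^d_\m(R(K_R))\to E_R$, using $D_{e_0}\le K_R+\Delta$. One also has to ensure that the $n$ used for the quasi-$F^{e_0}$-splitting can be taken at least the stabilization index $n_0$ defining $\tau^q$.
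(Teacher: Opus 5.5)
Your proposal is correct and is essentially the paper's own proof. It uses the same choice $f=c^pg$ with $c\in(t^8g^4)$, the same reduction to $E=(1-p^{-e_0}){\rm div}(f)$, and the same chain \cref{prop:comp-norm} $\Rightarrow$ \cref{prop:comp-split-regular-norm} $\Rightarrow$ injectivity from $n$-quasi-$F^{e_0}$-splitting. Your annihilator formulation of Matlis duality is a rephrasing of the paper's dualization of the composite map $\sigma$; that version only needs $\tau_n\subseteq\tau^q$, so no condition $n\ge n_0$ is required.

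One bookkeeping point needs correcting. The membership $f\in\omega_R(-D-E)=R(-\rdown{\Delta+E})$ does not follow from ${\rm div}(f)\ge E$ alone. It follows from ${\rm div}(f)\ge{\rm div}(g)\ge\Delta$ together with the fact that the hypothesis forces every coefficient of $\Delta$ to be at most $1$. The same coefficient bound gives $R(D_{e_0})=R(K_R)$, which makes the pairing identification you flag as the main obstacle immediate.
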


\begin{proof}
We may assume that $K_R$ is an effective Weil divisor.
Take $g \in R^\circ$ such that ${\rm div}(g) \geq D$.
Since $D$ is a $\Q$-Weil divisor, there exists $t \in R^\circ$ such that $t \in \tau(R,\{p^lD\})$ for every integer $l \geq 0$.
Set $c:=(t^8g^4)^p$, $f:=cg$, $E_e:=(1-1/p^e){\rm div}(f)$, and $D_e:=K_R+(p^e-1)/p^e\Delta$ for every $e \geq 1$.
It suffices to show that $f \in \tau^q(\omega_{R},D+E_e)$ for every $e \geq 1$.

By \cref{prop:tau-dual}(3), we may further assume that $R$ is complete.
Fix $e \geq 1$.
Then there exists an integer $n \geq 1$ such that $(R,(p^e-1)/p^e\Delta)$ is $n$-quasi-$F^e$-split.
Consider the composition
\[
\begin{tikzcd}
    \dfrac{H^d_\m(R(D+E_e))}{0^*_{R,D+E_e}} 
        \arrow[r,"(\star_3)"] 
      & \dfrac{H^d_\m(R(D))}{K^{e,cf^{p^e-1}}} 
        \arrow[r,"(\star_4)"] 
      & \dfrac{H^d_\m(R(D_e))}{{\rm Ker}(\Psi^e_{R,D_e,n})} 
        \arrow[r,"(\star_5)"] 
      & H^d_\m(R(D_e)),
\end{tikzcd}
\]
denoted by $\sigma$, where $(\star_3)$ follows from \cref{prop:comp-norm}, $(\star_4)$ is induced by multiplication by $f$ as in \cref{prop:comp-split-regular-norm}, and $(\star_5)$ follows from the fact that ${\rm Ker}(\Psi^e_{R,D_e,n})=0$ by the $n$-quasi-$F^e$-splitting of $(R,(p^e-1)/p^e\Delta)$.

Taking the Matlis dual of $\sigma$, we obtain a map
\[
R=\omega_R(-D_e) \xrightarrow{\sigma^\vee} \tau_n(\omega_{R},D+E_e) \subseteq \tau^q(\omega_{R},D+E_e).
\]
Since $\sigma$ is a composition of natural maps and the map induced by multiplication by $f$, the dual $\sigma^\vee$ is given by multiplication by $f$.
In particular, $f \in \tau^q(\omega_{R},D+E_e)$, as desired.
\end{proof}

\begin{theorem}\label{thm:qFs-to-lc-norm}
Let $R$ be an $F$-finite Noetherian normal domain of characteristic $p>0$.
Suppose that $\Delta$ is an effective $\Q$-Weil divisor on $\Spec R$ such that $K_R+\Delta$ is numerically $\Q$-Cartier and $S=\rdown{\Delta}$ is reduced.
\begin{enumerate}[label=\textup{(\arabic*)}]
\item If $(R,(p^e-1)/p^e\Delta)$ is quasi-$F^e$-split for every $e$, then $(R,\Delta)$ is numerically log canonical. 
\item In particular, if $(R,\Delta)$ is purely quasi-$F$-split, $\Delta$ has standard coefficients and $R(p^l(K_R+\Delta)-S)$ is Cohen-Macaulay for every integer $l \ge 0$, then $(R,\Delta)$ is numerically log canonical.
\end{enumerate}
\end{theorem}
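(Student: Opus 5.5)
For (1), I will localize at an arbitrary prime and reduce to the case where $(R,\m)$ is local. Quasi-$F^e$-splitting localizes by the characterization in \cref{prop:p-qFS equivalent def}(b). Numerical log canonicity is local on $\Spec R$. Since $D:=K_R+\Delta$ is numerically $\Q$-Cartier, \cref{thm:qFs-test-norm} gives an element $f\in R^\circ$ with
\[
f\in\tau^q(\omega_R,D+(1-\varepsilon)\,\div(f))\quad\text{for every rational } \varepsilon>0 .
\]
The divisor $D+(1-\varepsilon)\div(f)$ is still numerically $\Q$-Cartier. So \cref{thm:comp-test-ideal-mult-ideal} applies and yields
\[
f\in\mathcal J\bigl(R,\Delta+(1-\varepsilon)\div(f)\bigr)=\mathcal J\bigl(\omega_R,K_R+\Delta+(1-\varepsilon)\div(f)\bigr),
\]
using \cref{rem:cJ basic}(i).

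I then unwind the definition of the multiplier ideal. Fix a proper birational $\pi\colon Y\to\Spec R$ from a normal $Y$, and a prime divisor $E$ on $Y$. Since $\div(f)$ is Cartier, its numerical pullback is $\div_Y(f)$. The membership above gives
\[
\mathrm{ord}_E(f)\ \ge\ \Bigl\lfloor \mathrm{ord}_E(\Delta_Y^{\num})+(1-\varepsilon)\,\mathrm{ord}_E(f)\Bigr\rfloor ,
\]
so $\varepsilon\,\mathrm{ord}_E(f)+1>\mathrm{ord}_E(\Delta_Y^{\num})$. Letting $\varepsilon\to 0$ gives $-a_E(X,\Delta)=\mathrm{ord}_E(\Delta_Y^{\num})\le 1$, that is, $a_E\ge -1$.

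One subtlety is the case of a non-exceptional $E$ in the support of $\Delta$. Here $a_E=-\mathrm{coeff}_E\Delta\ge -1$ holds automatically, because $\lfloor\Delta\rfloor$ is reduced. Since $E$ is arbitrary, $(R,\Delta)$ is numerically log canonical.

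For (2), I combine \cref{cor:qFs-to-qF^eS} with \cref{rem:p-qFs-to-qFs}. The standard coefficients and Cohen--Macaulay hypotheses are exactly those of \cref{cor:qFs-to-qF^eS}. After localizing at each maximal ideal, pure quasi-$F$-splitting therefore upgrades to pure quasi-$F^e$-splitting for every $e$. By \cref{rem:p-qFs-to-qFs}, this implies that $(R,\tfrac{p^e-1}{p^e}\Delta)$ is quasi-$F^e$-split for all $e$, and part (1) then applies.

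The main obstacle I expect is the limit argument in (1). The element $f$ must be independent of $\varepsilon$, which \cref{thm:qFs-test-norm} provides. I also have to check carefully that the identification of $\tau^q(\omega_R,\cdot)$ with a fractional ideal, via $\omega_R(-D')$, matches the convention in $\mathcal J(\omega_R,\cdot)$. A second point to verify is that the local reductions are compatible with these identifications.
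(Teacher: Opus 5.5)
Your proposal is correct and follows the paper's proof: you localize, combine \cref{thm:qFs-test-norm} with \cref{thm:comp-test-ideal-mult-ideal}, and let $\varepsilon\to 0$, spelling out directly the limit argument that the paper defers to \cite{ST25}*{Lemma 3.5}; you then obtain (2) from \cref{cor:qFs-to-qF^eS} together with \cref{rem:p-qFs-to-qFs}, a step the paper leaves implicit. The only quibble is your separate paragraph on non-exceptional $E$: it is unnecessary, since your $\varepsilon$-inequality already applies to every prime divisor on every $Y$ (including $Y=\Spec R$), and its justification is wrong as written, because a reduced $\lfloor\Delta\rfloor$ only forces the coefficients of $\Delta$ to lie in $[0,2)$.
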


\begin{proof}
After replacing $R$ by its localization, we may assume that $(R,\m)$ is local.
For (1), by \cref{thm:qFs-test-norm}, there exists $f \in R^\circ$ such that for every rational number $\varepsilon>0$,
\[
f \in \tau^q(\omega_{R},K_R+\Delta+(1-\varepsilon){\rm div}(f))
      \subseteq \mathcal{J}(R,\Delta+(1-\varepsilon){\rm div}(f)),
\]
where the inclusion follows from \cref{thm:comp-test-ideal-mult-ideal}.
Thus, $(R,\Delta)$ is numerically log canonical by a similar argument as in the proof of \cite{ST25}*{Lemma 3.5}.
The assertion in (2) follows from (1) and \cref{cor:qFs-to-qF^eS}.
\end{proof}

\begin{cor}\label{cor:qFS-to-lc-nonnormal}
Let $(X,\Delta)$ be a pair as in \cref{setting:define purely qFS}.
We further assume that $\nu^*(K_X+\Delta)$ is numerically $\Q$-Cartier, where $\nu \colon \Spec R^n \to \Spec R$ denotes the normalization.
\begin{enumerate}[label=\textup{(\arabic*)}]
\item If $(R,(p^e-1)/p^e\Delta)$ is quasi-$F^e$-split for every $e$, then $(R,\Delta)$ is numerically semi log canonical. 
\item In particular, if $(R,\Delta)$ is purely quasi-$F$-split, $\Delta$ has standard coefficients and $R(p^l(K_R+\Delta)-S)$ is Cohen-Macaulay for every integer $l \ge 0$, then $(R,\Delta)$ is numerically semi log canonical.
\end{enumerate}
\end{cor}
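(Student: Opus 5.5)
The plan is to reduce to the normal case through the normalization, applying \cref{thm:qFs-to-lc-norm} to $(X^n, C+\nu^*\Delta)$; numerical slc is by definition numerical lc of exactly this pair (\cref{defn:slc}). We may localize and assume $X=\Spec R$ with $R$ local. Write $\nu\colon X^n=\Spec R^n\to X$ and $C$ for the conductor divisor. Because $X$ is $\Serre{2}$ and $\Gorenstein$, $\nu^*K_X=K_{X^n}+C$ for compatible choices of canonical divisors. This is the same identification used in the proof of \cref{prop:normalization-qFs-pair}, via $\nu_*W_n\omega_{X^n}(-\nu^*K_X)=\nu_*W_n\omega_{X^n}(-K_{X^n}-C)$. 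Consequently $K_{X^n}+C+\nu^*\Delta=\nu^*(K_X+\Delta)$ is numerically $\Q$-Cartier. Since $R^n$ is a finite product of normal domains, we may treat each component separately.

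For (1), fix $e$ and apply \cref{prop:normalization-qFs-pair} to the pair $(X,\tfrac{p^e-1}{p^e}\Delta)$. Its round-down is $0$, and it is purely quasi-$F^e$-split, which for $S=0$ is the same as quasi-$F^e$-split. The proposition then shows that $(X^n, C+\tfrac{p^e-1}{p^e}\nu^*\Delta)$ is purely quasi-$F^e$-split. Here $C$ is reduced because $X$ is weakly normal (\cref{prop:normalization-qFs}), so $\rdown{\cdot}$ of this boundary is $C$. By \cref{rem:p-qFs-to-qFs}, $(X^n,\tfrac{p^e-1}{p^e}(C+\tfrac{p^e-1}{p^e}\nu^*\Delta))$ is quasi-$F^e$-split.

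The main obstacle is that \cref{thm:qFs-to-lc-norm}(1) requires quasi-$F^e$-splitting of $(X^n,\tfrac{p^e-1}{p^e}(C+\nu^*\Delta))$, whereas we have it only for the smaller boundary $\tfrac{p^e-1}{p^e}C+\bigl(\tfrac{p^e-1}{p^e}\bigr)^2\nu^*\Delta$. Quasi-$F^e$-splitting descends to smaller boundaries but does not ascend to larger ones. I would try to avoid this by proving directly that $(X^n,C+\tfrac{p^e-1}{p^e}\nu^*\Delta)$ is purely quasi-$F^e$-split. Concretely, the aim is to rerun the argument of \cref{prop:normalization-qFs-pair} starting from $\varphi\colon F^e_*W_n\sO_X((p^e-1)\Delta)\to W_n\omega_X(-K_X)$ and landing in $F^e_*W_n\cI_C(p^eC+(p^e-1)\nu^*\Delta)$. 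This should work because the inclusion $R^n(-C)\subseteq R$ is compatible with every boundary.

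Alternatively, I would revisit the proof of \cref{thm:qFs-test-norm}, which uses the hypothesis only at $(R,D_e)$ with $D_e=K+\tfrac{p^e-1}{p^e}\Delta$. There the $C$ part can carry coefficient $1$ via pure splitting, and the comparison maps of \cref{prop:comp-split-regular-norm} still apply with $D=K_{X^n}+C+\nu^*\Delta$. With either route, the numerical lc conclusion of \cref{thm:qFs-to-lc-norm}(1) applies, so $(X,\Delta)$ is numerically slc.

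For (2), \cref{cor:qFs-to-qF^eS} applies to $X$ itself, since it is stated in \cref{setting:define purely qFS} and allows non-normal $X$. It yields that $(X,\Delta)$ is purely quasi-$F^\infty$-split. By \cref{rem:p-qFs-to-qFs}, $(X,\tfrac{p^e-1}{p^e}\Delta)$ is then quasi-$F^e$-split for all $e$, and (1) gives the conclusion.
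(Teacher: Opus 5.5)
Your overall route matches what the paper's one-line proof intends, and your part (2) is fine. The route is to pass to the normalization via \cref{prop:normalization-qFs-pair} applied to $(X,\tfrac{p^e-1}{p^e}\Delta)$, then apply \cref{thm:qFs-to-lc-norm}(1) to $(X^n,C+\nu^*\Delta)$, using $K_{X^n}+C+\nu^*\Delta=\nu^*(K_X+\Delta)$. You also diagnose the obstacle correctly: \cref{rem:p-qFs-to-qFs} used verbatim only gives quasi-$F^e$-splitting for $\tfrac{p^e-1}{p^e}C+(\tfrac{p^e-1}{p^e})^2\nu^*\Delta$. However, neither of your fixes closes the gap.

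\begin{itemize}
\item The first fix is circular. A map into $F^e_*W_n\cI_C(p^eC+(p^e-1)\nu^*\Delta)$ is exactly what \cref{prop:normalization-qFs-pair} already gave you, and it is a \emph{pure} splitting of $(X^n,C+\tfrac{p^e-1}{p^e}\nu^*\Delta)$. \cref{thm:qFs-to-lc-norm}(1) needs a non-pure quasi-$F^e$-splitting of $(X^n,\tfrac{p^e-1}{p^e}(C+\nu^*\Delta))$.
\item The second fix would require reproving \cref{thm:qFs-test-norm} with $\Phi^{S,e}$ and $W_n\cI_S$ in place of $\Phi^e$ and $W_nR$. Note that \cref{prop:comp-split-regular-norm} is stated only for the latter, and you do not carry this out.
\end{itemize}

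The missing observation is that the inclusion argument of \cref{rem:p-qFs-to-qFs} only needs to shrink the reduced part. Put $B:=\tfrac{p^e-1}{p^e}\nu^*\Delta$, so $\rdown{B}=0$, and $B$ shares no component with $C$ because $\Delta$ is Mumford. Set $\Gamma:=C+B$ and $\Gamma':=\tfrac{p^e-1}{p^e}C+B=\tfrac{p^e-1}{p^e}(C+\nu^*\Delta)$. Then for every $m\ge 0$,
\[
\rdown{p^m\Gamma'}\le\rdown{p^m\Gamma}-C \quad\text{and}\quad \rdown{p^{m+e}\Gamma'}\le \rdown{p^{m+e}\Gamma}-C.
\]
Along $C$ these read $\rdown{p^m-p^{m-e}}\le p^m-1$ and $p^{m+e}-p^m\le p^{m+e}-1$; along $B$ they are equalities. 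Hence $W_n\cO_{X^n}(\Gamma')\subseteq W_n\cI_C(\Gamma)$ and $F^e_*W_n\cO_{X^n}(p^e\Gamma')\subseteq F^e_*W_n\cI_C(p^e\Gamma)$. These inclusions are compatible with $F^e$ and with $R^{n-1}$, since both zeroth components are $\cO_{X^n}$ (as $\rdown{\Gamma'}=0=\rdown{\Gamma}-C$).

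Composing the pure splitting of $(X^n,\Gamma)$ with these inclusions gives a quasi-$F^e$-splitting of $(X^n,\tfrac{p^e-1}{p^e}(C+\nu^*\Delta))$ for every $e$. Then \cref{thm:qFs-to-lc-norm}(1) applies on each component of $X^n$.
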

\begin{proof}
    The assertion follows from \cref{thm:qFs-to-lc-norm} and \cref{prop:normalization-qFs}.
\end{proof}

\section{Two-dimensional quasi-\texorpdfstring{$F$}{F}-split singularities}

In this section, we prove that a $\Z_{(p)}$-Gorenstein two-dimensional log canonical pair $(X,\Delta)$ is purely quasi-$F^{\infty}$-split (\cref{thm:qFs-lc-Z_p-index}).
We also show that, when $\Delta$ has standard coefficients, the $\Z_{(p)}$-Gorenstein assumption is necessary (\cref{cor:qFs-index}).
As a consequence, we obtain a classification of two-dimensional quasi-$F$-split singularities (\cref{thm:class-qFs}).

\subsection{Birational transformation rule, adjunction and inversion of adjunction}
In this subsection, we consider the behavior of quasi-$F$-splitting under birational morphisms (\cref{prop:birational transformation rule}) and closed immersion from a snc divisor (\cref{prop:adj-inv}).

\begin{proposition}\label{prop:birational transformation rule}
    Let $f \colon Y \to X$ be a proper birational morphism between normal connected proper schemes over a Noetherian $F$-finite local ring $(R,\m)$ of characteristic $p>0$.
    Suppose that $\Delta \ge 0$ is an effective $\Q$-Weil divisor on $X$ such that $(X,\Delta)$ is log canonical.
    We set $\Delta_Y \coloneqq f^*(K_X+\Delta)-K_Y$ and $S_Y \coloneqq \Delta_Y^{=1}$.
    \begin{enumerate}[label=\textup{(\arabic*)}]
    \item If the map 
    \[
    H^d_{\m}(\Phi^{S_Y,e}_{Y,K_Y+\Delta_Y,n}) \colon H^d_{\m}(\sO_Y(K_Y)) \to H^d_{\m}(Q^{S_Y, e}_{Y,K_Y+\Delta_Y,n})
    \]
    is injective\footnote{If $\Delta_Y$ is effective, then this injectivity is equivalent to saying that $(Y,\Delta_Y)$ is purely $n$-quasi-$F^e$-split (\cref{prop:p-qFS equivalent def}).}, then $(X, \Delta)$ is purely $n$-quasi-$F^e$-split.
    \item The converse also holds if $R^if_*\sO_Y(p^l(K_Y+\Delta_Y)-S_Y)=0$ for every $i,l>0$.
    \end{enumerate}
\end{proposition}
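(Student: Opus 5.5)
Throughout, write $D \coloneqq K_X+\Delta$ and $D_Y \coloneqq K_Y+\Delta_Y = f^*D$, with $S=\rdown{\Delta}$. Since $(X,\Delta)$ is log canonical, every coefficient of $\Delta_Y$ is at most one, and $f_*S_Y = S$. The plan is to compare the two pushout diagrams defining $Q^{S_Y,e}_{Y,D_Y,n}$ and $Q^{S,e}_{X,D,n}$ through $f_*$, and then to apply the criterion \cref{prop:p-qFS equivalent def}~(c) on $X$.

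\emph{Comparison maps.} For every $l\ge 0$ and $i$ I would first check that $f_*\sO_Y(p^iD_Y - S_Y) \subseteq \sO_X(p^iD - S)$ inside the function field. This holds because $f_*$ of a divisor only remembers its non-exceptional components, and those agree with $p^iD-S$ on $X$. Hence there are natural inclusions $f_*W_n\cI_{S_Y}(p^lD_Y) \hookrightarrow W_n\cI_S(p^lD)$, compatible with $F$, $R$ and $V$. We also have $f_*\sO_Y(K_Y) \hookrightarrow \sO_X(K_X)$, which is an isomorphism on $X$ minus the image of the exceptional locus; via the Grothendieck trace this is the map $f_*\omega_Y\to\omega_X$. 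Using $H^d_\m(\cF) = H^d(R\Gamma_\m Rf_*\cF)$ from \eqref{eq:derived functor of composition}, we get a commutative square
\[
\begin{tikzcd}
H^d_\m(\sO_Y(K_Y)) \arrow[r] \arrow[d,"\tau"] & H^d_\m(Q^{S_Y,e}_{Y,D_Y,n}) \arrow[d] \\
H^d_\m(\sO_X(K_X)) \arrow[r] & H^d_\m(Q^{S,e}_{X,D,n}).
\end{tikzcd}
\]
Here the right vertical arrow comes from the morphism of pushouts $Rf_*Q_Y \to Q_X$, obtained by truncating along the edge map $H^d_\m(Rf_*\cF)\to H^d_\m(f_*\cF)$ in top degree. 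The left arrow $\tau$ is surjective, because $H^d_\m$ is right exact in top degree on a $d$-dimensional scheme and its cokernel is supported in dimension $<d$. Equivalently, by Matlis duality it is dual to the injection $H^0(X,\sO_X)\hookrightarrow H^0(Y,\sO_Y)$, and this is an isomorphism since $f$ is birational between normal schemes.

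\emph{Part (1).} This is essentially a diagram chase. Dualizing via \cref{lem:local duality}, injectivity of the top row amounts to the existence of a splitting $\varphi_Y \colon Q_Y \to W_n\omega_Y$ with $\varphi_Y\circ\Phi_Y$ equal to the trace inclusion $\sO_Y(K_Y)\to W_n\omega_Y$. Pushing forward, we obtain $f_*\varphi_Y$ on a big open set of $X$ over which $f$ is an isomorphism. Since $W_n\omega_X(-K_X)$ is $\Serre{2}$ (\cref{prop:dualizing module basic}), this extends, via \cref{rem:Serre condition basic}, to a map $F^e_*W_n\cI_S(p^e\Delta)\to W_n\omega_X(-K_X)$. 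The commutativity of diagram \eqref{eq:def:p-qFs} can be checked on that big open subset, where it is just the statement for $Y$. This avoids the local cohomology square altogether, and the cleanest write-up is likely this sheaf-level extension argument.

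\emph{Part (2).} This is the main obstacle. We now need the right vertical map to be injective on the relevant kernels, or equivalently that the map $H^d_\m(Q_Y)\to H^d_\m(Q_X)$ does not kill the image of $H^d_\m(\sO_Y(K_Y))$. I would proceed in three steps.
\begin{itemize}
\item The vanishing $R^if_*\sO_Y(p^lD_Y - S_Y)=0$, combined with the sequence \eqref{eq:key-sequence-for-WnI} and induction on $n$, gives $R^if_*W_n\cI_{S_Y}(p^lD_Y)=0$ for $i>0$.
\item The same vanishing shows that $f_*W_n\cI_{S_Y}(p^lD_Y)=W_n\cI_S(p^lD)$. For this I would argue that $f_*\sO_Y(p^lD_Y-S_Y)=\sO_X(p^lD-S)$ using that $D_Y=f^*D$ is numerically trivial over $X$ and the lc condition, via the negativity lemma, so that round-downs only add exceptional divisors with coefficient at most zero.
\item With these two facts, $H^d_\m(W_n\cI_{S_Y}(D_Y))\cong H^d_\m(W_n\cI_S(D))$ and likewise for $p^eD$. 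Therefore $K^e_{D_Y,n}\cong K^e_{D,n}$ compatibly with $R^{n-1}$. \Cref{prop:p-qFS equivalent def}~(d) then transfers the zero-map condition from $X$ back to $Y$, which is the kernel form of (c) established in that proof.
\end{itemize}
The delicate point is the middle step, namely the equality $f_*\sO_Y(p^lD_Y-S_Y)=\sO_X(p^lD-S)$ for all $l$. I expect it to need the standard computation $\rdown{p^l\Delta_Y}-S_Y\ge -\mathrm{Exc}$ together with $f_*\sO_Y(\lceil\text{exceptional}\rceil + f^*\cdot)$-type arguments.
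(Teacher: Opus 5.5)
Your sheaf-level argument for (1) is correct, and it takes a different route from the paper. By Matlis duality, the injectivity hypothesis gives a splitting on $Y$. You restrict it to $f^{-1}(U)\cong U$, where $U\subseteq X$ is the big open set over which $f$ is an isomorphism. It then extends to $X$ because the relevant $\cHom$ sheaves are $\Serre{2}$ (\cref{lem:Hom S2}). The paper instead chases a local cohomology square and shows, by duality, that $\beta\colon H^d_\m(\sO_X(D-S))\to H^d_\m(\sO_Y(f^*D-S_Y))$ is injective. One detail: when $\Delta_Y$ is not effective, the source of $\Phi^{S_Y,e}_{Y,D_Y,n}$ is $\sO_Y(K_Y+\rdown{\Delta_Y^{<1}})$. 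The dual condition then asks that $1\in H^0(Y,\sO_Y(-\rdown{\Delta_Y^{<1}}))=H^0(X,\sO_X)$ be hit, which is harmless over $f^{-1}(U)$. The square you write first, however, is not well defined. The natural comparison is $H^d_\m(f_*\cF)\to H^d_\m(\cF)$, induced by $f_*\cF\to Rf_*\cF$, and there is no edge map $H^d_\m(Rf_*\cF)\to H^d_\m(f_*\cF)$ in general. So any comparison of the $Q$'s has to run from $X$ to $Y$.

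Part (2) has a genuine gap, for two reasons.

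\emph{The identity you rely on is false.} You flag $f_*\sO_Y(p^lD_Y-S_Y)=\sO_X(p^lD-S)$ as the delicate point, but it fails whenever some $f$-exceptional divisor has discrepancy $-1$. Take a simple elliptic singularity with $\Delta=0$ and $K_X=0$, and let $f$ be the minimal resolution with exceptional curve $E$. Then $K_Y=-E$, $D_Y=0$, $S_Y=E$, and $f_*\sO_Y(-E)=\m\neq\sO_X$. This is exactly the situation in which the proposition is used in \cref{thm:red-to-dlt}. No negativity argument can fix this. What is true, and suffices, is that the inclusion is an isomorphism in codimension one, so it induces an isomorphism on $H^d_\m$.

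\emph{The case $l=0$ is not covered.} The hypothesis gives $R^if_*\sO_Y(p^lD_Y-S_Y)=0$ only for $l>0$. Your identification of $H^d_\m(W_n\cI_{S_Y}(D_Y))$ with $H^d_\m(W_n\cI_S(D))$ needs $l=0$, i.e.\ a Grauert--Riemenschneider type vanishing for $\sO_Y(K_Y+\rdown{\Delta_Y^{<1}})$, which is not assumed.

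The paper avoids both problems by working with $Q$ rather than $K^e_{D,n}$. The sequence $0\to F_*W_{n-1}\cI_{S_Y}(pD_Y)\to F^e_*W_n\cI_{S_Y}(p^eD_Y)\to Q^{S_Y,e}_{Y,D_Y,n}\to 0$ involves only $l\ge 1$. Hence $R^if_*Q^{S_Y,e}_{Y,D_Y,n}=0$, and $H^d_\m(Q^{S,e}_{X,D,n})\cong H^d_\m(f_*Q^{S_Y,e}_{Y,D_Y,n})\cong H^d_\m(Q^{S_Y,e}_{Y,D_Y,n})$; the first isomorphism holds because the comparison is an isomorphism in codimension one. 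The $l=0$ term is handled by duality alone: $\beta$ is Matlis dual to $H^0(Y,\sO_Y(-\rdown{\Delta_Y^{<1}}))\to H^0(X,\sO_X)$, which is an isomorphism since $-\rdown{\Delta_Y^{<1}}$ is effective and exceptional.

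Your route through condition (d) can be repaired in the same spirit. The map $H^d_\m(W_n\cI_S(D))\to H^d_\m(W_n\cI_{S_Y}(D_Y))$ is surjective, and the corresponding map at level $p^eD$ is injective. So $K^e_{D,n}$ surjects onto $K^e_{D_Y,n}$ compatibly with $R^{n-1}$, and the zero-map condition transfers. This works only with these weaker statements, not with the isomorphisms you claim.
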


\begin{proof}
We write $S:=\rdown{\Delta}$ and $D \coloneqq K_X+\Delta$.
Let $\cK_X$ and $\cK_Y$ denote the sheaves of total quotients of $X$ and $Y$, respectively.
The natural isomorphism
\[
f_* W_n \sK_Y \xrightarrow{\sim} W_n \sK_X
\]
induces the morphism
\[
f_*W_n\cI_{S_Y}(f^*D) \to W_n \cI_S(D).
\]
Let $P$ be the $W_n\sO_X$-module such that the following diagram is a pushout:
\[
\begin{tikzcd}
    f_*W_n\cI_{S_Y}(f^*D) \arrow[r,"f_*F^e"] \arrow[d,"f_*R^{n-1}"] & f_*W_n\cI_{S_Y}(p^ef^*D) \arrow[d] \\
    f_*\sO_Y(f^*D-S_Y) \arrow[r] & P.
\end{tikzcd}
\]
Then we have the following diagram:
\[
\begin{tikzcd}
    f_*\sO_Y(f^*D-S_Y) \arrow[r] \arrow[d,"u"] \arrow[rr,"f_* \Phi^{S_Y,e}_{Y,f^*D,n}", bend left=10] & P \arrow[r,"v"] \arrow[d,"w"] & f_*Q^{S_Y,e}_{Y,f^*D,n} \\
    \sO_X(D-S) \arrow[r,"\Phi^{S,e}_{X,D,n}"] & Q^{S,e}_{X,D,n},&
\end{tikzcd}
\]
where $u,v,w$ are isomorphic in codimension one.
Taking the local cohomology, we have
\begin{equation}\label{eq:diagram in birational trans}
\begin{tikzcd}
    H^d_{\m}(\sO_Y(f^*D-S_Y)) \arrow[rr,"H^d_{\m}(\Phi^{S_Y,e}_{Y,f^*D,n})"] && H^d_{\m}(Q^{S_Y,e}_{Y,f^*D,n}) \\
    H^d_{\m}(f_*\sO_Y(f^*D-S_Y)) \arrow[rr,"H^d_{\m}(f_*\Phi^{S_Y,e}_{Y,f^*D,n})"] \arrow[u] \arrow[d,"\sim",sloped] && H^d_{\m}(f_* Q^{S_Y,e}_{Y,f^*D,n}) \arrow[u] \arrow[d,"\sim",sloped] \\
    H^d_{\m}(\sO_X(D-S)) \arrow[rr,"H^d_{\m}(\Phi^{S,e}_{X,D,n})"] && H^d_{\m}(Q^{S,e}_{X,D,n}).
\end{tikzcd}
\end{equation}
For (1), it is enough to show that the left vertical map 
\[
\beta \colon H^d_{\m}(\sO_X(D-S)) \to H^d_{\m}(\sO_Y(f^*D-S_Y))
\]
is injective.
The Matlis dual of $\beta$ is the completion of 
\[
H^0(Y,\cO_Y(K_Y-\rdown{f^*D-S_Y})) \longrightarrow H^0(X,\cO_X),
\]
which is isomorphic since we have
\[
K_Y-\rdown{D_Y-S}=-\rdown{\Delta_Y} \geq 0.
\]

For (2), since $\beta$ is isomorphic by the above argument, it suffices to show that the right vertical map in the diagram \eqref{eq:diagram in birational trans} is isomorphic.
In order to do this, it is enough to prove the vanishing 
\[
R^if_* Q^{S_Y,e}_{Y,f^*D,n} =0
\]
for every $i>0$.

From the short exact sequence (cf.~\eqref{eq:key-sequence-for-WnI})
\[
0 \to F_*W_{n-1}\cI_{S_Y}(p^{l+1}f^*D) \to W_n\cI_{S_Y}(p^lf^*D) \to \cO_Y(p^lf^*D-S_Y) \to 0,
\]
it follows that
\[
R^i f_* W_n \cI_{S_Y}(p^lf^*D) = 0 \quad \text{for all } i,l,n>0.
\]
Combining this vanishing with the following exact sequence
\[
0 \to F_*W_{n-1}\cI_{S_Y}(pf^*D) \to F^e_*W_n\cI_{S_Y}(p^ef^*D) \to Q^{S_Y,e}_{Y,f^*D,n} \to 0,
\]
we deduce that $R^i f_* Q^{S_Y,e}_{Y,f^*D,n}=0$ for every $i>0$, as desired.
\end{proof}

We next consider an adjunction and inversion of adjunction type result.
Let $X$ be a regular connected Noetherian $F$-finite scheme of positive characteristic, $S$ be a reduced divisor on $X$, and $\Theta$ be a $\Q$-divisor on $X$ such that $(X, S \cup \Supp(\theta))$ is snc and the support of $\Theta$ contains no irreducible component of $S$.
We note that $\Theta|_S$ is a Mumford $\Q$-divisor on $S$.
Suppose that $u \colon \sO_X(D) \to \sO_S(E)$ is an $\sO_X$-homomorphism, where $D$ is a divisor on $X$ and $E$ is a Mumford divisor on $S$.
Then for every integer $m \ge 1$, we have the $\sO_X$-homomorphism
\[
u^{\otimes m} \colon \sO_X(mD + \rdown{m\Theta)} ) \to \sO_S(mE+\rdown{m\Theta|_S}).
\]
By using these $\sO_X$-homomorphisms, we define the $W\sO_X$-homomorphism
\[
W u \colon W \sO_X(D+\Theta) \to W \sO_S(E+\Theta|_S).
\]
We remark that the same construction works even if $E$ is an AC divisor on $S$ (see also \cref{rem:on setting} (iii).)

\begin{proposition}
    Let $X$ be a $d$-dimensional regular connected proper scheme over an $F$-finite Noetherian local ring $(R,\m)$ of characteristic $p>0$, $S$ be a reduced divisor on $X$, and $\Theta$ be a $\Q$-divisor on $X$.
    Let $K_X$ be a canonical divisor on $X$ and $K_S$ be a canonical Mumford divisor (or more generally, a canonical AC divisor \cite{ST23}*{Subsection A.2}) on $S$.
    Suppose that the following conditions hold:
    \begin{enumerate}[label=\textup{(\alph*)}]
    \item $(X, S \cup \Supp(\Theta))$ is snc,
    \item the support of $\Theta$ contains no irreducible component of $S$, and 
    \item $H^{d-1}_{\m}(\sO_X(p^l(K_X+S+\Theta)))=0$ for every $l \ge 0$.
    \end{enumerate}
    Then for an integer $e >0$, we have:
    \begin{enumerate}[label=\textup{(\arabic*)}]\label{prop:adj-inv}
    \item If the map
    \[
    H^d_{\m}(\Phi^{S,e}_{X,K_X+\Theta+S,n}) \colon H^d_\m(\cO_X(K_X+\Theta)) \longrightarrow H^d_\m(Q^{S,e}_{X,K_X+S+\Theta,n})
    \]
    is injective for some $n \ge 1$, then the morphism
    \[
    H^{d-1}_{\m}(\Phi^{0, e}_{S,K_S+\Theta|_S,m}) \colon H^{d-1}_\m(\cO_S(K_S+\Theta|_S)) \longrightarrow H^{d-1}_\m(Q^{0,e}_{S,K_S+\Theta|_S,m})
    \]
    is also injective for some $m$\footnote{If $\rdown{\Theta}=0$, then this statement is equivalent to saying that if $(X,S+\Theta)$ is purely quasi-$F^e$-split, then $(S,\Theta|_S)$ is quasi-$F^e$-split (\cref{prop:p-qFS equivalent def}).}
    \item The converse also holds if
    \[
    H^d_{\m}(F^e) \colon H^{d}_{\m}(W\sO_X(K_X+S+\Theta)) \to H^d_{\m}(F^e_* W\sO_X(p^e(K_X+S+\Theta))
    \]
    is injective.
    \end{enumerate}
\end{proposition}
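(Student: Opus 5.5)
The plan is to compare the Witt local cohomologies of $X$ and $S$ through a residue exact sequence. Then I will use the criterion of \cref{prop:infty-qFs}(1) in both directions. Set $D \coloneqq K_X+S+\Theta$ and $D_S \coloneqq K_S+\Theta|_S$, and let $u\colon \sO_X(K_X+S)\to \sO_S(K_S)$ be the adjunction (residue) map. The construction before the statement gives $Wu\colon W\sO_X(D)\to W\sO_S(D_S)$, and likewise $W_nu$ at each finite level.

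\textbf{Step 1: a termwise exact sequence.} In the $i$-th Witt component, $W_nu$ is the restriction $\sO_X(p^iD)\to \sO_S(p^iD_S)$. The relevant points are:
\begin{itemize}
\item $X$ is regular, $(X,S\cup\Supp\Theta)$ is snc, and $\Supp\Theta$ contains no component of $S$;
\item hence $\rdown{p^i\Theta}|_S=\rdown{p^i\Theta|_S}$, and this restriction map is surjective with kernel $\sO_X(p^iD-S)$.
\end{itemize}
This gives an exact sequence compatible with $F$, $V$ and $R$:
\[
0\to W_n\cI_S(D)\to W_n\sO_X(D)\to W_n\sO_S(D_S)\to 0 .
\]
I will pass to the limit over $n$ and apply $H^\bullet_\m$, using \cref{prop:lim and coh} on $X$ and its analogue on $S$.

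\textbf{Step 2: vanishing of $H^{d-1}_\m(W\sO_X(D))$.} This will follow from hypothesis (c). I will argue by induction on $n$, using the sequences $0\to F_*W_{n-1}\sO_X(pD)\to W_n\sO_X(D)\to\sO_X(D)\to 0$ obtained from \eqref{eq:key-sequence-for-WnI} with $D$ replaced by $p^lD$. This gives $H^{d-1}_\m(W_n\sO_X(p^lD))=0$ for all $n$ and $l$, and then \cref{prop:lim and coh} passes this to the limit.

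Consequently $\delta\colon H^{d-1}_\m(W\sO_S(D_S))\to H^d_\m(W\cI_S(D))$ is injective, and so is its level-one version $H^{d-1}_\m(\sO_S(D_S))\to H^d_\m(\sO_X(K_X+\Theta))$. Both connecting maps commute with $F^e$ and with the projections to level one. Therefore $\delta$ restricts to an injection
\[
K^{0,e}_{S,D_S,\infty}\hookrightarrow K^{S,e}_{X,D,\infty}
\]
compatible with the projections to $K^{0,e}_{S,D_S,1}$ and $K^{S,e}_{X,D,1}$.

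\textbf{Step 3: the two implications.} The proof of \cref{prop:p-qFS equivalent def} (c)$\Leftrightarrow$(d) and of \cref{prop:infty-qFs}(1) never uses effectivity of $\Delta$. So the injectivity hypothesis in (1), for some $n$, is equivalent to the projection $K^{S,e}_{X,D,\infty}\to K^{S,e}_{X,D,1}$ being zero. The same equivalence holds on $S$ in dimension $d-1$; here $S$ is equidimensional and proper over $R$, so $\omega_S$ has full support.
\begin{itemize}
\item For (1): take $\alpha\in K^{0,e}_{S,D_S,\infty}$. Its image $\delta(\alpha)$ projects to zero in $H^d_\m(\sO_X(K_X+\Theta))$. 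By the level-one injectivity from Step 2, $\alpha$ already projects to zero in $H^{d-1}_\m(\sO_S(D_S))$, which gives the injectivity on $S$ for some $m$.
\item For (2): take $\beta\in K^{S,e}_{X,D,\infty}$. Its image in $H^d_\m(W\sO_X(D))$ lies in the kernel of $F^e$, which is zero by assumption. So $\beta=\delta(\alpha)$ for some $\alpha$. Since $\delta$ is injective and compatible with $F^e$, we get $\alpha\in K^{0,e}_{S,D_S,\infty}$. Its level-one projection is zero by the hypothesis on $S$, hence so is that of $\beta$.
\end{itemize}

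\textbf{Main obstacle.} I expect the hardest part to be Step 1 together with the limit arguments:
\begin{itemize}
\item checking that $W_nu$ is well defined and surjective when $K_S$ is only an AC divisor;
\item checking that it is genuinely compatible with $F$, $V$ and $R$;
\item extending \cref{prop:lim and coh} to $H^{d-1}$ on $S$.
\end{itemize}
For the last point, Artinianity of the $H^{d-1}_\m$ gives the Mittag-Leffler condition, so \cref{lem:Rlim} applies as before.
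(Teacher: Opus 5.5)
Your proposal is correct and follows the paper's proof essentially step by step: the residue sequence $0\to W\cI_S(D)\to W\cO_X(D)\to W\cO_S(D_S)\to 0$, the vanishing of $H^{d-1}_\m(W\cO_X(p^lD))$ deduced from (c) via \eqref{eq:key-sequence-for-WnI} and \cref{prop:lim and coh}, and the comparison of Frobenius kernels (the paper packages this with the snake lemma where you chase elements), with (1) resting on the injectivity of the level-one connecting map. One small precision for (2): to get $F^e(\alpha)=0$ you need injectivity of the connecting map at level $p^eD$, which holds because $H^{d-1}_\m(W\cO_X(p^eD))=0$; your Step 2 already supplies this since it treats every $l$.
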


\begin{proof}
    We write $D_X \coloneqq K_X+S+\Theta$ and $D_S \coloneqq K_S+\Theta|_S$.
    The Poincar\'{e} residue map
    \[
    \mathrm{Res} \colon \omega_X(S)|_S \xrightarrow{\sim} \omega_S
    \]
    induces an $\sO_X$-homomorphism
    \[
    u \colon \sO_X(K_X+S) \to \sO_S(K_S).
    \]
    Noting that the sequence 
    \[
    0 \to \sO_X(\rdown{mD_X}-S) \to \sO_X(\rdown{mD_X}) \xrightarrow{u^{\otimes m}} \sO_S(\rdown{mD_S}) \to 0
    \]
    is exact for every $m \ge 1$, we have the following exact sequence
    \[
    0 \to W \cI_S(p^lD_X) \to W \sO_X(p^lD_X) \xrightarrow{W (u^{\otimes p^l})} W \sO_S(p^lD_S) \to 0
    \]
    for every $l \ge 0$.
    
    Since we assume $H^{d-1}_{\m}(\sO_X(p^lD_X))=0$ for every $l \ge 0$, it follows from the exact sequence \eqref{eq:key-sequence-for-WnI} and \cref{prop:lim and coh} that one has 
    \[
    H^{d-1}_\m(W\cO_Y(D_Y))=H^{d-1}_{\m}(W\sO_Y(p^eD_Y))=0.
    \]
    Therefore, we have the commutative diagram whose rows are exact
\begin{equation*}
\begin{tikzcd}[column sep=0.4cm]
    0 \arrow[r] & H_\m^{d-1}(W\cO_S(D_S)) \arrow[r] \arrow[d,"F^e_S"] & H^{d}_\m(W\cI_S(D_X)) \arrow[r] \arrow[d,"F^e_{X,S}"] & H^d_\m(W\cO_X(D_X)) \arrow[r] \arrow[d,"F^e_X"] & 0 \\
    0 \arrow[r] & F^e_*H^{d-1}_\m(W\cO_S(p^eD_S)) \arrow[r] & F^e_*H^d_\m(W\cI_S(p^eD_X)) \arrow[r] & F^e_*H^d_\m(W\cO_Y(p^eD_X)) \arrow[r] & 0.
\end{tikzcd}
\end{equation*}
By the snake lemma, we obtain the following exact sequence
\begin{equation}\label{eq:adjunction snake}
0 \to \Ker(F^e_S) \to \Ker(F^e_{X,S}) \to \Ker (F^e_X).
\end{equation}
We also note that by a similar argument as in the proof of \cref{prop:p-qFS equivalent def} (c) $\Leftrightarrow$ (d), the morphism $H^{d}_{\m}(\Phi^{S,e}_{X,D_X,n})$ (resp.~$H^{d-1}_{\m}(\Phi^{e}_{S,D_S,n})$) is injective if and only if the image of $\Ker(F^e_{X,S})$ (resp.~$\Ker(F^e_S)$) by the morphism $\gamma$ (resp.~$\beta$) in the following commutative diagram
\[
\begin{tikzcd}
    H^{d-1}_{\m}(W\sO_S(D_S)) \arrow[r] \arrow[d,"\beta"] & H^d_{\m}(W\cI_S(D_X)) \arrow[d,"\gamma"] \\
    H^{d-1}_{\m}(\sO_S(D_S)) \arrow[r] & H^d_{\m}(\sO_X(D_X-S)).
\end{tikzcd}
\]
is zero.

The assertion in (1) now follows from the injectivity of the bottom horizontal map.
For (2), combining the assumption with \eqref{eq:adjunction snake}, we have
\[
\Ker(F^e_S) \simeq \Ker(F^e_{X,S}),
\]
which proves that $\gamma(\Ker(F^e_{X,S}))=0$ if $\beta(\Ker(F^e_S))=0$. \qedhere

\end{proof}

\subsection{Reduction step to dlt centers}

In this subsection, we use the following notation.

\begin{notation}\label{notation:two-dim-normal}
Let $(R,\m)$ be a $2$-dimensional $F$-finite Noetherian normal local domain of characteristic $p>0$ and $\Delta$ be an effective $\Q$-Weil divisor on $X=\Spec R$ such that $(X,\Delta)$ is log canonical but not klt.

Suppose that $g \colon Z \to X$ is a dlt blow-up of $(X,\Delta)$; that is,
\[
g^*(K_X+\Delta) = K_Z + g^{-1}_*\Delta + \Exc(g),
\]
and $(Z,\Delta_Z)$ is dlt with $\Delta_Z := g^{-1}_*\Delta + \Exc(g)$.
Take a log resolution $h \colon Y \to Z$ of $(Z,\Delta_Z)$ such that $h$ is an isomorphism over the simple normal crossing locus of $(Z,\Delta_Z)$.
We set $f := g \circ h \colon Y \to X$.

We write $\Delta_Y := f^*(K_X+\Delta)-K_Y$, $D_Y \coloneqq K_Y+\Delta_Y$, $S := \Delta_Y^{=1}$, and $\Delta_S := \Delta_Y^{<1}|_{S}$.
\end{notation}

\begin{lemma}\label{lem:exis-good-resol}
We use \cref{notation:two-dim-normal}.
Then the following hold:
\begin{enumerate}[label=\textup{(\arabic*)}]
    \item $h \colon Y \to Z$ induces an isomorphism $S \to S_Z \coloneqq \Delta_Z^{=1}$,
    \item $\Delta_S$ is effective, and 
    \item if $\Delta$ has standard coefficients, then so does $\Delta_S$.
\end{enumerate}
\end{lemma}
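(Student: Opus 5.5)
Throughout, write $\Delta_Y=S+\Delta_Y^{<1}$ and compare $Y$ with $Z$ via $h$. The plan is to first identify what $S$ is on $Y$, then read off $\Delta_S$ by adjunction.

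\emph{Step 1: $S$ is the strict transform of $S_Z$.} By construction $K_Y+\Delta_Y=h^*(K_Z+\Delta_Z)$, since $K_Z+\Delta_Z=g^*(K_X+\Delta)$. Hence the coefficient of every $h$-exceptional prime $E$ in $\Delta_Y$ equals $-a_E(Z,\Delta_Z)$. Because $(Z,\Delta_Z)$ is dlt and $h$ is an isomorphism over the snc locus, every $h$-exceptional $E$ has center in the non-snc locus of $(Z,\Delta_Z)$. The definition of dlt then gives $a_E(Z,\Delta_Z)>-1$, so no exceptional divisor enters $S=\Delta_Y^{=1}$. Thus $S=h^{-1}_*S_Z$.

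\emph{Step 2: $h|_S\colon S\to S_Z$ is an isomorphism, proving (1).} In dimension two, dlt pairs are snc at every point lying on two components of $\Delta_Z^{=1}$, and each component of $S_Z$ is regular. Since $\dim S_Z=1$, the morphism $S\to S_Z$ is finite and birational onto a curve. I will separate two cases.
\begin{itemize}
\item Along the snc locus $h$ is an isomorphism, so $S\to S_Z$ is an isomorphism there.
\item At a non-snc point $z\in S_Z$, only one component of $S_Z$ passes through $z$, and that component is regular at $z$. Being finite and birational onto a regular (hence normal) curve germ, $h|_S$ is an isomorphism near $z$.
\end{itemize}
Together these give (1). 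I expect this dlt structural input (regularity of the components of $S_Z$ and snc along their intersections) to be the main point needing care. It is standard for excellent surfaces, but I would cite the dlt classification, e.g.\ \cite{kollar13}.

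\emph{Step 3: (2) and (3).} Now $S$ is regular and snc with $\Delta_Y^{<1}$, and $\Delta_S=\Delta_Y^{<1}|_S$. Effectivity of $\Delta_S$ amounts to the claim that every component of $\Delta_Y^{<1}$ meeting $S$ has nonnegative coefficient. I will handle the two kinds of components in turn.
\begin{itemize}
\item For a component $E$ of $\Delta_Y^{<1}$ meeting $S$ at a point over the snc locus of $Z$, $E$ is non-exceptional, so its coefficient is a coefficient of $\Delta_Z^{<1}=g^{-1}_*\Delta^{<1}$. That coefficient is nonnegative, and it is a standard coefficient if $\Delta$ has standard coefficients.
\item At a non-snc point $z\in S_Z$, I will use the classification of dlt surface germs with a boundary component through a non-snc point. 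Near $z$, $(Z,S_Z+\Delta_Z^{<1})$ is plt, so $Z$ has a cyclic quotient singularity and $\Delta_Z^{<1}=0$ near $z$, by the coefficient computation for lc surface germs in \cite{kollar13}*{Section 3.3}. The minimal resolution is then a chain, and only its end curve $E_1$ meets $S$.
\end{itemize}
Its coefficient is $1-1/m$, computed as in \cite{kollar13}*{Proposition 3.9 and Corollary 3.45}, where $m$ is the index. If $h$ is not minimal over $z$, further blow-ups at snc points of $\Delta_Y$ produce divisors meeting $S$ with coefficients at least $0$. I would arrange these by taking $h$ minimal; if not, I would compute via the log discrepancy additivity for blow-ups of snc points. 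This yields (2), and the different formula gives standard coefficients as required for (3).
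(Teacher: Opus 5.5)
Your Steps 1 and 2 are correct and are essentially the paper's argument for (1). Dlt forces snc at intersections of two components of $S_Z$ and plt elsewhere, so $S=h^{-1}_*S_Z$. The components of $S_Z$ are regular, so the finite birational map $S\to S_Z$ is an isomorphism.

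\textbf{The gap is in Step 3, at non-snc points $z\in S_Z$.} You claim that plt-ness of $(Z,S_Z+\Delta_Z^{<1})$ near $z$ forces $\Delta_Z^{<1}=0$ near $z$. This is false. Plt-ness only forces $(Z\ni z,S_Z)$ to be a cyclic quotient germ of some index $m$, with $S_Z$ the image of an axis. Components $B_j$ of $\Delta_Z^{<1}=\sum_j b_jB_j$ may still pass through $z$, subject only to $1-\frac1m+\sum_j b_j(B_j\cdot S_Z)_z<1$.

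Concretely, on $X=\Spec k[x,y]_{(x,y)}$ take $\Delta=B_1+\frac13B_2$ with $B_1=(y=0)$ and $B_2=(y=x^2)$.
\begin{enumerate}
\item[(i)] This pair is plt, so $g=\mathrm{id}$ is a dlt blow-up in the sense of the notation.
\item[(ii)] The origin is a non-snc point of $S_Z=B_1$, and $\Delta_Z^{<1}=\frac13B_2\neq0$ there.
\item[(iii)] The log resolution $h$ needs two point blow-ups. The only component of $\Delta_Y^{<1}$ meeting $S$ is the second exceptional curve, with coefficient $\frac23$.
\end{enumerate}
This is nothing like your chain picture, which would predict coefficient $1-\frac1m=0$ here.

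The same phenomenon occurs with standard coefficients. Take $Z=\mathbb{A}^2/\mu_m$ with $\Delta_Z=S+(1-\frac1n)B$, where $S$ and $B$ are the images of the two axes. This pair is plt, and the curve meeting $S$ has coefficient $1-\frac{1}{mn}$, not $1-\frac1m$.

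So your case analysis omits exactly the configurations in which (2) and (3) have content. At such points you would need that $1-\frac1m+\sum_j b_j(B_j\cdot S_Z)_z$ is a standard coefficient whenever the $b_j$ are. That is a genuine lemma, and you have not supplied it. Also, ``taking $h$ minimal'' is not available to you, since $h$ is fixed by the notation.

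The paper avoids any local analysis on $Y$. Because $K_Y+\Delta_Y=h^*(K_Z+\Delta_Z)$ and $S\cong S_Z$ by (1), adjunction identifies $\Delta_S$ with the different:
\[
\Delta_S=\sum_i\bigl(\mathrm{Diff}_{E_i}(0)+\Delta_Z^{<1}|_{E_i}\bigr)
\]
by \cite{kollar13}*{Equations (4.7.1) and (4.2.10)}. This is manifestly effective and independent of the choice of $h$. Standard coefficients then follow from \cite{KTTWYY1}*{Lemma~2.29(5)}.

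To keep your route, you would have to redo Step 3 at non-snc points with components of $\Delta_Z^{<1}$ allowed through $z$. That computation is precisely the computation of the different.
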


\begin{proof}
Write $S_Z = \Delta_Z^{=1}=E_1+\cdots+E_r$ with each $E_i$ prime.
Since $\Supp \Delta_Z$ is simple normal crossing near every intersection of two components of $S_Z$ and $(Z,g^{-1}_*\Delta^{<1}+E_i)$ is plt, we have
\[
S=h^{-1}_*S_Z.
\]
We also note that each $E_i$ is regular since $(Z,E_i)$ is plt (cf.~\cite{kollar13}*{Paragraph 3.35}).
Therefore, the induced morphism $S \to S_Z$ is an isomorphism.  
This proves (1).

For (2) and (3), let $B_{E_i} \coloneqq \mathrm{Diff}_{E_i}(0)$ be the $\Q$-Weil divisor on $E_i$ defined in \cite{kollar13}*{Theorem 3.36}.
It then follows from \cite{kollar13}*{Equations (4.7.1) and (4.2.10)} that we have
\[
\Delta_S = \sum_{i=1}^r \bigl( B_{E_i} + \Delta_Z^{<1}|_{E_i}\bigr),
\]
where we identify $S$ with $S_Z$.
Thus $\Delta_S$ is effective.
Moreover, if $\Delta$ has standard coefficients, then so does $B_{E_i}+\Delta_Z^{<1}|_{E_i}$ by \cite{KTTWYY1}*{Lemma~2.29(5)}.
\end{proof}

\begin{proposition}\label{prop:vanishing in dim 2}
    With the notation as in \cref{notation:two-dim-normal}, let $l \ge 0$ be an integer.
    Then the following hold:
    \begin{enumerate}[label=\textup{(\arabic*)}]
    \item We have $H^1_{\m}(\sO_Y(p^lD_Y))=0$.
    \item If we have $\{p^l\Delta\} \le \Delta$ (eg.~$\Delta$ has standard coefficients), then one has
    \[
    R^1f_*\sO_Y(p^lD_Y-S)=0.
    \]
    \end{enumerate}
\end{proposition}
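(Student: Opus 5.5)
The plan is to convert both statements into relative vanishing of $R^1f_*$ for divisors of the form $K_Y+\lceil L\rceil$ with $L$ $f$-nef. For a proper birational morphism from a regular surface (excellent, two-dimensional) I will use the generalized Grauert--Riemenschneider / Kawamata--Viehweg vanishing (cf.~\cite{kollar13}*{Theorem~10.4}): $R^1f_*\cO_Y(K_Y+\lceil L\rceil)=0$ whenever $L$ is an $f$-nef $\Q$-divisor. Write $K_Y=\sum k_EE$ and $\Delta_Y=\sum\delta_EE$. Note that $D_Y=f^*(K_X+\Delta)$ is $f$-numerically trivial, and $\Supp(\Delta_Y)\cup\Exc(f)$ is snc, because $f$ is a log resolution.

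For (1), I apply local duality (\cref{lem:local duality}) with $n=1$ and the normalized dualizing complex. Since $\omega_Y^{\bullet}=\omega_Y[2]$, this gives
\[
H^1_\m(\cO_Y(p^lD_Y))^\vee\simeq \bigl(R^1f_*\cO_Y(K_Y-\rdown{p^lD_Y})\bigr)^{\wedge}.
\]
Since $K_Y-\rdown{p^lD_Y}=K_Y+\lceil -p^lD_Y\rceil$ and $-p^lD_Y$ is $f$-numerically trivial, hence $f$-nef, the vanishing theorem kills the right-hand side. Hence $H^1_\m(\cO_Y(p^lD_Y))=0$. This part should be routine.

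For (2), the divisor in question is
\[
T:=\rdown{p^lD_Y}-S=K_Y+\bigl((p^l-1)K_Y+\rdown{p^l\Delta_Y}-S\bigr).
\]
I want to find an $f$-nef $L$ with $\lceil L\rceil=T-K_Y$. Set $T-K_Y=(p^l-1)D_Y+B$, where the coefficient of $B$ along $E$ is $\delta_E-s_E-\{p^l\delta_E\}$, with $s_E=1$ if $\delta_E=1$ and $s_E=0$ otherwise. Components of $S$ contribute $0$. For a non-exceptional component, $\delta_E$ is a coefficient of $\Delta$, and the hypothesis $\{p^l\Delta\}\le\Delta$ makes this coefficient lie in $[0,1)$. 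So away from $\Exc(f)$ the choice $L=(p^l-1)D_Y+B-\{B\}$ works, and only the $f$-exceptional components with $\delta_E<1$ need care.

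I expect these exceptional components to be the main obstacle, since there $\delta_E$ may be negative and $B$ need not be nef. My first attempt is to use the negative definiteness of the intersection matrix of $\Exc(f)$. Let $B^{\mathrm{exc}}$ be the $f$-exceptional part of $B$. I would choose an $f$-exceptional $\Q$-divisor $G$ with $G\cdot C=B^{\mathrm{exc}}\cdot C$ for every exceptional curve $C$, and take $L=(p^l-1)D_Y+(B-B^{\mathrm{exc}})+(B^{\mathrm{exc}}-G)+\Theta$, where $\Theta$ is a small correction with $\lceil L\rceil=T-K_Y$ and $L$ still $f$-nef. Checking this requires control of the coefficients of $G$.

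If that bookkeeping fails, the fallback is to factor $f=g\circ h$ and treat the two steps separately. Over $Z$, the dlt blow-up has $\Delta_Z\ge0$, so the hypothesis controls everything. The $h$-exceptional curves lie over the non-snc locus of the dlt pair $(Z,\Delta_Z)$, which is disjoint from $S$ by \cref{lem:exis-good-resol}, and near these curves the pair is klt. There I would use that $K_Y+\Delta_Y^{<1}$ has exceptional coefficients $>-1$. The Leray spectral sequence then gives $R^1f_*=0$ once $R^1h_*=0$ and $R^1g_*h_*=0$.

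As a second fallback, I would use the exact sequence
\[
0\to\cO_Y(p^lD_Y-S)\to\cO_Y(\rdown{p^lD_Y})\to\cO_S(\rdown{p^lD_Y}|_S)\to0 .
\]
The vanishing $R^1f_*\cO_Y(\rdown{p^lD_Y})=0$ follows from the same argument as in (1), because $\rdown{p^lD_Y}=K_Y+\lceil (p^l-1)D_Y+\Delta_Y-\{p^l\Delta_Y\}-\ldots\rceil$. It then remains to show that $H^0$ surjects onto the $S$-term, where $S$ is the connected snc curve identified in \cref{lem:exis-good-resol}.
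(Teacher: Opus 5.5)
Part (1) is fine and is exactly the paper's argument: local duality, then relative Kawamata--Viehweg for $K_Y+\lceil -p^lD_Y\rceil$. Part (2), however, is not proved.

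\textbf{Your main line for (2).} It does not get off the ground. The intersection matrix of $\Exc(f)$ is negative definite, so an exceptional $G$ with $G\cdot C=B^{\mathrm{exc}}\cdot C$ for every exceptional $C$ must equal $B^{\mathrm{exc}}$. Your ``correction'' is therefore vacuous, and the whole difficulty is pushed into the unspecified $\Theta$, i.e.\ into showing $T-K_Y=\lceil N\rceil$ for some $f$-nef $N$. That is a real balancing problem you do not solve. Exceptional curves with $b_E=\delta_E-\{p^l\delta_E\}<0$ do occur (e.g.\ $\delta_E=1/3$, $p=2$, $l=1$). They force $\theta_E<0$, which gives negative intersections with neighbouring curves, including components of $S$, and these must be compensated.

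\textbf{Your second fallback.} It is wrong as stated, because $R^1f_*\cO_Y(\rdown{p^lD_Y})$ need not vanish. For a simple elliptic singularity with $\Delta=0$ one has $D_Y=f^*K_X$, hence $\cO_Y(p^lD_Y)\simeq\cO_Y$ and $R^1f_*\cO_Y\neq 0$. It is not ``the same argument as in (1)'': $\rdown{p^lD_Y}-K_Y$ is not the round-up of an $f$-nef divisor.

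\textbf{Your first fallback.} This is the paper's route: factor $f=g\circ h$, use $h_*\cO_Y(p^lD_Y-S)=\cO_Z(p^lD_Z-S_Z)$, and apply Leray. But your justification of the $h$-step rests on a false claim. The non-snc locus of the dlt pair $(Z,\Delta_Z)$ is not disjoint from $S_Z$. Dlt only forbids non-snc points at intersections of two components of $S_Z$, and \cref{lem:exis-good-resol} only says that $S\to S_Z$ is an isomorphism.

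A concrete case is a rational lc singularity with star-shaped graph of type $(3,3,3)$ and $\Delta=0$. The dlt blow-up extracts only the central curve, and $Z$ has three cyclic quotient points on it. There the $h$-exceptional curves meet $S$, the pair is plt rather than klt, and the twist by $-S$ genuinely matters.

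What is missing is the following:
\begin{enumerate}
\item Since $\{p^lD_Z\}=g^{-1}_*\{p^l\Delta\}\le\Delta_Z^{<1}$, the pair $(Z,S_Z+\{p^lD_Z\})$ is dlt, hence plt near such points.
\item One then needs a local vanishing of $R^1h_*\cO_Y(\rdown{p^lD_Y}-S)$ at these plt points, as in the proof of \cite{KTTWYY1}*{Proposition 2.29}. Over the snc locus $h$ is an isomorphism, so nothing is needed there.
\end{enumerate}

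Your $g$-step (``the hypothesis controls everything'') should also be made explicit. One has $\rdown{p^lD_Z}-S_Z=K_Z+g^{-1}_*(\Delta^{<1}-\{p^l\Delta\})+(p^l-1)D_Z$. The hypothesis $\{p^l\Delta\}\le\Delta$ is used exactly to make this boundary effective. It is $\le\Delta_Z^{<1}$, so the pair is klt, and $(p^l-1)D_Z$ is $g$-numerically trivial. Relative Kawamata--Viehweg on $Z$ (\cite{Tanaka18}*{Theorem 3.3}) then gives $R^1g_*\cO_Z(\rdown{p^lD_Z}-S_Z)=0$.
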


\begin{proof}
    By local duality (\cref{lem:local duality}), the assertion in (1) is equivalent to the vanishing
    \[
    R^1f_*\cO_Y(K_Y-\lfloor p^lD_Y \rfloor)=0.
    \]
    Since we have
    \[
    K_Y-\lfloor p^lD_Y \rfloor = K_Y-p^lD_Y+\{p^lD_Y\},
    \]
    and $-D_Y$ is a nef and big $\Q$-divisor, the vanishing follows from relative Kawamata-Viehweg vanishing (\cite{Tanaka18}*{Theorem 3.3}).

    For (2), we write $S_Z \coloneqq \rdown{\Delta_Z}$ and $D_Z \coloneqq K_Z+\Delta_Z$.
    Since we have $h_*\sO_Y(p^lD_Y-S) = \sO_Z(p^lD_Z-S_Z)$, it suffices to show the vanishing 
    \[
    R^1h_*\sO_Y(p^lD_Y-S) = R^1g_*\sO_Z(p^lD_Z-S_Z) =0.
    \]
    
    Since we have $\{p^lD_Z\} \le \Delta_Z$, the pair $(Z, S_Z+ \{p^lD_Z\})$ is dlt.
    It then follows from the proof of \cite{KTTWYY1}*{Proposition 2.29} that we have $R^1h_*\sO_Y(p^lD_Y-S)=0$ outside the snc locus of $(Z,S_Z)$.
    Combining this with the assumption that $h$ is isomorphic around the snc locus of $(Z,S_Z)$, we conclude that 
    \[
    R^1h_*\sO_Y(p^lD_Y-S)=0.
    \]

    On the other hand, noting that we have
    \[
\rdown{p^lD_Z-S}
= p^lD_Z-S-\{p^lD_Z\}
= K_Z + g^{-1}_*(\Delta^{<1}-\{p^l\Delta\}) + (p^l-1)D_Z,
\]
it follows from relative Kawamata--Viehweg vanishing (\cite{Tanaka18}*{Theorem 3.3}) that one has $R^1 g_* \cO_Z(p^lD_Z-S)=0$, as desired.
\end{proof}

\begin{lemma}\label{lem:qFs-p^lDelta}
We use  \cref{notation:two-dim-normal}.
For every integer $l \geq 0$, there exists an integer $n \geq 1$ such that the map
\[
H^2_\m(\cO_Y(p^lD_Y)) \longrightarrow H^2_\m(Q^{0,1}_{Y,p^lD_Y,n})
\]
is injective.
\end{lemma}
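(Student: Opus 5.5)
The plan is to translate the injectivity into a statement about Frobenius kernels on local cohomology, as in the proof of \cref{prop:p-qFS equivalent def} (c) $\Leftrightarrow$ (d), and then kill those kernels by a $V$-filtration induction of the kind used in \cref{thm:qFs-to-qF^es}. Write $D:=p^lD_Y$. Since $S$ does not enter (we use $Q^{0,1}$), the same diagram chase as in \cref{prop:p-qFS equivalent def} gives
\[
\Ker\bigl(H^2_\m(\cO_Y(D))\to H^2_\m(Q^{0,1}_{Y,D,n})\bigr)=H^2_\m(R^{n-1})\bigl(K^{0,1}_{Y,D,n}\bigr).
\]
That argument only uses the snake lemma and the surjection from $H^1_\m$ of the common cokernel, so it does not need the divisor to be of the form $K_Y+\Delta$. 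So it suffices to find $n$ with $H^2_\m(R^{n-1})(K^1_{D,n})=0$. By the Mittag-Leffler argument of \cref{prop:infty-qFs} (the $K^1_{p^{l'}D_Y,n}$ are Artinian), it is enough to show that the Frobenius map $H^2_\m(W\cO_Y(p^{l'}D_Y))\to F_*H^2_\m(W\cO_Y(p^{l'+1}D_Y))$ is injective for every $l'\ge 0$.

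For this injectivity I would copy the induction in the proof of \cref{thm:qFs-to-qF^es}, now with $S=0$. \cref{prop:vanishing in dim 2}(1) gives $H^1_\m(\cO_Y(p^{l'}D_Y))=0$ for all $l'$. By \eqref{eq:key-sequence-for-WnI} and \cref{prop:lim and coh} this yields the left exact sequence
\[
0\to F_*H^2_\m(W\cO_Y(p^{l'+1}D_Y))\xrightarrow{V}H^2_\m(W\cO_Y(p^{l'}D_Y))\to H^2_\m(\cO_Y(p^{l'}D_Y)).
\]
The inductive step (lifting through $V$ and using $FV=VF$ and the injectivity of $F_*V$) then goes through verbatim. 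The only input needed is the base case: for every $l'$, the image of $K^1_{p^{l'}D_Y,\infty}$ in $H^2_\m(\cO_Y(p^{l'}D_Y))$ vanishes.

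The base case is the main obstacle, because $Y$ is not local, so no global ($n=1$) $F$-splitting is available. Write $p^{l'}D_Y=K_Y+B+E$ with $B:=\{p^{l'}D_Y\}$, which has snc support and coefficients in $[0,1)$ on the regular surface $Y$, and $E$ an integral divisor. Locally $(Y,B)$ is $F$-pure. I would dualize: by \cref{lem:local duality} the required vanishing is equivalent to the surjectivity, for some $n$, of
\[
\Hom_{W_n\cO_Y}\bigl(Q^{0,1}_{Y,p^{l'}D_Y,n},W_n\omega_Y\bigr)\to H^0\bigl(Y,\cO_Y(K_Y-\rdown{p^{l'}D_Y})\bigr).
\]
Local $F$-purity of $(Y,B)$ makes the corresponding sheaf map $\cHom(Q^{0,1}_{Y,p^{l'}D_Y,n},W_n\omega_Y)\to\cO_Y(K_Y-\rdown{p^{l'}D_Y})$ surjective. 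The obstruction to surjectivity on global sections is then a class in $H^1(Y,\mathcal{B}_n)$, where $\mathcal{B}_n$ is the kernel sheaf, a twisted $B$-type sheaf of Witt forms. I would try to show that these classes die in the limit over $n$. To do so I would use Serre duality to rewrite $H^1(Y,\mathcal{B}_n)$ as $H^1_\m$ of the cokernel of the Witt Frobenius, and then use the relative Kawamata--Viehweg vanishing of \cref{prop:vanishing in dim 2}, together with the fact that $D_Y=f^*(K_X+\Delta)$ is $f$-numerically trivial, to show that the transition maps become zero. This last step is the one I expect to require real work; it is also where the unboundedness of $n$ is genuinely needed.
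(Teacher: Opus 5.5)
\textbf{There is a genuine gap: your reduction is circular, and the lemma's actual content is left open.} Step 1 is fine, since the diagram chase of \cref{prop:p-qFS equivalent def} does not use $D=K+\Delta$. But the base case you reduce to is the lemma itself. The $K^1_{p^{l'}D_Y,n}$ are Artinian, so the images of $K^1_{p^{l'}D_Y,n}\to K^1_{p^{l'}D_Y,1}$ stabilize, and the image of $K^1_{p^{l'}D_Y,\infty}=\varprojlim_n K^1_{p^{l'}D_Y,n}$ is exactly this stable image. Hence ``the image of $K^1_{p^{l'}D_Y,\infty}$ in $H^2_\m(\cO_Y(p^{l'}D_Y))$ vanishes'' holds if and only if $H^2_\m(R^{n-1})(K^1_{p^{l'}D_Y,n})=0$ for some $n$. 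By your Step 1, that is precisely the statement of the lemma for $l'$. Worse, the $V$-induction needs this base case for every $l'\ge l$ before it says anything at $l$. In the paper the logic runs the other way: this lemma is the base-case input for \cref{lem:infty-qFs}.

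So everything rests on your last paragraph, and there no argument is given. You name obstruction classes in $H^1(Y,\mathcal{B}_n)$ but supply no mechanism that kills them, and the tools you mention are of the wrong kind. Relative Kawamata--Viehweg, as used in \cref{prop:vanishing in dim 2}, controls only twists of $\cO_Y$. The cokernel of the Witt Frobenius is instead governed by log differential forms and Cartier operators (compare the sequence in the proof of \cref{lem:van-B_0-irr}). The $f$-numerical triviality of $D_Y$ gives no vanishing for those.

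\textbf{The missing idea is a perturbation that makes the divisor strictly anti-ample without changing any round-down.} Take an effective $f$-exceptional divisor $E$ on $Y$ with $-E$ $f$-ample, and $\varepsilon>0$ so small that $B:=p^lD_Y+\varepsilon E$ satisfies $\rdown{B}=\rdown{p^lD_Y}$. Then $B\equiv_f\varepsilon E$ is $f$-anti-ample. The argument in the proof of \cite{KTTWYY1}*{Theorem~5.13} then gives an $n$ for which $H^2_\m(\cO_Y(B))\to H^2_\m(Q^{0,1}_{Y,B,n})$ is injective.

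Since $E\ge 0$, you have $W_n\cO_Y(p^lD_Y)\subseteq W_n\cO_Y(B)$ and $F_*W_n\cO_Y(p^{l+1}D_Y)\subseteq F_*W_n\cO_Y(pB)$, compatibly with $F$ and $R^{n-1}$. The pushouts therefore give a map $Q^{0,1}_{Y,p^lD_Y,n}\to Q^{0,1}_{Y,B,n}$ lying over the identity $H^2_\m(\cO_Y(p^lD_Y))=H^2_\m(\cO_Y(B))$. Injectivity of the map for $B$ then forces injectivity of the map for $p^lD_Y$. No inverse limits or $V$-filtration induction are needed for this lemma.
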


\begin{proof}
Fix $l \geq 0$.
Take an effective exceptional divisor $E$ on $Y$ such that $-E$ is ample.
Then there exists a rational number $\varepsilon >0$ such that
\[
\lfloor B:=p^lD_Y+\varepsilon E \rfloor=\lfloor p^lD_Y \rfloor.
\]
Noting that $B$ is anti-ample, it follows from the proof of \cite{KTTWYY1}*{Theorem~5.13} that the map
\[
H^2_\m(\cO_Y(B)) \longrightarrow H^2_\m(Q^{0,1}_{Y,B,n})
\]
is injective for some integer $n \geq 1$.

Therefore, from the commutative diagram
\[
\begin{tikzcd}
    H^2_\m(\cO_Y(p^lD_Y)) \arrow[r] \arrow[d,equal] &
    H^2_\m(Q^{0,1}_{Y,p^lD_Y,n}) \arrow[d] \\
    H^2_\m(\cO_Y(B)) \arrow[r] &
    H^2_\m(Q^{0,1}_{Y,B,n}),
\end{tikzcd}
\]
we see that the top horizontal map
\[
H^2_\m(\cO_Y(p^lD_Y)) \to H^2_\m(Q^{0,1}_{Y,p^lD_Y,n})
\]
is injective, as desired.
\end{proof}

\begin{lemma}\label{lem:infty-qFs}
We use the same notation as in \cref{notation:two-dim-normal}.
The map 
\[
F^e \colon H^2_\m(W\cO_Y(D_Y)) \longrightarrow F^e_*H^2_\m(W\cO_Y(p^{e}D_Y))
\]
is injective for every integer $e>0$. 
\end{lemma}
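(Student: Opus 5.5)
The plan is to mimic the proof of \cref{thm:qFs-to-qF^es}, applied on $Y$ with empty boundary part, i.e.\ $S=0$ and the divisor $D_Y=K_Y+\Delta_Y$, where $\Delta_Y$ need not be effective. Since $F^e$ on $H^2_\m(W\cO_Y(-))$ factors as the composite of the maps
\[
F\colon H^2_\m(W\cO_Y(p^lD_Y)) \to F_*H^2_\m(W\cO_Y(p^{l+1}D_Y)),\qquad l=0,\dots,e-1,
\]
it suffices to show that $F$ is injective for every $l\ge 0$. Write $K_l:=\Ker(F)$ on $H^2_\m(W\cO_Y(p^lD_Y))$, and let $\pi^l_m$ denote the projection to $H^2_\m(W_m\cO_Y(p^lD_Y))$. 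By \cref{prop:lim and coh} we have $K_l\simeq\varprojlim_m K^1_{p^lD_Y,m}$. So the goal is to show $\pi^l_m(K_l)=0$ for all $m\ge1$ and all $l\ge0$, by induction on $m$, uniformly in $l$.

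\emph{Base case $m=1$.} By \cref{lem:qFs-p^lDelta}, for each $l$ there is $n$ with $H^2_\m(\cO_Y(p^lD_Y))\to H^2_\m(Q^{0,1}_{Y,p^lD_Y,n})$ injective. By the argument of \cref{prop:p-qFS equivalent def} (c)$\Leftrightarrow$(d), the kernel of this map equals $H^2_\m(R^{n-1})(K^1_{p^lD_Y,n})$; this part of that proof only uses the pushout diagram and local cohomology, not the effectivity of $\Delta$. Hence $R^{n-1}$ kills $K^1_{p^lD_Y,n}$. Since $\pi^l_1$ factors through $K^1_{p^lD_Y,n}$, we get $\pi^l_1(K_l)=0$.

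\emph{Inductive step.} Use \cref{prop:vanishing in dim 2}(1), which gives $H^1_\m(\cO_Y(p^lD_Y))=0$ for all $l$. Together with \eqref{eq:key-sequence-for-WnI} and \cref{prop:lim and coh}, this yields the left exact sequence
\[
0\to F_*H^2_\m(W\cO_Y(p^{l+1}D_Y))\xrightarrow{V} H^2_\m(W\cO_Y(p^lD_Y))\xrightarrow{\pi^l_1} H^2_\m(\cO_Y(p^lD_Y)).
\]
Given $\alpha\in K_l$, the base case gives $\alpha=V(F_*\beta)$. Injectivity of $F_*V$ and the relation $FV=VF$ show $\beta\in K_{l+1}$. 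The induction hypothesis then gives $\pi^{l+1}_{m-1}(\beta)=0$, hence $\pi^l_m(\alpha)=V(F_*\pi^{l+1}_{m-1}(\beta))=0$. This is verbatim the diagram chase in \cref{thm:qFs-to-qF^es}.

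The main obstacle is the base case. One must check that the identification of $\Ker H^2_\m(\Phi)$ with the image of $K^1_{n}$ under $R^{n-1}$ holds for the possibly non-effective $\Delta_Y$ on the proper-over-$R$ scheme $Y$, in place of an appeal to \cref{lem:pqFs to zero map}, which presupposes a genuine pure quasi-$F$-splitting. The snake-lemma argument in \cref{prop:p-qFS equivalent def} is formal, so I expect it to go through. The only remaining care needed is that $n$ in the base case depends on $l$, which is harmless because the base case only asserts vanishing at level $m=1$.
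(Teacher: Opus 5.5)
Your proposal is correct and follows the paper's proof essentially verbatim. The paper also kills the projection $K^1_{p^lD_Y,\infty}\to K^1_{p^lD_Y,1}$ by factoring it through level $n$ and invoking \cref{lem:qFs-p^lDelta} together with the formal snake-lemma part of the proof of \cref{prop:p-qFS equivalent def} (c)$\Leftrightarrow$(d), rather than \cref{lem:pqFs to zero map}, and then runs the inductive diagram chase of \cref{thm:qFs-to-qF^es} using the vanishing $H^1_\m(\cO_Y(p^lD_Y))=0$ from \cref{prop:vanishing in dim 2}~(1).
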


\begin{proof}
Let $l \ge 0$ be an integer and we write 
\begin{align*}
K^1_{p^lD_Y,n} & \coloneqq \Ker (H^d_{\m}(W_n\sO_Y(p^lD_Y)) \xrightarrow{F} H^d_{\m}(W_n\sO_Y(p^{l+1}D_Y)) \\
K^1_{p^lD_Y,\infty} & \coloneqq \Ker (H^d_{\m}(W\sO_Y(p^lD_Y)) \xrightarrow{F} H^d_{\m}(W\sO_Y(p^{l+1}D_Y)) \\
& \cong \varprojlim_{n} K^1_{p^lD_Y, n}.
\end{align*}
Since the natural projection $K^{1}_{p^lD_Y,\infty} \to K^1_{p^lD_Y,1}$ factors through $K^{1}_{p^lD_Y,n} \to K^1_{p^lD_Y,1}$, which is zero by \cref{lem:qFs-p^lDelta} and the proof of \cref{prop:p-qFS equivalent def} (c) $\Leftrightarrow$ (d).
Combining this with the vanishing \cref{prop:vanishing in dim 2} (1), the assertion follows from the similar argument as in the proof of \cref{thm:qFs-to-qF^es}.
\end{proof}

\begin{theorem}\label{thm:red-to-dlt}
We use  \cref{notation:two-dim-normal}.
If $(S,\Delta_S)$ is quasi-$F^e$-split, then $(X,\Delta)$ is purely quasi-$F^e$-split.
Furthermore, the converse also holds if $\{p^l\Delta\} \le \Delta $ for every $l \ge 1$ (eg.~$\Delta$ has standard coefficients).
\end{theorem}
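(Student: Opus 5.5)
The plan is to pass through the log resolution $f\colon Y\to X$ of \cref{notation:two-dim-normal}. On $Y$ we use \cref{prop:birational transformation rule} to compare with $X$, and \cref{prop:adj-inv} to compare with the snc curve $S$. Set $\Theta\coloneqq \Delta_Y^{<1}$, so that $D_Y=K_Y+S+\Theta$ and $\Theta|_S=\Delta_S$, which is effective by \cref{lem:exis-good-resol}. By construction $(Y,S\cup\Supp\Theta)$ is snc, and $\Supp\Theta$ contains no component of $S$. Hypothesis (c) of \cref{prop:adj-inv}, namely $H^1_\m(\sO_Y(p^lD_Y))=0$ for all $l\ge0$, is exactly \cref{prop:vanishing in dim 2}(1). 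One point needs care: $\Theta$ may have negative coefficients, since $\Delta_Y$ need not be effective. This is why both propositions are phrased through injectivity of $H^d_\m(\Phi)$ rather than through splitting, and I expect this to cause no problem.

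For the forward direction, assume $(S,\Delta_S)$ is quasi-$F^e$-split. Then for some $m$ the map $H^1_\m(\Phi^{0,e}_{S,K_S+\Delta_S,m})$ is injective, by \cref{prop:p-qFS equivalent def}(c) applied on $S$. To use \cref{prop:adj-inv}(2) we need injectivity of $H^2_\m(F^e)$ on $H^2_\m(W\sO_Y(D_Y))$, and this is precisely \cref{lem:infty-qFs}. Hence \cref{prop:adj-inv}(2) shows that $H^2_\m(\Phi^{S,e}_{Y,D_Y,n})$ is injective for some $n$. \cref{prop:birational transformation rule}(1), applied with $\Delta_Y=f^*(K_X+\Delta)-K_Y$ and $S_Y=\Delta_Y^{=1}=S$, then shows that $(X,\Delta)$ is purely $n$-quasi-$F^e$-split.

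For the converse, assume $\{p^l\Delta\}\le\Delta$. If $(X,\Delta)$ is purely $n$-quasi-$F^e$-split, then $H^2_\m(\Phi^{S,e}_{X,K_X+\Delta,n})$ is injective by \cref{prop:p-qFS equivalent def}. \cref{prop:vanishing in dim 2}(2) gives $R^1f_*\sO_Y(p^lD_Y-S)=0$ for all $l$. In dimension two only $R^1$ can be nonzero, so the hypothesis of \cref{prop:birational transformation rule}(2) holds. That result then yields injectivity of $H^2_\m(\Phi^{S,e}_{Y,D_Y,n})$. Next, \cref{prop:adj-inv}(1) gives injectivity of $H^1_\m(\Phi^{0,e}_{S,K_S+\Delta_S,m})$ for some $m$.

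To conclude that $(S,\Delta_S)$ is quasi-$F^e$-split, we apply \cref{prop:p-qFS equivalent def} to $S$. This requires checking that $(S,\Delta_S)$ fits \cref{setting:global setting}:
\begin{itemize}
\item $S$ is a reduced snc curve, proper over $R$, $S_2$ and $G_1$ (nodes are Gorenstein);
\item $\omega_S$ has full support since $S$ is equidimensional (\cref{rem:omega full support});
\item a canonical divisor is available by \cref{rem:on setting}(iii) (AC divisors);
\item $\rdown{\Delta_S}=0$, because $\Theta$ has coefficients $<1$ and meets $S$ transversally.
\end{itemize}

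I expect the main obstacle to be bookkeeping rather than a new idea. One must check that the maps denoted $\Phi$ in \cref{prop:adj-inv} and \cref{prop:birational transformation rule} coincide with those used in \cref{prop:p-qFS equivalent def}. One must also make sure the $Q$-modules on $Y$, formed with the possibly non-effective $\Theta$, are the ones appearing in both statements, so that the two results chain together. The vanishing inputs are all supplied by \cref{prop:vanishing in dim 2} and \cref{lem:infty-qFs}.
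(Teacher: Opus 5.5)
Your proposal is correct and takes the same route as the paper. The paper's proof is the one-line chain \cref{prop:birational transformation rule} and \cref{prop:adj-inv}, with the hypotheses supplied by \cref{prop:vanishing in dim 2} and \cref{lem:infty-qFs}, arranged exactly as you arrange them; your extra checks (that $\Delta_S$ is effective with $\rdown{\Delta_S}=0$, that $S$ fits the setting, and that the two propositions refer to the same map $H^2_\m(\Phi^{S,e}_{Y,D_Y,n})$) make explicit what the paper leaves implicit.
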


\begin{proof}
Applying \cref{prop:vanishing in dim 2} and \cref{lem:infty-qFs}, the assertion follows from \cref{prop:birational transformation rule} and \cref{prop:adj-inv}.
\end{proof}

\subsection{Quasi-\texorpdfstring{$F^e$}{Fe}-splitting of dlt centers}
In this subsection, we give a sufficient condition for a one-dimensional projective pair $(S,\Delta_S)$ with $K_S+\Delta_S \sim_{\Q} 0$ to be quasi-$F$-split (\cref{prop:van-B_0}).

\begin{lemma}\label{lem:van-B_0-irr}
Let $S$ be a smooth projective curve over a perfect field $k$ and $D$ a $\Q$-Weil divisor on $S$ such that there exists an integer $e \geq 1$ with $(p^e-1)D \sim 0$.
Then we have
\[
\varprojlim_{n} H^0(S, B_{S,D,n})=0,
\]
where we write
\[
B_{S,D,n}:={\rm Coker}\bigl(W_n\cO_S(D) \xrightarrow{F} F_*W_n\cO_S(pD)\bigr)
\]
for every integer $n \geq 1$.
\end{lemma}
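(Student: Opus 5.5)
The plan is to compute $\varprojlim_n H^0(S,B_{S,D,n})$ through the cohomology sequence of
\[
0\to W_n\cO_S(D)\xrightarrow{F}F_*W_n\cO_S(pD)\to B_{S,D,n}\to 0 ,
\]
which is exact because $S$ is reduced, so $F$ is injective. Write $W\cO_S(D)=\varprojlim_n W_n\cO_S(D)$ as in Subsection~\ref{Witt ring}. All groups $H^i(S,W_n\cO_S(D'))$ are finite length $W_n(k)$-modules, hence the systems are Mittag-Leffler. Moreover the transition maps $R$ on $H^0$ and $H^1$ are surjective: $H^1$ is the top cohomology, and $H^0$ we handle below. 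Passing to the limit as in \cref{prop:lim and coh} then gives the exact sequence
\[
H^0(W\cO_S(D))\xrightarrow{F}H^0(W\cO_S(pD))\to\varprojlim_n H^0(B_{S,D,n})\to H^1(W\cO_S(D))\xrightarrow{F}H^1(W\cO_S(pD)).
\]
So it suffices to prove two things:
\begin{enumerate}
\item[(a)] $F\colon H^0(W\cO_S(D))\to H^0(W\cO_S(pD))$ is surjective;
\item[(b)] $F\colon H^1(W\cO_S(D))\to H^1(W\cO_S(pD))$ is injective.
\end{enumerate}

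First I use the hypothesis. Since $(p^e-1)D$ is integral and linearly equivalent to $0$, we have $\{p^lD\}=\{D\}$ and $p^lD$ is periodic in $l$ modulo linear equivalence, with period dividing $e$. In particular $\deg\rdown{p^lD}\le 0$, with equality only if $D$ is integral and $p^lD\sim 0$, i.e.\ $D\sim0$. If $D$ is not integral, or $D\not\sim 0$ (so $D$ has degree $0$ but is nontrivial), then $H^0(\cO_S(p^lD))=0$ for all $l$. Using the filtration \eqref{eq:key-sequence-for-WnI}, this gives $H^0(W_n\cO_S(p^lD))=0$ for all $n,l$, so (a) is trivial.

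If $D\sim 0$, say $D=\div(g)$, then multiplication by the Teichm\"uller lift $[g]$ identifies $W_n\cO_S(p^lD)$ with $W_n\cO_S$, compatibly with $F$, $V$, $R$. Then $H^0(W_n\cO_S)=W_n(H^0(\cO_S))$. Here $H^0(\cO_S)$ is a product of finite extensions of the perfect field $k$, hence perfect, so $F$ is bijective on it and (a) holds. In both cases $H^0(W_n\cO_S(p^lD))\to H^0(\cO_S(p^lD))$ is surjective, which also justifies the Mittag-Leffler claims above.

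For (b), I use $FV=VF=p$ together with injectivity of $V$ on $H^1$. The sequence
\[
0\to F_*W_{n-1}\cO_S(p^{l+1}D)\xrightarrow{V}W_n\cO_S(p^lD)\to\cO_S(p^lD)\to 0
\]
and the surjectivity on $H^0$ just established show that
\[
V\colon F_*H^1(W_{n-1}\cO_S(p^{l+1}D))\to H^1(W_n\cO_S(p^lD))
\]
is injective for all $n,l$, hence so is its limit $V\colon F_*H^1(W\cO_S(p^{l+1}D))\to H^1(W\cO_S(p^lD))$. Now take $x\in H^1(W\cO_S(D))$ with $F(x)=0$. Then $px=V(F(x))=0$, so it remains to show that $H^1(W\cO_S(D))$ has no $p$-torsion. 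If $px=0$, write $p=VF$ on $H^1(W\cO_S(D))$ with $V$ injective; then $F(x)=0$ again, and we are going in a circle.

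So the real work is to show that $F$ has no kernel on the limit. This is where I expect the main obstacle. I would attack it through the standard Dieudonné-style argument, using the periodicity $p^eD\sim D$. Composing $F^e$ with the isomorphism $[h]\colon W\cO_S(p^eD)\simeq W\cO_S(D)$, where $\div(h)=-(p^e-1)D$, gives a $\sigma^e$-semilinear endomorphism $\Phi$ of $M:=H^1(W\cO_S(D))$. The kernel $\Ker F$ is $\Phi$-stable, and by the same shift $\Phi\circ V^e=p^e$ up to units. I would then argue level by level. Let $x\in\Ker F$ and let $x_n$ be its image in $H^1(W_n\cO_S(D))$. The element $F(x_n)$ vanishes, so $x_n$ lies in the image of $H^0(B_{S,D,n})$. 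I would show that the subspaces $R^{m}\bigl(\Ker F|_{H^1(W_{n+m})}\bigr)\subseteq H^1(W_n\cO_S(D))$ shrink to $0$ as $m\to\infty$ for each fixed $n$.

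For $n=1$ this is the statement that the kernel of Frobenius on $H^1(\cO_S(D))$ is killed by sufficiently long Witt lifting. I would deduce this from the decomposition of $H^1(\cO_S(D))$, with its $p^{-e}$-linear endomorphism, into a bijective part and a nilpotent part, which is available since $k$ is perfect. On the nilpotent part, elements killed by $F$ come from $B_{S,D,1}$ and are pushed to zero by $R$ after finitely many steps, as in the proof of \cref{thm:qFs-to-qF^es}.

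The induction on $n$ then proceeds exactly as in \cref{thm:qFs-to-qF^es}. It uses the injectivity of $V$ just proved and the periodicity to replace $D$ by $pD$ at each step, which yields $\varprojlim_n\Ker(F|_{H^1(W_n\cO_S(D))})=0$. That is (b), and the lemma follows.
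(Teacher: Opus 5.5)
\paragraph{Verdict: genuine gap.} Your reduction to (a) and (b) is sound, and (a) is correct. One small point: $\{p^lD\}=\{D\}$ holds only for $e\mid l$. What you actually use is still true: $\deg\rdown{p^lD}\le 0$ with equality iff $D$ is integral, and $p^lD\sim 0\Rightarrow D\sim 0$ because $\gcd(p^l,p^e-1)=1$.

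The problem is that, once (a) is known, your own limit sequence gives
\[
\varprojlim_n H^0(S,B_{S,D,n})\;\cong\;\Ker\bigl(F\colon H^1(S,W\cO_S(D))\to H^1(S,W\cO_S(pD))\bigr).
\]
So (b) is not a step toward the lemma; it is a restatement of it. In the paper the logic runs the other way: the lemma is proved directly, and (b) is deduced from it in \cref{prop:van-B_0}.

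\paragraph{Where (b) breaks.} Your argument for (b) fails at its base case. The induction on $n$ you describe is the mechanism of \cref{thm:qFs-to-qF^es}. It converts the level-one vanishing $R^{m}\bigl(\Ker F|_{H^1(W_{m+1}\cO_S(p^lD))}\bigr)=0$ into injectivity on the limit. But in that theorem the level-one vanishing is the hypothesis (quasi-$F$-splitting, fed in via \cref{lem:pqFs to zero map}). For $D=K_S+\Delta$ it is, by \cref{prop:p-qFS equivalent def}(d), precisely the quasi-$F$-splitting that \cref{prop:van-B_0} is supposed to derive from this lemma. So appealing to that proof for the base case is circular.

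Your other justification is the bijective/nilpotent decomposition of the semilinear Frobenius on $H^1(\cO_S(D))$. That only sees the first Witt level and cannot decide the question. Compare a supersingular elliptic curve (with $D=0$) and $H^2$ of a supersingular K3 surface:
\begin{itemize}
\item They have identical level-one data: a one-dimensional top cohomology of $\cO$ on which $F=0$.
\item Every other ingredient of your sketch is available in both cases (injectivity of $V$, trivial periodicity).
\item Yet the kernel dies after one Witt step for the curve, while for the K3 surface $H^2(W\cO_X)\cong k[[V]]$ with $F=0$, so it never dies.
\end{itemize}
A genuinely one-dimensional input is needed, and your proposal never supplies one.

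\paragraph{The missing input in the paper.} The paper uses the curve-specific identification, from the proof of \cite{KTTWYY1}*{Theorem~5.13}, of $B_{S,D,n}$ with the kernel of the iterated Cartier operator. With $E=\rup{\{D\}}$ and $Z_n\Omega^1=F^n_*\Omega^1$ on a curve, there is an exact sequence
\[
0\to B_{S,D,n}\to F^n_*\Omega^1_S(\log E)(p^nD)\xrightarrow{C^n}\Omega^1_S(\log E)(D)\to 0 .
\]
The argument then runs as follows.
\begin{itemize}
\item All Witt levels are encoded in one inverse system $V_n=H^0(\Omega^1_S(\log E)(p^nD))$ with transition maps $C$.
\item Periodicity $V_{n+e}\cong V_n$ makes $C^e$ a $p^{-e}$-linear endomorphism of the finite-dimensional space $V=V_0$.
\item Your Fitting-decomposition idea is the right tool, but it belongs here. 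The Cartier operator is bijective on its stable image, so $\varprojlim_n F^n_*V_n\to V$ is injective.
\item Hence $\varprojlim_n H^0(B_{S,D,n})$, which is its kernel, vanishes, with no need for (a), (b), or the long exact sequence.
\end{itemize}

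\paragraph{An alternative within your framework.} You could also obtain (b) from much heavier inputs:
\begin{itemize}
\item realize $W\cO_S(D)$ as a Frobenius-compatible eigensheaf of $\pi_*W\cO_T$ on a cyclic cover $\pi\colon T\to S$ of degree prime to $p$;
\item use that $H^1(T,W\cO_T)$ is $p$-torsion free, since the formal Picard group of a curve has finite height;
\item conclude $\Ker F\subseteq H^1(T,W\cO_T)[p]=0$ from $VF=p$.
\end{itemize}
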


\begin{proof}
Let $E:=\rup{\{D\}}$.
By the proof of \cite{KTTWYY1}*{Theorem~5.13}, we have the exact sequence
\begin{equation}\label{eq:ex-B-Z-Omega}
    0 \to B_{S,D,n} \to F^n_*\Omega^1(\log E)(p^nD) \xrightarrow{C^n} \Omega^1(\log E)(D) \to 0,
\end{equation}
where we note that $Z_n\Omega^1(\log E)(D) \simeq F^n_*\Omega^1(\log E)(p^nD)$ since $X$ is dimension one.

Set $V_n := H^0(S, \Omega^1(\log E)(p^nD))$ and $V := V_0$.  
Since $(p^e-1)D \sim 0$, we have $V_{ne}\simeq V$ for every $n \geq 1$.  
Thus, $C^n$ induces a $p^{-e}$-linear map
\[
\varphi \colon F^e_*V \longrightarrow V.
\]

As $V$ is finite-dimensional over $k$, there exists $n_0 \geq 1$ such that
\[
{\rm Im}(\varphi^n) = {\rm Im}(\varphi^{n_0}) \quad \text{for all } n \geq n_0.
\]
Set $V' := {\rm Im}(\varphi^{n_0})$. Then $\varphi$ restricts to a surjection
\[
\varphi' \colon F^e_*V' \twoheadrightarrow V',
\]
which is in fact an isomorphism since $\dim F^e_*V' = \dim V'$.

Now for $n \geq n_0$, we obtain
\[
\varphi^n({\rm Ker}(\varphi^{2n})) = \varphi^n\bigl({\rm Ker}(F^{2en}_*V \to F^{en}_*V')\bigr) = 0.
\]
In particular, the transition maps in the inverse system $\{F^n_*V_n\}_n$ become injective in the limit, so that
\[
\varprojlim_{n} F^n_*V_n \hookrightarrow V.
\]

Finally, from the exact sequence \eqref{eq:ex-B-Z-Omega}, we deduce an exact sequence
\[
0 \to \varprojlim_{n} H^0(S, B_{S,D,n}) \to \varprojlim_{n} F^n_*V_n \to V.
\]
Since the right-hand map is injective, we have $\varprojlim_{} H^0(B_{S,D,n})=0$, as claimed.
\end{proof}
 
\begin{prop}\label{prop:van-B_0}
    Let $S$ be a purely one-dimensional reduced projective scheme over an $F$-finite infinite field $k$ of characteristic $p>0$ and $\Delta$ be an effective Mumford $\Q$-divisor on $S$.
    We denote by $\Delta_{\overline{k}}$ the flat pullback of $\Delta$ to $S \times_{k} \Spec \overline{k}$.
    We further assume that the following conditions hold:
    \begin{enumerate}[label=\textup{(\roman*)}]
    \item $\rdown{\Delta_{\overline{k}}}=0$,
    \item $(p^e-1)(K_S+\Delta) \sim 0$ for some $e>0$ and
    \item $S$ is geometrically snc over $k$, that is, all irreducible components $S_i$ and all scheme theoretic intersection $S_i \cap S_j$ are smooth over $k$.
    \item $S$ has only hypersurface singularities.
    \end{enumerate}
    Then the pair $(S,\Delta)$ is quasi-$F^{\infty}$-split.
\end{prop}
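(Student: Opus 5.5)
We will prove the statement through the criterion of \cref{prop:infty-qFs}(2): with $D:=K_S+\Delta$ and $d=1$, it suffices to show that for every $e'\geq 1$ the Frobenius map
\[
H^1(S, W\cO_S(D)) \xrightarrow{F^{e'}} F^{e'}_*H^1(S,W\cO_S(p^{e'}D))
\]
is injective. Here $H^1_\m=H^1$ because $S$ is projective over a field, and \cref{setting:global setting} holds since $S$ is equidimensional, $S_2$, $G_1$ and $k$ is infinite (\cref{rem:on setting}). It is enough to treat $e'=1$ for all twists $p^lD$ and then compose, as at the end of \cref{thm:qFs-to-qF^es}.

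\textbf{Reduction to $\overline{k}$.} First replace $k$ by a finite Galois extension, and then pass to $\overline{k}$ via \cref{prop:base change descent}. Its hypotheses hold: $S$ is lci in codimension one by (iv), $S_{\overline{k}}$ is reduced because it is geometrically snc, and $\rdown{\Delta_{\overline k}}=0$ by (i). Since the base change preserves $(p^e-1)(K+\Delta)\sim 0$, we may assume $k$ is algebraically closed. (An $F$-finite replacement such as a large perfect-ish extension may be needed; if so, we argue with injectivity of $\beta$ exactly as in the proof of \cref{prop:base change descent}.)

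\textbf{Cokernel reformulation.} For each $n$ consider the sequence
\[
0\to W_n\cO_S(p^lD)\xrightarrow{F} F_*W_n\cO_S(p^{l+1}D)\to B_{S,p^lD,n}\to 0.
\]
Injectivity of $F$ is clear because $S$ is reduced. Taking cohomology and the inverse limit, with Mittag-Leffler supplied by the Artinian property (\cref{rem:local coh is Artin}, \cref{prop:lim and coh}), the kernel of $F$ on $H^1(W\cO_S(p^lD))$ is the image of $\varprojlim_n H^0(B_{S,p^lD,n})$. It therefore suffices to prove
\[
\varprojlim_n H^0(S,B_{S,p^lD,n})=0 \quad\text{for all } l.
\]
The divisor $p^lD$ still satisfies $(p^e-1)p^lD\sim 0$.

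\textbf{Passage to components.} Let $\nu\colon \tilde S=\bigsqcup S_i\to S$ be the normalization. Because $S$ is nodal with $\rdown{\Delta}=0$, it has conductor $C$ reduced and supported on the nodes, and adjunction gives $\nu^*(K_S+\Delta)=K_{S_i}+C_i+\Delta_i$. We compare $W_n\cO_S(p^lD)$ with $\nu_*W_n\cO_{\tilde S}(p^l\nu^*D)$. The restriction to nodes produces an exact sequence
\[
0\to W_n\cO_S(p^lD)\to \nu_*W_n\cO_{\tilde S}(p^l\nu^*D)\to \textstyle\bigoplus_{\text{nodes}} W_n(k)\to 0,
\]
and at a node $\Delta$ contributes nothing, so the Frobenius acts on the skyscraper terms bijectively because $k$ is perfect. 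Applying the snake lemma to the Frobenius maps, $B_{S,p^lD,n}$ injects into $\nu_*B_{\tilde S,p^l\nu^*D,n}$, where on each $S_i$ the divisor is $K_{S_i}+(C_i+\Delta_i)$ with $C_i$ reduced. We then apply \cref{lem:van-B_0-irr} on each $S_i$. Its proof goes through with the log sheaf $\Omega^1(\log E)$ for $E=C_i+\rup{\{\Delta_i\}}$: the Cartier-operator exact sequence \eqref{eq:ex-B-Z-Omega} holds for such log boundaries, and $(p^e-1)p^l\nu^*D\sim0$. This gives the vanishing of $\varprojlim_n H^0$ on $\tilde S$, hence on $S$, because inverse limits are left exact.

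\textbf{Main obstacle.} The hardest step is the gluing along the nodes, specifically the Witt-level exact sequence and the bijectivity of Frobenius on the node terms. If that sequence fails to be exact on the right, injectivity of $B_S\to\nu_*B_{\tilde S}$ only needs the left part together with the snake lemma, and I would try to establish that weaker statement instead. A secondary issue is to check that the integral part of the twisted divisor on each $S_i$ matches the log-pole bookkeeping in \cref{lem:van-B_0-irr}, namely that $W_n\cO_{S_i}(p^l(K+C+\Delta_i))$ is the right sheaf whose Frobenius cokernel is computed via the $\Omega^1(\log E)$ exact sequence.
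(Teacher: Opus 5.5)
Your proposal is correct and follows essentially the same route as the paper. You descend to $\overline{k}$ via \cref{prop:base change descent}, glue the Witt--Frobenius sequences along the nodes and apply the snake lemma, then invoke \cref{lem:van-B_0-irr} on each component, compose the Frobenius maps and conclude with \cref{prop:infty-qFs}. The paper obtains an isomorphism $B_{S,p^lD_S,n}\simeq\bigoplus_i B_{S_i,p^lD_{S_i},n}$ from bijectivity of Frobenius on the perfect node terms, whereas your injection already suffices. No modification of the log boundary is needed: \cref{lem:van-B_0-irr} applies verbatim to the $\Q$-divisor $p^lD|_{S_i}$, since it only requires $(p^e-1)p^lD|_{S_i}\sim 0$.
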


\begin{proof}
    By \cref{prop:base change descent}, after replacing $S$ by $S \times_k \Spec \overline{k}$, we may assume that $k$ is algebraically closed.
    Let $K_S$ be a canonical Mumford divisor on $S$.
    Let $S_1,\ldots,S_r$ be the irreducible components of $S$, and put $D_S:=K_S+\Delta_S$.
For integers $l \geq 0$ and $n \geq 1$, we define the $W_n\cO_S$-module
\[
B_{S,p^lD_S,n}:={\rm Coker}\bigl(W_n\cO_S(p^lD_S) \xrightarrow{F} F_*W_n\cO_S(p^{l+1}D_S)\bigr).
\]
For each $1 \leq i \leq r$, we also set $D_{S_i}:=(D_S)|_{S_i}$.

\begin{claim}\label{claim:comp-B}
We have
\[
B_{S,p^lD_S,n} \simeq \bigoplus_{i=1}^r B_{S_i,p^lD_{S_i},n}.
\]
\end{claim}

\begin{claimproof}
By the assumptions (iii) and (iv), we have the commutative diagram in which each horizontal sequence is exact for every $n \geq 1$:
\begin{equation}\label{eq:ex-diag-comp}
\begin{tikzcd}[column sep=0.5cm]
    0 \arrow[r] & W_n\cO_S(p^lD_S) \arrow[r] \arrow[d,"F"] & \bigoplus_{i=1}^r W_n\cO_{S_i}(p^lD_{S_i}) \arrow[r] \arrow[d,"F"] & \bigoplus_{i>j} W_n\cO_{S_i \cap S_j} \arrow[r] \arrow[d,"F"] & 0 \\
    0 \arrow[r] & F_*W_n\cO_S(p^{l+1}D_S) \arrow[r] & \bigoplus_{i=1}^r F_*W_n\cO_{S_i}(p^{l+1}D_{S_i}) \arrow[r] & \bigoplus_{i>j} F_*W_n\cO_{S_i \cap S_j} \arrow[r] & 0,
\end{tikzcd}
\end{equation}
Since each $S_i \cap S_j$ is a finite disjoint union of spectra of perfect fields, the right vertical map in \eqref{eq:ex-diag-comp} is an isomorphism.
Applying the snake lemma to \eqref{eq:ex-diag-comp} yields the desired isomorphism.
\end{claimproof}

By \cref{claim:comp-B} and \cref{lem:van-B_0-irr}, it follows that
\[
\varprojlim_{n } H^0(S, B_{S,p^lD_S,n})=0
\]
for every $l \geq 0$.
In particular, the Frobenius morphism
\[
F \colon H^1(S, W\cO_S(p^lD_S)) \hookrightarrow F_*H^1(S, W\cO_S(p^{l+1}D_S))
\]
is injective.
Therefore, for every $e \geq 1$, the map
\[
F^e \colon H^1(S, W\cO_S(D_S)) \hookrightarrow F^e_*H^1(S, W\cO_S(p^eD_S))
\]
is injective, and hence $(S,\Delta_S)$ is quasi-$F^e$-split by \cref{prop:infty-qFs}, as desired.
\end{proof}

\subsection{Quasi-\texorpdfstring{$F^e$}{Fe}-splitting of two-dimensional log canonical pairs}

\begin{theorem}\label{thm:qFs-lc-Z_p-index}
With the notation as in \cref{notation:two-dim-normal}, we assume that $R/\m$ is perfect.
If the Cartier index of $K_X+\Delta$ is not divisible by $p$, then $(X,\Delta)$ is purely quasi-$F^{\infty}$-split.
\end{theorem}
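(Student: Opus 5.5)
The plan is to reduce to the one-dimensional dlt center via \cref{thm:red-to-dlt} and then apply \cref{prop:van-B_0} to $(S,\Delta_S)$. By \cref{thm:red-to-dlt}, it suffices to show that $(S,\Delta_S)$ is quasi-$F^e$-split for every $e \geq 1$, where $S=\Delta_Y^{=1}$ and $\Delta_S=\Delta_Y^{<1}|_S$ are as in \cref{notation:two-dim-normal}. The first direction of \cref{thm:red-to-dlt} needs no coefficient assumption. Since $(X,\Delta)$ is lc but not klt in \cref{notation:two-dim-normal}, $S\neq 0$. By \cref{lem:exis-good-resol}, $S\simeq S_Z$ is a reduced projective curve over $k=R/\m$, which is perfect and hence $F$-finite, and $\Delta_S$ is effective. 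It remains to check the hypotheses (i)--(iv) of \cref{prop:van-B_0}, possibly after a harmless base change to make $k$ infinite.

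For (iii) and (iv): each component $E_i$ of $S_Z$ is regular, as noted in the proof of \cref{lem:exis-good-resol}. Because $k$ is perfect, each $E_i$ is smooth over $k$. Near intersections, $h$ is an isomorphism over the snc locus, so $S\subset Y$ is an snc divisor on a regular surface. Its intersections are reduced points with perfect residue fields, and $S$ has nodal, hence hypersurface, singularities. For (i), $\Delta_S$ has coefficients $<1$ at points of $S$ off the other components. The points of $S_i\cap S_j$ are not in the support of $\Delta_Y^{<1}|_S$, again by the snc condition. Since $k$ is perfect, base change to $\overline{k}$ keeps the coefficients, so $\lfloor\Delta_{\overline{k}}\rfloor=0$.

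If $k$ is finite, I would first pass to $R_\infty$, a strict henselization-type ind-étale extension with infinite perfect residue field (e.g. $\overline{\F_p}$-residue field). Here I would use \cref{cor:asc-des ind-etale}(2), noting that $R\to R_\infty$ is faithfully flat and, being ind-étale local, splits after completion, which suffices by \cref{completion}. Alternatively, I would simply note that \cref{prop:van-B_0} reduces to $\overline{k}$ anyway via \cref{prop:base change descent}, which only needs the field of definition to be infinite. I expect this to be a technical point rather than the heart of the proof.

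The main step is (ii), namely $(p^e-1)(K_S+\Delta_S)\sim 0$ for some $e$. Let $N$ be the Cartier index of $K_X+\Delta$, with $p\nmid N$. Since $N(K_X+\Delta)$ is Cartier on the local scheme $X$, it is principal, say $N(K_X+\Delta)=\mathrm{div}(\phi)$. Then $N D_Y=f^*\mathrm{div}(\phi)$ is principal on $Y$. By adjunction (Poincaré residue along the snc divisor $S$, as in \cref{prop:adj-inv}), we have $(K_Y+S+\Delta_Y^{<1})|_S = K_S+\Delta_S$ as Mumford $\Q$-divisors. Hence $N(K_S+\Delta_S)$ is linearly equivalent to the restriction of a principal divisor, so it is $\sim 0$. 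I need care that $ND_Y$ is an integral divisor whose restriction is defined, and that restriction of the $\Q$-parts matches integrally; here I would use that $\Delta_Y^{<1}$ meets $S$ transversally away from nodes. Choosing $e$ with $N\mid p^e-1$, which is possible since $p\nmid N$, gives (ii). Then \cref{prop:van-B_0} gives that $(S,\Delta_S)$ is quasi-$F^{\infty}$-split, and \cref{thm:red-to-dlt} yields that $(X,\Delta)$ is purely quasi-$F^e$-split for all $e$. I expect the main obstacle to be the integral adjunction compatibility just described, in particular showing that $N(K_S+\Delta_S)$ is genuinely linearly trivial rather than only numerically trivial.
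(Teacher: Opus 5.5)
There is a genuine gap. Your reduction via \cref{thm:red-to-dlt} is sound, and so is your computation that the index hypothesis makes $N(K_S+\Delta_S)$ linearly trivial. The problem is the claim that $S$ is a reduced projective curve over $k=R/\m$, followed by an application of \cref{prop:van-B_0}.

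In \cref{notation:two-dim-normal} one may have $\rdown{\Delta}\neq 0$. Then $S=\Delta_Y^{=1}$ also contains the strict transform $f^{-1}_*\rdown{\Delta}$. Its components are curve germs through the closed point: they are proper over $\Spec R$ but are not schemes over $k$. So \cref{prop:van-B_0} does not apply: its setup (projectivity over a field, hypotheses (iii) and (iv), base change to $\overline{k}$) makes no sense for them.

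For example, let $X$ be regular and $\Delta$ a smooth curve germ with coefficient one. Then $g=h=\mathrm{id}$ and $S=\Delta$ has no exceptional component at all, yet the theorem covers this pair. As written, your argument only handles $\rdown{\Delta}=0$, where $S=h^{-1}_*\Exc(g)$ is indeed projective over $k$.

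The missing idea is to run the argument of \cref{prop:van-B_0} with local cohomology supported on $f^{-1}(\m)$. This is what quasi-$F^e$-splitting of $(S,\Delta_S)$ over $R$ means via \cref{prop:infty-qFs}. As in \cref{claim:comp-B}, one has $B_{S,p^lD_S,n}\simeq\bigoplus_i B_{S_i,p^lD_{S_i},n}$, and the components split into two cases:
\begin{itemize}
\item If $S_i$ is not exceptional, $B_{S_i,p^lD_{S_i},n}$ is torsion free and $f^{-1}(\m)\cap S_i$ consists of finitely many closed points. Hence $H^0_\m(B_{S_i,p^lD_{S_i},n})=0$.
\item If $S_i$ is exceptional, then $H^0_\m=H^0(S_i,-)$ and \cref{lem:van-B_0-irr} applies. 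Your index argument is exactly what is needed here, but applied componentwise: $ND_{S_i}=(ND_Y)|_{S_i}\sim 0$ with $D_{S_i}=D_Y|_{S_i}$, which includes the points of $S_i\cap S_j$. The paper leaves this step implicit.
\end{itemize}
Hence $\varprojlim_n H^0_\m(B_{S,p^lD_S,n})=0$ for all $l$. Therefore the Frobenius $H^1_\m(W\sO_S(p^lD_S))\to F_*H^1_\m(W\sO_S(p^{l+1}D_S))$ is injective, and \cref{prop:infty-qFs} together with \cref{thm:red-to-dlt} concludes.

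Separately, your fallback for a finite residue field is circular: \cref{prop:base change descent} itself requires an infinite base field. The paper instead first passes, via \cref{cor:asc-des ind-etale}, to an ind-\'etale extension built from $R\otimes_{\F_p}\overline{\F}_p$, whose residue field is infinite and perfect.
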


\begin{proof}
We denote by $R_{\infty}$ the localization of $R \otimes_{\F_p} \overline{\F}_p$ by a maximal ideal $\n$ of $R \otimes_{\F_p} \overline{\F}_p$ and by $R_l$ the localization of $R \otimes_{\F_p} \F_{p^l}$ by the maximal ideal $\n \cap (R \otimes_{\F_p} \F_{p^l})$.
Since $R_{\infty}$ is a direct limit of $\{R_l\}_l$, after replacing $R$ by $R_{\infty}$ and $X$ by $X \times_R \Spec R_{\infty}$, we may reduce to the case where $R/\m$ is infinite (\cref{cor:asc-des ind-etale}).

By \cref{thm:red-to-dlt}, it suffices to show that the pair $(S,\Delta_S)$ is quasi-$F^{\infty}$-split.
Let $S_1, \dots, S_r$ be the irreducible components of $S$.
It follows from the similar argument as in the proof of \cref{claim:comp-B} that one has
\[
B_{S,p^lD_S,n} \simeq \bigoplus_{i=1}^r B_{S_i,p^lD_{S_i},n}.
\]

Since $B_{S_i,p^lD_{S_i},n}$ is a torsion-free $\sO_{S_i}$-module, if $S_i$ is not an exceptional divisor then 
\[
H^0_\m(B_{S_i,p^lD_{S_i},n})=H^0_{f^{-1}(\m) \cap S_i} (S_i, B_{S_i,p^lD_{S_i},n}) = 0.
\]
On the other hand, if $S_i$ is an exceptional divisor, then it follows from \cref{lem:van-B_0-irr} that we have
\[
\varprojlim_{n} H^0_\m(B_{S_i,p^lD_{S_i},n})=\varprojlim_{n} H^0(S_i, B_{S_i,p^lD_{S_i},n}) =0.
\]
Therefore, we have
\[
\varprojlim_{n} H^0_\m(B_{S,p^lD_S,n})=0
\]
for every $l \geq 0$.
It then follows from the similar argument as in \cref{prop:van-B_0} that the pair $(S,\Delta_S)$ is quasi-$F^{\infty}$-split, as desired.
\end{proof}

\begin{corollary}\label{cor:mod-p-reduction}
Let $(S,\n)$ be a two-dimensional normal domain essentially of finite type over a field of characteristic zero.
Let $\Delta$ be an effective $\Q$-Weil divisor on $\Spec S$ such that $(S,\Delta)$ is log canonical.
Then for almost all prime ideals $\fp$, the mod $\fp$ reduction of $(S,\Delta)$ is purely quasi-$F^e$-split for every integer $e \geq 1$.
\end{corollary}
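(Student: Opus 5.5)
The plan is to reduce to \cref{thm:qFs-lc-Z_p-index} together with the klt case, via a standard spreading-out argument. First, I would spread out $(S,\Delta)$ over a finitely generated $\Z$-algebra $A$. I would choose $A$ so that, for every closed point $\fp$ of an open dense subset of $\Spec A$, the reduction $(S_\fp,\Delta_\fp)$ is a two-dimensional normal local pair. Since the residue field of such a closed point is finite, the localizations have perfect residue fields and are $F$-finite. In characteristic zero, a log resolution $Y\to \Spec S$ of $(S,\Delta)$ exists. Its reduction modulo $\fp$ remains a log resolution for almost all $\fp$, and the discrepancies and the numerical pullback are preserved, because the intersection matrix of the exceptional curves and the coefficients reduce unchanged. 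Hence $(S_\fp,\Delta_\fp)$ is numerically lc, and so lc by \cref{lem:num lc surface}. The same reasoning shows that klt-ness is preserved, and that the property of being a non-rational singularity is preserved, since $R^1f_*\cO_Y$ behaves well under generic base change.

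Second, I would control the Cartier index. If $S$ is not rational, then by \cref{lem:num lc surface}(1) we have $\Delta=0$ and $K_S$ is Cartier, and this is preserved on reduction, so the index is $1$. If $S$ is rational, then by \cref{lem:num lc surface}(2) the Cartier index $N$ of $K+\Delta$ is the least $r$ with $rf^*_{\num}(K+\Delta)$ integral. This index is read off from the resolution data, which are constant for almost all $\fp$, and rationality is also preserved. So the index of $K_{S_\fp}+\Delta_\fp$ equals $N$, which is prime to $p=\mathrm{char}(\kappa(\fp))$ once we discard the finitely many primes dividing $N$.

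Third, I would split into cases. If $(S_\fp,\Delta_\fp)$ is lc but not klt, then \cref{thm:qFs-lc-Z_p-index} gives purely quasi-$F^\infty$-splitting directly, after passing to the localization at the closed point, which is allowed by \cref{prop:p-qFS equivalent def}(b). If it is klt, I would invoke the two-dimensional result from \cite{KTTWYY3} that klt implies quasi-$F$-regular, together with the fact that quasi-$F$-regularity implies pure quasi-$F^e$-splitting for all $e$. Alternatively, I would argue directly with a plt blow-up by the same method as \cref{thm:red-to-dlt}.

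The main obstacle is the reduction-mod-$p$ bookkeeping. One has to check that the log resolution, the exceptional configuration, and the lc or klt status all specialize correctly. One also has to check that the local ring of $S_\fp$ at its closed point satisfies the hypotheses of \cref{notation:two-dim-normal}, in particular the existence of a dlt blow-up. I would handle this by taking the dlt blow-up in characteristic zero and reducing it, rather than constructing one afresh in characteristic $p$.
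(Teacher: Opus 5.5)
Your proposal is correct and is essentially the paper's argument. The paper notes that the Cartier index $r$ of $K_S+\Delta$ is fixed, so the characteristic of $\kappa(\fp)$ does not divide $r$ for almost all $\fp$, and then applies \cref{thm:qFs-lc-Z_p-index} to the reduction; your spreading-out bookkeeping (preservation of log canonicity, of the index via \cref{lem:num lc surface}, and the separate klt case, which \cref{notation:two-dim-normal} formally excludes) only makes explicit what the paper's one-line proof leaves implicit.
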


\begin{proof}
Denote by $r$ the Cartier index of $K_S+\Delta$.
If $r$ is not divisible by the characteristic of $\kappa(\fp)$, then by \cref{thm:qFs-lc-Z_p-index} the mod $\fp$ reduction of $(S,\Delta)$ is purely quasi-$F^e$-split for every integer $e \geq 1$, as desired.
\end{proof}

\begin{corollary}\label{cor:qFs-index}
With the notation as in \cref{thm:qFs-lc-Z_p-index}, we further assume that $\Delta$ has standard coefficients and $\rdown{\Delta} =0$.
Then the following conditions are equivalent:
\begin{enumerate}[label=\textup{(\alph*)}]
\item $(X,\Delta)$ is quasi-$F$-split.
\item $(X,\Delta)$ is quasi-$F^{\infty}$-split.
\item The Cartier index of $K_X+\Delta$ is not divisible by $p$.
\end{enumerate}
\end{corollary}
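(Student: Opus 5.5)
The implications (b) $\Rightarrow$ (a) and (c) $\Rightarrow$ (b) are immediate. For (c) $\Rightarrow$ (b), \cref{thm:qFs-lc-Z_p-index} gives that $(X,\Delta)$ is purely quasi-$F^{\infty}$-split, and since $\rdown{\Delta}=0$ this is the same as being quasi-$F^{\infty}$-split. So the real content is (a) $\Rightarrow$ (c). I would prove it by contraposition: assume $p$ divides the Cartier index $N=p^ar$ with $a\ge 1$, and show that $(X,\Delta)$ is not quasi-$F$-split.

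\textbf{Reductions.} As in the proof of \cref{thm:qFs-lc-Z_p-index}, I first pass to $R_\infty$ via \cref{cor:asc-des ind-etale}. Both the Cartier index and quasi-$F$-splitting are unchanged: the index is preserved by \cref{etale-cover-principal-index}, and the extension is ind-étale and split. This makes the residue field infinite, and, after completing by \cref{completion}, $N(K_X+\Delta)$ is principal. Next I apply \cref{prop:index-one-cov}. This replaces $(X,\Delta)$ by a pair $(Y,\Delta_Y)$ with the following properties: $\Delta_Y$ has standard coefficients, $\rdown{\Delta_Y}=0$ (ramification only removes coefficients $1-1/r_i$), $(Y,\Delta_Y)$ is lc, the principal index is $p^a$, and $(Y,\Delta_Y)$ is quasi-$F$-split if and only if $(X,\Delta)$ is. So we may assume the index is exactly $p^a$ with $a\ge1$.

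\textbf{Case split.} If $X$ is klt, the pair is quasi-$F$-regular by \cite{KTTWYY3}, and I would need the separate fact that klt surface pairs with standard coefficients have $\Z_{(p)}$-index, or else treat them directly. I would first check whether the index-one-cover reduction already forces non-klt in the relevant cases, since klt surface singularities over algebraically closed residue fields are quotient-like and their index cover is étale in codimension one. I do not expect this to be automatic in general, so it remains a point to verify. In the non-klt case, \cref{thm:red-to-dlt} applies, because standard coefficients give $\{p^l\Delta\}\le\Delta$. Hence quasi-$F$-splitting of $(X,\Delta)$ is equivalent to quasi-$F$-splitting of the one-dimensional lc curve pair $(S,\Delta_S)$, where $K_S+\Delta_S\sim_{\Q}0$ and $\Delta_S$ has standard coefficients by \cref{lem:exis-good-resol}. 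By \cref{lem:num lc surface}(2), the index of $K_X+\Delta$ is read off from the denominators of $f^*_{\num}(K_X+\Delta)$, and from these one sees that $p$ appears in the torsion order of $K_{S}+\Delta_S$ on the exceptional dlt center. This happens either through a coefficient $1-1/m$ with $p\mid m$, or through a $p$-torsion class of degree zero on an elliptic or nodal cycle.

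\textbf{Main obstacle.} The key step is to show that such a curve pair is not quasi-$F$-split. Using \cref{proposition:no-gl-section}, quasi-$F$-splitting of the curve pair would give a nonzero section of $(1-p^l)(K_S+\Delta_S)$, rounded down, for some $l$. If $p\mid m$ for some coefficient $1-1/m$, the rounding makes this divisor have negative degree on a component, which is a contradiction. If instead $K_S+\Delta_S$ has $p$-power torsion order, then $(1-p^l)(K_S+\Delta_S)$ is a nontrivial degree-zero class, hence has no sections. The delicate point is to make this argument work across the components of a cycle or chain $S$, where degree-zero torsion is glued along the nodes. I would handle it using the decomposition in \cref{claim:comp-B} and a direct computation of $\mathrm{Pic}^0$ of the cycle, whose torsion is prime to $p$ only for a multiplicative-type group.
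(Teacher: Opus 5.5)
Your skeleton matches the paper's. You reduce to principal index $p^a$ with $a\ge 1$ via \cref{prop:index-one-cov}, pass to $(S,\Delta_S)$ via \cref{thm:red-to-dlt}, and kill sections via \cref{proposition:no-gl-section}.

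The gap is the step you dispose of with ``one sees that $p$ appears in the torsion order of $K_S+\Delta_S$.'' Your two mechanisms do not exhaust the possibilities. Nothing in your argument excludes $\Delta_S=0$ with $K_S\sim 0$. In that case $(1-p^l)K_S$ has a nonzero section, and \cref{proposition:no-gl-section} yields nothing. Citing \cref{lem:num lc surface}(2) does not close this. The $p$-denominators of $f^*_{\num}(K_X+\Delta)$ sit on strict transforms of components of $\Delta$ and on $h$-exceptional curves, and transferring them to coefficients of $\Delta_S$ would need a computation of differents that you do not give.

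The missing idea, which the paper uses, is short. Suppose $\Delta_S=0$. Then $\Delta=0$, because every component of $g^{-1}_*\Delta$ meets $\Exc(g)\subseteq S$. Also all differents along $S$ vanish. So the dlt blow-up $g$ is already a log resolution, and $g^*(K_X+\Delta)=K_Z+S$ is integral. \cref{lem:num lc surface} then makes $K_X+\Delta$ Cartier, contradicting $a\ge 1$. Hence $\Delta_S\neq 0$, and the elliptic/cycle torsion case you single out as the main obstacle never arises.

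Since $p^a(K_S+\Delta_S)$ is integral, for $l\ge a$ you get $(1-p^l)K_S-\rdown{p^l\Delta_S}\sim_{\Q}-\Delta_S$. This has no nonzero sections, so $(S,\Delta_S)$ is not quasi-$F^a$-split. That suffices once you invoke (a)$\Leftrightarrow$(b) from \cref{cor:qFs-to-qF^eS}.

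If you insist on $e=1$ directly, you must also treat $1\le l<a$. This does work: at a point where $\Delta_S$ has coefficient $1-p^{-a_i}$, the divisor $-\Delta_S+\{p^l\Delta_S\}$ has coefficient $p^{-a_i}\bigl(1-p^{\min(l,a_i)}\bigr)<0$. But ``negative degree on a component'' is not enough on a reducible $S$. You need nonpositive degree on every component, together with connectedness of $S$.

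The klt branch should simply be deleted. The standing \cref{notation:two-dim-normal} assumes $(X,\Delta)$ is lc but not klt. Moreover, the ``fact'' you propose to verify is false. Take $k$ perfect and the pair $\bigl(\Spec k[x,y]_{(x,y)},(1-\frac1p)\div(x)\bigr)$: it is klt and $F$-split, has standard coefficients, and has Cartier index $p$. So (a)$\Rightarrow$(c) genuinely fails without the non-klt hypothesis. Your passage to $R_\infty$ and to the completion is harmless but unneeded. On a local scheme Cartier divisors are principal, and \cref{prop:index-one-cov} adjoins the required roots of unity itself.
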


\begin{proof}
(a) $\Leftrightarrow$ (b) follows from \cref{cor:qFs-to-qF^eS}, and (c) $\Rightarrow$ (b) follows from \cref{thm:qFs-lc-Z_p-index}.
We prove (b) $\Rightarrow$ (c).
We assume that the Cartier index of $K_X+\Delta$ is divisible by $p$.
By \cref{prop:index-one-cov}, we may assume that the Cartier index of $K_X+\Delta$ is $p^a$ for some integer $a \geq 1$.
By \cref{thm:red-to-dlt}, it suffices to show that $(S,\Delta_S)$ is not quasi-$F^a$-split.

We first assume that $\Delta_S=0$.
Then a dlt blow-up $g \colon Z \to X$ of $(X,\Delta)$ is a log resolution of $(X,\Delta)$ and 
\[
g^*(K_X+\Delta)=K_Z+S 
\]
is Cartier.
It follows from \cref{lem:num lc surface} that $K_X+\Delta$ itself is Cartier, contradicting the assumption that the Cartier index is $p^a$ with $a \geq 1$.
Thus, we must have $\Delta_S \neq 0$.

For every integer $l \geq a$, we compute
\[
(1-p^l)K_S-\rdown{p^l\Delta_S}=(1-p^l)K_S-p^l\Delta_S \sim_{\Q} -\Delta_S.
\]
Since $\Delta_S \neq 0$, it follows that
\[
H^0\bigl(S, \cO_S((1-p^l)K_S-\rdown{p^l\Delta_S})\bigr) \subsetneq H^0(S,\sO_S).
\]
Noting that $S$ is a connected reduced projective scheme over a field, the global section $H^0(S,\sO_S)$ is a field.
Therefore, we have
\[
H^0\bigl(S, \cO_S((1-p^l)K_S-\rdown{p^l\Delta_S})\bigr)=0.
\]
Combining this with \cref{proposition:no-gl-section}, $(S,\Delta_S)$ is not quasi-$F^a$-split, as required.
\end{proof}

\begin{example}
For every prime number $p>0$, there exists a two-dimensional log canonical pair which is not quasi-$F$-split.
Let $R=k[[x,y]]$, where $k$ is an algebraically closed field of characteristic $p>0$, and let $f \colon Y \to \Spec R$ be the blow-up at the origin $(0,0)$.
Denote by $E$ the exceptional prime divisor of $f$.
Then there exist prime divisors $B_1,\ldots,B_{2p}$ on $X=\Spec R$ such that $f^*B_i=f^{-1}_*B_i+E$ for every $1 \leq i \leq 2p$, and such that 
\[
f^{-1}_*B_1+\cdots+f^{-1}_*B_{2p}+E
\]
has simple normal crossing support.
Set
\[
\Delta:=\frac{1}{p}(B_1+\cdots+B_{2p}).
\]
Then we have
\[
f^*(K_X+\Delta)=K_Y-E+f^{-1}_*\Delta+2E
   =K_Y+E+f^{-1}_*\Delta=:K_Y+\Delta_Y.
\]
Thus $\Delta_Y^{=1}=S$ is a projective line, and 
\[
\Delta_S=\frac{1}{p}(P_1+\cdots+P_{2p})
\]
for some points $P_1,\ldots,P_{2p}$ on $S$.
Since
\[
H^0\bigl(S, \cO_S((1-p^l)K_S-\rdown{p^l\Delta_S})\bigr)=0
\quad \text{for every integer } l \geq 1,
\]
the pair $(S,\Delta_S)$ is not quasi-$F$-split by \cref{proposition:no-gl-section}.
Moreover, since $p\Delta_Y$ is a $\Z$-Weil divisor, the pair $(X,\Delta)$ is not quasi-$F$-split by \cref{thm:red-to-dlt}.
\end{example}

\subsection{Classification of two-dimensional quasi-\texorpdfstring{$F$}{F}-pure singularities}
As a special case of \cref{cor:qFs-index}, we consider the case of $\Delta=0$ and give a classification of quasi $F$-split surface singularities (\cref{thm:class-qFs} and \cref{thm:class-qFs imperfect}).

\begin{remark}\label{rem:graph of lc}
Let $(R,\m)$ be a $2$-dimensional excellent normal local domain that is log canonical but not klt.
\begin{enumerate}[label=\textup{(\arabic*)}]
\item If $R$ is a rational singularity, then the dual graph (see \cite{Sato25}*{Definition 2.13} for the definition) of its minimal resolution is one of the following:
\begin{enumerate}[label=\textup{(\alph*)}]
\item Star-shaped of type $(2,3,6)$, $(3,3,3)$, or $(2,4,4)$.
        \[
\begin{tikzpicture}
\draw node (ldot) at (1,0){$\cdots$};
\node[draw, shape=circle, inner sep=1.8pt] (E1 alpha) at (2,0){$*$};
\node[draw, shape=circle, inner sep=1.8pt] (C) at (3,0){$*$};
\node[draw, shape=circle, inner sep=1.8pt] (E1 Beta) at (4,0){$*$};
\draw node (rdot) at (5,0){$\cdots$};
\draw node (ddot) at (3,-1){\rotatebox{90}{$\cdots$}};

\draw (ldot)--(E1 alpha);
\draw (E1 alpha)--(C);
\draw (C)--(E1 Beta);
\draw (E1 Beta)--(rdot);
\draw (C)--(ddot);
\end{tikzpicture}
        \]
        
\item ${}_*\tilde{D}_{n+3}$ ($n \ge 1$)
\[
\begin{tikzpicture}
\node[draw, shape=circle, inner sep=4.5pt] (LU) at (0,0){};
\node[draw, shape=circle, inner sep=1.8pt] (C1) at (1,0){$*$};
\node[draw, shape=circle, inner sep=4.5pt] (LD) at (1,-1){};
\node[draw, shape=circle, inner sep=1.8pt] (C2) at (2,0){$*$};
\draw node (Dot) at (3,0){$\cdots$};
\node[draw, shape=circle, inner sep=1.8pt] (C3) at (4,0){$*$};
\node[draw, shape=circle, inner sep=1.8pt] (C4) at (5,0){$*$};
\node[draw, shape=circle, inner sep=4.5pt] (RU) at (6,0){};
\node[draw, shape=circle, inner sep=4.5pt] (RD) at (5,-1){};

\draw (LU)--(C1);
\draw (LD)--(C1);
\draw (C1)--(C2);
\draw (C2)--(Dot);
\draw (Dot)--(C3);
\draw (C3)--(C4);
\draw (RU)--(C4);
\draw (RD)--(C4);
\end{tikzpicture}
\]

\item[\textup{(a')}] Twisted star shaped of type $(3,3,3)$ or $(2,4,4)$.
\[
\begin{tikzpicture}
\node[draw, shape=circle, inner sep=1.8pt] (L0) at (-2,0){$*$};
\draw[shift={(0,0.4)}] (-2,0) node{\tiny $1$};
\draw node (LDot) at (-1,0){$\cdots$};
\node[draw, shape=circle, inner sep=1.8pt] (L1) at (0,0){$*$};
\draw[shift={(0,0.4)}] (0,0) node{\tiny $1$};
\node[draw, shape=circle, inner sep=1.8pt] (L2) at (1,0){$*$};
\draw[shift={(0,0.4)}] (1,0) node{\tiny $2$};
\draw node (Dot) at (2,0){$\cdots$};
\node[draw, shape=circle, inner sep=1.8pt] (L3) at (3,0){$*$};
\draw[shift={(0,0.4)}] (3,0) node{\tiny $2$};

\draw (LDot.180)--(L0.0);
\draw (LDot.0)--(L1.180);
\draw (L1.10)--(L2.170);
\draw (L1.-10)--(L2.190);
\draw (L2.10)--(Dot.175);
\draw (L2.-10)--(Dot.185);
\draw (Dot.5)--(L3.170);
\draw (Dot.-5)--(L3.190);

\node[draw, shape=circle, inner sep=1.8pt] (R0) at (5,0){$*$};
\draw[shift={(0,0.4)}] (5,0) node{\tiny $1$};
\draw node (RDot) at (6,0){$\cdots$};
\node[draw, shape=circle, inner sep=1.8pt] (R1) at (7,0){$*$};
\draw[shift={(0,0.4)}] (7,0) node{\tiny $1$};
\node[draw, shape=circle, inner sep=1.8pt] (R2) at (8,0){$*$};
\draw[shift={(0,0.4)}] (8,0) node{\tiny $3$};
\draw node (RRDot) at (9,0){$\cdots$};
\node[draw, shape=circle, inner sep=1.8pt] (R3) at (10,0){$*$};
\draw[shift={(0,0.4)}] (10,0) node{\tiny $3$};

\draw (RDot.180)--(R0.0);
\draw (RDot.0)--(R1.180);
\draw (R1.0)--(R2.180);
\draw (R1.10)--(R2.170);
\draw (R1.-10)--(R2.190);
\draw (R2.0)--(RRDot.180);
\draw (R2.10)--(RRDot.175);
\draw (R2.-10)--(RRDot.185);
\draw (RRDot.0)--(R3.180);
\draw (RRDot.5)--(R3.170);
\draw (RRDot.-5)--(R3.190);

\end{tikzpicture}
\]
\item[\textup{(b')}] Twisted ${}_*\widetilde{D}_{n+3}$ ($n \ge 1$).
\[
\begin{tikzpicture}
\node[draw, shape=circle, inner sep=1.8pt] (L1) at (0,0){$*$};
\draw[shift={(0,0.4)}] (0,0) node{\tiny $1$};
\node[draw, shape=circle, inner sep=1.8pt] (L2) at (1,0){$*$};
\draw[shift={(0,0.4)}] (1,0) node{\tiny $2$};
\draw node (Dot) at (2,0){$\cdots$};
\node[draw, shape=circle, inner sep=1.8pt] (L3) at (3,0){$*$};
\draw[shift={(0,0.4)}] (3,0) node{\tiny $2$};
\node[draw, shape=circle, inner sep=1.8pt] (L4) at (4,0){$*$};
\draw[shift={(0.2,0.4)}] (4,0) node{\tiny $2$};
\node[draw, shape=circle, inner sep=4.5pt] (U1) at (4,1){};
\draw[shift={(0,0.4)}] (4,1) node{\tiny $2$};
\node[draw, shape=circle, inner sep=4.5pt] (D1) at (5,0){};
\draw[shift={(0,0.4)}] (5,0) node{\tiny $2$};

\draw (L1.10)--(L2.170);
\draw (L1.-10)--(L2.190);
\draw (L2.10)--(Dot.175);
\draw (L2.-10)--(Dot.185);
\draw (Dot.5)--(L3.170);
\draw (Dot.-5)--(L3.190);
\draw (L3.10)--(L4.170);
\draw (L3.-10)--(L4.190);
\draw (L4.100)--(U1.-100);
\draw (L4.80)--(U1.-80);
\draw (L4.10)--(D1.170);
\draw (L4.-10)--(D1.190);
\end{tikzpicture}
,\quad \textup{ etc.}
\]
\end{enumerate}
See \cite{Sato25}*{Figure 2, 3} and \cite{Sato25}*{Theorem A.3} for more details.
\item Let $r$ be the Gorenstein index of $R$, that is, the minimal integer $r>0$ such that $rK_R$ is Cartier.
\begin{enumerate}[label=\textup{(\roman*)}]
\item If $R$ is not a rational singularity, then $r=1$ (\cref{lem:num lc surface} (1)).
\item If the dual graph of $R$ is star-shaped of type $(2,3,6)$ (resp.~$(3,3,3)$, $(2,4,4)$), then it follows from \cref{lem:num lc surface} (2) and the proof of \cite{kollar13}*{Theorem 3.38} that $r=6$ (resp.~$r=3$, $r=4$).
\item If the dual graph is ${}_*\tilde{D}_{n+3}$, then for a minimal resolution $f \colon Y \to X$, we have 
\[
K_Y-f^*K_X = -\frac{1}{2} (C_1+\cdots + C_4) - (E_1+\cdots +E_n),
\]
where $C_i$ are the leaves of the dual graph and $E_i$ are others.
Therefore, one has $r=2$ in this case.
\item If the dual graph is twisted star shaped of type $(3,3,3)$, $(2,4,4)$, and twisted ${}_*\tilde{D}_{n+3}$, then it follows from \cite{kollar13}*{Paragraph 3.41} that $r=3, 4$ and $2$, respectively.
\end{enumerate}
\end{enumerate}
\end{remark}

\begin{theorem}\label{thm:class-qFs}
Let $(R,\m)$ be a $2$-dimensional $F$-finite Noetherian normal local domain of characteristic $p>0$ with $R/\m$ perfect.
Then the following conditions are equivalent:
\begin{enumerate}[label=\textup{(\alph*)}]
\item $R$ is quasi-$F$-split.
\item $R$ is quasi-$F^{\infty}$-split.
\item $R$ is log canonical and satisfies one of the following conditions:
\begin{enumerate}[label=\textup{(\roman*)}]
    \item $R$ has log terminal singularities,
    \item $R$ is not a rational singularity,
    \item $p \neq 2,3$ and the dual graph is star shaped of type $(2,3,6)$,
    \item $p \neq 3$ and the dual graph is star shaped or twisted star shaped of type $(3,3,3)$, or
    \item $p \neq 2$ and the dual graph is ${}_*\widetilde{D}_{n+3}$, twisted ${}_*\widetilde{D}_{n+3}$ ($n \geq 1$), or star shaped or twisted star shaped of type $(2,4,4)$.
\end{enumerate}
\end{enumerate}
\end{theorem}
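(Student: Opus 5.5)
The plan is to combine the general results of the paper with the classification in \cref{rem:graph of lc}, splitting into the klt and the lc-but-not-klt cases.

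\emph{(a) $\Leftrightarrow$ (b).} I would use \cref{cor:qFs-to-qF^eS} with $\Delta=0$ and $S=0$. This needs $R(p^lK_R)$ to be Cohen--Macaulay for all $l$. That is automatic, since $R$ is a two-dimensional normal local domain and reflexive rank one modules are $S_2$, hence Cohen--Macaulay in dimension two.

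\emph{(b) $\Rightarrow$ (c).} If $R$ is quasi-$F^\infty$-split, \cref{thm:qFs-to-lc-norm}(1) shows that $R$ is numerically lc. In dimension two, \cref{remark on slc} upgrades this to lc, because every Weil divisor is numerically $\Q$-Cartier and numerically lc implies lc. We may then assume $R$ is lc, rational, and not klt. Now \cref{cor:qFs-index} applies with $\Delta=0$: it has standard coefficients and $\rdown{\Delta}=0$, and $R/\m$ is perfect. It gives that the Gorenstein index $r$ is not divisible by $p$. Reading off $r$ from \cref{rem:graph of lc}(2) shows the case list is forced:
\begin{itemize}
\item type $(2,3,6)$ has $r=6$, so $p\neq 2,3$;
\item type $(3,3,3)$, star-shaped or twisted, has $r=3$, so $p\neq 3$;
\item ${}_*\widetilde D_{n+3}$, its twisted form, and type $(2,4,4)$ (star-shaped or twisted) have $r=2$ or $r=4$, so $p\neq 2$.
\end{itemize}

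\emph{(c) $\Rightarrow$ (b).} There are three cases.
\begin{itemize}
\item If $R$ is klt, it is quasi-$F$-regular in dimension two by \cite{KTTWYY3}, hence quasi-$F$-split; by (a)$\Leftrightarrow$(b) it is then quasi-$F^\infty$-split.
\item If $R$ is lc and not rational, \cref{lem:num lc surface}(1) gives $K_R$ Cartier. Thus the index is $1$, which is prime to $p$, and \cref{thm:qFs-lc-Z_p-index} gives quasi-$F^\infty$-splitting.
\item In cases (iii)–(v), $R$ is lc, not klt, and rational. By \cref{rem:graph of lc}(1) its dual graph is one of the listed ones, and \cref{rem:graph of lc}(2) together with the assumption on $p$ shows the index is prime to $p$. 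So \cref{thm:qFs-lc-Z_p-index} applies again.
\end{itemize}

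\emph{Main obstacle.} The main obstacle is the bookkeeping in the klt case, namely justifying that a two-dimensional klt singularity is quasi-$F$-split in every characteristic. I would cite the two-dimensional result of \cite{KTTWYY3} that klt surface singularities are quasi-$F$-regular. The implication from quasi-$F$-regular to quasi-$F$-split should hold by definition. A second point to check is that the index computation in \cref{rem:graph of lc} is stated for the minimal resolution and matches the Cartier index used in \cref{cor:qFs-index}; \cref{lem:num lc surface}(2) handles this, since $R$ is rational.
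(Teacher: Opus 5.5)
Your proposal is correct and follows essentially the same route as the paper: (a)$\Leftrightarrow$(b) comes from \cref{cor:qFs-to-qF^eS}, \cref{thm:qFs-to-lc-norm} reduces to the log canonical case, and the non-klt case is handled by \cref{cor:qFs-index} (equivalently \cref{thm:qFs-lc-Z_p-index} for the converse) together with the index values in \cref{rem:graph of lc}~(2). The only difference is the klt case, where the paper cites \cite{KTTWYY1}*{Theorem~C} directly for quasi-$F$-splitting while you go through quasi-$F$-regularity from \cite{KTTWYY3}; both work, and in the non-rational case you should say explicitly that klt surface singularities are rational, so that \cref{notation:two-dim-normal} applies.
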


\begin{proof}
The equivalence (a) $\Leftrightarrow$ (b) follows from \cref{cor:qFs-to-qF^eS}.
By \cref{thm:qFs-to-lc-norm}, in any case we may assume that $R$ is log canonical.
If $R$ has log terminal singularities, then $R$ is quasi-$F$-split by \cite{KTTWYY1}*{Theorem~C}.
If $R$ is not log terminal, then the assertion follows from \cref{cor:qFs-index} and \cref{rem:graph of lc} (2).
\end{proof}

\begin{thm}\label{thm:class-qFs imperfect}
Let $(R,\m)$ be a $2$-dimensional $F$-finite Noetherian normal local domain of characteristic $p>3$.
Then the following conditions are equivalent:
\begin{enumerate}[label=\textup{(\alph*)}]
\item $R$ is quasi-$F$-split.
\item $R$ is quasi-$F^{\infty}$-split.
\item $R$ is log canonical.
\end{enumerate}
\end{thm}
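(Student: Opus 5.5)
The plan is to reduce to the perfect residue field case of \cref{thm:class-qFs} by an ind-\'etale base change, and then use $p>3$ to make the extra conditions in (c) automatic.

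\emph{Implications not needing the reduction.} The equivalence (a) $\Leftrightarrow$ (b) is \cref{cor:qFs-to-qF^eS} with $\Delta=0$ and $S=0$. Its hypotheses hold because a two-dimensional normal ring satisfies $S_2$, so every $R(p^lK_R)$ is Cohen--Macaulay. The implication (b) $\Rightarrow$ (c) is \cref{thm:qFs-to-lc-norm}(1) with $\Delta=0$. Here $K_R$ is numerically $\Q$-Cartier since every $\Q$-Weil divisor on an excellent surface is. This gives numerical log canonicity, which in dimension two is log canonicity by \cref{remark on slc}. So only (c) $\Rightarrow$ (a) remains.

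\emph{Reduction to a separably closed residue field.} Take the strict henselization, or more precisely the direct limit $R_\infty$ of localizations of finite \'etale $R$-algebras realizing the separable closure $k^{\mathrm{sep}}$ of $k=R/\m$. By \cref{ind-etale}, $R_\infty$ is a Noetherian $F$-finite local $G$-ring. Its residue field $k^{\mathrm{sep}}$ is $F$-finite because $[k^{\mathrm{sep}}:(k^{\mathrm{sep}})^p]=[k:k^p]$.
\begin{itemize}
\item $R_\infty$ is regular over $R$ and flat with $\m R_\infty=\m_\infty$, so normality, dimension two, log canonicity and the Cartier index of $K$ are all preserved.
\item By \cref{cor:asc-des ind-etale}(2), quasi-$F$-splitting descends from $R_\infty$ to $R$, provided $R\to R_\infty$ splits as $R$-modules. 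This split needs checking. Each finite \'etale local extension $R\to R_l$ is finite free, and its trace is surjective since the residue extension is separable, so it splits. I would choose these splittings compatibly, for instance via normalized traces when the degrees are prime to $p$, or via a direct-limit/Mittag--Leffler argument. This is the first place to be careful.
\end{itemize}
After this step we may assume $k$ is separably closed but possibly imperfect.

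\emph{Main obstacle: imperfect residue field.} \cref{thm:class-qFs} requires $R/\m$ to be perfect, and inseparable base change does not preserve normality, so this is the hardest step. I would avoid base change and go back through the proof of \cref{thm:qFs-lc-Z_p-index}. The only places perfectness is used are in \cref{prop:van-B_0} and \cref{lem:van-B_0-irr}, to make the components $S_i$ of the dlt centre and their intersections smooth over a perfect field.

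For $p>3$ I would argue the following.
\begin{itemize}
\item If $R$ is klt, it is quasi-$F$-split by \cite{KTTWYY1}*{Theorem~C}, presumably valid without the perfectness assumption. This has to be checked.
\item If $R$ is lc but not klt, the classification in \cref{rem:graph of lc} (over $H^0$ of the exceptional curves in the imperfect case, cf.~\cite{kollar13}*{Ch.~3}) shows the index $r$ divides $1,2,3,4$ or $6$, hence is prime to $p$.
\item By \cref{thm:red-to-dlt} it then suffices that $(S,\Delta_S)$ is quasi-$F^\infty$-split, where $S$ is a curve or cycle of curves with $(p^e-1)(K_S+\Delta_S)\sim 0$ over the field $H^0(\sO_S)$.
\item The key point is that for $p>3$, $S$ is geometrically reduced and geometrically snc over $H^0(\sO_S)$. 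This holds because a regular genus-one curve, or a conic, that is not smooth occurs only in characteristic $2$ or $3$ (Tate's genus change bound $(p-1)/2$ divides the drop in arithmetic genus $\le 1$).
\item Then \cref{prop:van-B_0}, whose proof only needs \cref{prop:base change descent} over the infinite field $H^0(\sO_S)$, applies after base change to $\overline{k}$ of the one-dimensional scheme $S$ alone. Normality of $X$ is not needed there.
\end{itemize}
This gives (c) $\Rightarrow$ (a).
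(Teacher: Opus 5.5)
Your proofs of (a)$\Leftrightarrow$(b)$\Rightarrow$(c) and of the klt case match the paper. For the lc, non-klt case with imperfect residue field you take a genuinely different route. The paper does not pass to a separably closed residue field. It quotes \cite{Sato25}*{Corollary 4.15} to get a finite \'etale local extension $R\to R'=R[t]/(G(t))$, with $G$ monic, over which the exceptional divisor of the minimal resolution is geometrically snc over $R'/\m'$. Since $R'$ is free over $R$, \cref{etale-cov}(2) lets the paper replace $R$ by $R'$, and \cref{rem:graph of lc}(2), \cref{thm:red-to-dlt} and \cref{prop:van-B_0} finish. So the paper outsources all the arithmetic of the exceptional curves to Sato's result and only ever uses finite, hence split, extensions. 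You instead pass to the strict henselization and check geometric snc-ness by hand over the constant field $H^0(\sO_S)$, which may be purely inseparable over $k$. This is more self-contained, but it costs an ind-\'etale descent and a more careful analysis at the key step.

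Two points need repair.

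\emph{The splitting of $R\to R_\infty$.} Your argument is not right as stated: $R_l$ need not be finite over $R$ unless $R$ is henselian, and normalized traces are unavailable because finite subextensions of $k^{\mathrm{sep}}/k$ generally have degree divisible by $p$. The splitting is also unnecessary. The descent in \cref{cor:asc-des ind-etale}(2), which runs through the argument of \cref{etale-cov}(2), only uses injectivity of $H^2_\m(\omega_R)\to H^2_\m(\omega_R)\otimes_R R_\infty$, and this holds by faithful flatness. Alternatively, first pass to $\widehat R$ via \cref{completion}, where pure maps split.

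\emph{The key geometric snc claim.} Tate's genus-change bound only shows that each component $S_i$ is smooth over its own constant field $k_i=H^0(\sO_{S_i})$; geometric reducedness of a genus-one component is a separate input. The hypotheses of \cref{prop:van-B_0} also need three further facts:
\begin{enumerate}
\item all the $k_i$ equal $H^0(\sO_S)$;
\item the nodes are rational over $H^0(\sO_S)$;
\item for hypothesis (i), the points of $\Supp\Delta_S$ are separable over $H^0(\sO_S)$. An inseparable point of degree $p$ with coefficient $1-\frac1m$ would acquire coefficient $p(1-\frac1m)\ge 1$ over $\overline{k}$.
\end{enumerate}
You should add a degree count. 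For a rational component, $K_{S_i}+(S-S_i)|_{S_i}+\Delta_S|_{S_i}\equiv 0$ gives $\deg_{k_i}\bigl((S-S_i)|_{S_i}+\Delta_S|_{S_i}\bigr)=2$, with all coefficients $\ge\frac12$. Hence each such point has degree at most $4$ over $k_i$. Since $k_i$ is finite over the separably closed field $k$, the residue field of the point is purely inseparable over $k_i$, so this degree is a power of $p$. It is therefore $1$ once $p\ge 5$; this is where $p>3$ enters beyond the index computation. Rationality of the nodes then identifies all the $k_i$ with $H^0(\sO_S)$. Note that $S$ may also be a chain, in the ${}_*\widetilde{D}_{n+3}$ case, not only a single curve or a cycle.

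With these additions your argument goes through.
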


\begin{proof}
    If $R/\m$ is perfect, then the assertion follows from \cref{thm:class-qFs}.
    We assume that $R/\m$ is imperfect, and in particular it is infinite.
    (a) $\Rightarrow$ (b) follows from \cref{cor:qFs-to-qF^eS} and (b) $\Rightarrow$ (c) follows from \cref{thm:qFs-to-lc-norm}.
    
    We prove the implication (c) $\Rightarrow$ (a). 
    If $R$ is log terminal, then the assertion follows from \cite{KTTWYY1}*{Theorem~C}.
    From now on, we assume that $R$ is log canonical but not log terminal.
    By \cite{Sato25}*{Corollary 4.15}, there is an \'etale finite local homomorphism $(R,\m) \hookrightarrow (R',\m')$ such that the exceptional divisor of a minimal resolution of $\Spec R'$ is geometrically snc over $R'/\m'$.
    We note that by the proof of \cite{Sato25}*{Corollary 4.15} (cf.~\cite{Sato25}*{Lemma 4.3}), we have $R' = R[t]/(G(t))$ for some monic polynomial $G(t) \in R[t]$, and in particular the inclusion $R \hookrightarrow R'$ splits.
    Combining this with \cref{etale-cov}, we may replace $R$ by $R'$.
    Then the assertion follows from \cref{rem:graph of lc} (2), \cref{thm:red-to-dlt} and \cref{prop:van-B_0}.
\end{proof}

\subsection{Non-normal case}
Noting that every $\Q$-divisor on a two-dimensional normal scheme is numerically $\Q$-Cartier, it follows from \cref{cor:qFS-to-lc-nonnormal} that a two-dimensional purely quasi-$F^{\infty}$-split pair is numerically slc.
In this subsection, we consider the converse implication (\cref{prop:slc F-pure} and \cref{memo:Whitney umbrella}).

We recall that an $F$-finite ring $R$ is $F$-pure if the Frobenius morphism $F \colon \sO_X \to F_*\sO_X$ splits.
We note that $R$ is $F$-pure if and only if it is $1$-quasi-$F$-split.

For a reduced scheme $X$, we denote by $X^n$ the normalization of $X$ and by $\mathfrak{c}_X \subseteq \sO_X$ the conductor ideal.
We set $C_X \subseteq X$ and $C_{X^n} \subseteq X^n$ to be the subschemes defined by $\mathfrak{c}_X$.
We say that $X$ has \emph{hereditary surjective trace} if there is some irreducible component $C_i$ of $(C_X)_{\mathrm{red}}$ dominated by an irreducible component $B_i$ of $C_{X^n}$ such that the trace map 
\[
\mathrm{Tr} \colon \sO_{B_i^n} \to \sO_{C_i^n}
\]
is surjective, and $C_i$ has hereditary surjective trace (cf.~\cite{MS12}*{Definition 3.5}).
We note that a normal scheme has hereditary surjective trace.

\begin{example}\label{eg:hereditary surj}
If $X=\Spec R$ for a one-dimensional excellent reduced local ring $(R,\m)$ with $R/\m$ perfect, then $X$ has hereditary surjective trace.
\end{example}

\begin{prop}\label{prop:slc F-pure}
    Let $(R,\m)$ be a purely $2$-dimensional $F$-finite Noetherian reduced semi-normal local ring which satisfies $\Serre{2}$ and $\Gorenstein$-conditions.
    Take a canonical Mumford divisor (or more generally, a canonical AC divisor \cite{ST23}*{Subsection A.2}) $K_R$ on $\Spec R$.
    We further assume that 
    \begin{enumerate}[label=\textup{(\roman*)}]
    \item $R$ is of characteristic $p>2$,
    \item $R/\m$ is algebraically closed,
    \item $R$ is not normal, and
    \item $(p^e-1)K_R$ is Cartier for some $e>0$.
    \end{enumerate}
    Then $R$ is slc if and only if it is $F$-pure.
\end{prop}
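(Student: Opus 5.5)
The implication ``$F$-pure $\Rightarrow$ slc'' is the easy one. An $F$-pure ring is $1$-quasi-$F$-split, so \cref{prop:normalization-qFs} gives weak normality. \cref{prop:normalization-qFs-pair} (with $n=1$, $e$ arbitrary, and $\Delta=0$) then shows that $(X^n,C)$ is purely $F$-split. Because $X^n$ is a normal surface, $K_{X^n}+C$ is numerically $\Q$-Cartier, so the classical Hara--Watanabe result (or \cref{thm:qFs-to-lc-norm}, using that $F$-splitting is preserved under $e$-iteration) makes $(X^n,C)$ log canonical, i.e.\ $R$ is numerically slc. By assumption (iv), $K_R$ is $\Q$-Cartier, hence $R$ is slc.

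The converse ``slc $\Rightarrow$ $F$-pure'' is the substantive direction, and I would proceed as follows.

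\textbf{Step 1 (reduction to the normalization).} First reduce $F$-purity of $R$ to $F$-purity of $(X^n,C)$ together with a gluing condition along the conductor. The conductor scheme $C_X$ is a one-dimensional reduced (seminormal) local scheme whose residue field is algebraically closed. By \cref{eg:hereditary surj} it has hereditary surjective trace, so the criterion of Miller--Schwede \cite{MS12} applies. That criterion says $R$ is $F$-pure if and only if $(R^n,C)$ is $F$-pure and the splitting can be chosen compatibly with the involution on the normalization of $C$. Concretely, I need a Frobenius splitting of $(X^n,C)$ whose restriction to $C^n$ descends through $C^n\to C_X$.

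\textbf{Step 2 ($F$-purity of $(X^n,C)$).} Each component of $(X^n,C)$ is a two-dimensional lc pair with $\rdown{C}=C$ reduced. Its Cartier index divides that of $K_R$ pulled back, so by (iv) it divides $p^e-1$ and in particular is prime to $p$. Here I would use the $n=1$ version of the proof of \cref{thm:qFs-lc-Z_p-index}. By \cref{thm:red-to-dlt} and adjunction this reduces to $F$-splitting of the one-dimensional snc dlt centre $(S,\Delta_S)$ with $(p^e-1)(K_S+\Delta_S)\sim 0$. The case analysis for lc surface pairs with reduced boundary through the point is then: (a) $S$ is a chain or cycle of $\mathbb{P}^1$'s with $\Delta_S$ supported at the ends; (b) $S$ is an elliptic curve; (c) $S$ is a $\mathbb{P}^1$ with $\Delta_S$ of standard type $(1/2,1/2)$ or similar. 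For (b), the index condition together with $p>2$ should force the curve to be ordinary, via a Hasse-invariant argument applied to the torsion $K_S+\Delta_S$. Note, however, that in the non-normal case $C\neq 0$, so the relevant centres come from the conductor and are rational chains. $F$-splitting of such chains is classical: toric-type chains of $\mathbb{P}^1$ are $F$-split. The case $\Delta_S=\frac12(P_1+P_2)$ on a $\mathbb{P}^1$ component is $F$-split exactly when $p\neq 2$, and this is where (i) is used.

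\textbf{Step 3 (gluing: main obstacle).} The hardest part is showing that the splitting constructed on $X^n$ is compatible with the involution $\tau$ on $C^n$ that defines $R$. My plan is to note that on each component of $C^n$, which is a regular one-dimensional local scheme with algebraically closed residue field, the space of splittings of $(C^n,\mathrm{Diff})$ compatible with the restriction from $X^n$ is a torsor under a group of roots of unity of order prime to $p$. This is forced by the index condition (iv). I would then use the local-to-global structure from the proof of \cref{prop:van-B_0}, namely the Cartier operator $\varphi$ on the finite-dimensional space $V$ which is bijective on $V'$, to rescale the splitting component by component so that it becomes $\tau$-invariant. If the orders of the roots of unity obstruct this, I would instead pass first to the index-one cover (\cref{prop:index-one-cov}). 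There $K_R$ is Cartier, the centres have trivial canonical class, and compatibility reduces to a single scalar on each cycle of components, which can be normalized because $R/\m$ is algebraically closed. Splitting then descends because the cover has degree prime to $p$.
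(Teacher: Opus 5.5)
Your argument for ``$F$-pure $\Rightarrow$ slc'' is fine. The converse has a genuine gap, and it sits exactly where hypothesis (i) has to enter.

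\textbf{Where $p>2$ is actually needed.} In your Step 2, the only use of $p>2$ is the configuration $\Delta_S=\frac12(P_1+P_2)$. That configuration has even Cartier index. When $p=2$, hypothesis (iv) already forces the index of $\nu^*K_R=K_{X^n}+C$ to divide the odd number $2^e-1$, so this case cannot occur. Hence your Steps 1--2 never really use (i). Yet the Whitney umbrella in \cref{memo:Whitney umbrella} satisfies every hypothesis except (i), has $(X^n,C)$ snc and $K_R$ Cartier, and is not $F$-pure. So everything rests on gluing along the conductor, and your Step 3 does not supply an argument:
\begin{enumerate}
\item Frobenius splittings of a one-dimensional regular local scheme form an infinite affine family, not a torsor under a finite group of roots of unity.
\item The Cartier-operator argument of \cref{prop:van-B_0} concerns the projective dlt centre, not the local conductor germ along which $R$ is glued.
\item In characteristic $2$ the map $B_i\to C_i$, from a component of $C_{X^n}$ to a component of $C_X$, can be purely inseparable of degree $2$. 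For the Whitney umbrella it is $z=t^2$ with $t=x/y$. Then there is no involution $\tau$ to make a splitting invariant under.
\end{enumerate}
None of your rescaling or index-one-cover steps distinguishes $p=2$ from $p>2$.

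\textbf{The missing idea.} You need to verify hereditary surjective trace for $X$ itself, not just for $C_X$. \cref{eg:hereditary surj} supplies only the last clause of the definition, namely that each $C_i$ has hereditary surjective trace. One also needs $\mathrm{Tr}\colon \sO_{B_i^n}\to\sO_{C_i^n}$ to be surjective. The paper gets this in three steps:
\begin{enumerate}
\item Seminormality together with $\Serre{2}$ makes $C_X$ a reduced divisor \cite{Tra70}.
\item Seminormality together with $\Gorenstein$ bounds $\deg(B_i\to C_i)\le 2$ (\cite{HS}*{Theorem 12.2.2}).
\item Since $p>2$, $\mathrm{Tr}(1)=\deg(B_i\to C_i)$ is a unit, so the trace is surjective.
\end{enumerate}
With this in hand, \cite{MS12}*{Corollary 4.3} says directly that $X$ is $F$-pure if and only if $(X^n,C_{X^n})$ is $F$-pure. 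There is no additional compatibility condition with an involution, so Step 1 misquotes the criterion and Step 3 becomes unnecessary.

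\textbf{On Step 2.} The paper's Witt-vector machinery (\cref{lem:van-B_0-irr}, \cref{thm:red-to-dlt}) only yields quasi-$F$-splitting with uncontrolled $n$. So there is no ready-made ``$n=1$ version'' of \cref{thm:qFs-lc-Z_p-index}. The paper instead cites \cite{hw02}*{Theorem 4.5} for the equivalence ``$(X^n,C_{X^n})$ is $F$-pure if and only if it is log canonical''; this is where (ii) and (iv) are used.
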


\begin{proof}
    We first prove that $X = \Spec R$ has hereditary surjective trace.
    Since $R$ is seminormal and $\mathfrak{c}_X$ satisfies $\Serre{2}$ condition, $C_X$ is a reduced divisor (\cite{Tra70}*{Lemma 1.3}).
    We fix irreducible components $C_i$ of $C_X$ and $B_i$ of $C_{X^n}$ such that $B_i$ dominates $C_i$.
    Then it follows from \cite{HS}*{Theorem 12.2.2} that the degree of finite morphism $B_i \onto C_i$ is at most two.
    Since we assume $p >2$, the trace map 
    \[
    \mathrm{Tr} \colon \sO_{B_i^n} \to \sO_{C_i^n}
    \]
    is surjective.
    Combining this with \cref{eg:hereditary surj}, we conclude that $X$ has hereditary surjective trace, as claimed.

    It then follows from \cite{MS12}*{Corollary 4.3} that $X$ is $F$-pure if and only if so is the pair $(X^n, C_{X^n})$.
    On the other hand, it follows from \cite{hw02}*{Theorem 4.5} that $(X^n, C_{X^n})$ is $F$-pure if and only if it is log canonical, which completes the proof.
\end{proof}

\begin{example}\label{memo:Whitney umbrella}
    The proposition above does not hold without assumption (i), as shown by the following counterexample:
    Let $k$ be an algebraically closed field in characteristic $2$.
    We consider the Whitney umbrella
    \[
    R \coloneqq k[x,y,z]/(x^2+y^2z),
    \]
    which is a Gorenstein semi-normal $2$-dimensional domain.
    Since the normalization $R^n$ is
    \[
    R^n = R\left[ x/y\right] \cong k \left[y, x/y \right]
    \]
    and the conductor divisor $C$ is the line $Z(y)$, the pair $(R^n, C)$ is simple normal crossing.
    Therefore, $R$ has slc singularities.

    On the other hand, it follows from Fedder's criterion (\cite{KTY22}*{Corollary B}) that $R$ is not quasi-$F$-split.
    In fact, if we set $I_n \subseteq S =k[x,y,z]$ as in \cite{KTY22}*{Theorem A}, then we can see by induction on $n$ that $I_n \subseteq (x^2, y^2)$.
\end{example}

\subsection{Log canonical thresholds versus quasi-\texorpdfstring{$F$}{F}-splitting thresholds}

Let $X$ be an excellent normal integral scheme with a dualizing complex $\omega_X^{\bullet}$.
We further assume that $K_X$ is numerically $\Q$-Cartier.
The \emph{log canonical threshold} of a numerically $\Q$-Cartier effective $\Q$-divisor $D$ with respect to $X$ is defined by
\[
\lct(X;D):=\sup\{a \in \R_{\geq 0} \mid (X,aD)\ \text{is numerically log canonical}\}.
\]

\begin{definition}\label{defn:threshold}
Let $R$ be a Noetherian $F$-finite normal ring of positive characteristic, and let $D$ be an effective $\Q$-Weil divisor on $X := \Spec R$.
The \emph{quasi-$F$-pure threshold} of $D$ with respect to $X$ is
\[
\qfpt(X;D):=\sup\{a \in \R_{\geq 0} \mid (X,aD)\ \text{is quasi-$F^{\infty}$-split}\}.
\]
\end{definition}

\begin{theorem}\label{lct-vs-qfpt}
Let $(R,\m)$ be a Noetherian $F$-finite normal local domain of positive characteristic, and let $D$ be an effective $\Q$-Weil divisor on $X := \Spec R$.
\begin{enumerate}[label=\textup{(\arabic*)}]
    \item If $K_R$ and $D$ are numerically $\Q$-Cartier, then $\qfpt(X;D) \leq \lct(X;D)$.
    \item If $\dim R=2$, $R/\m$ is perfect, and $R$ is quasi-$F$-split, then $\qfpt(X;D) = \lct(X;D)$.
\end{enumerate}
\end{theorem}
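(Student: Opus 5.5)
For (1), fix a rational $a<\qfpt(X;D)$ with $(X,aD)$ quasi-$F^\infty$-split; we show $(X,aD)$ is numerically log canonical, which gives $a\le\lct(X;D)$. Here $K_X+aD$ is numerically $\Q$-Cartier, since both $K_R$ and $D$ are. Quasi-$F^\infty$-splitting of $(X,aD)$ only says $(X,aD)$ is quasi-$F^e$-split for each $e$. Theorem~\ref{thm:qFs-to-lc-norm}(1), however, needs $(R,\tfrac{p^e-1}{p^e}\Delta)$ quasi-$F^e$-split for some fixed $\Delta$.

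We handle this mismatch with monotonicity in the boundary. If $\Delta'\le\Delta''$ and $(X,\Delta'')$ is quasi-$F^e$-split, then so is $(X,\Delta')$. This holds by composing with the inclusions $W_n\cO_X(\Delta')\subseteq W_n\cO_X(\Delta'')$ and $F^e_*W_n\cO_X(p^e\Delta')\subseteq F^e_*W_n\cO_X(p^e\Delta'')$, exactly as in Remark~\ref{rem:p-qFs-to-qFs}. Choose rational $a<b<\qfpt(X;D)$ with $(X,bD)$ quasi-$F^\infty$-split and $\lfloor bD\rfloor$ reduced; we may shrink $b$ so $bD$ has coefficients $<1$. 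For all $e\ge e_1$ we have $\tfrac{p^e-1}{p^e}b\ge a$. Set $\Delta:=aD$ with a slight modification: by monotonicity, $(R,\tfrac{p^e-1}{p^e}\Delta_e)$ is quasi-$F^e$-split for every $e\ge e_1$ when $\Delta_e:=bD$.

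Theorem~\ref{thm:qFs-to-lc-norm} is stated with "for every $e$". Its proof, via Theorem~\ref{thm:qFs-test-norm}, only needs infinitely many $e$, since it produces $f\in\tau^q$ for $D+E_e$ and then lets $\varepsilon\to0$. I will either remark this or apply it directly with $\Delta=bD$, checking small $e$ via monotonicity as $\tfrac{p^e-1}{p^e}bD\le bD$. This yields that $(X,bD)$ is numerically lc, hence $(X,aD)$ is numerically lc. Taking the supremum over $a$ gives $\qfpt\le\lct$.

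For (2), $\dim R=2$, so all divisors are numerically $\Q$-Cartier and (1) gives $\le$. For $\ge$, take rational $0\le a<\lct(X;D)$ such that $(X,aD)$ is lc. We may first reduce to $R/\m$ infinite by Corollary~\ref{cor:asc-des ind-etale}. If $(X,aD)$ is klt, the klt case should follow as in \cite{KTTWYY1}*{Theorem~C} (two-dimensional klt pairs are quasi-$F$-regular), or via perturbation. The harder point is to get all $a<\lct$ using only the hypothesis that $R$ is quasi-$F$-split.

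I would use the lc case via Theorem~\ref{thm:qFs-lc-Z_p-index}: perturb $a$ so that $(X,aD)$ remains lc and the Cartier index of $K_X+aD$ is prime to $p$. This is possible because $R$ is quasi-$F$-split, so by Theorem~\ref{thm:class-qFs} $R$ is lc and its Gorenstein index $r$ is prime to $p$ (a rational klt $R$ has $\Q$-factorial class group). By Lemma~\ref{lem:num lc surface}(2), the index of $K_X+aD$ divides $rN\cdot\mathrm{denominator}(a)$ for $N$ the index of $D$. Choosing $a$ with denominator prime to $p$, the index is prime to $p$ provided $N$ is too; otherwise handle the $p$-part of $N$ by noting that $D$ can be perturbed slightly. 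Then Theorem~\ref{thm:qFs-lc-Z_p-index} (lc, non-klt) or the klt case gives quasi-$F^\infty$-splitting. Such $a$ are dense in $[0,\lct)$, which gives $\qfpt\ge\lct$.

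The main obstacle is controlling the $p$-part of the Cartier index of $K_X+aD$, specifically the $p$-part of the index of $D$ itself. I would first try to avoid this issue by using klt perturbations: for $a<\lct$, pass to $a'$ with $a<a'<\lct$ so that $(X,a'D)$ is klt, and apply the two-dimensional quasi-$F$-regularity result for klt pairs with $\Q$-boundary. Monotonicity then handles $a\le a'$.
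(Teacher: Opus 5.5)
Your part (1) is the paper's argument (``follows directly from \cref{thm:qFs-to-lc-norm}''), with the monotonicity step made explicit. If $(X,bD)$ is quasi-$F^{\infty}$-split, then $\frac{p^e-1}{p^e}bD\le bD$ and the argument of \cref{rem:p-qFs-to-qFs} show that $(R,\frac{p^e-1}{p^e}bD)$ is quasi-$F^e$-split for \emph{every} $e$. So the detour through $a<b$, $e\ge e_1$ and ``infinitely many $e$'' is unnecessary.

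Part (2) has a genuine gap. Your claim that a quasi-$F$-split $R$ has Gorenstein index prime to $p$ is false, because \cref{thm:class-qFs} imposes no condition on klt $R$. Take $p$ odd, $k=\overline{k}$, and $R$ the completion of the $p$-th Veronese subring of $k[x,y]$.
\begin{itemize}
\item $R$ is a direct summand of $k[[x,y]]$, hence $F$-regular and in particular quasi-$F$-split.
\item Its minimal resolution has one exceptional curve $E$ with $E^2=-p$ and $f^*_{\num}K_X=K_Y+\frac{p-2}{p}E$, so by \cref{lem:num lc surface}(2) its Gorenstein index is $p$.
\item Let $D$ be a reduced Cartier divisor, e.g.\ $\mathrm{div}(g)$ with $g\in\m$ general. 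Then $m(K_X+aD)$ is Cartier only if $ma\in\Z$ and $mK_X$ is Cartier. So the index of $K_X+aD$ is divisible by $p$ for every $a$, although $\lct(X;D)>0$.
\end{itemize}
Hence no choice of $a$ puts you in the range of \cref{thm:qFs-lc-Z_p-index}. The $p$-part of the index $N$ of $D$ is harmless (take $a\in N\cdot\{u/v : p\nmid v\}$); the $p$-part of the index of $K_X$ is the real obstruction.

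This klt situation is also the only one that matters. If $D\neq 0$ and $X$ is lc but not klt, some exceptional $E$ has $a_E(X,0)=-1$ and $\mathrm{ord}_E f^*_{\num}D>0$. Then $\lct(X;D)=0$, and (2) reduces to $\qfpt(X;D)\ge 0$, i.e.\ to $R$ being quasi-$F^{\infty}$-split (\cref{thm:class-qFs}). If instead $\lct(X;D)>0$, then $X$ is klt, and $(X,aD)$ is klt for all $0\le a<\lct(X;D)$ because discrepancies are affine in $a$. But \cref{thm:qFs-lc-Z_p-index} is proved under \cref{notation:two-dim-normal}, i.e.\ only for lc \emph{non-klt} pairs, so it never applies. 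Everything then rests on the klt case you leave as ``should follow''.

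The paper's own proof of (2) takes exactly your main route. It asserts that $a\in(\qfpt(X;D),\lct(X;D))$ can be chosen with the Cartier index of $K_R+aD$ prime to $p$, then applies \cref{thm:qFs-lc-Z_p-index}. The example above obstructs that step as well, so following the paper more closely will not close the gap.

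What can work is your last fallback, made precise. First use $\lct(X;D)>0\Rightarrow X$ klt, as above, so that $(X,aD)$ is klt for every $a<\lct(X;D)$. Then invoke a theorem that two-dimensional klt \emph{pairs} with arbitrary rational coefficients and perfect residue field are quasi-$F$-regular, hence quasi-$F^{\infty}$-split. This is external input. The paper only invokes \cite{KTTWYY1}*{Theorem~C} for $R$ itself, with no boundary, and mentions the two-dimensional result of \cite{KTTWYY3} only in the introduction. You must therefore locate and cite a statement covering pairs $(X,aD)$ in exactly this generality. With it, the auxiliary $a'$, the monotonicity step and the reduction to an infinite residue field become superfluous.
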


\begin{proof}
(1) follows directly from \cref{thm:qFs-to-lc-norm}.  
For (2), we first note that $R$ is quasi-$F^{\infty}$-split by \cref{thm:class-qFs}, which proves that $0 \leq \qfpt(X;D)$.
By (1), we also have $\qfpt(X;D) \leq \lct(X;D)$.
Suppose, for the sake of contradiction, that $\qfpt(X;D) < \lct(X;D)$.  
Since we have $0<\lct(X;D)$, it follows from \cref{lem:num lc surface} that $X$ is $\Q$-factorial.
We choose a rational number $a \in \Q$ such that
\[
\qfpt(X;D) < a < \lct(X;D),
\]
and the Cartier index of $K_R+aD$ is not divisible by $p$.  
By \cref{thm:qFs-lc-Z_p-index}, the pair $(X,aD)$ is then purely quasi-$F^{\infty}$-split.
Since $a < \lct(X;D)$, all coefficients of $aD$ are strictly less than one, and hence $(X,aD)$ is quasi-$F^{\infty}$-split.  
This implies that $a \leq \qfpt(X;D)$, contradicting the choice of $a$.  
\end{proof}

\section{Appendix}

In this section, we use the same method as that of the proof of Theorem \cite{KTTWYY3}*{Theorem~3.44} to show that quasi-$F^\infty$-split rings are log canonical. In particular, we get that Gorenstein Cohen-Macaulay quasi-$F$-split rings (e.g.\ quasi-$F$-split complete intersections or surface singularities) are log canonical. This has been a key missing piece of the theory built in \cite{KTTWYY1}.
Jakub Witaszek taught us the following proof.

\begin{theorem} \label{thm:lc-quasi} 
Let $R$ be a Noetherian normal $F$-finite ring of characteristic $p>0$ and $\Delta$ be an effective $\Q$-Weil divisor on $\Spec R$ such that $D:=K_R+\Delta$ is numerically $\Q$-Cartier and $\rdown{\Delta}=0$. 
Assume that $(R,\Delta)$ is  quasi-$F^e$-split for some integer $e>0$. Let $\pi \colon X \to \Spec R$ be a projective birational map from a normal scheme $X$ and let $\epsilon>0$ be a rational number.
Suppose that there exists a Cartier divisor $A$ on $X$ such that
\begin{enumerate}[label=\textup{(\arabic*)}]
\item $R^i\pi_*\cO_X(p^{e+r}\pi_{num}^*D + p^rA) = 0$ for all integers $i>0$ and $r \geq 0$,
\item $0 \leq -A \leq p^e\epsilon E$ for the reduced effective exceptional divisor $E$ on $X$.
\end{enumerate}
Then $\pi_*\cO_X(\rup{K_X - \pi_{num}^*D + \epsilon E})= R$.
\end{theorem}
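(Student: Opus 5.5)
The plan is to translate the conclusion into an injectivity statement on top local cohomology and compare it with the dual characterization of quasi-$F^e$-splitting (\cref{prop:p-qFS equivalent def} (c)). We may localize and complete, so assume $(R,\m)$ is local of dimension $d$. Set $B:=\rdown{\pi^*_{\num}D-\epsilon E}$; then $K_X-B=\rup{K_X-\pi^*_{\num}D+\epsilon E}$. By local duality (\cref{lem:local duality}), $\pi_*\cO_X(K_X-B)=R$ is equivalent to injectivity of the natural map
\[
\beta\colon H^d_\m(R(K_R))=H^d_\m(R(D)) \longrightarrow H^d_\m(\cO_X(B)).
\]
Here we use $\rdown{\Delta}=0$, so that $R(D)=R(K_R)$ and the Matlis dual of $\beta$ is the completion of $\pi_*\cO_X(K_X-B)\to R$. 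Since $(R,\Delta)$ is $n$-quasi-$F^e$-split for some $n$, the map $H^d_\m(\Phi^{0,e}_{R,D,n})$ is injective. It therefore suffices to prove $\Ker(\beta)\subseteq \Ker H^d_\m(\Phi^{0,e}_{R,D,n})$. Equivalently, by the argument of \cref{prop:p-qFS equivalent def} (c)$\Leftrightarrow$(d), every $z\in\Ker(\beta)$ should lift to an element of $H^d_\m(W_nR(D))$ that is killed by $F^e$ into $F^e_*H^d_\m(W_nR(p^eD))$.

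The key construction is a commutative diagram linking the Witt Frobenius on $R$ with one on $X$. On $X$, Frobenius sends $W_n\cO_X(\pi^*_{\num}D-\epsilon E)$ into $F^e_*W_n\cO_X(p^e\pi^*_{\num}D-p^e\epsilon E)$. By hypothesis (2), $-p^e\epsilon E\le A$, so this lands in $F^e_*W_n\cO_X(p^e\pi^*_{\num}D+A)$. The graded pieces of the latter under the $V$-filtration \eqref{eq:key-sequence-for-WnI} are $F^{e+r}_*\cO_X(p^{e+r}\pi^*_{\num}D+p^rA)$ for $0\le r<n$. Hypothesis (1) kills their higher direct images, and induction on $n$ then gives $R^i\pi_*W_n\cO_X(p^e\pi^*_{\num}D+A)=0$ for $i>0$. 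Hence $H^d_\m$ of this sheaf equals $H^d_\m$ of its pushforward $M:=\pi_*W_n\cO_X(p^e\pi^*_{\num}D+A)\subseteq W_nR(p^eD)$, where the inclusion uses $A\le 0$. Since $\pi^*_{\num}D$ is integral away from $E$, we have $\pi_*\cO_X(B)=R(D)=\pi_*W_1$-level, and the natural maps fit into a diagram
\[
H^d_\m(W_nR(D))\to H^d_\m(W_n\cO_X(\pi^*_{\num}D-\epsilon E))\xrightarrow{F^e} F^e_*H^d_\m(M)\to F^e_*H^d_\m(W_nR(p^eD)),
\]
whose composite is the Frobenius $F^e$ on $R$.

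Given $z\in\Ker(\beta)$, lift it to $\tilde z\in H^d_\m(W_nR(D))$; this is possible because $H^{d+1}_\m=0$. The image of $\tilde z$ in $H^d_\m(W_n\cO_X(\pi^*_{\num}D-\epsilon E))$ has vanishing $R^{n-1}$-component, since that component is $\beta(z)=0$. Hence it comes from $F_*H^d_\m(W_{n-1}\cO_X(p\pi^*_{\num}D-p\epsilon E))$ via $V$. Applying $F^e$ and using $FV=VF$-type compatibilities, its image in $F^e_*H^d_\m(M)$ lies in $V(F^{e+1}_*H^d_\m(\cdot))$. After pushing to $R$, this lands in the image of $VF^e$, which is exactly what is quotiented out in $Q^{0,e}_{R,D,n}$ (the exact sequence used in \cref{completion}). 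Therefore $\Phi^{0,e}_{R,D,n}(z)=0$, so $z=0$ by quasi-$F^e$-splitting.

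I expect the main obstacle to be making this last step precise. One must check that the $V$-part on $X$ maps into the $VF^e$-image on $R$ rather than merely into $F^e_*H^d_\m(W_nR(p^eD))$. This forces one to carry the twist $p^rA$ at every Witt level and to invoke the vanishing (1) for each $r$. Hypothesis (2) is also needed at level $p^r$, in the form $p^{e+r}\epsilon E\ge -p^rA$, to obtain the inclusions $W_{n-1}\cO_X(p\,\cdot)\subseteq W_{n-1}\cO_X(p^{e+1}\pi^*_{\num}D+pA)$. I would first verify the diagram for $n=2$ by hand, and then run induction on $n$ following the pattern of the proof of \cref{thm:comp-test-ideal-mult-ideal}.
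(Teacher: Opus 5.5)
Your setup matches the paper's proof: local duality reduces the claim to injectivity of $\beta$ (the paper's $\alpha$), the vanishing $R^{>0}\pi_*W_n\cO_X(p^e\pi^*_{\num}D+A)=0$ follows from (1), (2) and induction on $n$, and $F^e$ on $H^d_\m(W_nR(D))$ factors through $X$. The gap is in the final step.

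From $\beta(z)=0$ you correctly get that the image of the lift $\tilde z$ on $X$ is $V(y)$ for some class $y\in F_*H^d_\m(W_{n-1}\cO_X(p(\pi^*_{\num}D-\epsilon E)))$. Hence $F^e(\tilde z)$ is the push-down of $VF^e(y)$. However, $H^d_\m(\Phi^{0,e}_{R,D,n})(z)=0$ requires $F^e(\tilde z)$ to lie in $VF^e\bigl(F_*H^d_\m(W_{n-1}R(pD))\bigr)$, that is, in the image of classes coming from $R$. Nothing you wrote puts $y$ there. Pushing $F^e(y)$ down through the vanishing (1) only places $F^e(\tilde z)$ in $V\bigl(F^{e+1}_*H^d_\m(W_{n-1}R(p^{e+1}D))\bigr)$, which can be strictly larger than the kernel of $F^e_*H^d_\m(W_nR(p^eD))\to H^d_\m(Q^{0,e}_{R,D,n})$. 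Carrying the twists $p^rA$ or inducting on $n$ does not repair this. Vanishing controls where classes on $X$ go; it does not say that classes on $X$ come from $R$.

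The missing ingredient is the surjectivity of
\[
\rho\colon H^d_\m(F_*W_{n-1}R(pD))\longrightarrow H^d_\m\bigl(F_*W_{n-1}\cO_X(p(\pi^*_{\num}D-\epsilon E))\bigr).
\]
It holds for two reasons:
\begin{itemize}
\item $\pi_*$ of the sheaf on $X$ agrees with $W_{n-1}R(pD)$ in codimension one, so their top local cohomologies coincide.
\item For $\pi$ proper birational and $\mathcal G$ coherent, $\dim\Supp R^q\pi_*\mathcal G\le d-1-q$ for $q\ge 1$. Hence every term $H^{d-q}_\m(R^q\pi_*\mathcal G)$ with $q\ge1$ vanishes in the spectral sequence for $R\Gamma_\m\circ R\pi_*$, and $H^d_\m(\pi_*\mathcal G)\to H^d_\m(\mathcal G)$ is onto.
\end{itemize}

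With this, lift $y$ to $\tilde y$ on $R$ and replace $\tilde z$ by $\tilde z-V(\tilde y)$. This still lifts $z$ and maps to $0$ on $X$. Since $F^e$ on $R$ factors through $X$, it is killed by $F^e$, so $z\in R^{n-1}(K^e_{D,n})$. This is zero by characterization (d) of pure $n$-quasi-$F^e$-splitting. This is exactly the paper's diagram chase, which uses these vertical surjections to get $\Ker(\alpha)=R^{n-1}(\Ker\phi')\subseteq R^{n-1}(\Ker F^e)=0$.
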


\begin{proof}
We may assume $(R,\m)$ is local and set $d:=\dim R$.
Pick $n$ such that $(R,\Delta)$ is purely quasi-$F^e$-split. Consider the following diagram.
\begin{equation}\label{eq:diag-ap}
\begin{tikzcd}
H^d_\m(W_nR(D)) \ar{r}{F^e} & H^d_m(F^e_*W_nR(p^eD)) \\
H^d_\m(\pi_*W_n\cO_X(\pi_{num}^*D-\epsilon E)) \ar{d}{\phi} \ar{u}{=}  &  H^d_m(F^e_*\pi_*W_n\cO_X(p^e\pi_{num}^*D)). \ar{u}{=}  \\
H^d_m(R\pi_*W_n\cO_X(\pi_{num}^*D-\epsilon E)) \ar{r}{F^e} & H^d_m(F^e_*R\pi_*W_n\cO_X(p^e(\pi_{num}^*D-\epsilon E))), \ar{u}{\psi} 
\end{tikzcd}
\end{equation}
where 
\begin{itemize}
    \item the upper left and right vertical arrows are identities, because the sheaf $\pi_*W_n\cO_X(\pi_{num}^*D-\epsilon E)$ agrees with $W_nR(D)$ outside of a locus of codimension at least $2$;
    \item the morphism $\phi$ is the inclusion of $\cH^0$ of the complex $R\pi_*W_n\cO_X(\pi_{num}^*D-\epsilon E)$;
    \item the morphism $\psi$ exists by the claim below.
\end{itemize}
\begin{claim*}
The morphism
\[
R\pi_*W_n\cO_X(p^e(\pi_{num}^*D-\epsilon E)) \to R\pi_*W_n\cO_X(p^e\pi_{num}^*D)
\]
induced by the inclusion $W_n\cO_X(p^e(\pi_{num}^*D-\epsilon E)) \subseteq W_n\cO_X(p^e\pi_{num}^*D)$ factors through
\[
\pi_*W_n\cO_X(p^e\pi_{num}^*D). 
\]
\end{claim*}
\begin{claimproof}
It is enough to argue that the map
\[
R^{>0}\pi_*W_n\cO_X(p^e(\pi_{num}^*D-\epsilon E)) \to R^{>0}\pi_*W_n\cO_X(p^e\pi_{num}^*D)
\]
is zero. Since 
\[
W_n\cO_X(p^e(\pi_{num}^*D-\epsilon E)) \subseteq W_n\cO_X(p^e\pi_{num}^*D + A) \subseteq W_n\cO_X(p^e\pi_{num}^*D),
\]
it is enough to argue that 
\[
R^{>0}\pi_*W_n\cO_X(p^e\pi_{num}^*D + A) = 0.
\]
This is immediate by (1) and induction on $n$ in view of the short exact sequence
\[
0 \to F_*W_{n-1}\cO_X(p^{e+1}\pi_{num}^*D + pA) \to W_n\cO_X(p^e\pi_{num}^*D + A) \to \cO_X(p^e\pi_{num}^*D + A) \to 0.
\]
\end{claimproof}\\
By the diagram \eqref{eq:diag-ap} we have a sequence of maps:    
\begin{align*}
H^d_\m(W_nR(D)) \xleftarrow{=} H^d_\m(\pi_*W_n\cO_X(\pi_{num}^*D-\epsilon E)) &\to H^d_\m(R\pi_*W_n\cO_X(\pi_{num}^*D-\epsilon E)) \\
&\to H^d_\m(F^e_*W_nR(p^eD))).  
\end{align*}
In particular, we get an inclusion
\begin{multline*}
{\rm Ker}\Big(H^d_\m(W_nR(D)) \xrightarrow{\phi'} H^d_\m(R\pi_*W_n\cO_X(\pi^*_{num}D-\varepsilon E)) \Big) \\
\subseteq {\rm Ker}\Big(H^d_\m(W_nR(D)) \xrightarrow{F^e} H^d_\m(F^e_*W_nR(p^eD)) \Big).  
\end{multline*}
Furthermore, we obtain
\begin{align*}
    &R^{n-1}\Big({\rm Ker}\Big(H^d_\m(W_nR(D)) \xrightarrow{\phi'} H^d_\m(R\pi_*W_n\cO_X(\pi^*_{num}D-\varepsilon E)) \Big)\Big) \\
    \subseteq& R^{n-1}\Big({\rm Ker}\Big(H^d_\m(W_nR(D)) \xrightarrow{F^e} H^d_\m(F^e_*W_nR(p^eD)) \Big)\Big) \\
    \overset{(\star_1)}{=}&0,
\end{align*}
where $(\star_1)$ follows from $n$-quasi-$F^e$-splitting of $(R,\Delta)$.

We consider the following commutative diagram in which each horizontal sequence is exact:
{\scriptsize
\[
\begin{tikzcd}[column sep=0.3cm]
    H^d_\m(F_*W_{n-1}R(pD)) \arrow[r] \arrow[d,twoheadrightarrow] & H^d_\m(W_nR(D)) \arrow[r] \arrow[d,twoheadrightarrow] & H^d_\m(R(K_R)) \arrow[r] \arrow[d,twoheadrightarrow,,"\alpha"] & 0 \\
    H^d_\m(F_*W_{n-1}\cO_X(p(\pi^*_{num}D-\varepsilon E))) \arrow[r] & H^d_\m(W_n\cO_X(\pi^*_{num}D-\varepsilon E)) \arrow[r] &H^d_\m(\cO_X(\pi^*_{num}D-\varepsilon E)) \arrow[r] & 0.
\end{tikzcd}
\]}
By diagram chasing, we obtain 
\begin{multline*}
    {\rm Ker}\Big(H^d_\m(R(K_R)) \xrightarrow{\alpha} H^d_\m(\cO_X(\pi^*_{num}D-\varepsilon E)) \Big) \\
    =R^{n-1}\Big({\rm Ker}\Big(H^d_\m(W_nR(D)) \xrightarrow{\phi'} H^d_\m(R\pi_*W_n\cO_X(\pi^*_{num}D-\varepsilon E)) \Big)\Big)=0.
\end{multline*}
Taking Matlis dual of $\alpha$, we obtain 
\[
\pi_*\cO_X(\rup{K_X-\pi^*_{num}D+\varepsilon E})=R,
\]
as desired.
\end{proof}

In order to show that quasi-$F^\infty$-split rings are log canonical, we need one more technical lemma.
\begin{lem}\label{l deJong}
Let $X$ be a normal variety over a field $k$. 
Then there exists a projective birational morphism $\rho : Y \to X$ 
from a normal $\Q$-factorial variety $Y$. 
\end{lem}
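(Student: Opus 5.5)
The plan is to derive this from de Jong's alterations together with a quotient construction; the key input is a result of the type ``every normal variety admits a $\Q$-factorial modification by a projective birational morphism.'' Concretely, I would first reduce to $X$ quasi-projective, or at least argue locally and glue. Gluing projective birational modifications is delicate, so I would treat first the case where $X$ is quasi-projective, which is all that is needed downstream since we work over $\Spec R$ locally. By de Jong's theorem on alterations, I would choose a generically Galois alteration $Z \to X$ with group $G$ such that $Z$ is regular (or at least has quotient-free regular models). I would then set $Y' := Z/G$. The map $Y' \to X$ is purely inseparable over the generic point and is a projective morphism.

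Quotients of regular schemes by finite groups are $\Q$-factorial. Indeed, for a Weil divisor $B$ on $Z/G$, its pullback to $Z$ is Cartier, and the norm of a local equation gives $|G|\cdot B$ locally principal. Hence $Y'$ is normal and $\Q$-factorial. The remaining issue is that $Y' \to X$ is only generically purely inseparable, not birational. To fix this, I would take $Y$ to be the normalization of $X$ in the function field $K(X)$ inside a suitable construction, i.e.\ replace $Y'$ by the normalization of the main component of $Y'\times_X X$ after a Frobenius twist. Alternatively, I would use the fact that a purely inseparable finite morphism $Y' \to Y$ of normal varieties is a universal homeomorphism. Then $\Q$-factoriality descends: for a Weil divisor $B$ on $Y$, its pullback to $Y'$ is $\Q$-Cartier, and pushing forward the local equation via a power of Frobenius ($K(Y)^{p^m}\supseteq K(Y')^{p^m}$) shows that $p^m$ times a multiple of $B$ is Cartier. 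Here $Y$ is the normalization of $X$ in $K(X)$ relative to $Y'$, namely the Stein-type factor $Y := $ normalization of the image, which is projective and birational over $X$.

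I expect the main obstacle to be the last step, namely producing a genuinely birational projective $Y \to X$ from the alteration and checking that $\Q$-factoriality descends along the purely inseparable map. I would handle this with the Frobenius argument. First, $F^m$ factors as $Y \to Y' \to Y^{(p^{-m})}$, which identifies $Y$ up to Frobenius twists. Second, for a local equation $h$ of $N\,\nu^*B$ on $Y'$, the element $h^{p^m}$ lies in $K(Y)$ and defines $p^mN B$ locally, since the valuations on $Y$ and $Y'$ differ only by powers of $p$ under the homeomorphism. Projectivity of $Y \to X$ would follow from the projectivity of $Z\to X$ via finite quotients and normalization.
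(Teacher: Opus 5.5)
This is essentially the paper's argument: de Jong's Galois alteration, $\Q$-factoriality of the quotient $Y'=Z/G$ (the paper's $Z'$), a Frobenius sandwich producing a birational model of $X$ onto which $Y'$ maps finitely, and descent of $\Q$-factoriality along that finite map. Your $p$-power valuation argument for the descent is correct; the ramification index cancels. The containment you need there is $K(Y')^{p^m}\subseteq K(Y)$, not the one you wrote.

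However, your description of $Y$ as a ``Stein-type factor'' or ``normalization of the image'' does not work. The Stein factor of $Y'\to X$ is the normalization of $X$ in $K(Y')$. It is finite and purely inseparable over $X$, not birational; in fact it is the paper's $Z$. The normalization of the image is just $X$ itself.

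The construction that works is the Frobenius twist, exactly as in the paper.
\begin{enumerate}
\item Since $X$ is normal and $K(Z)^{p^e}\subseteq K(X)$, one has $F^e_X=\varphi\circ\psi$, where $\varphi\colon Z\to X$ is the Stein factor and $\psi\colon X\to Z$.
\item Let $Y$ be the normalization of $Y'$ in $K(X)$, regarded as an extension of $K(Y')=K(Z)$ via $\psi^\sharp$. Equivalently, $Y$ is the normalization of the reduced main component of $Y'\times_{X,F^e}X$.
\item Then $Y\to X$ is projective and birational.
\item For an affine open $\Spec A\subseteq Y'$ with preimage $\Spec B\subseteq Y$, normality of $A$ gives $B^{p^e}\subseteq\psi^\sharp(A)$. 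Hence $F^e_Y$ factors as $Y\to Y'\to Y$.
\item The second map $Y'\to Y$ is the finite surjection onto $Y$ that your descent argument requires.
\end{enumerate}

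Finally, no reduction to quasi-projective $X$ is needed, since every step of the construction is global.
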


\begin{proof}
By (the proof of) de Jong's alterations (cf.\ \cite{CR12}*{Remark 4.3.2}), 
there exist proper surjective generically finite morphisms of normal varieties 
\[
Z'' \xrightarrow{f} Z' \xrightarrow{g} Z \xrightarrow{\varphi} X, 
\]
such that 
\begin{itemize}
\item $Z''$ is regular, 
\item $Z' = Z''/G$, that is, $f$ is the quotient by a finite group $G$ acting on $Z''$, 
\item $g$ is a projective birational morphism, and 
\item $\varphi$ is a finite purely inseparable surjective morphism (that is, a finite universal homeomorphism). 
\end{itemize}
As a finite image of a regular scheme, $Z'$ is $\Q$-factorial. 
Since $\varphi$ is a finite purely inseparable, there exists $e$ and $\psi \colon X \to Z$ such that 
\[
(F^e \colon X \to X)=(X \xrightarrow{\psi} Z \xrightarrow{\varphi} X).
\]
Let $Y$ be a normalization of $Z'$ in $\psi^\sharp \colon K(Z) \to K(X)$, then we obtain the diagram
\[
\begin{tikzcd}
    Y \arrow[r] \arrow[d,"\rho"] \arrow[rr,bend left,"F^e"] & Z' \arrow[d] \arrow[r] & Y \arrow[d,"\rho"] \\
    X \arrow[r,"\psi"] \arrow[rr,"F^e",bend right] & Z \arrow[r,"\varphi"] & X
\end{tikzcd}
\]
Since $Y$ is a finite image of a $\Q$-factorial variety $Z'$, $Y$ is $\Q$-factorial.  
\end{proof}

\begin{corollary} \label{cor:quasiFinfty-are-lc}
Let $R$ be a normal domain of finite type over an $F$-finite field of characteristic $p>0$ and $\Delta$ be an effective $\Q$-Weil divisor on $\Spec R$ such that $D:=K_R+\Delta$ is numerically $\Q$-Cartier and $\rdown{\Delta}=0$. 
If $(R,\Delta)$ is  quasi-$F^{\infty}$-split, then $R$ is numerically log canonical.
\end{corollary}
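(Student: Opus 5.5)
We derive the corollary from \cref{thm:lc-quasi} by producing, for every rational $\epsilon>0$ and every projective birational $\pi\colon X\to\Spec R$, a model where the hypotheses of that theorem hold. Fix such $\pi$ and $\epsilon$; to show numerical log canonicity we must prove $a_F(\Spec R,\Delta)\ge -1$ for every exceptional prime divisor $F$ over $\Spec R$. Refining $\pi$, we may assume $F$ lies on $X$. By \cref{l deJong}, applied to $X$ (a normal variety over the $F$-finite base field), we may further dominate $X$ by a normal $\Q$-factorial variety. So we may take $X$ normal, $\Q$-factorial and projective over $\Spec R$, and, after a further blow-up, such that some exceptional divisor is anti-ample. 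Concretely, fix an effective exceptional Cartier divisor $G$ with $-G$ $\pi$-ample and $\Supp G=E$, the reduced exceptional divisor. This uses $\Q$-factoriality to make $E$-supported divisors $\Q$-Cartier.

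Next we construct $A$. Let $D_X:=\pi^*_{num}D$, which is $\Q$-Cartier on the $\Q$-factorial $X$ and $\pi$-numerically trivial. Set $A:=-mG$ with $m>0$ small. Condition (2) of \cref{thm:lc-quasi}, namely $0\le -A\le p^e\epsilon E$, holds once $m\cdot\mathrm{coeff}(G)\le p^e\epsilon$; this is the point of choosing $e$ large. Condition (1) asks that $R^i\pi_*\cO_X(p^{e+r}D_X+p^rA)=0$ for all $r\ge0$. The divisor $p^{e+r}D_X+p^rA$ is numerically $p^r$ times an anti-ample divisor, so $-A$ is not ample and Serre vanishing does not apply directly. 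The fix is to replace $A$ by the \emph{ample} choice, as in the proof of \cref{lemma:univ-van-H^d-1}, which uses Keeler's relative Fujita vanishing \cite{Keeler03}*{Theorem~1.5}: there is a $\pi$-ample Cartier $H$ with $R^i\pi_*\cO_X(N D_X+H+P)=0$ for all $N\in\Z$ and all nef $P$. We then take $A=p^{-?}$-scaled versions supported on $E$ via the negativity lemma.

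I expect the main obstacle to be reconciling (1) with (2): vanishing wants $A$ positive, while (2) forces $-A$ small, effective and exceptional. My plan is to use $A=-G'$, where $-G'$ is $\pi$-ample and exceptional, and to control positivity by taking $e$ large. Write $p^{e+r}D_X+p^rA\equiv_\pi p^r(-G')$. Since $D_X$ is numerically trivial and $\Q$-Cartier with bounded index, there are only finitely many classes of $\lfloor p^{e+r}D_X\rfloor$ up to $\Q$-linear equivalence twisted by bounded fractional parts. Fujita vanishing then gives vanishing for all multiples $p^r\ge p^{r_0}$ of the ample class $-G'$, uniformly in these finitely many twists. To handle $r<r_0$, we absorb the shift by replacing $e$ by $e+r_0$ and $G'$ by $p^{r_0}G'$, while keeping $p^{e}\epsilon E\ge p^{r_0}G'$; this is possible by taking $G'$ small relative to $\epsilon$. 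This uses quasi-$F^{e}$-splitting for arbitrarily large $e$, and it is exactly where the quasi-$F^\infty$ hypothesis enters.

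With the hypotheses verified, \cref{thm:lc-quasi} gives $\pi_*\cO_X(\rup{K_X-\pi^*_{num}D+\epsilon E})=R$. Hence $1$ is a section, so $\rup{K_X-\pi^*_{num}D+\epsilon E}\ge0$, which means $a_F(\Spec R,\Delta)+\epsilon\ge -1$ after taking round-ups, that is $a_F>-1-\epsilon-1$. I need to check the round-up bookkeeping: the coefficient of $F$ in $K_X-\pi^*_{num}D$ is $a_F$, and $\rup{a_F+\epsilon}\ge0$ gives $a_F>-1-\epsilon$ once $\epsilon<1$. Letting $\epsilon\to0$ gives $a_F\ge-1$. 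Since $F$ and $\pi$ were arbitrary, $(\Spec R,\Delta)$ is numerically log canonical.
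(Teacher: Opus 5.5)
Your final argument is essentially the paper's: pass to a $\Q$-factorial model via \cref{l deJong}, then to a dominating model with a $\pi$-ample, $\pi$-exceptional Cartier divisor $A$, pick $e_1$ with $-A\le p^{e_1}\epsilon E$, get the vanishing in (1) for all sufficiently large $p$-power multiples, and absorb the remaining exponents by using $e=e_1+e_2$ and $p^{e_2}A$ before invoking \cref{thm:lc-quasi}. The discrepancy bookkeeping at the end is also right.

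A few points need cleaning up:
\begin{itemize}
\item Your claim that $p^{e+r}D_X+p^rA$ is numerically a multiple of an anti-ample divisor is false for $A=-mG$: it is $\pi$-ample.
\item There is no tension between (1) and (2). A $\pi$-ample exceptional Cartier divisor is automatically anti-effective by the negativity lemma, and its smallness is handled by taking $e$ large.
\item The existence of such a divisor after ``a further blow-up'' is not automatic, since a $\Q$-factorial model can be small over $\Spec R$. It needs a reference such as \cite{KW24}*{Lemma~8}, which is what the paper uses.
\item A blow-up need not preserve $\Q$-factoriality. What you actually use is that $\pi^*_{num}D$ is $\Q$-Cartier, and that survives pullback from the $\Q$-factorial model.
\item Keeler's Fujita vanishing is more than you need. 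Serre vanishing for the single $\pi$-ample $\Q$-Cartier divisor $p^{e_1}\pi^*_{num}D+A$ already gives (1) for all $p^{e_2+r}$-multiples, which is how the paper argues.
\end{itemize}
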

\begin{proof}
Let $\pi \colon X \to \Spec R$ be any projective birational morphism such that $X$ is normal. To show that $R$ is log canonical, we need to argue that 
\begin{equation} \label{eq:quasiFinfty-are-lc}
    \pi_*\cO_X(\rup{K_X-\pi_{num}^*D + \epsilon E})= R \text{ for every $\epsilon>0$.}
\end{equation}

Let $\rho \colon Y \to \Spec R$ be a projective birational map such that $Y$ is $\bQ$-factorial, the existence of which is guaranteed by \cref{l deJong}. By \cite{KW24}*{Lemma~8}, we may replace $\rho$ a bigger projective birational morphism so that
\begin{enumerate}
\item there exists a $\rho$-ample, $\rho$-exceptional Cartier divisor $A$,
\item $\rho \colon Y \to \Spec R$ factors through $\pi \colon X \to \Spec R$.
\end{enumerate}
By further replacing $\pi$ by $\rho$, we may thus assume that there exists a $\pi$-ample, $\pi$-exceptional Cartier divisor $A$ on $X$ and $X$ is $\Q$-factorial.
It is enough to show that \eqref{eq:quasiFinfty-are-lc} holds for this new choice of $X$. From now on, we fix $\epsilon >0$.

We take an integer $e_1 \geq 1$ such that $0 \leq -A \leq p^{e_1}\varepsilon E$.
Since $p^{e_1}\pi^*_{num}D+A$ is $\pi$-ample $\Q$-Cartier divisor, there exists an integer $e_2 \geq 1$ such that 
\[
R^i\pi_*\cO_X(p^{e'}(p^{e_1}\pi^*_{num}D+A))=0
\]
for every integers $i >0$ and $e' \geq e_2$.
We put $e:=e_1+e_2$ and replace $A$ by $p^{e_2}A$, then we have
\[
R^i\pi_*\cO_X(p^{r}(p^{e}\pi^*_{num}K_R+A))=0
\]
for every integers $i>0$ and $r \geq 0$.
Therefore the assumptions of \cref{thm:lc-quasi} are satisfied, and hence \eqref{eq:quasiFinfty-are-lc} holds. 
This concludes the proof of log canonicity of $R$.
\end{proof}

\bibliographystyle{skalpha}
\bibliography{bibliography.bib}

@article {LZ,
    AUTHOR = {Langer, Andreas and Zink, Thomas},
     TITLE = {De {R}ham-{W}itt cohomology for a proper and smooth morphism},
   JOURNAL = {J. Inst. Math. Jussieu},
  FJOURNAL = {Journal of the Institute of Mathematics of Jussieu. JIMJ.
              Journal de l'Institut de Math\'{e}matiques de Jussieu},
    VOLUME = {3},
      YEAR = {2004},
    NUMBER = {2},
     PAGES = {231--314},
      ISSN = {1474-7480},
   MRCLASS = {14F30 (14F40)},
  MRNUMBER = {2055710},
MRREVIEWER = {Martin C. Olsson},
       DOI = {10.1017/S1474748004000088},
       URL = {https://doi.org/10.1017/S1474748004000088},
}

@article{CST,
  author = {Carvajal-Rojas, Javier and Schwede, Karl and Tucker, Kevin},
  title = {Fundamental groups of~$F$-regular singularities via $F$-signature},
  journal = {Annales scientifiques de l'\'Ecole Normale Sup\'erieure},
  pages = {993--1016},
  publisher = {Soci\'et\'e Math\'ematique de France. Tous droits r\'eserv\'es},
  volume = {Ser. 4, 51},
  number = {4},
  year = {2018},
  doi = {10.24033/asens.2370},
  mrnumber = {3861567},
  language = {en},
  url = {https://www.numdam.org/articles/10.24033/asens.2370/}
}

@article {Keeler03,
    AUTHOR = {Keeler, Dennis S.},
     TITLE = {Ample filters of invertible sheaves},
   JOURNAL = {J. Algebra},
  FJOURNAL = {Journal of Algebra},
    VOLUME = {259},
      YEAR = {2003},
    NUMBER = {1},
     PAGES = {243--283},
      ISSN = {0021-8693,1090-266X},
   MRCLASS = {14F05 (16S38)},
  MRNUMBER = {1953719},
MRREVIEWER = {Michel\ Van den Bergh},
       DOI = {10.1016/S0021-8693(02)00557-4},
       URL = {https://doi.org/10.1016/S0021-8693(02)00557-4},
}

@book {Serre79,
    AUTHOR = {Serre, Jean-Pierre},
     TITLE = {Local fields},
    SERIES = {Graduate Texts in Mathematics},
    VOLUME = {67},
      NOTE = {Translated from the French by Marvin Jay Greenberg},
 PUBLISHER = {Springer-Verlag, New York-Berlin},
      YEAR = {1979},
     PAGES = {viii+241},
      ISBN = {0-387-90424-7},
   MRCLASS = {12Bxx},
  MRNUMBER = {554237},
}

@incollection {Gabber,
    AUTHOR = {Gabber, Ofer},
     TITLE = {Notes on some {$t$}-structures},
 BOOKTITLE = {Geometric aspects of {D}work theory. {V}ol. {I}, {II}},
     PAGES = {711--734},
 PUBLISHER = {Walter de Gruyter, Berlin},
      YEAR = {2004},
      ISBN = {3-11-017478-2},
   MRCLASS = {14F05 (14F43)},
  MRNUMBER = {2099084},
MRREVIEWER = {Maurizio\ Cailotto},
}

@article {CR12,
    AUTHOR = {Chatzistamatiou, Andre and R\"{u}lling, Kay},
     TITLE = {Hodge-{W}itt cohomology and {W}itt-rational singularities},
   JOURNAL = {Doc. Math.},
  FJOURNAL = {Documenta Mathematica},
    VOLUME = {17},
      YEAR = {2012},
     PAGES = {663--781},
      ISSN = {1431-0635},
}

@book{kollar13,
  author =        {Koll{\'a}r, J{\'a}nos},
  pages =         {x+370},
  publisher =     {Cambridge University Press, Cambridge},
  series =        {Cambridge Tracts in Mathematics},
  title =         {Singularities of the minimal model program},
  volume =        {200},
  year =          {2013},
  doi =           {10.1017/CBO9781139547895},
  isbn =          {978-1-107-03534-8},
  url =           {http://dx.doi.org/10.1017/CBO9781139547895},
}

@article {yobuko19,
    AUTHOR = {Yobuko, Fuetaro},
     TITLE = {Quasi-{F}robenius splitting and lifting of {C}alabi-{Y}au
              varieties in characteristic {$p$}},
   JOURNAL = {Math. Z.},
  FJOURNAL = {Mathematische Zeitschrift},
    VOLUME = {292},
      YEAR = {2019},
    NUMBER = {1-2},
     PAGES = {307--316},
      ISSN = {0025-5874},
}

@article {tanaka22,
    AUTHOR = {Tanaka, Hiromu},
     TITLE = {Vanishing theorems of {K}odaira type for {W}itt canonical
              sheaves},
   JOURNAL = {Selecta Math. (N.S.)},
  FJOURNAL = {Selecta Mathematica. New Series},
    VOLUME = {28},
      YEAR = {2022},
    NUMBER = {1},
     PAGES = {Paper No. 12, 50},
      ISSN = {1022-1824},
}

@article{hw02,
  author =        {Hara, Nobuo and Watanabe, Kei-Ichi},
  journal =       {J. Algebraic Geom.},
  number =        {2},
  pages =         {363--392},
  title =         {{$F$}-regular and {$F$}-pure rings vs. log terminal and log
                   canonical singularities},
  volume =        {11},
  year =          {2002},
  doi =           {10.1090/S1056-3911-01-00306-X},
  issn =          {1056-3911},
  url =           {http://dx.doi.org/10.1090/S1056-3911-01-00306-X},
}

@article{hara98,
  author =        {Hara, N.},
  journal =       {Adv. Math.},
  number =        {1},
  pages =         {33--53},
  title =         {Classification of two-dimensional {$F$}-regular and
                   {$F$}-pure singularities},
  volume =        {133},
  year =          {1998},
  doi =           {10.1006/aima.1997.1682},
  issn =          {0001-8708},
  url =           {http://dx.doi.org/10.1006/aima.1997.1682},
}

@article{KW24,
  author =        {Koll{\'a}r, J{\'a}nos and Witasek, Jakub},
  journal =       {Science China Mathematics},
  title =         {Resolution and alteration with ample exceptional divisor},
  year =          {2024},
}

@book {HS,
    AUTHOR = {Huneke, Craig and Swanson, Irena},
     TITLE = {Integral closure of ideals, rings, and modules},
    SERIES = {London Mathematical Society Lecture Note Series},
    VOLUME = {336},
 PUBLISHER = {Cambridge University Press, Cambridge},
      YEAR = {2006},
     PAGES = {xiv+431},
      ISBN = {978-0-521-68860-4; 0-521-68860-4},
   MRCLASS = {13B22 (13A18 13A30 13A35 13H15 14A05)},
  MRNUMBER = {2266432},
MRREVIEWER = {Liam\ O'Carroll},
}

@book {Huybrechts,
    AUTHOR = {Huybrechts, D.},
     TITLE = {Fourier-{M}ukai transforms in algebraic geometry},
    SERIES = {Oxford Mathematical Monographs},
 PUBLISHER = {The Clarendon Press, Oxford University Press, Oxford},
      YEAR = {2006},
     PAGES = {viii+307},
      ISBN = {978-0-19-929686-6; 0-19-929686-3},
   MRCLASS = {14F05 (14-02 18E30)},
  MRNUMBER = {2244106},
MRREVIEWER = {Bal\'azs\ Szendr\H oi},
       DOI = {10.1093/acprof:oso/9780199296866.001.0001},
       URL = {https://doi.org/10.1093/acprof:oso/9780199296866.001.0001},
}

@incollection {KollarTfS2,
    AUTHOR = {Koll{\'a}r, J{\'a}nos},
     TITLE = {Duality and normalization, variations on a theme of {S}erre
              and {R}eid},
 BOOKTITLE = {Recent developments in algebraic geometry---to {M}iles {R}eid for his 70th birthday},
    SERIES = {London Math. Soc. Lecture Note Ser.},
    VOLUME = {478},
     PAGES = {216--252},
      NOTE = {With an appendix by Hailong Dao},
 PUBLISHER = {Cambridge Univ. Press, Cambridge},
      YEAR = {2022},
      ISBN = {978-1-009-18085-6},
   MRCLASS = {14F06 (14F08)},
  MRNUMBER = {4480570},
MRREVIEWER = {Jean-Marc\ Dr\'ezet},
}

@article{Hashimoto15,
  author    = {Mitsuyasu Hashimoto},
  title     = {$F$-finiteness of homomorphisms and its descent},
  journal   = {Osaka Journal of Mathematics},
  volume    = {52},
  number    = {1},
  pages     = {205--215},
  year      = {2015},
  doi       = {10.18910/57681},
}

@article{KTY22,
  author =        {Kawakami, T. and Takamatsu, T. and Yoshikawa, S.},
  journal =       {arXiv:2204.10076},
  title =         {Fedder criterion for quasi-${F}$-splittings {I}},
  year =          {2022},
}

@article{KTY25,
  author =        {Kawakami, T. and Takamatsu, T. and Yoshikawa, S.},
  journal =       {arXiv:2511.17270},
  title =         {Fedder criterion for quasi-${F}$-splittings {II}},
  year =          {2025},
}

@misc{stacks-project,
  author =        {The {Stacks Project Authors}},
  title =         {\itshape {S}tacks {P}roject},
  year =          {2014},
}

@book{hartshorne77,
  author =        {Hartshorne, R.},
  publisher =     {Springer-Verlag, New York},
  title =         {Algebraic Geometry},
  year =          {1977},
}

@book{hartshorne_local_cohomology,
  author =        {Hartshorne, R.},
  pages =         {vi+106},
  publisher =     {Springer-Verlag, Berlin-New York},
  series =        {A seminar given by A. Grothendieck, Harvard
                   University, Fall},
  title =         {Local cohomology},
  volume =        {1961},
  year =          {1967},
}

@article{illusie_de_rham_witt,
  author =        {Illusie, Luc},
  journal =       {Ann. Sci. \'Ecole Norm. Sup. (4)},
  number =        {4},
  pages =         {501--661},
  title =         {Complexe de de\thinspace {R}ham-{W}itt et cohomologie
                   cristalline},
  volume =        {12},
  year =          {1979},
  issn =          {0012-9593},
  url =           {http://www.numdam.org/item?id=ASENS_1979_4_12_4_501_0},
}

@article{TWY,
  title={Quasi-${F^e}$-splittings and quasi-$F$-regularity},
  author={Tanaka, Hiromu and Witaszek, Jakub and Yobuko, Fuetaro},
  journal={arXiv preprint arXiv:2404.06788},
  year={2024}
}

@article{KTTWYY1,
  title={Quasi-${F}$-splittings in birational geometry},
  author={Kawakami, Tatsuro and Takamatsu, Teppei and Tanaka, Hiromu and Witaszek, Jakub and Yobuko, Fuetaro and Yoshikawa, Shou},
  journal={arXiv preprint arXiv:2208.08016},
  year={2022}
}

@article{KTTWYY3,
  title={Quasi-${F}$-splittings in birational geometry III},
  author={Kawakami, Tatsuro and Takamatsu, Teppei and Tanaka, Hiromu and Witaszek, Jakub and Yobuko, Fuetaro and Yoshikawa, Shou},
  journal={arXiv preprint arXiv:2408.01921},
  year={2024}
}

@article{ST23,
  author       = {Sato, Kenta and Takagi, Shunsuke},
  title        = {Deformations of log terminal and semi log canonical singularities},
  journal      = {Forum of Mathematics, Sigma},
  year         = {2023},
  volume       = {11},
  doi          = {10.1017/fms.2023.28},
  note         = {Open Access},
}

@article{Yoshikawa22,
  author       = {Yoshikawa, Shou},
  title        = {Global {$F$}-splitting of surfaces admitting an int-amplified endomorphism},
  journal      = {Manuscripta Mathematica},
  year         = {2022},
  volume       = {169},
  pages        = {271--296},
  doi          = {10.1007/s00229-021-01331-5},
}

@article{RRS96,
  author = {Reid, Les and Roberts, Leslie G. and Singh, Balwant},
  title = {On weak subintegrality},
  journal = {Journal of Pure and Applied Algebra},
  volume = {114},
  number = {1},
  pages = {93--109},
  year = {1996},
  doi = {10.1016/0022-4049(95)00157-3},
}

@article{schwedetucker14,
  title={On the behavior of test ideals under finite morphisms},
  author={Schwede, Karl and Tucker, Kevin},
  journal={Journal of Algebraic Geometry},
  volume={23},
  number={3},
  pages={399--443},
  year={2014}
}

@article{Tra70,
author = {Traverso, Carlo},
fjournal = {Annali della Scuola Normale Superiore di Pisa - Classe di Scienze},
journal ={Ann. Sc. Norm. Sup. Pisa},
number = {4},
pages = {585-595},
publisher = {Scuola normale superiore},
title = {Seminormality and Picard group},
url = {http://eudml.org/doc/83540},
volume = {24},
year = {1970},
}

@article{Tanaka18,
  author    = {Tanaka, Hiromu},
  title     = {Minimal model program for excellent surfaces},
  journal   = {Annales de l'Institut Fourier},
  volume    = {68},
  number    = {1},
  pages     = {345--376},
  year      = {2018},
}

@article{Sato25,
  author={Sato, Kenta},
  title={General hyperplane sections of log canonical threefolds in positive characteristic},
  journal={Journal of the Institute of Mathematics of Jussieu},
  pages={1--28},
  year={2025},
  publisher={Cambridge University Press}
}

@article{MS12,
  title={Semi-log canonical vs {$F$}-pure singularities},
  author={Miller, Lance Edward and Schwede, Karl},
  journal={Journal of Algebra},
  volume={349},
  number={1},
  pages={150--164},
  year={2012},
  publisher={Elsevier}
}

@article{Hoc77,
  title={Cyclic purity versus purity in excellent Noetherian rings},
  author={Hochster, Melvin},
  journal={Transactions of the American Mathematical Society},
  volume={231},
  number={2},
  pages={463--488},
  year={1977}
}

@article {HR76,
    AUTHOR = {Hochster, Melvin and Roberts, Joel L.},
     TITLE = {The purity of the {F}robenius and local cohomology},
   JOURNAL = {Advances in Math.},
  FJOURNAL = {Advances in Mathematics},
    VOLUME = {21},
      YEAR = {1976},
    NUMBER = {2},
     PAGES = {117--172},
      ISSN = {0001-8708},
   MRCLASS = {14B15 (13C15 14M99)},
  MRNUMBER = {417172},
MRREVIEWER = {Gerhard\ Pfister},
       DOI = {10.1016/0001-8708(76)90073-6},
       URL = {https://doi.org/10.1016/0001-8708(76)90073-6},
}

@article {MR85,
    AUTHOR = {Mehta, V. B. and Ramanathan, A.},
     TITLE = {Frobenius splitting and cohomology vanishing for {S}chubert
              varieties},
   JOURNAL = {Ann. of Math. (2)},
  FJOURNAL = {Annals of Mathematics. Second Series},
    VOLUME = {122},
      YEAR = {1985},
    NUMBER = {1},
     PAGES = {27--40},
      ISSN = {0003-486X,1939-8980},
   MRCLASS = {14M15 (20G10)},
  MRNUMBER = {799251},
MRREVIEWER = {H.\ H.\ Andersen},
       DOI = {10.2307/1971368},
       URL = {https://doi.org/10.2307/1971368},
}

@incollection {HH89,
    AUTHOR = {Hochster, Melvin and Huneke, Craig},
     TITLE = {Tight closure and strong {$F$}-regularity},
      NOTE = {Colloque en l'honneur de Pierre Samuel (Orsay, 1987)},
   JOURNAL = {M\'em. Soc. Math. France (N.S.)},
  FJOURNAL = {M\'emoires de la Soci\'et\'e{} Math\'ematique de France.
              Nouvelle S\'erie},
    NUMBER = {38},
      YEAR = {1989},
     PAGES = {119--133},
      ISSN = {0037-9484},
   MRCLASS = {13H10 (13A50 13C14)},
  MRNUMBER = {1044348},
MRREVIEWER = {W.\ V.\ Vasconcelos},
}

@article {ST25,
    AUTHOR = {Sato, Kenta and Takagi, Shunsuke},
     TITLE = {Arithmetic and geometric deformations of {$F$}-pure and
              {$F$}-regular singularities},
   JOURNAL = {Amer. J. Math.},
  FJOURNAL = {American Journal of Mathematics},
    VOLUME = {147},
      YEAR = {2025},
    NUMBER = {2},
     PAGES = {561--596},
      ISSN = {0002-9327,1080-6377},
   MRCLASS = {14G17 (13A35 14B05)},
  MRNUMBER = {4887970},
       DOI = {10.1353/ajm.2025.a954650},
       URL = {https://doi.org/10.1353/ajm.2025.a954650},
}

\end{document}